\def\proto{\xslashedrightarrow{}}
\newcommand{\Nb}{\mathbb{N}}
\newcommand{\Ib}{\mathbb{I}}
\renewcommand{\H}{\mathbf{H}}
\def\-{\raisebox{.75pt}{-}}
\newcommand{\uvar}{\_}
\newcommand{\C}{\textbf{č}}
\newcommand{\Db}{\mathbf{D}}
\newcommand{\Lb}{\mathbf{L}} 
\newcommand{\Fb}{\mathbf{F}}
\DeclareMathOperator{\N}{N}
\DeclareMathOperator{\Pair}{Pair}
\DeclareMathOperator{\Cube}{Cb}
\DeclareMathOperator*{\Woperator}{W}
\DeclareMathOperator*{\Wsegoperator}{W_{Seg}}
\DeclareMathOperator*{\Wsatoperator}{W_{Sat}}
\newcommand{\W}{\Woperator}
\newcommand{\Wseg}{\Wsegoperator}
\newcommand{\Wsat}{\Wsatoperator}
\DeclareMathOperator*{\M}{M}
\DeclareMathOperator*{\Mseg}{M_{Seg}}
\DeclareMathOperator*{\Msat}{M_{Sat}}
\DeclareMathOperator*{\I}{I}
\DeclareMathOperator*{\F}{F}
\DeclareMathOperator*{\IF}{IF}
\DeclareMathOperator\Set{Set}
\DeclareMathOperator\Sp{Sp}
\DeclareMathOperator\Surj{Surj}
\DeclareMathOperator\Hot{Hot}
\DeclareMathOperator*{\Sq}{Sq}
\DeclareMathOperator{\Hom}{Hom}
\renewcommand\path{\oppath}
\DeclareMathOperator*{\LFib}{LFib}
\DeclareMathOperator*{\LCartoperator}{LCart}
\DeclareMathOperator*{\BCartoperator}{BCart}
\DeclareMathOperator*{\RCartoperator}{RCart}
\newcommand{\BCart}{\mbox{${\BCartoperator}$}}
\newcommand{\RCart}{\mbox{${\RCartoperator}$}}
\newcommand{\LCart}{\mbox{${\LCartoperator}$}}
\newcommand{\LCartc}{\mbox{${\LCartoperator}^c$}}
\DeclareMathOperator*{\Hor}{Hor}
\DeclareMathOperator*{\Comp}{Comp}
\DeclareMathOperator{\im}{Im}
\DeclareMathOperator*{\ev}{ev}
\DeclareMathOperator*{\Arr}{Arr}
\newcommand{\coend}{\operatornamewithlimits{coend}}
\newcommand{\cocoend}{\operatornamewithlimits{end}}
\newcommand{\colim}{\operatornamewithlimits{colim}}
\newcommand\dirtimes[1]{\underset{#1}{\overset{\to}{\times}}}
\newcommand\zo{(0,\omega)}
\DeclareMathOperator\cst{cst}
\DeclareMathOperator\Fun{Fun}
\DeclareMathOperator\cat{Cat}
\DeclareMathOperator\Dcatoperatore{DblCat}
\newcommand\ocat{\omega\mbox{-$\cat$}}
\newcommand\ocatm{{\omega\mbox{-$\cat_m$}}}
\newcommand\zocat{(0,\omega)\mbox{-$\cat$}}
\newcommand\icat{1\mbox{-$\cat$}}
\DeclareMathOperator{\intoperotor}{int}
\newcommand{\intw}{\intoperotor}
\newcommand{\ncat}{n\mbox{-$\cat$}}
\newcommand{\tcat}{\mbox{-$\cat$}}
\newcommand{\npmcat}{(n+m)\mbox{-$\cat$}}
\newcommand{\npcat}{(n+1)\mbox{-$\cat$}}
\newcommand{\Dcat}{\mbox{-$\Dcatoperatore$}}
\newcommand{\Dcatc}{\mbox{-$\Dcatoperatore^{c}$}}
\newcommand{\Dcatmc}{\mbox{-$\Dcatoperatore_m^{c}$}}
\newcommand{\Dcatm}{\mbox{-$\Dcatoperatore_m$}}
\newcommand{\oDcatm}{(\omega,1)\mbox{-$\Dcatoperatore_m$}}
\newcommand{\DI}{\I_{dbl}}
\newcommand{\nDcat}{(n,1)\Dcat}
\newcommand{\nDcatc}{(n,1)\Dcatc}
\DeclareMathOperator{\Filt}{Filt}
\newcommand{\Filts}{\Filt^{\twoheadrightarrow}}
\title{Effectivity of Generalized Double $\infty$-Categories}
\author{Félix Loubaton\\ Max Planck Institute for Mathematics}
\date{}
\begin{document}
\maketitle

\begin{abstract}
We construct an adjunction between $m$-categories internal to $(\infty,n)$-categories, called $(n,m)$-double $\infty$-categories, and filtrations $A_0\to \dots\to A_m$ where for all $i<m$, $A_i$ is a $(n+i)$-category. We show that this adjunction induces an equivalence between $(n,m)$-double $\infty$-categories admitting enough companions and filtrations such that each morphism $A_i\to A_{i+1}$ is essentially surjective on cells of dimension lower than or equal to $i$.

This result can be seen as a $(\infty,n)$-categorical generalization of the equivalence between internal groupoids and effective epimorphisms in the category of $\infty$-groupoids proven by Rezk and Lurie.

In the case $n = 0$, this recovers the characterization of flagged $m$-categories given by Ayala-Francis in \cite{ayala2018flagged}, and in the case $n = 1$, it allows us to prove some conjectures concerning the square functor and its variants, stated by Gaitsgory-Rozenblyum in the appendix of \cite{Gaitsgory_A_study_on_DAG}.

\end{abstract}

\tableofcontents

\section*{Introduction}
\addcontentsline{toc}{section}{Introduction}

\vspace{0.5cm}
A fundamental property of the category of sets is that it admits a good notion of \textit{quotient}. Indeed, given a transitive, reflexive, and symmetric relation $R\subset E\times E$ on a set $E$, one can form the quotient of $E$ by $R$, denoted $E_{/R}$, and recover $R$ by inspecting the morphism $p:E\to E_{/R}$: given two elements $x,y$ in $E$, $xRy$ if and only if $p(x)=p(y)$. We will then say that transitive, reflexive, and symmetric relations are \textit{effective}.

\vspace{0.5cm}

A similar result holds in the $\infty$-category $\Hot$ of homotopy types. The analogue of the notion of transitive and reflexive relation is that of a Segal object, i.e., a functor $D_\bullet:\Delta^{op}\to \Hot$ such that the canonical morphism 
 $$D_n\to D_1\times_{D_0}...\times_{D_0}D_1$$
 is an equivalence. The quotient of such a Segal object is the colimit $\colim_{\Delta^{op}}D_\bullet$. Informally, this object is obtained from $D_0$ by adding a path between $D_{d^1}(x)$ and $D_{d^0}(x)$ for every element $x$ in $D_1$, as well as coherences between all these paths.

An \textit{internal groupoid} is a Segal object $D_\bullet$ such that the two morphisms
$$D_{[2]}\to D_{\Lambda^{0}[2]}~~~~~~~ D_{[2]}\to D_{\Lambda^{2}[2]}$$
are equivalences. This condition can be thought of as a kind of "symmetry" for the "relation" $D_1\to D_0\times D_0$. By results of Rezk and Lurie, one can once again recover the simplicial object $D_\bullet$ by inspecting the morphism $p:D_0\to \colim_{\Delta^{op}}D_\bullet$. More precisely, $D_\bullet$ is equivalent to the simplicial object $\C_\bullet(p)$, called the \textit{Čech nerve} of $p$, defined by the formula:
$$\C_n(p):=D_0\times_{\colim_{\Delta^{op}}D_\bullet}...\times_{\colim_{\Delta^{op}}D_\bullet}D_0$$
We will then say that internal groupoids are \textit{effective}.

\vspace{0.5cm}

The starting point of the work presented here is to investigate what the $(\infty,n)$-categorical analogue of this notion of effectiveness should and could be. What underlies this question is the observation that the effectiveness of equivalence relations is one of Giraud's axioms characterizing toposes, and, similarly, the effectiveness of internal groupoids is one of the Giraud-Rezk-Lurie axioms characterizing $(\infty,1)$-toposes. We therefore hope that understanding its $(\infty,n)$-categorical generalizations will allow us to state and prove a theorem \textit{à la} Giraud that would characterize $(\infty,n)$-toposes with several axioms, one of which would be this effectiveness. This link between higher effectiveness and (categorically) higher topos theory is successfully used by Street in his definition of $2$-toposes in the article \cite{street1982two}, which is moreover an important source of inspiration for the present work.

\vspace{0.5cm}

Let $D_\bullet$ be a \textit{$(n,1)$-double $\infty$-category}, i.e., a functor $D_\bullet:\Delta^{op}\to (\infty,n)\tcat$ such that the canonical morphism 
 $$D_n\to D_1\times_{D_0}...\times_{D_0}D_1$$
 is an equivalence\footnote{We chose to call $(n,1)$-double $\infty$-categories the categories internal to $(\infty,n)$-categories to stick with the convention of denoting $(n,1)$-categories the categories internal to $n$-homotopy types.}. Informally, the \textit{quotient} of $D_\bullet$ is the $(\infty,n+1)$-category obtained from $D_0$ by adding a morphism $f_x$ between $D_{d^1}(x)$ and $D_{d^0}(x)$ for every element $x$ in $D_1$, a $2$-cell $\alpha_u:f_x\to f_y$ for every $1$-cell $u:x\to y$ in $D_1$, and so on. More formally, the quotient of $D_\bullet$ is the $(\infty,n+1)$-category $\coend_{\Delta} D_n\otimes[n]$, where $\otimes$ is the Gray tensor product.

The first result of this text is to identify precise conditions under which one can recover $D_\bullet$ by inspecting the morphism $D_0\to \coend_{\Delta}D_n\otimes[n]$. Before stating it, we need a few definitions. A vertical $1$-cell in $D_\bullet$ admits a \textit{companion} if there exists a square
\[\begin{tikzcd}
	a & b \\
	b & b
	\arrow["{\overline{f}}", from=1-1, to=1-2]
	\arrow["f"', from=1-1, to=2-1]
	\arrow["{\alpha_f}"{description}, draw=none, from=1-1, to=2-2]
	\arrow[equals, from=1-2, to=2-2]
	\arrow[equals, from=2-1, to=2-2]
\end{tikzcd}\]
invertible in a certain sense\footnote{We use here the classical terminology of the theory of double $\infty$-categories: a vertical $1$-cell is a $1$-cell of $D_0$, and a square is a $1$-cell of $D_1$.}.

We can then recursively define the notion of a \textit{companion} for a $k$-cell of $D_0$, and we will say that $D_\bullet$ is \textit{accompanied} if all the cells of $D_0$ admit companions (definition \ref{defi:accompange}). We denote by $(n,1)\Dcat$ the $\infty$-category of $(n,1)$-double $\infty$-categories and $(n,1)\Dcatc$ the full sub $\infty$-category of accompanied ones. It is important to note that the notion of companion is, beyond the scope of this text, an important concept in the theory of double ($\infty$-)categories (\cite{grandis2004adjoint}, \cite{shulman2007framed}, \cite{ruit2023formal}).

We define a \textit{$(n,1)$-filtration} as a morphism $A_0\to A_1$ with $A_0$ an $(\infty,n)$-category and $A_1$ an $(\infty,n+1)$-category. A $(n,1)$-filtration is \textit{surjective} if the morphism $A_0\to A_1$ is a \textit{$0$-surjection}, i.e., a functor that is essentially surjective on objects. We denote by $\Filt_{n,1}$ the $\infty$-category of $(n,1)$-filtrations and $\Filts_{n,1}$ the sub $\infty$-category of surjective ones.

\begin{itheorem}[\ref{theo:eff of n1 dcat}]
There exists an adjunction, whose left adjoint is called the \emph{realization}, and whose right adjoint is called the \emph{Čech nerve}:
$$\begin{tikzcd}
	{|\uvar|:(n,1)\Dcat} & {\Filt_{n,1}:\C}
	\arrow[""{name=0, anchor=center, inner sep=0}, shift left=2, from=1-1, to=1-2]
	\arrow[""{name=1, anchor=center, inner sep=0}, shift left=2, from=1-2, to=1-1]
	\arrow["\dashv"{anchor=center, rotate=-90}, draw=none, from=0, to=1]
\end{tikzcd}$$
with the realization defined by the formula:
$$|D_\bullet|:=D_0\to \coend_{\Delta}D_n\otimes[n]$$

The realization of a $(n,1)$-double $\infty$-category is always a surjective $(n,1)$-filtration, and the Čech nerve of a $(n,1)$-filtration is always an accompanied marked $(n,1)$-double $\infty$-category.

The realization and the Čech nerve induce inverse equivalences:
$$|\uvar|:(n,1)\Dcatc\sim \Filts_{n,1}:\C$$
\end{itheorem}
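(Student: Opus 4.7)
The strategy is to decompose the theorem into four steps: (i) construct the Čech nerve and establish the adjunction; (ii) verify that the two functors land in the claimed subcategories; (iii) prove the counit is an equivalence on surjective filtrations; and (iv) prove the unit is an equivalence on accompanied double categories.

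For step (i), I would define, for a filtration $f : A_0 \to A_1$, the Čech nerve levelwise as a pullback
\[
\C(f)_k := \uhom([k], A_1) \times_{A_1^{k+1}} A_0^{k+1},
\]
encoding $k$-chains of composable $1$-cells of $A_1$ whose vertices factor through $A_0$. The Segal condition is forced by expressing $[k]$ as the iterated pushout $[1] \sqcup_{[0]} \cdots \sqcup_{[0]} [1]$ of its spine in $(n{+}1)\tcat$. The adjunction $|-| \dashv \C$ is then formal: a morphism from $|D_\bullet| = (D_0 \to \coend_\Delta D_k \otimes [k])$ to $f$ in $\Filt_{n,1}$ corresponds, via the universal property of the coend and the Gray tensor/hom adjunction, to a dinatural family $D_k \to \uhom([k], A_1)$ satisfying the vertex constraints, i.e., to a simplicial map $D_\bullet \to \C(f)_\bullet$.

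For step (ii), the map $D_0 \to |D|$ is the coend leg at $[0]$, so every object of the target lifts from $D_0$ and $|D|$ is surjective. Conversely, given $f$, a vertical $1$-cell $g \in \C(f)_0 = A_0$ admits a companion via the tautological lift $f(g) \in A_1$, viewed as a square in $\C(f)_1$ with identity right and bottom edges; the same construction, applied to higher cells of $A_0$ lifted to $A_1$ through $f$, yields companions at every dimension by induction, showing $\C(f)$ is accompanied. For step (iii), the counit $|\C(f)| \to f$ is an equivalence when $f$ is $0$-surjective: the $A_0$-component is the identity, so the content is that $\coend_\Delta \C(f)_k \otimes [k] \to A_1$ is an equivalence of $(\infty, n{+}1)$-categories. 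Essential surjectivity on objects follows from $0$-surjectivity; for higher cells one unfolds the coend into iterated fiber products $A_0 \times_{A_1} \cdots \times_{A_1} A_0$ and inducts on $n$, with the $n = 0$ case recovering the Ayala-Francis characterization of flagged $m$-categories and the higher-groupoidal content reducing to the Rezk-Lurie effectivity of internal groupoids.

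The main obstacle is step (iv): showing the unit $\eta : D_\bullet \to \C(|D|)_\bullet$ is a levelwise equivalence for accompanied $D_\bullet$. The Segal condition reduces this to degrees $0$ and $1$; degree $0$ is trivial, while in degree $1$ one must identify squares of $D_1$ with horizontal $1$-cells of $|D|$ whose endpoints lie in $D_0$, together with higher cells of $\C(|D|)_1$ between them. The companion data on $D_\bullet$ provide the essential surjectivity direction: every horizontal $1$-cell of $|D|$ arising from the leg $D_1 \otimes [1] \to |D|$ admits a representative square, and the accompaniment data at higher cell dimensions account for the $2$-cells and above of $\C(|D|)_1$. Fully faithfulness requires analyzing the skeleton filtration of the Gray tensor products $D_k \otimes [k]$ that assemble to $|D|$, and verifying that the accompaniment hypothesis propagates coherently across cell dimensions so that distinct squares in $D_1$ remain distinguishable after realization. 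This interplay between the Gray-tensor structure of $|D|$ and the inductively defined accompaniment condition --- essentially showing that companions furnish precisely the data making $D_\bullet$ reconstructible from its quotient --- is the most delicate point of the argument.
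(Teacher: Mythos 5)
Your skeleton (construct the adjunction, check the images, then prove counit/unit equivalences) matches the shape of the result, but at the two points where all the content lies --- full faithfulness of $|\C(A)|_1\to A_1$ for a $0$-surjective filtration, and the degree-$1$ unit $D_1\to \C_1(|D|)$ for accompanied $D_\bullet$ --- you do not supply an argument. Both steps require computing mapping categories in the lax realization $\coend_\Delta D_k\otimes[k]$, and neither ``unfolding the coend into iterated fiber products'' nor an induction on $n$ gives access to these homs: Gray coends do not commute with $\hom$, and the appeal to Rezk--Lurie effectivity of internal groupoids is not available, since $\npcat$ is not an $\infty$-topos and the realization is a directed (Gray) quotient, not a groupoid quotient. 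In the paper this is exactly where the machinery enters: for accompanied $D_\bullet$ the span $D_1\to D_0^\sharp\times D_0^\sharp$ is a two-sided fibration (theorem \ref{theo:jaco} and its generalization \ref{theo:generaljaco}, whence the equivalence of corollary \ref{cor:marking and accompanied} between accompanied and marked-accompanied double categories); the slice $(D_\bullet)_{x/}$ is identified as the left fibrant replacement of $\{x\}$ (proposition \ref{prop:explicit factorization double case}); and the straightening-type equivalence $\LFib(D_\bullet)\simeq\LCart(|D_\bullet|^\sharp)$ of proposition \ref{prop:double bar is eq} transports it to $(|D|_1^\sharp)_{|x|_1/}$. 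Taking fibers gives lemma \ref{lemma:equivalence on fiber}, i.e.\ $(D_1)_{x,y}\simeq\hom_{|D|_1}(|x|_1,|y|_1)$, which is what drives both the unit and the counit in theorem \ref{theo:eff of n1 dcat marked}, together with detection of equivalences of two-sided fibrations on fibers (corollary \ref{cor:morphism between is an equivalence when equivalence on fiberbifibrant case}). Your proposal contains no substitute for this hom computation; ``analyzing the skeleton filtration of the Gray tensor products'' is where the argument would have to begin, not where it can end.

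A secondary gap: your verification that $\C(f)$ is accompanied only exhibits candidate companion squares; one must still produce the companionship unit and counit with the two required equivalences, and propagate this through all cell dimensions via the $\hom^v$ double categories. In the paper this too is delegated to the fibrational formalism: proposition \ref{prop:directed pullback i bifib} shows $A_0\dirtimes{A_1}A_0\to A_0\times A_0$ is a two-sided fibration, and theorem \ref{theo:generaljaco} converts that into accompaniment. Note finally that the paper's own proof of theorem \ref{theo:eff of n1 dcat} is only a reduction to $n=\omega$ followed by corollary \ref{cor:marking and accompanied} and the marked theorem \ref{theo:eff of n1 dcat marked}; if you wish to bypass the marked/fibrational detour, you must in particular reprove the fiber-to-hom identification above by hand, and that is precisely the step your sketch leaves open.
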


\vspace{0.5cm}

The second result of this text is to generalize the above theorem to a setting where $\Delta$ is replaced by $\Theta_m$. A \textit{$(n,m)$-double $\infty$-category} is a functor $D_\bullet:\Theta_{m}^{op}\to \ncat$ satisfying the Segal condition (Definition \ref{defi:of nmdouble cate}). We then define a notion of \textit{accompanied} $(n,m)$-double $\infty$-categories (Definition \ref{defi:nmaccomp}). We denote by $(n,m)\Dcat$ the $\infty$-category of $(n,1)$-double $\infty$-categories and $(n,m)\Dcatc$ the full sub $\infty$-category of accompanied ones.

An \textit{$(n,m)$-filtration} is a sequence of morphisms $A_0\to A_1\to \dots \to A_k\to \dots$ where for any $i$, $A_i$ is a $(\infty,n+i)$-category. Such a filtration is \textit{surjective} if for any $k<m$, $A_k\to A_{k+1}$ is $k$-surjective, i.e., essentially surjective on objects and locally $(k-1)$-surjective. We denote by $\Filt_{n,m}$ the $\infty$-category of $(n,m)$-filtrations and $\Filts_{n,m}$ the sub $\infty$-category of surjective ones.

\begin{itheorem}[\ref{theo:eff of nm dcat}]
There exists an adjunction, whose left adjoint is called once again the \emph{realization}, and whose right adjoint is called once again the \emph{Čech nerve}:
$$\begin{tikzcd}
	{|\uvar|:(n,m)\Dcat} & {\Filt_{n,m}:\C}
	\arrow[""{name=0, anchor=center, inner sep=0}, shift left=2, from=1-1, to=1-2]
	\arrow[""{name=1, anchor=center, inner sep=0}, shift left=2, from=1-2, to=1-1]
	\arrow["\dashv"{anchor=center, rotate=-90}, draw=none, from=0, to=1]
\end{tikzcd}$$
with the realization defined by the formula:
$$|D_\bullet|:=D_0\to \coend_{\Delta}D_n\otimes[n]\to ...\to \coend_{\Theta_k}D_a\otimes a\to...$$

The realization of a $(n,m)$-double $\infty$-category is always a surjective $(n,m)$-filtration, and the  Čech nerve of a $(n,m)$-filtration is always an accompanied  $(n,m)$-double $\infty$-category.

The realization and the directed marked Čech nerve induce inverse equivalences:
$$|\uvar|:(n,m)\Dcatc\sim \Filts_{n,m}:\C$$
\end{itheorem}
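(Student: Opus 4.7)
My plan is to induct on $m$, with base case $m=1$ given by Theorem \ref{theo:eff of n1 dcat}. The key structural input is that $\Theta_m$ is obtained from $\Delta$ by iterated wreath product, so that a $(n,m)$-double $\infty$-category $D_\bullet:\Theta_m^{op}\to\ncat$ can be equivalently reorganized as a Segal simplicial object $\Delta^{op}\to (n,m-1)\Dcat$. Correspondingly, the realization formula
$$|D_\bullet|:=D_0\to \coend_{\Delta}D_n\otimes[n]\to \dots \to \coend_{\Theta_k}D_a\otimes a\to \dots$$
factors as an iteration of single-$\Delta$ realizations, each one increasing the categorical dimension of the target by one and producing the next arrow of the filtration. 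This is the scaffolding that will allow the inductive step to be carried out.

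For the construction of the adjunction, I would first define the Čech nerve of a filtration $A_0\to\dots\to A_m$ by iterating the $(n+k,1)$-Čech nerve of the base case on each successive morphism $A_k\to A_{k+1}$, thereby assembling a $\Theta_m^{op}$-diagram whose Segal condition in each simplicial direction is inherited from $m=1$. Adjointness between $|\uvar|$ and $\C$ then follows by stacking the unit/counit data coming from the base-case adjunction one direction at a time, using that coends over $\Theta_k$ decompose along the wreath structure. At this stage I would check the two preservation statements: that $|D_\bullet|$ is a surjective filtration reduces, step by step along the telescope, to the base-case assertion that the realization of a $(n+k,1)$-double is a surjective filtration; and that $\C A_\bullet$ is accompanied follows from an analogous inductive decomposition of the accompanied condition of Definition \ref{defi:nmaccomp}, together with the $m=1$ case applied to each $A_k\to A_{k+1}$.

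The equivalence on restricted sub $\infty$-categories is then proved by induction. Given an accompanied $(n,m)$-double $D_\bullet$, viewed as a Segal object in $(n,m-1)\Dcatc$, one step of realization produces a Segal simplicial object in $(n+1,m-1)\Dcatc$ equipped with a map from $D_0$; by the $m=1$ case this first step is effective, and the remaining $(n+1,m-1)$-structure is handled by the inductive hypothesis. The counit is handled symmetrically starting from a surjective $(n,m)$-filtration $A_0\to\dots\to A_m$ and peeling off $A_0\to A_1$.

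The principal obstacle I expect lies in showing that the accompanied condition of Definition \ref{defi:nmaccomp} really admits the inductive decomposition required above: the definition involves companion data for cells in all simplicial directions of $\Theta_m$, and one must verify that these companions collate correctly with the wreath decomposition so that ``accompanied'' before and after a single realization step matches the $m=1$ notion plus an accompanied $(n+1,m-1)$-structure. A closely related subtlety is the marking data: the base case produces a \emph{marked} accompanied Čech nerve, and the induction must propagate this marking coherently so that the ``directed marked Čech nerve'' appearing in the statement is precisely the iterate of the base-case construction. Once these combinatorial compatibilities are in place, the rest of the argument is a formal induction.
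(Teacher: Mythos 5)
Your scaffolding (induction on $m$, the wreath-type decomposition of $\Theta_m$, iterated one-$\Delta$ realizations, base case Theorem \ref{theo:eff of n1 dcat}) is the same as the paper's, but the two statements you defer as ``combinatorial compatibilities'' are not bookkeeping: they are the mathematical content of the inductive step, and as you state them they are not available. First, it is not established (and the paper carefully avoids assuming) that an accompanied $(n,m)$-double category is a Segal object in $(n,m-1)\Dcatc$: accompaniedness provides companions for cells of $D_0$ with respect to the first simplicial direction, and the slices $D_{[\bullet,n]}$ need not inherit them. The paper only gets accompaniedness of these slices (Lemma \ref{lemma: Dan accompanied}) after first reducing to the case where $D_0$ is a homotopy type, and that reduction (Lemma \ref{lemma:lumpen}) is itself non-formal: it relies on Corollary \ref{cor: naturality}, i.e.\ invariance of the realization under pullback along a $0$-surjection, which is an output of the $(n,1)$-effectivity theory, not of the wreath combinatorics. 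Second, your assertion that ``one step of realization produces a Segal simplicial object'' is exactly what must be proved: realization is a coend and does not commute with the fiber products in the Segal condition. The paper establishes that $[n]\mapsto |D_{[\bullet,n]}|_m$ is Segal by using the induction hypothesis to identify the relevant fiber products as Čech nerves of surjective filtrations, which needs Corollary \ref{cor:stability of n surjective by some pullback} (stability of $k$-surjections under pullbacks over a fixed base), again a consequence of effectivity rather than a formal consequence of the decomposition. Note also the duality twists in Lemma \ref{lemma:technical |||3bis}: the two orders of iterated realization differ by $(\uvar)^\circ$, so the two simplicial directions cannot be treated ``symmetrically'' without care.

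On the counit direction, the paper does not peel off $A_0\to A_1$: since truncation commutes with $\C$ and $|\uvar|$, the induction hypothesis directly gives $|\C(A)|_k\to A_k$ for $k\leq m$, and the genuinely new step is showing the top map $|\C(A)|_{m+1}\to A_{m+1}$ is $m$-surjective and $(m+1)$-fully faithful (the hom-categories of $|\C(A)|_{m+1}$ being computed from the already-proved unit equivalence), after which Corollary \ref{cor:characterization of n surjective} concludes. Your peeling strategy would require a relation between the $\Theta_m$-indexed Čech nerve of the whole filtration and a Čech nerve of $A_1\to\dots\to A_m$ with the first map discarded; the relation that actually exists goes through hom-objects (Lemma \ref{lemma:hom and cech}), not through dropping $A_0\to A_1$. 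So while your outline traces the paper's route at the level of structure, the steps you postpone are where the theorem lives, and they are resolved not by a formal induction but by the reduction to a homotopy-type base and the surjection-stability corollaries extracted from the $(n,1)$ case.
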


\subsubsection*{Relation to other work}
\addcontentsline{toc}{subsection}{Relation to other work}

Applied in the case $n=0$, theorem \ref{theo:eff of nm dcat} is equivalent to theorem 0.26 of \cite{ayala2018flagged}, which gives an equivalence between flagged $m$-categories and $\Filts_{0,m}$.

The unproven theorems 4.1.3, 4.3.5, 4.6.3, and 5.2.3 of the appendix on $(\infty,2)$-categories of \cite{Gaitsgory_A_study_on_DAG} can be derived from theorem \ref{theo:eff of n1 dcat} applied to $n=1$ (remark \ref{rem:gait}). A proof of theorem 4.1.3 of \cite{Gaitsgory_A_study_on_DAG} was already given by Abellán in \cite{Abellan2023}.

In \cite{ruit2023formal}, several conjectures are stated about double $\infty$-categories. One of them, hypothesis 3.7, is implied by theorem \ref{theo:square commutes with cil}.

In the strict case, a variant of theorem \ref{theo:eff of n1 dcat} is proven by Street in \cite{street1982two}.

\subsubsection*{Organization of the article}
\addcontentsline{toc}{subsection}{Organization of the article}

Section \ref{section:Recalls and preliminaries} is mainly composed of background material. Nonetheless, it includes results of independent interest, such as the preservation of strict $\omega$-categories under the Gray tensor product with globular sums (theorem \ref{theo:strictness of Gray1}), a decomposition of the Gray tensor product of suspensions into simpler elements (proposition \ref{prop:decomposition}), and the construction of a monoidal structure on marked $\omega$-categories extending the Gray tensor product (theorem \ref{theo:Gray tensor product is monoidal on marked}).

\vspace{0.5cm}
Section \ref{section:Two-sided fibrations} is devoted to the notion of two-sided fibration of marked $\omega$-categories. We prove there theorem \ref{theo:eff of n1 dcat marked}, which is a “marked” version of the effectivity of marked $(\omega,1)$-categories.

\vspace{0.5cm}
Finally, Section \ref{section:Effectivity of accompanied $(n,m)$-double categories} is devoted to the proofs of the theorems mentioned in the introduction. We also state a few corollaries  such as corollary \ref{cor:characterization of n surjective}, which shows that $n$-surjective morphisms are characterized by the fact that they have the unique right lifting property against $(n+1)$-fully faithful morphisms, and results on the square functor and its variants (section \ref{section n1 eff}).

\subsubsection*{Acknowledgment}
\addcontentsline{toc}{subsection}{Acknowledgment}

I would like to thank Thomas Blom and Jaco Ruit for helpful discussions.

\section*{Conventions}
\addcontentsline{toc}{section}{Conventions}

\begin{enumerate}
\item[$*$] Unless explicitly stated, we will only manipulate $\infty$-categories and $(\infty,n)$-categories in this text. To simplify the notation, we will drop the symbol $\infty$.
\item[$*$] We denote by $\Hot$ the category of homotopy types. A subhomotopy type of a homotopy type $B$ is a homotopy type $A$ and a monomorphism $A\to B$.
\item[$*$] Given a category $A$, we denote by $\widehat{A}$ the category $\Fun(A^{op},\Hot)$. 
\item[$*$]  A class of morphisms $S$ in a category $C$ is \textit{cocomplete} if it is stable under colimits, compositions, and contains identities. We denote $\widehat{S}$ the smallest cocomplete class of morphisms containing $S$. By \cite[proposition 2.1.2.10]{loubaton2024categorical}, $\widehat{S}$ is stable under colimits, transfinite composition, pushouts, left cancellation, and retracts.
\item[$*$] We denote by $\widehat{A}[S^{-1}]$ the localization of $\widehat{A}$ by $S$, or equivalently, the subcategory of $\widehat{A}$ whose objects are the $S$-local. We then denote by $\Fb_S$ and $\iota_S$ the two adjoints: 
\[\begin{tikzcd}
	{\Fb_S:\widehat{A}} & {\widehat{A}[S^{-1}]:\iota}
	\arrow[""{name=0, anchor=center, inner sep=0}, shift left=2, from=1-1, to=1-2]
	\arrow[""{name=1, anchor=center, inner sep=0}, shift left=2, from=1-2, to=1-1]
	\arrow["\dashv"{anchor=center, rotate=-90}, draw=none, from=0, to=1]
\end{tikzcd}\]
The unit of this adjunction is in $\widehat{S}$ and the counit is the identity.
\item[$*$]Given a functor $G_\bullet:A^{op}\to D$ with values in a complete category and a presheaf $X$ on $A$, we will denote by $G_X$ the element of $D$ corresponding to $\lim_{\Theta_n/X}G$.
\item[$*$] Given a bifunctor $F:C^{op}\times C\to D$, we will denote $\coend_C F(c,c)$ the \textit{coend} of $F$, i.e., the colimit $\colim_{c\to c'\in C}F(c,c')$.
\item[$*$] Given an integer $k$, we set by convention $\omega + k = \omega - k := \omega$.

\end{enumerate}

\section{Recalls and preliminaries}

\label{section:Recalls and preliminaries}

\subsection{$n$-Categories}

\begin{definition}
We denote by $\Theta$ the Joyal globe (strict) category. This category was originally defined in \cite{Joyal_Theta}, but we will use the notation given in Section 1.1.2 of \cite{loubaton2024categorical} and we recall here some of them.

The objects of $\Theta$ are called \textit{globular sums}. Given a sequence $\textbf{a}:=\{a_0,...,a_{n-1}\}$ of globular sums in $\Theta$, we denote by $[\textbf{a},n]$ the globular sum whose objects are $0,1,...,n$ and such that $\hom_{[\textbf{a},n]}(k,k+1)= a_k$ for any $k<n$.

We define the morphism $\Sigma^n:\Theta\to \Theta$ by induction using the formula $\Sigma^0:=id$ and $\Sigma^{n+1}:=[\uvar,1]\circ\Sigma^n$. The \textit{$n$-globe} is the globular sum $\Db_n:=\Sigma^n[0]$.

Given two globular sums $[\textbf{a},n]$ and $[\textbf{b},m]$, we define 
$$[\textbf{a},n]\vee [\textbf{b},m]:=[\textbf{a}\cdot \textbf{b},n+m].$$

We will often reason by induction on the elements of $\Theta$, using the fact that every globular sum is either of the form $[0]$ or of the form $[\textbf{a},n]$.

We define the \textit{dimension of a globular sum $a$}, denoted by $|a|$, recursively as follows:
$$|[0]|:= 0~~~~~~~~|[\textbf{a},n]:=1+\max_{k<n}|a_k|.$$

We denote by $\Theta_n$ the full subcategory of $\Theta$ whose objects are globular sums of dimension less than or equal to $n$. We then have:
$$\Theta_0\cong \{[0]\}~~~~~~~~~\Theta_1\cong \Delta~~~~~~\Theta_{\omega}\cong \Theta.$$
\end{definition}

\begin{definition}
We define the functor $\Sp:\Theta_n\to \widehat{\Theta_n}$ by induction, setting $$\Sp_{[0]}:=[0]~~~~\Sp_{[\textbf{a},1]}:=\colim_{\Db_n\to \Sp_a }\Db_{n+1}$$ and $\Sp_{[\textbf{a},n]}$ as the colimit of the diagram: \[\begin{tikzcd} {\Sp_{\{1\}}} & {\Sp_{\{2\}}} & {...} \\ {\Sp_{[a_0,1]}} & {\Sp_{[a_1,1]}} & {...} & {\Sp_{[a_{n-1},1]}} \arrow[from=1-1, to=2-1] \arrow[from=1-1, to=2-2] \arrow[from=1-2, to=2-2] \arrow[from=1-2, to=2-3] \arrow[from=1-3, to=2-3] \arrow[from=1-3, to=2-4] \end{tikzcd}\] We denote by $(\Wseg)_n$ the set of maps $\{\Sp_a\to a, a\in \Theta_n\}$. 
We denote by $E_{eq}$ the simplicial set corresponding to the diagram \[\begin{tikzcd} & 0 && 0 \\ 1 && 1 \arrow[equals, from=1-2, to=1-4] \arrow[from=1-2, to=2-3] \arrow[from=2-1, to=1-2] \arrow[equals, from=2-1, to=2-3] \arrow[from=2-3, to=1-4] \end{tikzcd}\] and we denote by $(\Wsat)_n$ the set of maps $\{\Sigma^kE_{eq}\to \Sigma^k[0]=:\Db_k\}$. 	
Finally, we set $$\mbox{$\W_n$}:=(\Wseg)_n\cup (\Wsat)_n.$$
\end{definition}

\begin{definition}
A \textit{$n$-category} is an object of $\widehat{\Theta_n}$ that is local with respect to $\W_n$. We then set 
$$\mbox{$\ncat:= \widehat{\Theta_n}[\W_n^{-1}]$}$$ 
In particular, we have: 
$$0\mbox{-$\cat$}\sim \Hot~~~~~~~ 1\mbox{-$\cat$}\sim \cat$$

Given a $n$-category $C$, we will simply denote by $C_k$ the homotopy type $C_{\Db_k}$ et we will refer to it as the \textit{homotopy type of $k$-cells}. Given $k<l$, we denote by 
$$\pi^{-}_k, \pi^{+}_k:C_l\to C_k$$ the $k$-sources and $k$-targets which are the two functors induced by the inclusion $i_k^-,i_k^+:\Db_k\to \Db_n$.

The \textit{unit} is the canonical functor
$$\Ib:C_n\to C_{n+1}$$
induced by the morphism $\Db_{n+1}\to \Db_n$.
As $n$-categories are local to the map $\Db_n\coprod_{\Db_i}\Db_m\to\Db_n\vee_{\Db_i}\Db_m$, any triplet of integers $n>i$, $m>i$ induces a morphism:
$$\circ_i:C_n\times_{C_i}C_m\to C_{\max(n,m)}$$ called the
\textit{$i$-composition}.

The $i$-compositions are moreover associative, which in particular implies that we have equivalence 
$$(a\circ_i b)\circ_i c\sim a\circ_i (b\circ_i c) $$
and 
$$(a\circ_i b)\circ_j (c\circ_i d)\sim (a\circ_j c)\circ_i (b\circ_j d)$$
whenever these composites are defined.
\end{definition}

\begin{remark}
\label{rem:consequence of associativity of composition}
Let $i+1<k\leq n$ be three integers, $C$ a $n$-category, and $x,y$ two $i$-composable $k$-cells. 
By the associativity of composition, we have the equivalence:
$$(x \circ_i \pi_{i+1}^+y)\circ_{k-1} (\pi_{i+1}^-x\circ_i y)\sim x \circ_i y\sim (\pi_{i+1}^+x\circ_i y)\circ_{k-1}(x \circ_i \pi_{i+1}^-y)$$
\end{remark}

\begin{definition}
For all $n<m$, the inclusion $\Theta_n\to \Theta_m$ induces a triplet of adjoints:
\[\begin{tikzcd}
	{\iota:\ncat} & {m\mbox{-$\cat$}}
	\arrow[""{name=0, anchor=center, inner sep=0}, from=1-1, to=1-2]
	\arrow[""{name=1, anchor=center, inner sep=0}, "{\tau_n}", shift left=3, from=1-2, to=1-1]
	\arrow[""{name=2, anchor=center, inner sep=0}, "{\tau_n^i}"', shift right=3, from=1-2, to=1-1]
	\arrow["\dashv"{anchor=center, rotate=-90}, draw=none, from=0, to=1]
	\arrow["\dashv"{anchor=center, rotate=-90}, draw=none, from=2, to=0]
\end{tikzcd}\]
The left adjoint $\tau^i_n$ is the \textit{intelligent $n$-truncation functor}, and can be understood as the functor inverting any cell of dimension higher than $n$. The right adjoint $\tau_n$ is the \textit{$n$-truncation functor} and can be understood as the functor discarding any cell of dimension higher than $n$.
\end{definition}

\begin{definition}
The functor $[\uvar,1]:\Theta_n\to \Theta_{n+1}$ induces by extension by colimit a functor
$$\begin{array}{rcl}
\ncat&\to&\npcat_{\bullet,\bullet}\\
C&\mapsto & ([C,1],\{0\},\{1\})
\end{array}
$$
called the \textit{suspension},
which admits a right adjoint
$$\begin{array}{rcl}
\npcat_{\bullet,\bullet}&\to&\ncat\\
(C,a,b)&\mapsto & \hom_C(a,b)
\end{array}
$$
By construction $n$-cells of $ \hom_C(a,b)$ correspond to $(n+1)$-cell  of $C$ whose $0$-sources are $a$ and $0$-target are $b$.
\end{definition}

\begin{definition}
\label{defi:dualities non strict case} For any subset $S$ of $\Nb^*$, we denote by $(\uvar)^S:\Theta\to \Theta$ the functor that inverts the direction of globes of dimensions belonging to $S$. These functors induce, by extension by colimit, functors $$(\uvar)^S:\widehat{\Theta}\to \widehat{\Theta},$$ which are called \textit{dualities}. It is easy to see that this functor preserves $\omega$-categories and thus induces functors $$(\uvar)^S:\ocat\to \ocat.$$ In particular, we have the \textit{odd duality} $(\uvar)^{op}$, corresponding to the set of odd integers, the \textit{even duality} $(\uvar)^{co}$, corresponding to the subset of non-negative even integers, and the \textit{full duality} $(\uvar)^{\circ}$, corresponding to $\Nb^*$. Finally, we have the equivalences $$((\uvar)^{co})^{op}\sim (\uvar)^{\circ} \sim ((\uvar)^{op})^{co}.$$
\end{definition}

\begin{definition}
\label{defi:M}
Given $n\in \Nb\cup\{\omega\}$, we denote by $\Delta[\Theta_n]$ the strict category fitting in the cocartesian square:

\[\begin{tikzcd}
	{\Theta_n\times \{0\}} & {\Theta_n\times \Delta} \\
	{\{0\}} & {\Delta[\Theta_n]}
	\arrow[from=1-1, to=1-2]
	\arrow[from=1-1, to=2-1]
	\arrow[from=1-2, to=2-2]
	\arrow[from=2-1, to=2-2]
	\arrow["\lrcorner"{anchor=center, pos=0.125, rotate=180}, draw=none, from=2-2, to=1-1]
\end{tikzcd}\]

We then have a canonical morphism: 
$$\begin{array}{rcl}
\Delta[\Theta_n]&\to &\Theta_{n+1}\\
([m],a)&\mapsto& [a,m]:=[\{a,a,\ldots,a\},m].
\end{array}
$$
inducing by left Kan extension a functor:
$$\begin{array}{rrcl}
[\uvar,\uvar]:&\widehat{\Delta[\Theta_n]}&\to &\widehat{\Theta_{n+1}}\\
\end{array}
$$

Eventually, we denote by $(\Mseg)_{n+1}$ the set of maps $\{[\Sp_a,\Sp_m]\to [a,m], [a,n]\in \Delta[\Theta_n]\}$ and $(\Msat)_{n+1}$ the set of maps $\{E_{eq}\to [0]\}\cup\{[\Sigma^kE_{eq},1]\to [\Db_k,1], k<n\}$, and we set 
$$\mbox{$\M_n$}:=(\Mseg)_n\cup (\Msat)_n.$$
\end{definition}

\begin{prop}
\label{prop:delta theta and delta} Let $C$ be a presentable category. The adjunction \[\begin{tikzcd} {\Lb i_!:\Fun(\Delta[\Theta_{n}],C)[(\M_{n+1}\times C)^{-1}]} & {\Fun(\Theta_{n+1},C)[(\W_{n+1}\times C)^{-1}]:i^*} \arrow[""{name=0, anchor=center, inner sep=0}, shift left=2, from=1-1, to=1-2] \arrow[""{name=1, anchor=center, inner sep=0}, shift left=2, from=1-2, to=1-1] \arrow["\dashv"{anchor=center, rotate=-90}, draw=none, from=0, to=1] \end{tikzcd}\] is an equivalence.
\end{prop}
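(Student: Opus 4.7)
The plan is to reduce the proposition to verifying three things: (1) both $\Lb i_!$ and $i^*$ descend to the localizations; (2) the counit $\Lb i_! i^* X \to X$ becomes an equivalence in the localized target; (3) the unit $Y \to i^* \Lb i_! Y$ becomes an equivalence in the localized source. Since $\Lb i_!$ and $i^*$ are both $C$-linear in the natural sense (left Kan extension and precomposition commute with tensoring by objects of $C$), the coefficients in $C$ contribute no extra difficulty and it suffices to treat the case $C = \Hot$, the general case then following tautologically.

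For (1), since $\Lb i_!$ is cocontinuous it is enough to check it on each generator of $\M_{n+1}$. The image of a saturation generator $E_{eq} \to [0]$ or $[\Sigma^k E_{eq}, 1] \to [\Db_k,1]$ is directly an element of $(\Wsat)_{n+1}$. For a Segal generator $[\Sp_a,\Sp_m] \to [a,m]$ with $[a,m]\in \Delta[\Theta_n]$, I would unpack $\Lb i_![\Sp_a,\Sp_m]$ as a colimit in $\widehat{\Theta_{n+1}}$, and compare it to $\Sp_{[a,a,\ldots,a,m]}$. This is the key identification: the double decomposition of $[a,\ldots,a,m]$ first along the $\Delta$-coordinate into $m$ suspended copies $[a,1]$ and then along $\Theta_n$ using $\Sp_a$ in each copy is, up to a bookkeeping reshuffle, the same as the colimit computing $\Lb i_![\Sp_a,\Sp_m]$. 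Consequently the map lies in $\widehat{(\Wseg)_{n+1}}$. Conversely, $i^*$ carries $\W_{n+1}$-local presheaves to $\M_{n+1}$-local presheaves: on a $\W_{n+1}$-local $X$ and a Segal generator $[\Sp_a,\Sp_m]\to[a,m]$, the induced map $X([a,\ldots,a,m]) \to \Map(\Sp_{[a,\ldots,a,m]},X)$ is already an equivalence by the preceding identification.

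For (2) and (3), after (1) the adjunction descends, and both sides are characterized by their Segal and saturation conditions. Since $\Lb i_! i^*$ and the identity are both cocontinuous on the localized categories, the counit is an equivalence once it is so on representables $[\textbf{a},m]\in \Theta_{n+1}$. Writing $[\textbf{a},m]$ as a Segal colimit $[a_0,1]\vee\cdots\vee[a_{m-1}, 1]$ of homogeneous suspensions (each of which is $i([1],a_k)$) reduces the claim to the homogeneous case, where $\Lb i_!i^*[a,1] \to [a,1]$ is the identity by construction of $i$. A symmetric argument using the representables of $\Delta[\Theta_n]$ handles the unit.

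The main obstacle is the combinatorial identification in (1) of $\Lb i_![\Sp_a,\Sp_m]$ with the full Segal spine of $[a,a,\ldots,a,m]$ modulo $\W_{n+1}$. One must carefully exchange the order of the two Segal decompositions along the two coordinates of $\Delta\times \Theta_n$ (including the collapse sending $([0],a)$ to $[0]$), and track which sub-pieces lie in $(\Wseg)_{n+1}$. This is the combinatorial heart of the proof; once it is in place, the rest of the argument is formal.
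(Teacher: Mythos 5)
The overall shape of your argument (reduce to $C=\Hot$, then compare the two localized presheaf categories) is the same as the paper's, but both halves have gaps. For the reduction, you only assert that $C$-linearity makes the general case "tautological"; the paper makes this step precise via the Yoneda lemma, identifying $\Fun(\Delta[\Theta_n],C)[(\M_{n+1}\times C)^{-1}]$ with the category of limit-preserving functors $C^{op}\to \widehat{\Delta[\Theta_n]}[\M_{n+1}^{-1}]$ (and likewise for the $\Theta_{n+1}$ side), after which the statement is literally about the two localized presheaf categories. Your version of this step is fixable, but as written it is an assertion, not an argument.

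The serious gap is in the case $C=\Hot$ itself. The paper does not reprove this: it invokes the adjoint equivalence $\widehat{\Delta[\Theta_n]}[\M_{n+1}^{-1}]\simeq \widehat{\Theta_{n+1}}[\W_{n+1}^{-1}]$ from construction 2.2.1.2 of \cite{loubaton2024categorical}, which is a substantive comparison theorem, whereas you attempt to prove it from scratch and the key steps are either deferred or wrong. You yourself leave open the identification of $\Lb i_![\Sp_a,\Sp_m]$ with the spine of $[a,\dots,a,m]$, which is precisely the nontrivial content. More concretely, the claim that the counit $\Lb i_!i^*[a,1]\to[a,1]$ is "the identity by construction of $i$" confuses $\Lb i_!$ of the representable $([1],a)$ (which is indeed $[a,1]$) with $\Lb i_!i^*$ of the representable $[a,1]$: since $i:\Delta[\Theta_n]\to\Theta_{n+1}$ is not fully faithful, $i^*[a,1]$ is not the representable $([1],a)$, the counit is not an equivalence at the presheaf level, and its becoming an equivalence after localization is exactly what has to be proved. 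Finally, your reduction of the counit check to representables via Segal decompositions tacitly uses that the localized $i^*$ preserves colimits, i.e., that $i^*$ sends $\W_{n+1}$-local equivalences to $\M_{n+1}$-local equivalences; your step (1) only establishes the adjoint statement that $i^*$ preserves local objects, so this cocontinuity is an additional unverified claim. As written, the argument does not close; citing or reproducing the comparison of \cite{loubaton2024categorical} (or an equivalent iterated-Segal versus $\Theta$-presheaf comparison) is what is actually needed.
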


\begin{proof}
By the Yoneda lemma, this adjunction is equivalent to 
\[\begin{tikzcd}
	{\Fun^{co}(C^{op},\widehat{\Delta[\Theta_n]}[(\M_{n+1})^{-1}])} & {\Fun^{co}(C^{op},\widehat{\Theta_{n+1}}[(\W_{n+1})^{-1}])}
	\arrow[""{name=0, anchor=center, inner sep=0}, shift left=2, from=1-1, to=1-2]
	\arrow[""{name=1, anchor=center, inner sep=0}, shift left=2, from=1-2, to=1-1]
	\arrow["\dashv"{anchor=center, rotate=-90}, draw=none, from=0, to=1]
\end{tikzcd}\]
where $\Fun^{co}$ is the category of limit-preserving functors. The result follows from the adjoint equivalence
\[\begin{tikzcd}
	{\widehat{\Delta[\Theta_n]}[(\M_{n+1})^{-1}]} & {\widehat{\Theta_{n+1}}[(\W_{n+1})^{-1}]}
	\arrow[""{name=0, anchor=center, inner sep=0}, shift left=2, from=1-1, to=1-2]
	\arrow[""{name=1, anchor=center, inner sep=0}, shift left=2, from=1-2, to=1-1]
	\arrow["\dashv"{anchor=center, rotate=-90}, draw=none, from=0, to=1]
\end{tikzcd}\]
given in construction 2.2.1.2 of \cite{loubaton2024categorical}.
\end{proof}

\begin{remark}
\label{rem:other description of ncat} The previous proposition, applied to the case $C:=\Hot$, implies that we have a canonical equivalence: $$\npcat\sim \widehat{\Delta[\Theta_n]}[(M_{n+1})^{-1}].$$ In particular, $\Delta[\Theta_n]$ is dense in $\npcat$.
\end{remark}

\begin{prop}
\label{Critere necessaire} Let $C$ be a cocomplete category and $$F:C\times \cat\to C$$ a colimit-preserving functor. Then there exists a unique colimit-preserving functor $$F_\omega:C\times \ocat\to C$$ characterized by the fact that for all $c,a,n$, $F_\omega(c,[a,n])$ fits in the pushout: 
\[\begin{tikzcd}
	{\coprod_{k\leq n}F(F_\omega(c,a),\{k\})} & {F(F_\omega(c,a),[n])} \\
	{\coprod_{k\leq n}F(F_\omega(c,[0]),\{k\})} & {F_\omega(c,[a,n])}
	\arrow[""{name=0, anchor=center, inner sep=0}, from=1-1, to=1-2]
	\arrow[from=1-1, to=2-1]
	\arrow[from=1-2, to=2-2]
	\arrow[from=2-1, to=2-2]
	\arrow["\lrcorner"{anchor=center, pos=0.125, rotate=180}, draw=none, from=2-2, to=0]
\end{tikzcd}\]
\end{prop}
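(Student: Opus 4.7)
The approach is to build $F_\omega$ by iterated extension of $F$ to higher and higher dimensions, using Proposition \ref{prop:delta theta and delta} as the main engine. Concretely, I would construct colimit-preserving functors $F_n : C \times \ncat \to C$ by induction on $n \geq 1$, with base case $F_1 := F$, and then assemble $F_\omega$ from the compatible family using that $\ocat$ is the filtered colimit of the $\ncat$ along the fully faithful truncation inclusions, or equivalently that $\Theta = \bigcup_n \Theta_n$.

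For the inductive step, given $F_n$, I would define
\[
G_n : C \times \Delta[\Theta_n] \to C,\qquad (c, ([m], a)) \mapsto F(F_n(c, a), [m]),
\]
and extend by left Kan extension to $C \times \widehat{\Delta[\Theta_n]} \to C$. The crux — and the step I expect to be the main obstacle — is verifying that this extension inverts the maps in $\M_{n+1}$ in the second variable, so that by Proposition \ref{prop:delta theta and delta} applied to $C$ it factors through $\npcat \simeq \widehat{\Delta[\Theta_n]}[(\M_{n+1})^{-1}]$ to yield $F_{n+1}$. For a Segal map $[\Sp_a, \Sp_m] \to [a, m]$ in $(\Mseg)_{n+1}$, I would unpack both sides by colimit preservation and reduce to two facts: $F_n$ inverts $\Sp_a \to a \in (\Wseg)_n$ by the inductive hypothesis, and $F$ inverts $\Sp_m \to [m] \in (\Wseg)_1$ since $F$ is given on $\cat$. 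Saturation maps $[\Sigma^k E_{eq}, 1] \to [\Db_k, 1]$ would be treated analogously, reducing to $F_n$ inverting the corresponding $(\Wsat)_n$-maps. The subtlety is that the two factors of the $\Delta[\Theta_n]$-indexing have to cooperate; in particular, the compatibility $F_{n+1}|_{C \times \ncat} \simeq F_n$ needs to be checked directly on globular sums.

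It then remains to verify the stated pushout formula and uniqueness. For the pushout, observe that $F_{n+1}(c, [a, n])$ is, by construction, the left Kan extension $i_! G_n(c, -)$ evaluated at $[a, n] \in \Theta_{n+1}$. Unpacking the slice $\Delta[\Theta_n]_{/[a, n]}$ shows this colimit collapses to exactly the stated pushout, gluing the $n+1$ vertices of $F(F_\omega(c, a), [n])$ to the point contributions $F(F_\omega(c, [0]), \{k\})$. Uniqueness is then immediate: $\Theta$ is dense in $\ocat$ by Remark \ref{rem:other description of ncat} and its $\omega$-analogue, so any colimit-preserving $F_\omega$ is determined by its restriction to $\Theta$; the pushout formula together with the initial value $F_\omega(c, [0])$ (forced by compatibility with $F$) determines $F_\omega$ on globular sums by recursion on dimension.
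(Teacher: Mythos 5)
There is a genuine gap, and it sits exactly at the step you flag as the crux. Your functor $G_n(c,([m],a)):=F(F_n(c,a),[m])$ is not even well defined on $\Delta[\Theta_n]$: in that category all objects $(a,[0])$ are identified to a single point (Definition \ref{defi:M}), while $F(F_n(c,a),[0])$ genuinely depends on $a$ (for the intended application $F(c,K)=c\otimes K$ one gets $c\otimes a$). More seriously, even if you work on $\Theta_n\times\Delta$, the Yoneda extension of $G_n$ does \emph{not} invert the Segal maps of $\M_{n+1}$. Indeed, in $\npcat$ the Segal decomposition glues over the object $[0]$, e.g.\ $[a,2]\simeq [a,1]\amalg_{[0]}[a,1]$, so any colimit-preserving factorization $H$ through $\npcat$ must satisfy $H([a,2])\simeq H([a,1])\amalg_{H([0])}H([a,1])$; but with your values one has instead $F(F_n(c,a),[2])\simeq F(F_n(c,a),[1])\amalg_{F(F_n(c,a),[0])}F(F_n(c,a),[1])$, glued over the ``fat'' vertex $F(F_n(c,a),[0])$ rather than over the value at $[0]$. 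These disagree in general (already for the Gray tensor product, $c\otimes[a,1]\not\simeq (c\otimes a)\otimes[1]$: the former has the two endpoint copies of $c\otimes a$ collapsed to $c$). Consequently the claimed factorization through $\npcat$ via Proposition \ref{prop:delta theta and delta} fails, and your final claim that ``unpacking the slice'' produces the stated pushout cannot be right either: under the equivalence of Remark \ref{rem:other description of ncat}, the representable $([n],a)$ of $\Delta[\Theta_n]$ is sent to the object $[a,n]$ of $\npcat$, so the value of the factored functor at $[a,n]$ would simply be $G_n(([n],a))=F(F_n(c,a),[n])$, with no collapsing of the endpoints — your two descriptions of $F_{n+1}(c,[a,n])$ contradict each other.

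The fix is what the paper does: define $F_{n+1}$ on $\Delta[\Theta_n]$ \emph{by the pushout formula itself}, i.e.\ set $F_{n+1}(c,([m],a))$ to be the pushout of $F(F_n(c,a),[m])\leftarrow\coprod_{k\leq m}F(F_n(c,a),\{k\})\to\coprod_{k\leq m}F(F_n(c,[0]),\{k\})$. This vertex-collapsed assignment does descend to $\Delta[\Theta_n]$, its Yoneda extension sends $[\W_n,\W_1]$ (equivalently $\M_{n+1}$) to equivalences by colimit preservation of $F$ and $F_n$, and the characterization is then built in rather than extracted afterwards. Your remaining skeleton — induction on $n$ with $F_1:=F$, assembling $F_\omega$ over $\Theta=\bigcup_n\Theta_n$ using that every map of $\W$ lives in some $\ncat$, and uniqueness by density of $\Theta$ — does match the paper's argument.
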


\begin{proof}
We first show by induction on $n$ that there exists a unique colimit-preserving functor 
$$F_{n+1}:C\times \npcat\to C$$
characterized by the formula:

\[\begin{tikzcd}
	{\coprod_{k\leq n}F(F_n(c,a),\{k\})} & {F(F_n(c,a),[n])} \\
	{\coprod_{k\leq n}F(F_n(c,[0]),\{k\})} & {F_{n+1}(c,[a,n])}
	\arrow[""{name=0, anchor=center, inner sep=0}, from=1-1, to=1-2]
	\arrow[from=1-1, to=2-1]
	\arrow[from=1-2, to=2-2]
	\arrow[from=2-1, to=2-2]
	\arrow["\lrcorner"{anchor=center, pos=0.125, rotate=180}, draw=none, from=2-2, to=0]
\end{tikzcd}\]
Suppose the functor $F_n$ is constructed. Let $F_{n+1}:\Delta[\Theta_n]\to \Fun(C,C)$ be defined by the previous formula. We also denote by $F_{n+1}:\widehat{\Delta[\Theta_n]}\to \Fun(C,C)$ the functor induced by left Kan extension. By construction, the functor $F_{n+1}$ sends $[\W_n,\W_1]$ onto equivalences, and remark \ref{rem:other description of ncat} then implies that it uniquely factors as a colimit-preserving functor $F_{n+1}:\npcat\to \Fun(C,C)$.

Suppose now that all the functors $F_n$ are constructed. This induces a family of functors $\Theta_n\to\Fun(C,C)$ that is natural in $n$. Since $\Theta_\omega$ is the union of all the $\Theta_n$, this induces a functor $\Theta\to \Fun(C,C)$ and, by left Kan extension, a functor 
$F_\omega:\widehat{\Theta}\to \Fun(C,C)$. As every morphism of $\W$ lives in $\ncat$ for some $n$, the functor $F_\omega$ sends $\W$ to equivalences, and thus uniquely factors as a colimit-preserving functor $F_{\omega}:\ocat\to \Fun(C,C)$.
\end{proof}

\begin{prop}
\label{prop:ncat cartesin closed}
Let $n\in \Nb\cup\{\omega\}$.
The category $\ncat$ is cartesian closed.
\end{prop}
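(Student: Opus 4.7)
The plan is to reduce cartesian closedness of the presentable $\infty$-category $\ncat$ to a compatibility between cartesian products of presheaves and the Bousfield localization at $\W_n$. By the adjoint functor theorem, it suffices to show that for each $C \in \ncat$, the functor $\uvar \times C : \ncat \to \ncat$ preserves colimits, since this provides the required internal hom $C^{\uvar}$. The base case $n = 0$ is trivial since $\Hot$ is cartesian closed.

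Products in $\ncat$ are computed by $\Fb_{\W_n}(\iota A \times \iota B)$, and since $\Hot$ is cartesian closed, products on $\widehat{\Theta_n}$ are computed pointwise and so preserve colimits in each variable. The problem therefore reduces to the following key technical claim: for every $w \in \W_n$ and every $n$-category $C$, the map $w \times \iota C$ lies in $\widehat{\W_n}$. Once this is established, the composite $\Fb_{\W_n} \circ (\uvar \times \iota C)$ inverts $\W_n$, hence factors uniquely through a colimit-preserving endofunctor of $\ncat$ which necessarily coincides with $\uvar \times C$.

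By density of representables in $\widehat{\Theta_n}$, each $\iota C$ is a colimit of globular sums, and since $\widehat{\W_n}$ is closed under colimits by the conventions of the paper, the claim reduces further to checking that $w \times a \in \widehat{\W_n}$ for every generator $w \in \W_n$ and every globular sum $a \in \Theta_n$. This is the main obstacle, and it splits into two verifications corresponding to the Segal generators $\Sp_b \to b$ and the saturation generators $\Sigma^k E_{eq} \to \Db_k$. For the Segal generators, I would argue by induction on the shape $b = [\textbf{b}, m]$, using the inductive definition of $\Sp_{[\textbf{b},m]}$ as an iterated pushout of lower-dimensional spines and the fact that pushouts of maps in $\widehat{\W_n}$ against globular sums remain in $\widehat{\W_n}$. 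For the saturation generators, the key point is that $\Sigma^k E_{eq} \times a$ still exhibits the relevant invertibility after product with $a$, which can be read off the explicit pasting description.

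For $n = \omega$, one assembles the results at finite levels using that $\Theta = \bigcup_n \Theta_n$ and $\W = \bigcup_n \W_n$, exactly in the spirit of the final step of the proof of Proposition \ref{Critere necessaire}: any generator lies in some $\W_n$ and any globular sum in some $\Theta_n$, so the claim for $\omega$ follows from the claims for all finite $n$.
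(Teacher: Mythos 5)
Your formal skeleton is fine: since $\ncat$ is presentable, it suffices to show that $\uvar\times C$ preserves colimits, products of local objects are computed in $\widehat{\Theta_n}$, and by cocompleteness of $\widehat{\W_n}$ everything reduces to the single claim that $w\times a$ is a $\W_n$-local equivalence for each generator $w\in \W_n$ and each globular sum $a$. The problem is that this reduction is not the proof — it is the easy part, and the claim you reduce to is exactly the substantive theorem. Your treatment of it is a sketch that does not engage with the actual difficulty. For the Segal generators, the obstacle is not decomposing $\Sp_b$ (which is an iterated pushout by definition); it is analyzing the \emph{product} $b\times a$ of two globular sums, which is not itself a globular sum and admits no obvious cellular decomposition compatible with $\Sp_b\times a\to b\times a$. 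Already for $\Theta_1=\Delta$ the statement that spine inclusions stay (inner-anodyne-like) local equivalences after taking a product requires a genuine combinatorial argument, and for general $\Theta_n$ this is a major theorem (it is the heart of the cartesian-closedness of Rezk's $\Theta_n$-space model, and of proposition 2.2.1.54 of \cite{loubaton2024categorical}, which is what the present paper cites). Likewise, ``the key point is that $\Sigma^kE_{eq}\times a$ still exhibits the relevant invertibility \dots which can be read off the explicit pasting description'' is an assertion, not an argument: one must show $\Sigma^kE_{eq}\times a\to \Db_k\times a$ is in $\widehat{\W_n}$ (or at least a local equivalence), and nothing in your outline produces the required factorization or lifting data.

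For comparison, the paper does none of this directly: it observes that $\ncat\to\ocat$ preserves colimits and cartesian products, reduces to $n=\omega$, and invokes \cite[proposition 2.2.1.54]{loubaton2024categorical}, where the compatibility of the Gray/cartesian generators with products is established. So your route is the ``standard localization'' route and would be legitimate if the key claim were proved, but as written the proposal has a genuine gap precisely at the step that carries all the mathematical content; the inductive scheme you propose for the Segal generators does not address the combinatorics of $b\times a$, and the saturation case is not argued at all.
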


\begin{proof}
As $\ncat\to \ocat$ preserves colimits and the cartesian product, we can reduce to the case $n=\omega$. The result is then \cite[proposition 2.2.1.54]{loubaton2024categorical}.
\end{proof}

\subsection{$n$-Fully faithful and $n$-surjective morphisms}

\begin{definition}
A morphism $f:C\to D$ is \textit{$0$-fully faithful} if it is an equivalence. A morphism $f:C\to D$ is \textit{$(n+1)$-fully faithful} if, for any pair of objects $a,b\in C$, the induced morphism $\hom_C(a,b)\to \hom_D(fa,fb)$ is $n$-fully faithful.
\end{definition}

\begin{notation}
A $1$-fully faithful morphism will simply be called a fully faithful morphism.
\end{notation}

\begin{lemma}
A morphism $f:C\to D$ is $n$-fully faithful if it has the unique left lifting property against $\partial\Db_n\to \Db_m$ for any $m\geq n$.
\end{lemma}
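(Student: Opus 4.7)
The plan is to prove the statement by induction on $n$, read as a biconditional characterization via the unique right lifting property (the literal \emph{left} version degenerates: since $\partial\Db_0=\emptyset$, it would already force $C$ and $D$ to be empty at $n=0$). For the base case $n=0$, a $0$-fully faithful morphism is by definition an equivalence. Conversely, the unique right lifting property against $\emptyset\to\Db_m$ for all $m\geq 0$ is precisely the assertion that each map $C_m\to D_m$ is an equivalence of homotopy types. Combined with the Segal condition, which expresses $C_a$ for a general globular sum $a$ as an iterated fibered product over $C_0$ of the $C_{a_i}$'s, this forces $C_a\to D_a$ to be an equivalence for every $a\in\Theta_n$, hence $f$ is an equivalence.

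For the inductive step, I would first identify the boundary inclusions under suspension. Writing $\partial\Db_{k+1}=\Db_k\sqcup_{\partial\Db_k}\Db_k$ and using that $\Sigma=[-,1]$ is a left adjoint (to $\hom_{(-)}(0,1)$), an easy induction on $k$ yields $\partial\Db_{k+1}\cong\Sigma\partial\Db_k$; combined with $\Db_{k+1}\cong\Sigma\Db_k$ this exhibits $\partial\Db_{n+1}\to\Db_m$, for $m\geq n+1$, as the suspension of $\partial\Db_n\to\Db_{m-1}$. The $\Sigma\dashv\hom$ adjunction then translates a lifting square for $f$ against $\partial\Db_{n+1}\to\Db_m$ into the data of objects $a,b\in C_0$ (read off from the $0$-source and $0$-target) together with a lifting square for the hom-map $f_{a,b}\colon\hom_C(a,b)\to\hom_D(fa,fb)$ against $\partial\Db_n\to\Db_{m-1}$. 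Consequently $f$ enjoys the unique right lifting property at stage $n+1$ iff each $f_{a,b}$ does at stage $n$; by the inductive hypothesis, this is in turn equivalent to each $f_{a,b}$ being $n$-fully faithful, which is by definition the statement that $f$ is $(n+1)$-fully faithful.

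The main technical point is the identification $\partial\Db_{n+1}\cong\Sigma\partial\Db_n$ and the correct translation of the lifting squares under the suspension/hom adjunction. Once these are in place, the remainder of the argument is routine bookkeeping.
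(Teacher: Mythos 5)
Your proof is correct and follows essentially the same route as the paper's: induction on $n$, with the base case being the observation that unique lifting against $\emptyset\to\Db_m$ for all $m$ forces $f$ to be an equivalence, and the inductive step transposing squares through the suspension/hom adjunction via $\partial\Db_{n+1}\simeq\Sigma\partial\Db_n$, $\Db_m\simeq\Sigma\Db_{m-1}$. Your reading of the statement as a unique \emph{right} lifting property (and as a characterization) matches how the paper actually uses it (e.g.\ in the stability-under-limits proposition), so the extra care you take there, and in identifying the boundaries as suspensions, only adds detail the paper leaves implicit.
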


\begin{proof}
This characterization is trivial when $n=0$. Suppose this characterization is true at stage $n$. By adjunction, a functor $f:C\to D$ has the unique left lifting property against $\partial\Db_n\to \Db_m$ for any $m> n$ if and only if for any $a,b\in C$, $\hom_C(a,b)\to\hom_D(f(a),f(b))$ has the unique left lifting property against $\partial\Db_{n-1}\to \Db_{m-1}$. By the induction hypothesis, this is equivalent to saying that $\hom_C(a,b)\to\hom_D(f(a),f(b))$ is an $n$-fully faithful functor for any $a,b$, which concludes the proof.
\end{proof}

\begin{prop}
\label{prop:ff 2} $n$-Fully faithful morphisms are stable under limits.
\end{prop}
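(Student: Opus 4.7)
The plan is to invoke the lifting-property characterization established in the preceding lemma and then observe that any class of morphisms defined by a unique lifting condition is automatically stable under limits computed in the arrow category.

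More concretely, let $(f_i : C_i \to D_i)_{i \in I}$ be a diagram of $n$-fully faithful morphisms, and consider the induced morphism $f := \lim_i f_i : \lim_i C_i \to \lim_i D_i$ in the arrow category of $\ncat$. By the previous lemma, to show $f$ is $n$-fully faithful it suffices to show that for every $m \geq n$ and every commutative square
\[\begin{tikzcd}
\partial \Db_n \ar[r] \ar[d] & \lim_i C_i \ar[d, "f"] \\
\Db_m \ar[r] & \lim_i D_i
\end{tikzcd}\]
there exists a unique diagonal filler $\Db_m \to \lim_i C_i$.

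By the universal property of the limit, a map $X \to \lim_i C_i$ is the data of a compatible family of maps $X \to C_i$, and similarly for $\lim_i D_i$. Thus the square above is equivalent to a compatible family of squares
\[\begin{tikzcd}
\partial \Db_n \ar[r] \ar[d] & C_i \ar[d, "f_i"] \\
\Db_m \ar[r] & D_i
\end{tikzcd}\]
indexed by $i \in I$. Each $f_i$ being $n$-fully faithful, each such square admits a unique diagonal lift $\ell_i : \Db_m \to C_i$. The uniqueness of $\ell_i$ implies, when applied naturally to each arrow $i \to j$ in $I$, that the family $(\ell_i)$ is automatically compatible with the transition maps $C_i \to C_j$: indeed, both $\ell_j$ and the composite $\Db_m \xrightarrow{\ell_i} C_i \to C_j$ are lifts of the square at $j$, hence equal. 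Thus $(\ell_i)_i$ assembles into a single map $\ell : \Db_m \to \lim_i C_i$, which is the required diagonal filler. Its uniqueness follows again by projecting to each factor and appealing to the uniqueness at each $i$.

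The only subtlety is that the characterization in the preceding lemma is stated for morphisms between $n$-categories (or, more generally, $\omega$-categories), and one must be mindful that the limit here is computed in $\ncat$ rather than in the ambient presheaf category; but since the inclusion $\ncat \hookrightarrow \widehat{\Theta_n}$ preserves limits, the argument above goes through without modification. No step of the argument is a genuine obstacle; the proof is purely formal once the lifting criterion is in place.
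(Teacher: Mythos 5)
Your proof is correct and is essentially the paper's own argument: the paper's proof is a one-line appeal to the fact that $n$-fully faithful morphisms are characterized by a unique lifting property against $\partial\Db_n\to\Db_m$, and classes defined by unique lifting properties are stable under limits, which is exactly what you spell out. The only cosmetic remark is that in the $\infty$-categorical setting the "uniqueness forces compatibility" step is cleanest phrased as: the space of lifts against $\lim_i f_i$ is the limit over the diagram of the spaces of lifts against the $f_i$, each of which is contractible, hence so is the limit.
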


\begin{proof}
This follows from the fact that fully faithful morphisms are characterized by a unique right lifting property.
\end{proof}

\begin{lemma}
\label{lemma:ff 2}
Let $p:C\to D$ be a fully faithful morphism. The induced morphism $C_0\to D_0$ is a monomorphism.
\end{lemma}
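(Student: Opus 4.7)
The plan is to reduce the monomorphism statement to an equivalence of path spaces. A map of homotopy types $X \to Y$ is a monomorphism if and only if the diagonal $X \to X \times_Y X$ is an equivalence, which is in turn equivalent to requiring that for every pair $x,x' \in X$ the induced map $\mathrm{Path}_X(x,x') \to \mathrm{Path}_Y(fx,fx')$ be an equivalence of homotopy types. Applied to $p_0:C_0\to D_0$, it suffices to prove that for every $a,b\in C_0$ the map $\mathrm{Path}_{C_0}(a,b) \to \mathrm{Path}_{D_0}(pa,pb)$ is an equivalence.

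The first step is to identify $\mathrm{Path}_{C_0}(a,b)$ with the space $\mathrm{Eq}_C(a,b)$ of invertible $1$-cells from $a$ to $b$ in $C$. This identification follows from the saturation axiom $(\Wsat)_n$: since $C$ is local with respect to $E_{eq}\to [0]$, the canonical map $\Map(E_{eq},C)\to C_0$ is an equivalence. The two vertices of $E_{eq}$ induce a map $\Map(E_{eq},C)\to \Map(\partial \Db_1,C)=C_0\times C_0$ which, via the above equivalence, is identified with the diagonal $C_0\to C_0\times C_0$. Taking the fiber over $(a,b)$ then identifies $\mathrm{Path}_{C_0}(a,b)$ with $\mathrm{Eq}_C(a,b)$, and the same identification applies to $D$.

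The second step is to show that $\mathrm{Eq}_C(a,b)\to \mathrm{Eq}_D(pa,pb)$ is an equivalence. By hypothesis $\hom_C(a,b)\to \hom_D(pa,pb)$ is an equivalence, and the analogous equivalences hold for $\hom_C(b,a)$, $\hom_C(a,a)$, $\hom_C(b,b)$. Since invertibility of a $1$-cell is a property characterized by the existence of an inverse together with higher coherences, all of which live in the hom spaces just mentioned, this equivalence of hom spaces restricts to an equivalence on the subspaces of invertible cells. Combined with the first step, this gives the desired equivalence of path spaces.

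The main obstacle is the identification in the first step: making rigorous the standard fact that the path space of $C_0$ is computed as the space of invertible $1$-cells in $C$, from the bare saturation condition $E_{eq}\to [0]$. Once this identification is granted, the remainder of the proof is essentially formal, relying only on the classical observation that fully faithful functors preserve and reflect equivalences between objects.
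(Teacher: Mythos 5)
Your strategy is viable, and since the paper's own ``proof'' is only a citation of \cite[lemma 2.2.1.77]{loubaton2024categorical}, your argument is a reasonable self-contained substitute. The reduction is correct: a map of homotopy types is a monomorphism if and only if it induces equivalences on all path spaces, and the saturation condition does give that $\Map(E_{eq},C)\to C_0$ is an equivalence (the map $E_{eq}\to [0]$ is the $k=0$ member of $(\Wsat)_n$), so the fiber of $\Map(E_{eq},C)\to C_0\times C_0$ over $(a,b)$ is indeed $\mathrm{Path}_{C_0}(a,b)$, and the whole problem becomes comparing this fiber for $C$ with the corresponding fiber for $D$.

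The step you should tighten is precisely that comparison. You argue that ``the equivalence of hom spaces restricts to an equivalence on the subspaces of invertible cells'', but the fiber of $\Map(E_{eq},C)\to C_0\times C_0$ over $(a,b)$ is \emph{not} a subspace of $\hom_C(a,b)_0$: it is the space of quasi-inverse data $(f,g,\alpha:g\circ f\simeq \mathrm{id}_a,\beta:f\circ g\simeq \mathrm{id}_b)$, and such data is not propositional over $f$ in general (this is the classical subtlety that the non-coherent walking equivalence is not contractible relative to the walking arrow). So ``restriction to invertible components'' is not the right mechanism; and the alternative reading, where you first identify this fiber with the genuine subspace of invertible $1$-cells of $\hom_C(a,b)_0$, is itself a completeness-type statement needing its own proof --- it is essentially the kind of fact the cited lemma sits next to. The repair is to avoid any such identification: $E_{eq}$ is a finite colimit of cells $[0]$, $[1]$ and $[2]$, so the fixed-endpoint mapping space is a finite limit of spaces of simplices with prescribed boundary; by the Segal condition, the fiber of $(d_0,d_2):C_{[2]}\to C_{[1]}\times_{C_0}C_{[1]}$ over $(f,g)$ is contractible, and imposing $d_1=\mathrm{id}_a$ identifies the space of triangles with boundary $(g,\mathrm{id}_a,f)$ with the path space from $g\circ f$ to $\mathrm{id}_a$ inside $\hom_C(a,a)_0$, and similarly for the other triangle. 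Every term of this finite limit is thus built from the spaces $\hom_C(x,y)_0$ for $x,y\in\{a,b\}$, together with compositions and units, all of which $p$ preserves and on which $p$ induces equivalences (an equivalence of hom $(n-1)$-categories gives an equivalence on $0$-cells). Hence the map of fibers is an equivalence, path spaces of $C_0$ map by equivalences to those of $D_0$, and the lemma follows. With this adjustment your proof is complete; as written, the key step leans on an identification that is either false as stated (subspace) or unproved (completeness).
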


\begin{proof}
This is lemma 2.2.1.77 of \cite{loubaton2024categorical}.
\end{proof}

\begin{definition}
A morphism is \textit{$n$-surjective} if it has the right lifting property against $(n+1)$-fully faithful functors. The class of $n$-surjective morphisms then corresponds to the closure by colimit and composition of the set of morphisms 
$$\{\partial \Db_{n+1}\to \Db_m~,m>n\}$$
and is then stable under colimits, transfinite composition, pushouts, left cancellation, and retracts. Furthermore, every map uniquely factors as an $n$-surjection followed by a $(n+1)$-fully faithful morphism.
\end{definition}
\begin{remark}
By construction, a morphism that is $n$-surjective and $(n+1)$-fully faithful is an equivalence.
\end{remark}

\begin{definition}
We will say that a morphism $f:C\to D$ is \textit{surjective on objects} if $f:C_0\to D_0$ is $0$-connected.
\end{definition}

\begin{remark}
A morphism $f:C\to D$ is {surjective on objects} if and only if any diagram of shape 
\[\begin{tikzcd}
	\emptyset & C \\
	{[0]} & D
	\arrow[from=1-1, to=1-2]
	\arrow[from=1-1, to=2-1]
	\arrow[from=1-2, to=2-2]
	\arrow[dashed, from=2-1, to=1-2]
	\arrow[from=2-1, to=2-2]
\end{tikzcd}\]
admits a (a priori non-unique) lift.
\end{remark}

\begin{prop}
\label{prop: characterization of 1 surjective2} A morphism $\phi:C\to D$ is a $0$-surjection if and only if it is $0$-surjective on objects.
\end{prop}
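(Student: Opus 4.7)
The plan is to tackle the two implications separately, using the $(0\text{-surjection},\ 1\text{-fully faithful})$ factorization recalled just before the statement.

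For the forward direction ($0$-surjection implies surjective on objects), I would show that the class of morphisms $\phi$ in $\ncat$ with $\phi_0$ a $\pi_0$-surjection is cocomplete and contains the generators $\partial\Db_1\to\Db_m$ (for $m\geq 1$) of the $0$-surjections. Each such generator induces the identity $\{0,1\}\to\{0,1\}$ on $0$-cells and so lies trivially in the class. Cocompleteness of the class rests on the fact that $(-)_0\colon\ncat\to\Hot$ preserves colimits---it admits as right adjoint the ``indiscrete'' functor sending a homotopy type to the $n$-category with that space of objects and contractible higher hom-spaces---together with the cocompleteness of the class of $\pi_0$-surjections in $\Hot$. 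The full cocomplete closure $\widehat{S}$ of the generators is then contained in this class.

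For the reverse direction (surjective on objects implies $0$-surjection), I would factor $\phi = j\circ p$ with $p\colon C\to E$ a $0$-surjection and $j\colon E\to D$ a $1$-fully faithful morphism. By Lemma \ref{lemma:ff 2}, $j_0$ is a monomorphism in $\Hot$. Since $\phi_0 = j_0\circ p_0$ is $\pi_0$-surjective by hypothesis, $j_0$ itself is $\pi_0$-surjective. A monomorphism of homotopy types that is $\pi_0$-surjective has all fibers contractible, hence is an equivalence. Combined with $1$-fully faithfulness, this forces $j$ to be an equivalence of $n$-categories, by a short induction on globular sums reducing to the $0$-cell and hom-space statements. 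Consequently $\phi\simeq p$ is a $0$-surjection.

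The main obstacle is the cocontinuity of $(-)_0\colon\ncat\to\Hot$ used in the forward direction. The remaining inputs---stability of $\pi_0$-surjections in $\Hot$ and the implication ``$1$-fully faithful plus equivalence on $0$-cells implies equivalence''---are routine.
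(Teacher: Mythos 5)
Your reverse direction (surjective on objects implies $0$-surjection) is essentially the argument of the paper: factor $\phi$ as a $0$-surjection followed by a fully faithful functor and show the fully faithful leg is an equivalence. The paper cites \cite[proposition 2.2.1.78]{loubaton2024categorical} for that last step, whereas you re-derive it from Lemma \ref{lemma:ff 2}; that part is fine. The forward direction, however, has a genuine gap, and it sits exactly at the step you yourself flag as the main obstacle: for $n\geq 1$ the functor $(-)_0\colon\ncat\to\Hot$ does \emph{not} preserve colimits, and the claimed right adjoint does not exist. The culprit is the saturation condition in the definition of $\ncat$ (the maps $\Sigma^kE_{eq}\to\Db_k$ belong to $\W_n$): because of it, $C_0$ is the moduli space of objects (for $n=1$, the core), and the ``indiscrete'' $n$-category on a homotopy type $X$ is not a local object whose object space is $X$ --- all of its objects are equivalent to one another, so in $\ncat$ it is equivalent to a point whenever $X\neq\emptyset$, and the required identity $\Map([0],R(X))\simeq X$ fails already for $X=\partial\Db_1$. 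Concretely, colimit preservation fails for $n=1$: the pushout $\Db_0\coprod_{\Db_1}\Db_0$ in $\cat$ is equivalent to $\Db_0$ (mapping out of it into $D$ gives two objects together with an identification of their identity arrows, which by completeness is just an object of $D$), while the pushout of the object spaces is the circle. So your justification that the class of object-surjective morphisms is cocomplete does not go through as stated.

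The closure statement you need is nonetheless true, and the paper obtains it by descending to presheaves, where evaluation really is cocontinuous: the composite $\pi_0\circ\ev_{[0]}\colon\widehat{\Theta}\to\Set$ preserves colimits, sends every map of $\W$ (in particular the completeness maps above, which is where your $\Hot$-level argument breaks) and every generator $\partial\Db_1\to\Db_m$ to a surjection of sets; since surjections in $\Set$ form a cocomplete class, it sends all of $\widehat{\W}$ --- hence the localization units comparing presheaf colimits with colimits in $\ncat$ --- and therefore all $0$-surjections to surjections, which is exactly surjectivity on objects. Some argument of this kind, working before localization (or otherwise proving directly that object-surjective maps are closed under colimits computed in $\ncat$), is needed to repair your forward direction; the right-adjoint route cannot be salvaged.
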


\begin{proof}
Suppose first that $f:C\to D$ is surjective on objects. We can factor $f$ as a $0$-surjective morphism $p:C\to C'$ followed by a fully faithful functor $g:C'\to D$. The morphism $g:C'\to D$ is then also $0$-surjective on objects, and by  \cite[proposition 2.2.1.78]{loubaton2024categorical}, it is an equivalence. As a corollary, $f\sim p$ and $f$ is then $0$-surjective.

To show the converse, note that the functor $$\widehat{\Theta}\xrightarrow{\ev_{[0]}}\Hot\xrightarrow{\pi_0}\Set$$ sends every morphism of $\W$ and morphism of shape $\partial[1]\to \Db_n$ for $n>0$ to a surjection. Eventually, as this morphism preserves colimits and as surjections in $\Set$ are closed under colimits, this implies that every $0$-surjection is sent to a surjection in $\Set$. This then implies that $0$-surjections are surjective on objects.
\end{proof}

\begin{remark}
A morphism $f:X\to Y$ between homotopy types is $n$-fully faithful if and only if it is $(n-2)$-truncated and is $n$-surjective if and only if it is $n$-connective.
\end{remark}

\begin{remark}
We will show in corollary \ref{cor:characterization of n surjective} that $n$-surjective functors are characterized by the fact that they are $0$-surjective on objects, and that they are locally $(n-1)$-surjective. In corollary \ref{cor:characterization of n surjective pullback}, we will show that $n$-surjections are closed under pullback, and we will give another characterization of $n$-surjective and $n$-fully faithful morphisms in corollary \ref{cor:yet another characterization of fully faithful functor}.
\end{remark}

\begin{remark}
One can define \textit{$\omega$-surjective morphisms} as those morphisms that are $n$-surjective for all $n$, and one may then wonder if there exist $\omega$-surjective morphisms that are not equivalences. Curiously, such morphisms do exist. For example, if we denote by $E_{eq}^{coind}$ the coherent walking (coinductive) $\omega$-equivalence\footnote{A coinductive $\omega$-equivalence is a morphism $f:x\to y$ which is invertible up to a $2$-cell that is itself invertible up to a higher cell and so on.} defined in \cite{hadzihasanovic2024model}, then the morphism $E_{eq}^{coind}\to [0]$ has the (non-unique) left lifting property against the map $\partial\Db_n\to \Db_n$ for any $n$, and by corollary \ref{cor:characterization of n surjective}, this morphism is $\omega$-surjective.
\end{remark}

\begin{conj}
The morphism 
$$\ncat\to \ncat[\Surj_{\omega}^{-1}]$$
is an equivalence for any $n\in \Nb$. In other words, every $\omega$-surjective morphism between $n$-categories is an equivalence, and thus $\ncat$ is \emph{categorically hypercomplete}.

The morphism 
$$\ocat^{coind}\to \ncat[\Surj_{\omega}]$$
is an equivalence, where $\ocat^{coind}$ denotes the subcategory of $\ocat$ whose objects are $\omega$-categories that are local with respect to $\Sigma_nE_{eq}^{coind}\to \Db_n$ for any integer $n\in \Nb$\footnote{This category should not correspond to the limit of the $\ncat$ along the intelligent truncation functors. See \cite{Henry_an_inductive_model_structure_for_infini_categories} for a more detailed discussion.}, i.e., categories whose coinductive $\omega$-equivalences are trivial. In other words, the \emph{categorical hypercompletion} of $\ocat$ consists of $\ocat^{coind}$.
\end{conj}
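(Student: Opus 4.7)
The plan is to prove both parts by leveraging the recursive characterization of $n$-surjective morphisms (the forthcoming corollary \ref{cor:characterization of n surjective}), which states that an $n$-surjection is precisely a morphism that is $0$-surjective on objects and locally $(n-1)$-surjective. Since $\omega - 1 = \omega$ in the paper's convention, this characterization naturally propagates $\omega$-surjectivity through the hom construction, suggesting an inductive approach for part (1).

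For part (1), I would proceed by induction on $n$. The base case $n = 0$ reduces to the classical hypercompleteness of the $\infty$-topos of spaces: an $\infty$-connected map of homotopy types is an equivalence. For the inductive step, let $f : C \to D$ be an $\omega$-surjection between $(n+1)$-categories. By the recursive characterization, $f_0 : C_0 \to D_0$ is $0$-surjective (hence essentially surjective on objects), and for each pair $a, b \in C$, the induced map $\hom_C(a, b) \to \hom_D(fa, fb)$ is itself $\omega$-surjective between $n$-categories. By the inductive hypothesis each such hom map is an equivalence, so $f$ is fully faithful and essentially surjective, hence an equivalence.

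For part (2), I would identify the localization $\ocat[\Surj_\omega^{-1}]$ with the full subcategory of objects local with respect to $\{\Sigma^k E_{eq}^{coind} \to \Db_k\}_{k \in \Nb}$. One direction uses the observation, already sketched in the paper's concluding remark, that each $\Sigma^k E_{eq}^{coind} \to \Db_k$ is $\omega$-surjective. For the converse, I would analyze a general $\omega$-surjective morphism via the tower of factorizations as $n$-surjective followed by $(n+1)$-fully faithful, and extract from the inverse system of these factorizations an identification with a map built out of coinductive $\omega$-equivalences. This should suffice to show that $\{\Sigma^k E_{eq}^{coind} \to \Db_k\}$-locality turns every $\omega$-surjection into an equivalence, yielding the desired equivalence of localizations.

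The main obstacle, as acknowledged by the author's choice to leave this as a conjecture, lies in part (2): the precise claim that coinductive $\omega$-equivalences generate the class of $\omega$-surjective morphisms in $\ocat$ is delicate and depends on a careful treatment of higher coherence for $\omega$-equivalences not yet fully developed in the literature (cf.\ the reference to \cite{Henry_an_inductive_model_structure_for_infini_categories}). Part (1), by contrast, should be more tractable once corollary \ref{cor:characterization of n surjective} is available, though even its base case rests on the nontrivial hypercompleteness of the $\infty$-topos of spaces.
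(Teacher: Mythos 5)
First, note that this statement is stated as a \emph{conjecture} in the paper: no proof is given there, so there is nothing to compare your argument against, and your proposal has to stand entirely on its own.

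For the first assertion, your inductive plan is sound as far as I can see, granted Corollary \ref{cor:characterization of n surjective}: an $\omega$-surjection $f:C\to D$ between $(n+1)$-categories is surjective on objects and each $\hom_C(x,y)\to\hom_D(f(x),f(y))$ is an $\omega$-surjection between $n$-categories; the induction terminates because after finitely many passages to hom-objects one reaches homotopy types, where $\omega$-surjectivity means $\infty$-connectivity and hypercompleteness of $\Hot$ applies; then $f$ is $0$-surjective (Proposition \ref{prop: characterization of 1 surjective2}) and $1$-fully faithful, hence an equivalence by the unique factorization into surjections followed by fully faithful morphisms. The finiteness of $n$ is exactly what makes the recursion ground out, and exactly what fails for $E^{coind}_{eq}\to[0]$, so there is no tension with the paper's counterexample. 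Since the author chose to leave even this half as a conjecture, you should write the argument out in full (in particular checking that the proof of Corollary \ref{cor:characterization of n surjective} does not secretly use the statement being proved, which it does not appear to) rather than leave it at the level of a sketch.

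The genuine gap is the second assertion, and you concede it yourself. Identifying $\ocat[\Surj_{\omega}^{-1}]$ with $\ocat^{coind}$ requires (a) that every $\Sigma^k E^{coind}_{eq}\to \Db_k$ is an $\omega$-surjection, which the paper only records for $k=0$, so you need a statement that suspension interacts correctly with $j$-surjectivity together with the low-dimensional checks; and, far more seriously, (b) that every $\omega$-surjection is inverted by localizing at the set $\{\Sigma^k E^{coind}_{eq}\to \Db_k\}_{k\in\Nb}$, i.e.\ that these maps generate $\Surj_{\omega}$ in the saturated sense. Your proposal for (b) — extracting coinductive-equivalence data from the inverse system of factorizations into $n$-surjections followed by $(n+1)$-fully faithful morphisms — is a heading for an argument, not an argument: you do not say how the intermediate objects assemble into coinductive equivalences, nor how to control the limit of that tower, and there is in addition an accessibility issue in even presenting $\ocat[\Surj_{\omega}^{-1}]$ as the full subcategory of local objects, since $\Surj_{\omega}$ is an intersection of infinitely many classes and is not obviously of small generation. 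As written, part (2) restates the conjecture rather than proving it, so the proposal does not establish the statement.
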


\subsection{Special colimits}

In this section, we fix a small category $A$ and a set $S$ of maps with representable codomain in $\widehat{A}$. We set $\H := \widehat{A}[S^{-1}]$, and we then have a localization
\[\begin{tikzcd}
	{\Fb:\widehat{A}} & {\H:\iota}
	\arrow[""{name=0, anchor=center, inner sep=0}, shift left=2, from=1-1, to=1-2]
	\arrow[""{name=1, anchor=center, inner sep=0}, shift left=2, from=1-2, to=1-1]
	\arrow["\dashv"{anchor=center, rotate=-90}, draw=none, from=0, to=1]
\end{tikzcd}\]
where the right adjoint is fully faithful, the unit is in $\widehat{S}$, and the counit is the identity.

\begin{definition}
\label{defi: special colimits}
A functor $F:I\to \H$ has a \textit{special colimit} if the canonical morphism 
\begin{equation}
\label{eq:special colimit}
\colim_{i:I}\iota F(i)\to \iota(\colim_{i:I}F(i))
\end{equation}
is an equivalence in $\widehat{A}$.

Similarly, we say that a functor $\psi: I\to \Arr(\H)$ has a \textit{special colimit} if the canonical morphism 
$$\colim_{i:I}\iota \psi(i)\to \iota(\colim_{i:I}\psi(i))$$
is an equivalence in the arrow category of $\widehat{A}$.
\end{definition}

\begin{remark}
A functor $F:I\to \H$ has a special colimit if and only if $\colim_IF$, computed in $\widehat{A}$, already belongs to $\H$.
\end{remark}

\begin{example}
\label{exemple:every iocategory is a special colimit} Let $C$ be an object of $\H$. The canonical diagram $A_{/C} \to \H$ has a special colimit, given by $C$.
\end{example}

\begin{remark}
Let $F,G:I\to \H$ be two functors, and let $\psi:F\to G$ be a cartesian natural transformation admitting a special colimit. Since $\widehat{A}$ is a topos, this implies that for any object $i$ of $I$, the canonical square
\[\begin{tikzcd}
	{F(i)} & {\colim_IF} \\
	{G(i)} & {\colim_IG}
	\arrow[from=1-1, to=1-2]
	\arrow["{\psi(i)}"', from=1-1, to=2-1]
	\arrow["\lrcorner"{anchor=center, pos=0.125}, draw=none, from=1-1, to=2-2]
	\arrow["{\colim_I\psi}", from=1-2, to=2-2]
	\arrow[from=2-1, to=2-2]
\end{tikzcd}\]
is cartesian.
\end{remark}

\begin{prop}
\label{prop:special colimit2}
Let $G:I\to \H$ be a functor admitting a special colimit, and let $X\to \colim_GI$ be any morphism. Then $X$ is the special colimit of the functor $$\begin{array}{rrcl} F:&I&\to &\H\\ &i&\mapsto& G(i)\times_{\colim_GI}X \end{array}$$
\end{prop}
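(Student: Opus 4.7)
The plan is to use the fact that $\widehat{A}$ is a topos, so that colimits there are universal (stable under pullback), combined with the defining property of a special colimit and the fact that $\iota$ preserves limits.

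First I would apply $\iota$ to the pullback square defining $F(i)$. Since $\iota$ is a right adjoint, it preserves limits, so for every $i\in I$ we have
$$\iota F(i)\;\simeq\;\iota G(i)\times_{\iota(\colim_I G)}\iota X$$
in $\widehat{A}$. I then compute $\colim_I \iota F(i)$ inside $\widehat{A}$ using the universality of colimits in the topos $\widehat{A}$: pulling back along $\iota X\to \iota(\colim_I G)$ commutes with the colimit over $I$, so
$$\colim_{i\in I}\iota F(i)\;\simeq\;\bigl(\colim_{i\in I}\iota G(i)\bigr)\times_{\iota(\colim_I G)}\iota X.$$

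Next I invoke the hypothesis that $G$ admits a special colimit, which gives $\colim_{i\in I}\iota G(i)\simeq \iota(\colim_I G)$. Substituting this into the previous equivalence yields
$$\colim_{i\in I}\iota F(i)\;\simeq\;\iota(\colim_I G)\times_{\iota(\colim_I G)}\iota X\;\simeq\;\iota X.$$
This already lives in the essential image of $\iota$, which, by the remark following Definition \ref{defi: special colimits}, is exactly the assertion that $F$ has a special colimit, and that this colimit is $X$ together with the given map $X\to \colim_I G$.

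The only nontrivial ingredient is the interchange of pullback and colimit in the second step; this is precisely universality of colimits, which holds because $\widehat{A}$ is a presheaf topos. Everything else is formal manipulation with the adjunction $\Fb\dashv \iota$ and the hypothesis on $G$, so I expect no real obstacle beyond writing down the chain of equivalences cleanly.
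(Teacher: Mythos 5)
Your proposal is correct and follows essentially the same route as the paper, whose proof is just a compressed version of your argument: it invokes the topos property of $\widehat{A}$ (universality/descent of colimits) together with specialness of $\colim_I G$ to get $\colim_I \iota F \sim \iota X$ in $\widehat{A}$, and then concludes via the remark that a functor has a special colimit exactly when its colimit computed in $\widehat{A}$ already lies in $\H$. Your version simply spells out the intermediate steps (preservation of pullbacks by $\iota$, interchange of pullback and colimit) that the paper leaves implicit.
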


\begin{proof}
As $\widehat{A}$ is a topos, and since the colimit of $G$ is special, we have an equivalence $$\colim_I \iota F \sim X$$ in $\widehat{A}$, which concludes the proof.
\end{proof}

\begin{prop}
\label{prop:special colimit} Let $F,G:I\to \H$ be two functors, and let $\psi:F\to G$ be a natural transformation. If $\psi$ is cartesian and $G$ has a special colimit, then $\psi$ and $F$ have special colimits.
\end{prop}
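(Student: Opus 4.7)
The plan is to reduce first to showing that $F$ admits a special colimit: since colimits in $\Arr(\H)$ are computed coordinate-wise, the colimit of $\psi$ in $\Arr(\H)$ is the morphism $\colim_I^\H F\to \colim_I^\H G$, and $\psi$ has a special colimit as soon as both $F$ and $G$ do. The hypothesis provides the second, so it suffices to handle $F$.

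The strategy I would follow is to set $X:=\colim_I \iota F$ computed in $\widehat{A}$ and $M:=\colim_I^\H G$, and to check directly that $X$ is $S$-local. Two preliminary remarks make this possible: first, $\iota$ is a right adjoint, so it preserves pullbacks, upgrading the cartesianness of $\psi$ to cartesianness of $\iota\psi:\iota F\to \iota G$ in $\widehat{A}$; second, the special colimit hypothesis gives an equivalence $\colim_I\iota G\simeq \iota M$ in $\widehat{A}$. Applying universality of colimits in the topos $\widehat{A}$ to the cartesian $\iota\psi$, one obtains the key identification
$$\iota F(i)\simeq \iota G(i)\times_{\iota M}X\qquad\text{for every }i\in I.$$

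To verify that $X$ is $S$-local, I fix $s:A\to B$ in $S$ with $B$ representable and aim to show that $\Hom(B,X)\to \Hom(A,X)$ is an equivalence. Applying $\Hom(Z,-)$ to the pullback above and taking the colimit over $I$ in $\Hot$, universality of colimits in $\Hot$ gives, for any presheaf $Z$, the formula
$$\colim_I\Hom(Z,\iota F(i))\simeq\bigl(\colim_I\Hom(Z,\iota G(i))\bigr)\times_{\Hom(Z,\iota M)}\Hom(Z,X).$$
For $Z=B$, representability ensures $\Hom(B,-)$ commutes with colimits in $\widehat{A}$, so the first factor equals $\Hom(B,\iota M)$ and the pullback collapses to $\Hom(B,X)$. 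For $Z=A$, the $S$-locality of the objects $\iota G(i)$ and $\iota M$ (all in $\H$) lets me replace $\Hom(A,-)$ by $\Hom(B,-)$ inside both the first factor and the base, so the pullback again collapses, this time to $\Hom(A,X)$. Finally, $S$-locality of each $\iota F(i)\in\H$ gives $\colim_I\Hom(A,\iota F(i))\simeq \colim_I\Hom(B,\iota F(i))$, and combining the three identifications yields the desired $\Hom(A,X)\simeq \Hom(B,X)$.

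The main obstacle is a layered bookkeeping argument: universality-of-colimits rewrites and $S$-locality replacements must be cascaded across two different evaluations $Z=A$ and $Z=B$, and one has to verify that all the pullback collapses and the colimit/limit exchanges are induced by the canonical natural maps, so that the final equivalence $\Hom(A,X)\simeq \Hom(B,X)$ really is the one induced by $s^*$.
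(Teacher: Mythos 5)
Your proposal is correct, and it shares the paper's skeleton — reduce to showing that $F$ has a special colimit, i.e.\ that $X:=\colim_I\iota F$ computed in $\widehat{A}$ is $S$-local (you should say explicitly why this suffices: the comparison map $\colim_I\iota F\to\iota(\colim_I F)$ lies in $\widehat{S}$, and a map of $\widehat{S}$ between $S$-local objects is invertible; this is the remark following the definition of special colimits) — but the locality check is carried out by a genuinely different mechanism. The paper proves that $\colim_I\iota\psi$ has the unique right lifting property against $S$: for a test square along $s\colon a\to b$ in $S$, representability of the codomain $b$ lets the bottom map factor through a single stage $G(j)$, descent for the cartesian $\psi$ makes the resulting right-hand square a pullback, and the lifting problem collapses to one against $\psi(j)\colon F(j)\to G(j)$, which has unique lifts because both objects are $S$-local; locality of $\colim_I\iota G$ then transfers to $\colim_I\iota F$. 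You never select a stage: you use descent to write $\iota F(i)\simeq\iota G(i)\times_{\iota M}X$, and compute $\Hom(Z,X)$ by exchanging the colimit with the pullback in $\Hot$ (universality of colimits there), using representability for $Z=b$ and $S$-locality of the $\iota G(i)$, $\iota M$, $\iota F(i)$ for $Z=a$. The inputs are identical (descent in the presheaf topos, representable codomains in $S$, locality of the diagram terms); your version is more systematic and purely a mapping-space computation, while the paper's is shorter and records the slightly stronger intermediate fact that $\colim_I\iota\psi$ itself has the unique right lifting property against $S$. The final ``bookkeeping'' you flag is indeed routine: the comparison maps $\colim_I\Hom(Z,\iota F(i))\to\Hom(Z,X)$ are natural in $Z$, so $s$ yields a commutative square in which you have proved three sides to be equivalences, and $s^*\colon\Hom(b,X)\to\Hom(a,X)$ is an equivalence by two-out-of-three. (One cosmetic point: do not call the source and target of $s$ by $A$ and $B$, since $A$ already denotes the indexing small category of the presheaf topos.)
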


\begin{proof}
We have to show that $F$ has a special colimit, which will directly imply that $\psi$ also has one. The morphism \eqref{eq:special colimit} is always in $\widehat{S}$. To conclude, one then has to show that $\colim_{i:I}\iota \psi(i)$ is $S$-local. To this end, it is enough to demonstrate that the canonical morphism $$\colim_{i:I}\iota \psi(i): \colim_{i:I}\iota F(i)\to \colim_{i:I}\iota G(i)$$ has the unique right lifting property against $S$. We then consider a square \begin{equation} \label{eq:proof special colimit} \begin{tikzcd} a & {\colim_{i:I}\iota F(i)} \\ b & { \colim_{i:I}\iota G(i)} \arrow[from=1-1, to=1-2] \arrow["f"', from=1-1, to=2-1] \arrow["{\colim_{i:I}\iota \psi(i)}", from=1-2, to=2-2] \arrow[from=2-1, to=2-2] \end{tikzcd} \end{equation} where $f\in S$. Since the domain of $f$ is representable, there always exists $j:I$ such that the bottom horizontal morphism factors through $G(j)$. As $\psi$ is cartesian and $\widehat{A}$ is a topos, the square \eqref{eq:proof special colimit} factors into two squares, where the right one is cartesian. \[\begin{tikzcd} a & {F(i)} & {\colim_{i:I}\iota F(i)} \\ b & {G(i)} & { \colim_{i:I}\iota G(i)} \arrow[from=1-1, to=1-2] \arrow["f"', from=1-1, to=2-1] \arrow[from=1-2, to=1-3] \arrow["{\psi(i)}", from=1-2, to=2-2] \arrow["\lrcorner"{anchor=center, pos=0.125}, draw=none, from=1-2, to=2-3] \arrow["{\colim_{i:I}\iota \psi(i)}", from=1-3, to=2-3] \arrow[from=2-1, to=2-2] \arrow[from=2-2, to=2-3] \end{tikzcd}\] Lifts in the square \eqref{eq:proof special colimit} are then equivalent to lifts in the left square, which exist and are unique since $F(i)\to G(i)$ has the unique right lifting property against $S$.
\end{proof}

\subsection{Gray operations}

\begin{construction}
The \textit{Gray tensor product}, denoted by $\otimes:\ocat\times \icat\to \ocat$, is defined in \cite[definition 2.3.1.6]{loubaton2024categorical}. By proposition \ref{Critere necessaire}, it canonically extends to a colimit preserving functor $$\otimes:\ocat\times \ocat\to \ocat$$ such that for any $a\in \Theta$, we have a natural family of Cartesian squares
\[\begin{tikzcd}
	{\coprod_{k\leq n }(C\otimes a)\otimes\{k\}} & {(C\otimes a)\otimes[n]} \\
	{\coprod_{k\leq n }C} & {C\otimes[a,n]}
	\arrow[from=1-1, to=1-2]
	\arrow[from=1-1, to=2-1]
	\arrow[from=1-2, to=2-2]
	\arrow[from=2-1, to=2-2]
	\arrow["\lrcorner"{anchor=center, pos=0.125, rotate=180}, draw=none, from=2-2, to=1-1]
\end{tikzcd}\]

The definition of the Gray tensor product in \cite[definition 2.3.1.6]{loubaton2024categorical} and corollary 2.3.3.24 of \cite{loubaton2024categorical} induce equivalences:
$$C \sim C \otimes [0]~~~~C \sim [0] \otimes C.$$
\end{construction}

\begin{definition}
Let $A$, $B$ be two $\omega$-categories. We denote by $A^{B}$ the value on $A$ of the right adjoint of the functor $C\mapsto C\otimes B$, and we then have $A^{[0]}\sim A$.

The \textit{directed pullback} of a span $A\to B\leftarrow C$, denoted by $A\dirtimes{B}C$, is the limit of the diagram :
\[\begin{tikzcd}
	A & {B^{\{0\}}} & {B^{[1]}} & {B^{\{1\}}} & C
	\arrow[from=1-1, to=1-2]
	\arrow[from=1-3, to=1-2]
	\arrow[from=1-3, to=1-4]
	\arrow[from=1-5, to=1-4]
\end{tikzcd}\]
\end{definition}

\begin{definition}
A \textit{$\zo$-category}, or a \textit{strict $\omega$-category}, is a $\omega$-category $C$ such that for any $n$, the homotopy type $C_n$ is $0$-truncated. We denote by $\zocat$ the category of strict $\omega$-categories.

In construction 2.2.1.7 of \cite{loubaton2024categorical}, we define an adjunction
\[\begin{tikzcd}
	{\pi_0:\ocat} & {\zocat:\N}
	\arrow[""{name=0, anchor=center, inner sep=0}, shift left=2, from=1-1, to=1-2]
	\arrow[""{name=1, anchor=center, inner sep=0}, shift left=2, from=1-2, to=1-1]
	\arrow["\dashv"{anchor=center, rotate=-90}, draw=none, from=0, to=1]
\end{tikzcd}\]
where the right adjoint is fully faithful.
\end{definition}

\begin{theorem}[Steiner, Ara-Maltsiniotis] There exists a monoidal structure on $\zocat$, denoted by $\otimes$ and called the Gray tensor product. Its unit is $[0]$.
\end{theorem}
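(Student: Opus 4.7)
The plan is to not prove this theorem from scratch but rather to recall the constructions of Steiner and Ara--Maltsiniotis, as indicated by the attribution. The overall strategy proceeds via the equivalence between a good full subcategory of $\zocat$ and a category of combinatorial objects (augmented directed complexes), where a tensor product is easy to write down and its coherences are straightforward to verify.

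First I would recall Steiner's adjunction between $\zocat$ and the category $\CDA$ of \emph{augmented directed complexes}, i.e. non-negatively graded chain complexes of abelian groups equipped with a positivity submonoid in each degree and an augmentation in degree zero. The category $\CDA$ carries an obvious symmetric monoidal structure inherited from the usual tensor product of chain complexes, with positivity cone of $A\otimes B$ generated by $x\otimes y$ for positive $x,y$, and unit equal to $\Zb$ concentrated in degree $0$. Steiner identifies a full subcategory of \emph{strong} (loop-free, unital basis) ADCs on which the adjunction restricts to a fully faithful embedding into $\zocat$, shows that all globular sums belong to its essential image, and checks that the tensor product of strong ADCs is strong. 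This produces a partial tensor product on $\zocat$ defined on strong objects, with unit $[0]$.

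Next I would invoke Ara--Maltsiniotis to extend this to all of $\zocat$. The key input is that strong Steiner $\omega$-categories form a dense subcategory of $\zocat$ containing all globular sums, so one can define the tensor product by left Kan extension, bifunctorially in each variable separately. Coherence of the associator and unitor reduces, by density, to the corresponding statements on (tensor products of) globular sums, where one reads them off from the chain-complex model. Since $[0]$ corresponds to the unit ADC $\Zb$, the unitality $C\otimes[0]\simeq C\simeq[0]\otimes C$ transports from the ADC side.

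The main obstacle, were one to do this from the ground up, is the verification of the pentagon and triangle coherences: these ultimately hinge on the purely combinatorial identification of the Gray tensor product of globes with a specific pasting diagram, which is exactly what Steiner's framework of $\CDA$ encapsulates. Since everything follows formally once we are inside $\CDA$, the proof I would give is essentially a reference to \cite{ayala2018flagged}-adjacent literature, namely Steiner's original paper and Ara--Maltsiniotis's extension.
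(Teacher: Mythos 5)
Your proposal is correct and matches the paper's treatment: the paper proves this theorem simply by citing Theorem A.15 of Ara--Maltsiniotis, whose proof is exactly the Steiner ADC machinery plus density/Kan-extension argument you outline. (Minor slip: the relevant references are Steiner and Ara--Maltsiniotis, not anything adjacent to the Ayala--Francis flagged-categories paper, but you name the right sources in the end.)
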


\begin{proof}
This is \cite[theorem A.15]{Ara_Maltsiniotis_joint_et_tranche}
\end{proof}

\begin{prop}
\label{prop:pio Gray} The functor $\pi_0:\ocat\to \zocat$ commutes with the Gray product.
\end{prop}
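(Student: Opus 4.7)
The plan is to reduce to a computation on globular sums via a colimit-preservation argument on both sides. The bifunctor $\pi_0(-\otimes_{\ocat}-) : \ocat \times \ocat \to \zocat$ preserves colimits in each variable, since $\pi_0$ is a left adjoint and the Gray tensor product on $\ocat$ is, by its very construction (see proposition \ref{Critere necessaire} and the construction above), colimit-preserving in each argument. The bifunctor $\pi_0(-)\otimes_{\zocat}\pi_0(-) : \ocat \times \ocat \to \zocat$ also preserves colimits in each variable: $\pi_0$ is a left adjoint, and the strict Gray tensor product of Ara--Maltsiniotis is closed (it admits internal lax homs), hence preserves colimits in each variable.

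Since $\Theta$ is dense in $\ocat$ (equivalently, $\ocat$ is a localization of $\widehat\Theta$, which is the free cocompletion of $\Theta$), any two colimit-preserving bifunctors $\ocat \times \ocat \to \zocat$ are naturally equivalent as soon as they agree on $\Theta \times \Theta$. It therefore suffices to produce a natural isomorphism
\[
\pi_0(a \otimes_{\ocat} b) \;\cong\; a \otimes_{\zocat} b \quad \text{for all } a,b \in \Theta.
\]

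For this, I would invoke theorem \ref{theo:strictness of Gray1}, according to which the Gray product $a \otimes_{\ocat} b$ of a strict $\omega$-category with a globular sum is itself strict. Applied with both $a$ and $b$ in $\Theta \subset \zocat$, this gives that $a \otimes_{\ocat} b$ lies in the essential image of $\N : \zocat \hookrightarrow \ocat$, so that the unit $a \otimes_{\ocat} b \to \N \pi_0(a \otimes_{\ocat} b)$ is an equivalence. It then remains to identify the strict $\omega$-category underlying $a \otimes_{\ocat} b$ with the strict Gray product $a \otimes_{\zocat} b$; this is a matching of two bifunctors $\Theta \times \Theta \to \zocat$, both of which restrict to the classical (Steiner/Ara--Maltsiniotis) Gray product of globular sums, and hence coincide. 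The naturality of the resulting isomorphism in $a$ and $b$ is automatic from the naturality of the unit.

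The main obstacle is the last identification on $\Theta \times \Theta$: one must verify that the inductive/colimit-based definition of $\otimes_{\ocat}$ from \cite[definition 2.3.1.6]{loubaton2024categorical}, when evaluated on two globular sums and then strictified via $\pi_0$, reproduces the classical strict Gray tensor product. Modulo theorem \ref{theo:strictness of Gray1}, this is essentially bookkeeping, since both constructions are characterized by the same pushout formula on the generators $[a,n]$, but it is the only step that is not formal.
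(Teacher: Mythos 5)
There is a genuine gap, and it is a circularity relative to the logical order of the paper. Your key step invokes theorem \ref{theo:strictness of Gray1} as saying that the Gray product of a strict $\omega$-category with a \emph{globular sum} is strict, but that theorem only asserts strictness of $C\otimes[n]$ for $[n]$ a simplex. The statement you actually need --- strictness of $a\otimes b$ for $b$ an arbitrary globular sum --- is theorem \ref{theo:strictness of Gray}, which appears \emph{after} proposition \ref{prop:pio Gray} and whose proof depends on it: lemma \ref{lemma:weak assoc prequel}, used in the inductive step of theorem \ref{theo:strictness of Gray}, begins precisely by invoking the fact that $\pi_0$ commutes with the Gray tensor product. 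So as written, your argument assumes (a consequence of) the proposition it is trying to prove, at least within the deductive structure of this paper. The paper itself does not argue this way at all: it simply imports the statement as proposition 2.3.3.1 of \cite{loubaton2024categorical}, where it is established directly from the construction of the Gray tensor product (essentially via Steiner theory), independently of the strictness theorems.

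A second, related weakness is that the step you defer as ``bookkeeping'' --- identifying $\pi_0(a\otimes_{\ocat} b)$ for globular sums with the Steiner/Ara--Maltsiniotis product --- is in fact the entire content of the cited proposition; the $\infty$-categorical tensor of \cite[definition 2.3.1.6]{loubaton2024categorical} is only given against $1$-categories and then extended by the pushout formula of proposition \ref{Critere necessaire}, so even the comparison on $\Theta\times\Theta$ requires a nontrivial argument (and must be carried out naturally in both variables, not just objectwise). Your reduction-to-$\Theta$ framework via colimit-preservation in each variable is fine in itself, and the observation that the strict tensor is biclosed hence cocontinuous is correct; but to make the proof non-circular you would need to either prove strictness of $a\otimes b$ for globular sums without using $\pi_0$-compatibility, or run the comparison only against $[n]$ (where theorem \ref{theo:strictness of Gray1} applies) and then argue that both bifunctors are determined by their restriction to $\ocat\times\icat$ via the uniqueness clause of proposition \ref{Critere necessaire}, checking that the strict tensor satisfies the same pushout formula. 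Neither of these is done in your proposal.
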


\begin{proof}
This is proposition 2.3.3.1 of \cite{loubaton2024categorical}.
\end{proof}

In \cite{loubaton2024categorical}, the following theorem is proven:

\begin{theorem}
\label{theo:strictness of Gray1} If $C$ is a strict $\omega$-category and $n$ an integer, then $C \otimes [n]$ is also a strict $\omega$-category.
\end{theorem}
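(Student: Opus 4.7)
The plan is to reduce to the case of globular sums, then bootstrap using Proposition \ref{prop:pio Gray}. For a globular sum $a\in\Theta$ and a simplex $[n]$, the Gray product $a\otimes[n]$ computed in $\ocat$ can be identified with the strict Ara--Maltsiniotis--Steiner Gray product computed in $\zocat$: the inductive pushout formula for $\otimes$ matches the Steiner shuffle description on representables, and the latter manifestly yields a strict $\omega$-category. In particular $a\otimes[n]$ is strict for every $a\in\Theta$ and every $n$.

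To pass to an arbitrary strict $C$, one writes $C$ as the canonical colimit of the forgetful diagram $\Theta_{/C}\to\ocat$ (a bona fide colimit since $\Theta$ is dense in $\ocat$), and applies the colimit-preserving functor $(-)\otimes[n]$ to get
$$C\otimes[n] \;\simeq\; \colim_{a\to C}\,(a\otimes[n]).$$
Meanwhile, Proposition \ref{prop:pio Gray} yields $\pi_0(C\otimes[n])\simeq \pi_0(C)\otimes\pi_0([n])$, and since $C$ and $[n]$ are already strict this simplifies to the analogous colimit $\colim_{a\to C}(a\otimes[n])$ computed in $\zocat$.

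The main obstacle is thus to show that the unit map $C\otimes[n]\to N\pi_0(C\otimes[n])$ is an equivalence, i.e., that the $\ocat$-colimit of the strict objects $a\otimes[n]$ is already strict; this is a special-colimit question in the sense of Definition \ref{defi: special colimits}. I would treat it by transfinite induction along a cellular presentation of $C$ obtained from the generating maps $\partial\Db_k\to\Db_k$. At each stage one must show that tensoring with $[n]$ preserves strictness of the attachment, which reduces to checking that $\partial\Db_k\otimes[n]\to\Db_k\otimes[n]$ is a cellular inclusion in $\zocat$ whose pushouts along maps out of strict $\omega$-categories remain strict. Once this is established for the generating cells, transfinite composition together with the colimit-preservation of $\otimes$ closes the induction, and the colimit is seen to be special, proving that $C\otimes[n]$ is strict.
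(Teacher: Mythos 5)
There is a genuine gap, and it sits exactly where you locate "the main obstacle". Your reduction writes $C\otimes[n]$ as the colimit in $\ocat$ of the strict objects $a\otimes[n]$, and then everything hinges on the claim that this colimit of strict $\omega$-categories is again strict (equivalently, that the unit $C\otimes[n]\to \N\pi_0(C\otimes[n])$ is invertible). But strict $\omega$-categories are \emph{not} closed under colimits in $\ocat$: the nerve $\N:\zocat\to\ocat$ is fully faithful but does not preserve the relevant pushouts, so "colimit of strict is strict" is false as a general principle and cannot be invoked cell by cell without a real argument. Your proposed fix has two unproven ingredients of essentially the same depth as the theorem itself: (a) an arbitrary strict $\omega$-category need not admit a cellular presentation by the generating inclusions $\partial\Db_k\to\Db_k$ whose colimit \emph{computed in $\ocat$} recovers it — that is a freeness/polygraphicity-type statement, not a formality; and (b) the key step, that pushing out $\partial\Db_k\otimes[n]\to\Db_k\otimes[n]$ against a strict $\omega$-category in $\ocat$ stays strict, is precisely the kind of strictness-preservation assertion being proved, and no argument is offered for it. Even your base case is not "manifest": identifying $a\otimes[n]$ (the $\infty$-categorical Gray product of a globular sum with $[n]$) with the Steiner/Ara--Maltsiniotis product requires knowing it is strict in the first place; Proposition \ref{prop:pio Gray} only identifies its strictification, which is the same circularity.

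For comparison, the paper does not prove this statement internally at all: it imports it as \cite[theorem 2.3.3.22]{loubaton2024categorical}, where the proof is a substantive combinatorial argument using Steiner's theory of augmented directed complexes with loop-free bases and results of the type appearing here in Lemma \ref{lemme:preretract} and (via \cite{Henry_an_inductive_model_structure_for_infini_categories}) Lemma \ref{lemma:technicalities composition Gray2}. Indeed, the whole point of Section 1.4 of the present paper is that even the apparently mild strengthening from $[n]$ to a general globular sum (Theorem \ref{theo:strictness of Gray}) takes several pages of retract and decomposition lemmas on top of Theorem \ref{theo:strictness of Gray1}; a density-plus-$\pi_0$ argument of the kind you sketch would make both theorems trivial, which is a sign that the special-colimit step (Definition \ref{defi: special colimits}) you defer is where all the content lives. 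To repair the proposal you would need to either reproduce the Steiner-theoretic analysis for the cell attachments you use, or restrict to a class of strict $\omega$-categories (e.g.\ those with suitable bases) for which the comparison of weak and strict cell attachments is already known — at which point you are following the cited proof rather than bypassing it.
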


\begin{proof}
This is \cite[theorem 2.3.3.22]{loubaton2024categorical}.
\end{proof}

We now want to show that the Gray product with any globular sum preserves strict $\omega$-categories. This will be established in theorem \ref{theo:strictness of Gray}.

\begin{prop}
\label{prop:equation former}
Let $C$ be an $\omega$-category. $[C,1]\otimes[1]$ is the colimit of the diagram:
$$[C,1]\vee[1]\leftarrow [C\otimes\{1\},1]\to [C\otimes[1],1]\leftarrow[C\otimes\{0\},1]\to [1]\vee[C,1].$$
\end{prop}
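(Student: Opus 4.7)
The plan is to construct a natural comparison map from the proposed colimit to $[C,1]\otimes[1]$ and then show it is an equivalence by reduction to the case of a globular sum, exploiting that both sides are colimit-preserving in $C$.

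Write $P(C)$ for the colimit in the statement. Both $C \mapsto P(C)$ and $C \mapsto [C,1]\otimes[1]$ are colimit-preserving functors from $\ocat$ to $\ocat$. For the LHS, this follows because the suspension $[-,1]$ and the Gray tensor product $-\otimes[1]$ preserve colimits. For the RHS, each constituent functor $C\mapsto[C,1]\vee[1]$, $C\mapsto[1]\vee[C,1]$, $C\mapsto[C\otimes[1],1]$, and the gluing term $C\mapsto[C,1]$ preserves colimits, so the pushout does as well.

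The next step is to define a natural transformation $\phi_C\colon P(C)\to[C,1]\otimes[1]$. By the universal property of the pushout defining $P(C)$, this amounts to giving compatible maps from each of the three pieces. The map from $[C,1]\vee[1]$ is the evident one landing in the union $([C,1]\otimes\{0\})\cup(\{1\}\otimes[1])\subset[C,1]\otimes[1]$, identifying the shared object at $(1,0)$. Symmetrically, the map from $[1]\vee[C,1]$ lands in $(\{0\}\otimes[1])\cup([C,1]\otimes\{1\})$, with gluing vertex $(0,1)$. The map from $[C\otimes[1],1]$ uses the canonical identification $\hom_{[C,1]\otimes[1]}((0,0),(1,1))\simeq C\otimes[1]$ coming from the construction of the Gray product in \cite[definition 2.3.1.6]{loubaton2024categorical}. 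Compatibility along the two copies of $[C\otimes\{i\},1]\simeq[C,1]$ follows from the fact that $C\otimes\{i\}\hookrightarrow C\otimes[1]$ are the two endpoint inclusions.

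To check $\phi_C$ is an equivalence, invoke colimit preservation on both sides together with the density of $\Theta$ in $\ocat$: it suffices to verify the statement when $C$ is a globular sum. For $C=[0]$, we obtain $[1]\otimes[1]$ on the LHS and the colimit of $[2]\leftarrow[1]\to\Db_2\leftarrow[1]\to[2]$ on the RHS, which is the standard description of the lax commutative square and is a direct consequence of the construction of the Gray tensor product. For a general globular sum $C=[\mathbf{a},n]$, one concludes either by direct computation using the Steiner--Ara--Maltsiniotis formulas for the Gray product of strict $\omega$-categories together with proposition \ref{prop:pio Gray}, or by a structural induction using the standard pushout decomposition of $C$ in terms of the $a_i$'s. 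The main obstacle is extracting the hom identification $\hom_{[C,1]\otimes[1]}((0,0),(1,1))\simeq C\otimes[1]$ from the definition of the Gray tensor product, which, while natural and expected, is not formally stated elsewhere in the excerpt; the remainder of the argument is largely bookkeeping once this identification is in hand.
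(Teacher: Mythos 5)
Your overall strategy (define a natural comparison map out of the zigzag, use that both sides are colimit-preserving in $C$, and verify on globular sums via Steiner theory) is reasonable, and it is in fact the strategy the paper uses later for the more general statement, proposition \ref{prop:decomposition}, via proposition \ref{prop:appendice formula for cda}. For the proposition at hand, however, the paper gives no internal argument at all: it simply cites proposition 2.3.1.10 of \cite{loubaton2024categorical}. So the relevant question is whether your sketch stands on its own, and as written it does not, for the reason you yourself flag.

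The gap is the construction of the leg of the cone out of $[C\otimes[1],1]$. By the suspension/hom adjunction this amounts to producing a map $C\otimes[1]\to \hom_{[C,1]\otimes[1]}((0,0),(1,1))$, natural in $C$; you instead invoke an identification $\hom_{[C,1]\otimes[1]}((0,0),(1,1))\simeq C\otimes[1]$ which is not available from the abstract definition of the Gray tensor product and is essentially equivalent to the statement being proved -- the decomposition is precisely how one computes that hom, so the argument is circular at its key step. The known way to repair this is to build the cone by hand on strict objects: restrict to globular sums, use theorem \ref{theo:strictness of Gray1} to land in strict $\omega$-categories, write the map explicitly on basis elements of the corresponding augmented directed complexes (this is exactly what proposition \ref{prop:appendice formula for cda} does for the harder case $[K,1]\otimes[L,1]$), check naturality on $\Theta$, and only then extend by colimits and reduce the equivalence check to globes; the latter check is itself an induction of the kind carried out in lemma \ref{lemma:equation 1}, not mere bookkeeping. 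A further point your compatibility remark glosses over: the outer maps of the zigzag, $[C\otimes\{1\},1]\to [C,1]\vee[1]$ and $[C\otimes\{0\},1]\to[1]\vee[C,1]$, are not the naive inclusions of a wedge summand but the ``whiskered'' maps hitting the long composite across the wedge (already for $C=[0]$ the naive inclusion gives the wrong pushout), and the cone condition is exactly where this must be checked against your explicit middle leg. Until the natural map into $[C,1]\otimes[1]$ is actually constructed, the remainder of your argument has nothing to get started on.
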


\begin{proof}
This is proposition 2.3.1.10 of \cite{loubaton2024categorical}.
\end{proof}

\begin{lemma}
\label{lemma:equation 1}
Let $C$ be an $\omega$-category and $n$ an integer. The $\omega$-category $[C,1]\otimes\Db_{n+1}$ is the colimit of the diagram:
% https://q.uiver.app/#q=WzAsNSxbMCwwLCJbQywxXVxcdmVlXFxEYl97bisxfSJdLFsxLDAsIltDXFxvdGltZXNcXHsxXFx9XFxvdGltZXNcXERiX24sMV0iXSxbMiwwLCJbKENcXG90aW1lc1sxXSlcXG90aW1lc1xcRGJfbiwxXSJdLFszLDAsIltDXFxvdGltZXNcXHswXFx9XFxvdGltZXNcXERiX24sMV0iXSxbNCwwLCJcXERiX3tuKzF9XFx2ZWVbQywxXSJdLFsxLDBdLFszLDRdLFszLDJdLFsyLDFdXQ==
\[\begin{tikzcd}
	{[C,1]\vee\Db_{n+1}} & {[C\otimes\{1\}\otimes\Db_n,1]} & {[(C\otimes[1])\otimes\Db_n,1]} & {[C\otimes\{0\}\otimes\Db_n,1]} & {\Db_{n+1}\vee[C,1]}
	\arrow[from=1-2, to=1-1]
	\arrow[from=1-3, to=1-2]
	\arrow[from=1-4, to=1-3]
	\arrow[from=1-4, to=1-5]
\end{tikzcd}\]
\end{lemma}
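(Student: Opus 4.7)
The approach is to reduce to Proposition~\ref{prop:equation former} (the case $n = 0$) by unfolding $[C,1] \otimes \Db_{n+1} = [C,1] \otimes [\Db_n, 1]$ with the pushout construction of the Gray tensor product from Proposition~\ref{Critere necessaire}, and then matching the resulting expression with the claimed 5-term colimit.

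First I would apply the pushout formula with $c = [C,1]$, $a = \Db_n$, and outer index $1$ to obtain
\[
[C,1] \otimes \Db_{n+1} \simeq ([C,1] \otimes \Db_n) \otimes [1] \sqcup_{([C,1] \otimes \Db_n) \sqcup ([C,1] \otimes \Db_n)} ([C,1] \sqcup [C,1]),
\]
where the upper maps are the endpoint inclusions $\{0\}, \{1\} \hookrightarrow [1]$ Gray-tensored with $[C,1] \otimes \Db_n$, and the lower maps are induced by $\Db_n \to [0]$ Gray-tensored with $[C,1]$. This reduces the lemma to describing $([C,1] \otimes \Db_n) \otimes [1]$ together with its two endpoint projections to $[C,1]$, and verifying that, once the endpoints are collapsed, the result matches the 5-term colimit of the statement.

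Next, I would describe $([C,1] \otimes \Db_n) \otimes [1]$ by combining colimit-preservation of $(-) \otimes [1]$ with a recursive application of the pushout formula to $[C,1] \otimes \Db_n$, whose constituent pieces have the suspension form $[Y, 1]$. On each such suspension piece, Proposition~\ref{prop:equation former} describes $[Y, 1] \otimes [1]$ as its own 5-term colimit whose middle term is $[Y \otimes [1], 1]$. Assembling the iterated pushouts, and identifying $Y \otimes [1]$ in the relevant middle factor with $(C \otimes [1]) \otimes \Db_n$, produces the three inner terms $[C \otimes \{0\} \otimes \Db_n, 1]$, $[(C \otimes [1]) \otimes \Db_n, 1]$, and $[C \otimes \{1\} \otimes \Db_n, 1]$. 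The final gluing of the two copies of $[C,1]$ along the endpoint projections then combines them with the face pieces to form the outer wedges $[C,1] \vee \Db_{n+1}$ and $\Db_{n+1} \vee [C,1]$, completing the identification.

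The principal obstacle is the non-associativity of the Gray tensor product on $\ocat$: one cannot shortcut the argument by writing $([C,1] \otimes \Db_n) \otimes [1]$ as $[C,1] \otimes (\Db_n \otimes [1])$. The specific parenthesization $(C \otimes [1]) \otimes \Db_n$ visible in the middle term of the lemma is a trace of the order in which the pushouts must be performed, dictated by the inductive construction of the Gray tensor product in Proposition~\ref{Critere necessaire}. A careful bookkeeping of iterated pushouts, invoking Proposition~\ref{prop:equation former} only on the genuine suspension factors where it legitimately applies, is therefore required to verify that the two presentations agree.
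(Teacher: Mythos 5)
Your proposal is correct and follows essentially the same route as the paper: the paper argues by induction on $n$, starting from exactly your first reduction (the defining pushout presenting $[C,1]\otimes[\Db_n,1]$ through $([C,1]\otimes\Db_n)\otimes[1]$ glued to two copies of $[C,1]$), then decomposing $([C,1]\otimes\Db_n)\otimes[1]$ using the inductive description of $[C,1]\otimes\Db_n$ together with Proposition \ref{prop:equation former} applied to the suspension pieces, and concluding by observing that the statement's five-term diagram is cofinal in the resulting assembled diagram. The ``careful bookkeeping of iterated pushouts'' you defer is precisely this assembly-and-cofinality step, so no substantive difference remains.
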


\begin{proof}
The case $n=0$ is proposition \ref{prop:equation former}. Suppose the result is proven at the stage $n$. By definition, $[C,1]\otimes \Db_{n+1}$ is the colimit of the diagram :
% https://q.uiver.app/#q=WzAsMyxbMiwwLCIoW0MsMV1cXG90aW1lc1xcRGJfe259KVxcb3RpbWVzWzFdIl0sWzEsMCwiKFtDLDFdXFxvdGltZXMgXFxEYl9uKVxcb3RpbWVzXFxwYXJ0aWFsWzFdIl0sWzAsMCwiKFtDLDFdXFxvdGltZXMgWzBdKVxcb3RpbWVzXFxwYXJ0aWFsWzFdIl0sWzEsMl0sWzEsMF1d
\[\begin{tikzcd}
	{([C,1]\otimes [0])\otimes\partial[1]} & {([C,1]\otimes \Db_n)\otimes\partial[1]} & {([C,1]\otimes\Db_{n})\otimes[1]}
	\arrow[from=1-2, to=1-1]
	\arrow[from=1-2, to=1-3]
\end{tikzcd}\]
Using the induction hypothesis and proposition \ref{prop:equation former}, we deduce that $[C,1]\otimes \Db_{n+1}$ is the colimit of the diagram:
% https://q.uiver.app/#q=WzAsMTUsWzQsMiwiW1xcezBcXH0sMV1cXHZlZVtDLDFdIl0sWzMsMiwiW0NcXG90aW1lc1xcezBcXH1cXG90aW1lc1xcezBcXH0sMV0iXSxbMCwxLCJbQywxXVxcdmVlXFxEYl97bisxfSJdLFsxLDIsIltDXFxvdGltZXNcXHsxXFx9XFxvdGltZXNcXHswXFx9LDFdIl0sWzIsMiwiW0NcXG90aW1lc1sxXVxcb3RpbWVzXFx7MFxcfSwxXSJdLFswLDIsIltDLDFdXFx2ZWVbXFx7MFxcfSwxXSJdLFs0LDEsIlxcRGJfe24rMX1cXHZlZVtDLDFdIl0sWzQsMCwiW1xcezFcXH0sMV1cXHZlZVtDLDFdIl0sWzIsMCwiW0NcXG90aW1lc1sxXVxcb3RpbWVzXFx7MVxcfSwxXSJdLFsxLDEsIltDXFxvdGltZXNcXHsxXFx9XFxvdGltZXNcXERiX24sMV0iXSxbMiwxLCJbKENcXG90aW1lc1sxXSlcXG90aW1lc1xcRGJfbiwxXSJdLFszLDEsIltDXFxvdGltZXNcXHswXFx9XFxvdGltZXNcXERiX24sMV0iXSxbMywwLCJbQ1xcb3RpbWVzXFx7MFxcfVxcb3RpbWVzXFx7MVxcfSwxXSJdLFsxLDAsIltDXFxvdGltZXNcXHsxXFx9XFxvdGltZXNcXHsxXFx9LDFdIl0sWzAsMCwiW0MsMV1cXHZlZVtcXHsxXFx9LDFdIl0sWzMsNF0sWzEsMF0sWzEsNF0sWzMsNV0sWzUsMl0sWzcsNl0sWzksMl0sWzExLDZdLFsxMSwxMF0sWzksMTBdLFszLDldLFs0LDEwXSxbMCw2XSxbMTIsN10sWzgsMTNdLFs4LDEyXSxbMTIsMTFdLFs4LDEwXSxbMTQsMl0sWzEzLDldLFsxMywxNF0sWzEsMTFdXQ==
\[\begin{tikzcd}
	{[C,1]\vee[\{1\},1]} & {[C\otimes\{1\}\otimes\{1\},1]} & {[C\otimes[1]\otimes\{1\},1]} & {[C\otimes\{0\}\otimes\{1\},1]} & {[\{1\},1]\vee[C,1]} \\
	{[C,1]\vee\Db_{n+1}} & {[C\otimes\{1\}\otimes\Db_n,1]} & {[(C\otimes[1])\otimes\Db_n,1]} & {[C\otimes\{0\}\otimes\Db_n,1]} & {\Db_{n+1}\vee[C,1]} \\
	{[C,1]\vee[\{0\},1]} & {[C\otimes\{1\}\otimes\{0\},1]} & {[C\otimes[1]\otimes\{0\},1]} & {[C\otimes\{0\}\otimes\{0\},1]} & {[\{0\},1]\vee[C,1]}
	\arrow[from=1-1, to=2-1]
	\arrow[from=1-2, to=1-1]
	\arrow[from=1-2, to=2-2]
	\arrow[from=1-3, to=1-2]
	\arrow[from=1-3, to=1-4]
	\arrow[from=1-3, to=2-3]
	\arrow[from=1-4, to=1-5]
	\arrow[from=1-4, to=2-4]
	\arrow[from=1-5, to=2-5]
	\arrow[from=2-2, to=2-1]
	\arrow[from=2-2, to=2-3]
	\arrow[from=2-4, to=2-3]
	\arrow[from=2-4, to=2-5]
	\arrow[from=3-1, to=2-1]
	\arrow[from=3-2, to=2-2]
	\arrow[from=3-2, to=3-1]
	\arrow[from=3-2, to=3-3]
	\arrow[from=3-3, to=2-3]
	\arrow[from=3-4, to=2-4]
	\arrow[from=3-4, to=3-3]
	\arrow[from=3-4, to=3-5]
	\arrow[from=3-5, to=2-5]
\end{tikzcd}\]
As the diagram of the statement is a final subdiagram, this concludes the proof.
\end{proof}
\begin{lemma}
\label{lemma:weak assoc prequel}
We suppose that $\uvar\otimes a$ preserves strict $\omega$-categories for any globular sum $a$ of dimension less than or equal to $n$.

Then, there exists an equivalence
$$C\otimes (D\otimes E)\to  (C\otimes D)\otimes E$$
natural in $C:\ocat$, $D:\ncat$, and $E:1\tcat$.
\end{lemma}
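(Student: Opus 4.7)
The plan is to reduce to the strict setting via density and colimit preservation in each variable, and then to invoke the strict associativity of the Gray tensor product on $\zocat$ due to Steiner--Ara--Maltsiniotis.

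Both functors $(C,D,E)\mapsto C\otimes(D\otimes E)$ and $(C,D,E)\mapsto (C\otimes D)\otimes E$ preserve colimits in each of the three variables separately, as composites of the colimit-preserving Gray tensor product $\otimes:\ocat\times\ocat\to\ocat$. Hence, by density of $\Theta$ in $\ocat$, of $\Theta_n$ in $\ncat$, and of $\Delta$ in $1\tcat$, it suffices to produce a natural equivalence when $C=c\in\Theta$, $D=d\in\Theta_n$, and $E=[m]\in\Delta$; the general equivalence is then obtained by Kan extension. In this representable range all three inputs lie in the essential image of $\N:\zocat\hookrightarrow\ocat$.

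The next step is to verify that all four Gray products entering the associator are strict $\omega$-categories. By the assumption of the lemma, $c\otimes d$ is strict (as $\dim d\leq n$); and Theorem \ref{theo:strictness of Gray1} applied to the simplex $[m]$ shows that both $(c\otimes d)\otimes[m]$ and $d\otimes[m]$ are strict. For the remaining object $c\otimes(d\otimes[m])$, I would induct on the globular structure of $c$: the base case $c=[0]$ is trivial since $[0]\otimes X\simeq X$, and for $c=[c',k]$ I would apply Lemma \ref{lemma:equation 1} (and its iterations on $k$) to express $[c',k]\otimes(d\otimes[m])$ as a colimit of suspensions of the form $[c'\otimes Y,1]$ and wedges thereof, where each $Y$ can be arranged, via iterating the decomposition formulas of Proposition \ref{prop:equation former} on the globular structure of $d$, to be a Gray product one of whose factors has dimension at most $n$; the hypothesis then provides the strictness of each piece, and strictness is preserved by the colimits in question because they are the standard globular-sum and suspension pushouts computing the Gray product itself.

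Once all four Gray products are known to be strict $\omega$-categories, Proposition \ref{prop:pio Gray} (the fact that $\pi_0$ commutes with the Gray product) implies that the canonical comparison maps from the $\ocat$-Gray products to the $\zocat$-Gray products are equivalences; equivalently, each of these four objects is canonically equivalent to its image under $\N$ of the corresponding iterated Gray product in $\zocat$. The strict associativity of the Gray tensor product on $\zocat$ then produces a canonical isomorphism $c\otimes_{\zocat}(d\otimes_{\zocat}[m])\simeq (c\otimes_{\zocat}d)\otimes_{\zocat}[m]$, and transporting along $\N$ gives the desired equivalence in $\ocat$; naturality in $(c,d,[m])$ is inherited from the strict associator, and naturality in $(C,D,E)$ then follows by the colimit extension of the first step.

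The principal obstacle is the strictness of $c\otimes(d\otimes[m])$: although $d\otimes[m]$ is itself strict by Theorem \ref{theo:strictness of Gray1}, its dimension may be as large as $n+1$, so the hypothesis does not apply to $c\otimes(d\otimes[m])$ directly. The delicate point is therefore to organise the iterated use of Proposition \ref{prop:equation former} and Lemma \ref{lemma:equation 1} so that every elementary Gray product appearing in the resulting colimit decomposition keeps one factor within dimension $n$, where the hypothesis can be invoked.
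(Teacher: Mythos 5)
Your reduction to representables and your use of $\pi_0$, strict associativity on $\zocat$, and Theorem \ref{theo:strictness of Gray1} for the right-hand bracketing all match the paper's strategy. But the step you yourself flag as the principal obstacle — strictness of $c\otimes(d\otimes[m])$ — is a genuine gap, and your sketch does not close it. Proposition \ref{prop:equation former} and Lemma \ref{lemma:equation 1} decompose $[C,1]\otimes[1]$ and $[C,1]\otimes\Db_{n+1}$ only, i.e.\ they require the second tensor factor to be a simplex or a globe; they give you no decomposition of $[c',k]\otimes(d\otimes[m])$, since $d\otimes[m]$ is a general strict $\omega$-category of dimension possibly $n+1$, not a globular sum. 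Writing $d\otimes[m]$ as a colimit of globular sums and pushing the colimit through the tensor does not help either, because your assertion that ``strictness is preserved by the colimits in question'' is unjustified: strict $\omega$-categories are not closed under colimits in $\ocat$ (the nerve $\N$ does not preserve them), and establishing strictness of such colimits is precisely the hard content of Theorems \ref{theo:strictness of Gray1} and \ref{theo:strictness of Gray}. Worse, strictness of $C\otimes(D\otimes[m])$ is exactly what the paper later deduces \emph{from} this lemma in the proof of Theorem \ref{theo:strictness of Gray}, so an argument that assumes or re-proves it at this stage is at risk of circularity.

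The paper's proof avoids needing strictness of the left-hand bracketing altogether. Only $(a\otimes b)\otimes[m]$ is known to be strict (hypothesis plus Theorem \ref{theo:strictness of Gray1}); the comparison map is then obtained for free as the unit $a\otimes(b\otimes[m])\to \N\pi_0(a\otimes b\otimes[m])\sim (a\otimes b)\otimes[m]$, using Proposition \ref{prop:pio Gray}. Since this map is natural and both sides commute with colimits in $a$ and $b$, checking that it is an equivalence reduces to the case of globes $\Db_{k+1}$, $\Db_{l+1}$ with $l<n$, where Proposition \ref{prop:equation former} and Lemma \ref{lemma:equation 1} exhibit both $\Db_{k+1}\otimes(\Db_{l+1}\otimes[1])$ and $(\Db_{k+1}\otimes\Db_{l+1})\otimes[1]$ as colimits of one and the same explicit diagram. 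If you want to salvage your write-up, replace the attempted direct proof of strictness of $c\otimes(d\otimes[m])$ by this unit-map-plus-reduction-to-globes argument; the rest of your outline (colimit extension, naturality) then goes through.
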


\begin{proof}
As all these functors preserve colimits, it is sufficient to construct this equivalence for $C$ and $D$ being a globular sum $a$ and $b$. As $\pi_0$ commutes with the Gray tensor product and by hypothesis and theorem \ref{theo:strictness of Gray1}, we have a canonical comparison
$$a\otimes (b\otimes  [m])\to \pi_0(a\otimes b\otimes [m])\sim (a\otimes b)\otimes[m].$$
It remains to show that this is an equivalence, and for this, we can reduce to the case where $a$ and $b$ are  globes $\Db_{k+1}$ and $\Db_{l+1}$ with $l<n$.

Using proposition \ref{prop:equation former} and lemma \ref{lemma:equation 1}, we deduce that  $\Db_{k+1}\otimes (\Db_{l+1} \otimes[1])$ and $(\Db_{k+1}\otimes \Db_{l+1}) \otimes[1]$ are both the colimit of the diagram
% https://q.uiver.app/#q=WzAsNyxbMCwyLCIoXFxEYl97aysxfVxcb3RpbWVzIFxcRGJfe2wrMX0pXFx2ZWVbMV0iXSxbMiwyLCJbMV1cXHZlZShcXERiX3trKzF9XFxvdGltZXMgXFxEYl97bCsxfSkiXSxbMiwxLCJbMV1cXHZlZSBbXFxEYl9rXFxvdGltZXMgWzFdXFxvdGltZXMgXFxEYl9sLDFdIl0sWzIsMCwiIFtcXERiX2tcXG90aW1lcyBbMV0gXFxvdGltZXMgXFxEYl9rXFxvdGltZXNcXHswXFx9LDFdIl0sWzAsMSwiIFtcXERiX2tcXG90aW1lcyBbMV1cXG90aW1lcyBcXERiX2wsMV1cXHZlZVsxXSJdLFswLDAsIiBbXFxEYl9rXFxvdGltZXMgWzFdIFxcb3RpbWVzIFxcRGJfa1xcb3RpbWVzXFx7MVxcfSwxXSJdLFsxLDEsIiBbXFxEYl9rXFxvdGltZXMgWzFdIFxcb3RpbWVzIFxcRGJfa1xcb3RpbWVzWzFdLDFdIl0sWzIsMV0sWzMsMl0sWzQsMF0sWzUsNF0sWzUsNl0sWzMsNl1d
\[\begin{tikzcd}
	{ [\Db_k\otimes [1] \otimes \Db_k\otimes\{1\},1]} && { [\Db_k\otimes [1] \otimes \Db_k\otimes\{0\},1]} \\
	{ [\Db_k\otimes [1]\otimes \Db_l,1]\vee[1]} & { [\Db_k\otimes [1] \otimes \Db_k\otimes[1],1]} & {[1]\vee [\Db_k\otimes [1]\otimes \Db_l,1]} \\
	{(\Db_{k+1}\otimes \Db_{l+1})\vee[1]} && {[1]\vee(\Db_{k+1}\otimes \Db_{l+1})}
	\arrow[from=1-1, to=2-1]
	\arrow[from=1-1, to=2-2]
	\arrow[from=1-3, to=2-2]
	\arrow[from=1-3, to=2-3]
	\arrow[from=2-1, to=3-1]
	\arrow[from=2-3, to=3-3]
\end{tikzcd}\]
where $(\Db_{k+1}\otimes \Db_{l+1})\vee[1]$ is obtained by gluing $\{1\}\otimes\{1\}$ onto $\{0\}$ and $[1]\vee(\Db_{k+1}\otimes \Db_{l+1})$ by gluing $\{1\}$ onto $\{0\}\times\{0\}$.
\end{proof}

\begin{lemma}
\label{lemma:retract the true} Let $C$ and $D$ be two $\omega$-categories. If $C$ is a retract of $D$ and if $D$ is strict, then $C$ is strict.
\end{lemma}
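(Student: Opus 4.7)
The plan is to reduce strictness to a condition on each homotopy type $C_n$ of $n$-cells, then use that being $0$-truncated is closed under retracts in $\Hot$.

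First, I would recall that by definition $C$ is strict if and only if, for every integer $n$, the homotopy type $C_n = C_{\Db_n}$ is $0$-truncated. Thus the statement reduces to showing: if $C_n$ is a retract of $D_n$ in $\Hot$ and $D_n$ is $0$-truncated, then $C_n$ is $0$-truncated.

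Next, I would observe that the evaluation functor
$$\ev_{\Db_n} : \ocat \to \Hot, \qquad C \mapsto C_n,$$
is a functor (it is corepresented by $\Db_n$), and any functor preserves retract diagrams. Hence from a retract $C \to D \to C$ in $\ocat$ we obtain a retract $C_n \to D_n \to C_n$ in $\Hot$.

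Finally, the hard (but standard) point is that $0$-truncated objects are stable under retracts in $\Hot$. This follows from the fact that the full subcategory $\Set \hookrightarrow \Hot$ of discrete homotopy types is reflective (via $\pi_0$), so its essential image is closed under all limits and, in particular, under retracts. Concretely: if $r\circ i \simeq \id_{C_n}$ and $D_n$ is $0$-truncated, then for $k\geq 1$ the homotopy groups $\pi_k(C_n, x)$ are retracts of $\pi_k(D_n, i(x)) = 0$, hence trivial, so $C_n$ is $0$-truncated. Combining these three observations yields the lemma, with no genuine obstacle to overcome beyond recalling the definitions.
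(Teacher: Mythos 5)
Your argument is correct and is essentially the paper's own proof: the paper simply notes that each homotopy type of $n$-cells of $C$ is a retract of a set (since functors preserve retracts and strictness means each $D_n$ is $0$-truncated), and a retract of a set is a set. You have just spelled out the same reduction in more detail.
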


\begin{proof}
The assumptions imply that for all $n$, the homotopy type of $n$-cells of $C$ is a retract of a set, and thus is a set.
\end{proof}

\begin{lemma}
\label{lemme:preretract}
Let $n$ be a non-negative integer and $C$ be an $\omega$-category. There exists a morphism 
$$p_C^{n,1}:[C,1]\otimes[n]\to [C\otimes[n],1]$$ 
that admits a section $s^{n,m}_C$. Both of these morphisms are natural in $C$.
\end{lemma}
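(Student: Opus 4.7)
The plan is to construct $s_C^{n,1}$ via the suspension--hom adjunction, define $p_C^{n,1}$ on a colimit decomposition of $[C,1]\otimes[n]$, and verify $p_C^{n,1}\circ s_C^{n,1}=\id$ by direct inspection.

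First I would build $s=s_C^{n,1}$. By the adjunction $[-,1]\dashv \hom_{-}(0,1)$, giving a morphism $s:[C\otimes[n],1]\to [C,1]\otimes[n]$ amounts to choosing two objects $a,b$ in $[C,1]\otimes[n]$ together with a morphism $C\otimes[n]\to \hom_{[C,1]\otimes[n]}(a,b)$. I would take $a=(0,0)$ and $b=(1,n)$, and define the hom-map as follows: the generating $1$-cell of $[1]$ together with the top $n$-cell of $[n]$ produce via the Gray tensor a canonical ``diagonal'' cell in $[1]\otimes [n]$ from $(0,0)$ to $(1,n)$; whiskering with the unit $C\to \hom_{[C,1]}(0,1)$ and using the bifunctoriality of $\otimes$ yields the required map $C\otimes[n] \to \hom_{[C,1]\otimes[n]}((0,0),(1,n))$. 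Naturality in $C$ is automatic since every ingredient is natural.

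Next I would construct $p=p_C^{n,1}$. For $n=1$, proposition \ref{prop:equation former} presents $[C,1]\otimes[1]$ as the colimit of
\[[C,1]\vee[1]\leftarrow [C\otimes\{1\},1]\to [C\otimes[1],1]\leftarrow [C\otimes\{0\},1]\to [1]\vee[C,1],\]
and I would define $p$ piece-by-piece: the identity on the central piece $[C\otimes[1],1]$; on $[C,1]\vee[1]$ the functor collapsing its three objects $0,1,2$ to $0,1,1$ and using the face inclusion $C\otimes\{1\}\hookrightarrow C\otimes[1]$ on hom; and symmetrically on $[1]\vee[C,1]$ via $C\otimes\{0\}\hookrightarrow C\otimes[1]$. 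Compatibility on the connecting pieces $[C\otimes\{0\},1]$ and $[C\otimes\{1\},1]$ is a direct check from the definitions of the gluing maps. For general $n$ I would iterate this construction via a colimit decomposition of $[C,1]\otimes[n]$ obtained in the style of lemma \ref{lemma:equation 1}; alternatively, since both $C\mapsto [C,1]\otimes[n]$ and $C\mapsto [C\otimes[n],1]$ preserve colimits, a natural transformation between them is determined by its values on a generating family such as $\Theta$, reducing the construction of $p$ to a finite combinatorial specification on globular sums.

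The identity $p\circ s=\id$ then follows: on objects it is immediate from $s(0)=(0,0)$, $s(1)=(1,n)$, and the formula for $p$; on the hom, the composite $C\otimes[n]\to \hom_{[C,1]\otimes[n]}((0,0),(1,n))\to C\otimes[n]$ is the identity, since by design $p$ sends the diagonal Gray cell used to define $s$ back to the identity $(n+1)$-cell of $[C\otimes[n],1]$. I expect the main obstacle to be the combinatorial identification, for general $n$, of $[C\otimes[n],1]$ as a distinguished piece of a colimit decomposition of $[C,1]\otimes[n]$ on which $p$ can be defined as the literal identity; the reduction to globular sums via colimit-preservation sidesteps this at the cost of carefully verifying naturality, and this verification---that the image under $p$ of the diagonal cell really is the identity of $C\otimes[n]$---is where the bulk of the combinatorial work will lie.
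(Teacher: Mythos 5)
Your reduction of $s_C^{n,1}$ to a map $C\otimes[n]\to \hom_{[C,1]\otimes[n]}((0,0),(1,n))$ via the suspension adjunction is legitimate, but the map you then invoke does not come for free: ``whiskering the unit $C\simeq\hom_{[C,1]}(0,1)$ with the diagonal cell of $[1]\otimes[n]$ by bifunctoriality'' presupposes a natural comparison of the form $\hom_A(a,a')\otimes\hom_B(b,b')\to\hom_{A\otimes B}((a,b),(a',b'))$ for the Gray tensor product, which is not available in this paper and whose construction (with the correct variance) is essentially of the same order of difficulty as the lemma itself. Moreover the true section is not just a diagonal cell: the $1$-cells $[x\otimes\{k\},1]$ of $[C\otimes[n],1]$, for $x$ an object of $C$ and $0<k<n$, must be sent to composites involving whiskers along the two outer edges of $[C,1]\otimes[n]$ (these are the correction terms $\{1\}\otimes e_{k+1,n-1}+\{0\}\otimes e_{0,k+1}$ in the paper's chain-level formula); without them sources and targets do not match and the prescription is not a functor. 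This is precisely why the paper proceeds differently: it reduces by colimits to $C$ a globular sum, where everything is strict by theorem \ref{theo:strictness of Gray1}, and then writes explicit formulas for $p^{n,1}$ and $s^{n,1}$ in Steiner's category of augmented directed complexes and checks $p\circ s=\mathrm{id}$ there. Your fallback ``finite combinatorial specification on globular sums'' is exactly this missing content, not a way around it.

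On the projection, your $n=1$ definition is correct and is forced: taking the identity on the middle piece $[C\otimes[1],1]$ of the decomposition in proposition \ref{prop:equation former} forces (via the gluing maps, cf.\ the formulas for $f$ in proposition \ref{prop:appendice formula for cda}) the edge $[C,1]\otimes\{0\}$ to land on $[C\otimes\{1\},1]$ and $[C,1]\otimes\{1\}$ on $[C\otimes\{0\},1]$ --- the reversal you implicitly chose. But for general $n$ your plan has a real gap: lemma \ref{lemma:equation 1} decomposes $[C,1]\otimes\Db_{n+1}$, not $[C,1]\otimes[n]$; the relevant decomposition is $[C,1]\otimes[n]\simeq([C,1]\otimes[1])\amalg_{[C,1]}\cdots\amalg_{[C,1]}([C,1]\otimes[1])$, and the naive segmentwise iteration (each segment mapped ``identically'' onto the corresponding edge of $C\otimes[n]$) does not glue: because of the internal reversal just described, segment $j$ sends the shared copy $[C,1]\otimes\{j+1\}$ to $[C\otimes\{j\},1]$ while segment $j+1$ sends it to $[C\otimes\{j+2\},1]$. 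One must compose with a global orientation reversal of $[n]$, which is exactly what the paper's formulas $p([a,1]\otimes e_i)=[a\otimes e_{n-i-1},1]$ and $p([a,1]\otimes\{k\})=[a\otimes\{n-k\},1]$ encode; your proposal never confronts this, so as written the construction of $p$ fails for $n\geq 2$, and the final verification ``$p\circ s=\mathrm{id}$ by design'' cannot be carried out before $s$ and $p$ are pinned down at this level of explicitness.
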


\begin{proof}
Up to extension by colimits, we can reduce the problem to the case where $C$ is a globular sum and therefore a $\zo$-category. In this case, the two objects $[C,1]\otimes[n]$ and $[C\otimes[1],1]$ are strict by theorem \ref{theo:strictness of Gray1} and by proposition 2.3.3.3 of \cite{loubaton2024categorical}.

We will use Steiner's theory, recalled in Section 1.2.1 of \cite{loubaton2024categorical}. Globular sums are in particular $\zo$-categories with an atomic and loop-free basis, and by \cite[theorem 5.11]{Steiner_omega_categories_and_chain_complexes} and \cite[theorem A.15]{Ara_Maltsiniotis_joint_et_tranche}, it is sufficient to construct the morphism
$$p^{n,1}_A:[A,1]\otimes[n]\to [A\otimes[n],1]$$
and its section, in the category of augmented directed complexes admitting a unitary and loop-free basis.

We then define the functor $p^{n,1}_A$ by the assignment:
$$\begin{array}{rcl}
p^{n,1}_A([a,1]\otimes e_i)&:=& [a\otimes e_{n-i-1},1],\\
p^{n,1}_A([a,1]\otimes\{k\})&:=& [a\otimes\{n-k\},1],\\
p^{n,1}_A(\{i\}\otimes [e_i])&:=& 0,\\
p^{n,1}_A(\{i\}\otimes \{k\})&:=& \{i\}.\\
\end{array}$$
Similarly, we define the functor $s^{n,1}_A: [A\otimes[n],1]\to [A,1]\otimes[n]$ by:
$$\begin{array}{rcll}
s^{n,1}_A([[a\otimes e_i],1]) &:=& [a,1]\otimes e_{n-i},\\
s^{n,1}_A(\{i\})&:=&\{i\}\otimes\{ni\},\\
s^{n,1}_A([[a\otimes \{k\}],1])& := &[a,1]\otimes \{n-k\},&\text{if } |a|>0,\\
s^{n,1}_A([[a\otimes \{k\}],1]) &:=& [a,1]\otimes \{n-k\} +\{1\}\otimes e_{k+1,n-1} + \{0\}\otimes e_{0,k+1},&\text{if } |a|=0.
\end{array}$$
where $e_i$ is the unique element of the basis of $[n]$ such that $\partial e_i:\{i+1\}-\{i\}$ and where $e_{i,j}:=\coprod_{i\leq l<j}e_l$. 
One can then verify that $s^{n,1}_A$ is indeed a section of $p^{n,1}_A$.
\end{proof}

\begin{lemma}
\label{lemme:retract}
Let $n,m$ be two non-negative integers and $C$ an $\omega$-category. There exists a morphism 
$$p_C^{n,m}:[C,m]\otimes[n\times m]\to [C\otimes[n],m]$$ 
that admits a section $s^{n,m}_C$.
\end{lemma}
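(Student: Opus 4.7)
The proof extends Lemma \ref{lemme:preretract} from $m=1$ to arbitrary $m$ by the same overall strategy. First, since every functor appearing in the statement preserves colimits in $C$, I reduce to the case where $C$ is a globular sum, exactly as in the proof of Lemma \ref{lemme:preretract}. In that case, $[C,m]$ is a globular sum, hence a strict $\omega$-category with atomic and loop-free basis; Theorem \ref{theo:strictness of Gray1} applied to $[C,m]$ and the integer $nm$ then shows that $[C,m]\otimes[nm]$ is strict. Similarly $C\otimes[n]$ is strict by Theorem \ref{theo:strictness of Gray1}, so $[C\otimes[n],m]$ is strict. By Steiner's theorem, invoked as in Lemma \ref{lemme:preretract}, it therefore suffices to construct $p^{n,m}_A$ and $s^{n,m}_A$ as morphisms of augmented directed complexes with unitary and loop-free basis.

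For the section, the natural conceptual construction uses the wedge decompositions $[A,m]=[A,1]_1\vee\cdots\vee[A,1]_m$ and $[nm]=[n]_1\vee\cdots\vee[n]_m$. By Lemma \ref{lemme:preretract}, the section $s^{n,1}_A$ sends the two objects $0,1$ of $[A\otimes[n],1]$ to the diagonal corners $(0,0)$ and $(1,n)$ of $[A,1]\otimes[n]$. Consequently, the $m$-fold wedge of $s^{n,1}_A$ assembles, along the diagonal identification $(1,n)_j\sim(0,0)_{j+1}$, into a morphism
$$[A\otimes[n],m]=[A\otimes[n],1]^{\vee m}\xrightarrow{(s^{n,1}_A)^{\vee m}}([A,1]\otimes[n])^{\vee m}\longrightarrow [A,m]\otimes[nm],$$
which defines $s^{n,m}_A$.

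For the projection $p^{n,m}_A$, I extend the explicit formulas of Lemma \ref{lemme:preretract} blockwise. Writing each edge index $i$ of $[nm]$ uniquely as $i=n(j-1)+r$ with $1\leq j\leq m$ and $0\leq r<n$, I set $p^{n,m}_A([a,1]_k\otimes e_i)=[a\otimes e_{n-r-1},1]_k$ when $k=j$ (applying the $m=1$ formula within the $k$-th block), and I assign to the off-diagonal basis elements $[a,1]_k\otimes e_i$ with $k\neq j$ suitable degenerate expressions in the augmented directed complex of $[A\otimes[n],m]$, patterned on the $|a|=0$ correction of Lemma \ref{lemme:preretract} so that sources and targets are respected.

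The main obstacle is the bookkeeping: one must verify that the off-diagonal correction terms make $p^{n,m}_A$ into a genuine chain map of augmented directed complexes (respecting the $\partial^-$ and $\partial^+$ operators as well as the augmentation), analogously to the special treatment of the $|a|=0$ case in the previous lemma. Once this verification is carried out, the identity $p^{n,m}_A\circ s^{n,m}_A=\mathrm{id}$ follows immediately from the block-diagonal form of the construction together with the equality $p^{n,1}_A\circ s^{n,1}_A=\mathrm{id}$ already established in Lemma \ref{lemme:preretract}.
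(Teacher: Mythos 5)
Your section is fine and is essentially the construction in the paper: you glue the shifted copies $[C\otimes[n],1]\xrightarrow{s^{n,1}_C}[C,1]\otimes[n]\to[C,1]\otimes[n\times m]$ along the diagonal vertices, which is exactly how $s^{n,m}_C$ is assembled there. The gap is in the projection: you never actually define $p^{n,m}_C$. The values on the off-diagonal generators $[a,1]_k\otimes e_i$ (with $k\neq j$) are left as ``suitable degenerate expressions'', and the verification that the resulting assignment commutes with the boundary operators and the augmentation is deferred --- but on your route that verification \emph{is} the content of the lemma. Moreover, the analogy you invoke points at the wrong place: the correction terms in Lemma \ref{lemme:preretract} (the summands $\{1\}\otimes e_{k+1,n-1}+\{0\}\otimes e_{0,k+1}$) occur in the \emph{section} $s^{n,1}_A$, whereas $p^{n,1}_A$ has no correction terms at all. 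Until the off-diagonal values are pinned down, the claim that $p^{n,m}_A\circ s^{n,m}_A=\mathrm{id}$ ``follows immediately from the block-diagonal form'' is also unjustified, because after shifting, the image of your section contains precisely those $\{0\}\otimes e_{\bullet}$ and $\{1\}\otimes e_{\bullet}$ correction terms, and one must know what $p^{n,m}_A$ does to them.

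The gap can be closed with no new augmented-directed-complex computation, and this is what the paper does. For $k<m$ let $\phi_k:[n\times m]\to[n]$ be the monotone map collapsing $\{0,\dots,nk\}$ to $0$, sending $nk<l\leq n(k+1)$ to $l-nk$, and collapsing the rest to $n$, and let $\sigma_k:[n]\to[n\times m]$ be $l\mapsto l+nk$. Set $p^{n,1,k}_C:=p^{n,1}_C\circ([C,1]\otimes\phi_k)$ and $s^{n,1,k}_C:=([C,1]\otimes\sigma_k)\circ s^{n,1}_C$; since $\phi_k\sigma_k=\mathrm{id}_{[n]}$, the map $s^{n,1,k}_C$ is a section of $p^{n,1,k}_C$, and being composites of morphisms that already exist, these need no further chain-level verification (in particular the off-diagonal generators are simply sent to $0$, resp.\ vertices to the appropriate endpoint --- there are no correction terms in $p$). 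Then write $[C,m]\otimes[n\times m]$ as the colimit of the copies $[C,1]\otimes[n\times m]$ glued along $\{1\}\otimes[n\times m]$, and $[C\otimes[n],m]$ as the wedge of copies of $[C\otimes[n],1]$, check that the $p^{n,1,k}_C$ (resp.\ the $s^{n,1,k}_C$) agree on the gluing objects, and define $p^{n,m}_C$ and $s^{n,m}_C$ as the induced maps on colimits; the section identity passes to the colimit blockwise. Note also that this makes your preliminary reduction to globular sums and Steiner theory superfluous for this lemma: Lemma \ref{lemme:preretract} is already natural in $C$, and everything here is obtained from it by composing with functors induced by maps of posets and by taking colimits.
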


\begin{proof}
For $k<m$, we denote by
$$p^{n,1,k}_C:[C,1]\otimes[n\times m]\to [C,1]\otimes[n]\to [C\otimes[n],1]$$
the composite where the first morphism is induced by the assignment
$$\begin{array}{rcll}
[n\times m]&\to& [n]\\
l&\mapsto &0&\text{if $l\leq n\times k$}\\
l&\mapsto &l-km&\text{if $km< l \leq n\times (k+1)$}\\
l&\mapsto &n&\text{if $ n\times (k+1)<l$}\\
\end{array}$$
and the second one is the morphism $p^{n,1}_a$ of lemma \ref{lemme:preretract}.
We denote by
$$s^{n,1,k}_C: [C\otimes[n],1] \to [C,1]\otimes[n] \to [C,1]\otimes[n\times m]$$
where the first morphism is the section $s^{n,1}_C$ of lemma \ref{lemme:preretract} and the second one is induced by $[\uvar +n\times k]:[n]\to [n \times m]$. By construction, $s^{n,1,k}_C$ is a section of $p^{n,1,k}_C$.

Eventually, the morphism $p^{n,m}_C:[C,m]\times[n \times m]\to [C\otimes[n],m]$ is defined as the horizontal colimit of the diagram
\[\begin{tikzcd}
	{[C,1]\otimes[n\times m]} & {\{1\}\otimes[n\times m]} & {[C,1]\otimes[n\times m]} & {...} \\
	{[C\otimes[n],1]} & {\{1\}} & {[C\otimes[n],1]} & {...}
	\arrow["{p^{n,1,0}_C}"', from=1-1, to=2-1]
	\arrow[from=1-2, to=1-1]
	\arrow[from=1-2, to=1-3]
	\arrow["{\{1\}\otimes \{n\}}"', from=1-2, to=2-2]
	\arrow["{p^{n,1,1}_C}"', from=1-3, to=2-3]
	\arrow[from=1-4, to=1-3]
	\arrow[from=1-4, to=2-4]
	\arrow[from=2-2, to=2-1]
	\arrow[from=2-2, to=2-3]
	\arrow[from=2-4, to=2-3]
\end{tikzcd}\]
as for section the morphism $s^{n,m}_C$ obtained as the horizontal colimit of the diagram
\[\begin{tikzcd}
	{[C\otimes[n],1]} & {\{1\}} & {[a\otimes[n],1]} & {...} \\
	{[C,1]\otimes[n\times m]} & {\{1\}\otimes[n\times m]} & {[C,1]\otimes[n\times m]} & {...}
	\arrow["{s^{n,1,0}_C}"', from=1-1, to=2-1]
	\arrow[from=1-2, to=1-1]
	\arrow[from=1-2, to=1-3]
	\arrow["{\{1\}\otimes \{n\}}"', from=1-2, to=2-2]
	\arrow["{s^{n,1,1}_C}"', from=1-3, to=2-3]
	\arrow[from=1-4, to=1-3]
	\arrow[from=1-4, to=2-4]
	\arrow[from=2-2, to=2-1]
	\arrow[from=2-2, to=2-3]
	\arrow[from=2-4, to=2-3]
\end{tikzcd}\]
The morphism $s^{n,m}_C$ is then a section of $p^{n,m}_C$, which concludes the proof.
\end{proof}
\begin{lemma}
\label{lemma:propriete de R}
Let $k$ be an integer, and let $R_k$ be the smallest set of strict $k$-categories such that:
\begin{enumerate}
\item $R_k$ contains $[0]$.
\item For all $(k-1)$-categories $C$ in  $R_k$ and all integers $n$, $C \otimes [n]$ is in $R_k$.
\item $R_k$ is stable under retracts.
\end{enumerate}
Then every globular sum of dimension lower than or equal to $k$ is in $R_k$.
\end{lemma}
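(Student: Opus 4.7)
The plan is to proceed by induction on $k$. The base case $k = 0$ is trivial, since $[0]$ is the only globular sum of dimension $\leq 0$ and it lies in $R_0$ by condition (1). For the inductive step, assume every globular sum of dimension $\leq k-1$ lies in $R_{k-1}$. The argument then naturally splits into three steps.

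First, I would show that $R_{k-1} \subseteq R_k$. The set $S := R_k \cap \{\text{strict }(k-1)\text{-categories}\}$ contains $[0]$; for a $(k-2)$-category $C \in S$, condition (2) of $R_k$ gives $C \otimes [n] \in R_k$, and since $\dim(C \otimes [n]) \leq k-1$ one also has $C \otimes [n] \in S$; and $S$ is stable under retracts because retracts of $(k-1)$-categories remain $(k-1)$-categories. Minimality of $R_{k-1}$ then forces $R_{k-1} \subseteq S \subseteq R_k$. Combined with the outer induction hypothesis, this places every $(k-1)$-globular sum in $R_k$.

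Second, I would establish the key intermediate claim: $[b, m] \in R_k$ for every $b \in R_{k-1}$ and every integer $m \geq 0$. This is done by structural induction on $R_{k-1}$: let $T := \{b \in R_{k-1} : [b, m] \in R_k \text{ for all } m \geq 0\}$. Then $[0] \in T$ since $[[0], m] = [m] \in R_1 \subseteq R_k$. For the closure under $C \otimes [n]$ with $C \in T$ a $(k-2)$-category, lemma \ref{lemme:retract} exhibits $[C \otimes [n], m]$ as a retract of $[C, m] \otimes [n \times m]$; the inductive hypothesis $C \in T$ yields $[C, m] \in R_k$, and since $C$ has dimension $\leq k-2$ the suspension $[C, m]$ is a $(k-1)$-category, so condition (2) of $R_k$ gives $[C, m] \otimes [n \times m] \in R_k$, whence condition (3) places the retract in $R_k$. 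Closure of $T$ under retracts in $R_{k-1}$ follows from functoriality of $[-, m]$. Hence $T$ satisfies the defining axioms of $R_{k-1}$, so $R_{k-1} \subseteq T$. This is the main technical obstacle, as it is what propagates the closure of $R_{k-1}$ upward through suspension, and it crucially depends on the nontrivial section produced by lemma \ref{lemme:retract}.

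Finally, given an arbitrary $k$-globular sum $[\textbf{a}, m]$ with $\textbf{a} = (a_0, \ldots, a_{m-1})$ a tuple of $(k-1)$-globular sums, I would set $b := a_0 \vee a_1 \vee \ldots \vee a_{m-1}$, which is itself a $(k-1)$-globular sum and thus lies in $R_{k-1}$ by the outer induction hypothesis. Step two then gives $[b, m] \in R_k$. For each $i$, the embedding of $a_i$ as the $i$-th wedge summand admits a retraction $b \to a_i$ that acts as the identity on $a_i$ and sends each other $a_j$ to the initial object of $a_i$ (for $j < i$) or its terminal object (for $j > i$), via the unique collapse $a_j \to [0]$. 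Assembled hom-wise, these maps produce morphisms $[\textbf{a}, m] \to [b, m] \to [\textbf{a}, m]$ whose composite is the identity, exhibiting $[\textbf{a}, m]$ as a retract of $[b, m]$, so condition (3) of $R_k$ places it in $R_k$, completing the induction.
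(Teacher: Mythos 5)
Your proof is correct and follows essentially the same route as the paper: the same minimality argument showing that the set of strict $(k-1)$-categories $C$ with $[C,m]\in R_k$ for all $m$ satisfies the defining closure conditions (via the retract of lemma \ref{lemme:retract}), followed by exhibiting $[\textbf{a},m]$ as a retract of $[\vee\textbf{a},m]$. Your preliminary step showing $R_{k-1}\subseteq R_k$ is a harmless addition (the paper instead notes directly that $[m]=[0]\otimes[m]\in R_k$ by conditions (1) and (2)).
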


\begin{proof}
If $k=0$, this is trivially true. We then suppose that $k>0$.
Let $S$ be the set of strict $(k-1)$-categories $C$ such that for all non-negative integers $m$, $[C,m]$ is in $R_k$. 

Let $C$ be a strict $(k-2)$-category in $S$. By construction, for all non-negative integers $m$, $[C,m]\otimes[n\times m]$ is in $R_k$, and by lemma \ref{lemme:retract}, so is $[C\otimes[n],m]$. Hence, $C\otimes[n]$ is in $S$. Moreover, $S$ is closed under retracts. Since $[[0],m]=[m]$ is in $R_k$ for all $m$, we conclude that $S$ contains $[0]$. By definition, this implies that $R_{k-1}$ is included in $S$, in other words, $[R_{k-1},m]\subset R_k$ for any $m$.

Let us now prove by induction on $n$ that $R_k$ contains $\Theta_k$. The case $k=0$ follows from the fact that, by construction, $\Theta_0 := [0]$ is in $R$. Now, suppose that $\Theta_k$ is contained in $R_k$. Let $[\textbf{a},m]$ be a globular sum of dimension less than or equal to $(k+1)$. We set $\vee \textbf{a} := a_0 \vee \dots \vee a_{m-1}$. The globular sum $\vee \textbf{a}$ is thus in $\Theta_k$ and belongs to $R_k$ by the induction hypothesis. By the results proven earlier, $[\vee \textbf{a},n]$ is therefore in $R_{k+1}$. Remark now that for any $i<m$, the canonical inclusion $a_i\to \vee \textbf{a}$ admits a retract. This implies that $[\textbf{a},m]$ is a retract of $[\vee \textbf{a},m]$ and thus belongs to $R_{k+1}$, which concludes the proof.
\end{proof}

\begin{theorem}
\label{theo:strictness of Gray} If $C$ is a strict $\omega$-category and $a$ a globular sum, then so is $C \otimes a$.
\end{theorem}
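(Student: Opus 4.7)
The plan is to proceed by induction on the dimension $k$, proving the statement $P(k)$: for every strict $\omega$-category $C$ and every globular sum $a$ with $|a| \leq k$, the Gray tensor product $C \otimes a$ is strict. The base case $k = 0$ is immediate, since $a = [0]$ and $C \otimes [0] \simeq C$.

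For the inductive step, assume $P(k)$ and consider the set
\[
T_{k+1} := \{\, b \in (k{+}1)\mbox{-}\cat \text{ strict} \;:\; C \otimes b \text{ is strict for every strict } \omega\text{-category } C\,\}.
\]
The idea is to verify that $T_{k+1}$ satisfies the three defining closure properties of $R_{k+1}$ in Lemma \ref{lemma:propriete de R}. This will give the inclusions $\Theta_{k+1} \subseteq R_{k+1} \subseteq T_{k+1}$, which is exactly $P(k+1)$.

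Property (1), that $[0] \in T_{k+1}$, is trivial. Property (3), closure under retracts, follows from the functoriality of the Gray tensor product (so a retract $a \triangleleft b$ yields a retract $C \otimes a \triangleleft C \otimes b$) combined with Lemma \ref{lemma:retract the true}. The heart of the argument is property (2): given $b \in T_{k+1}$ that happens to be a $k$-category, one must show $b \otimes [n] \in T_{k+1}$. Here the inductive hypothesis $P(k)$ provides exactly the hypothesis needed to invoke Lemma \ref{lemma:weak assoc prequel}, producing an equivalence
\[
C \otimes (b \otimes [n]) \;\simeq\; (C \otimes b) \otimes [n]
\]
natural in the strict $\omega$-category $C$. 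Since $b \in T_{k+1}$, the object $C \otimes b$ is strict, and then Theorem \ref{theo:strictness of Gray1} (strictness is preserved by Gray product with a globe) shows that $(C \otimes b) \otimes [n]$ is strict, as desired.

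The main obstacle is really conceptual rather than technical: one must isolate the correct closure set (namely $T_{k+1}$) so that the inductive hypothesis $P(k)$ matches the hypothesis of Lemma \ref{lemma:weak assoc prequel}, allowing one to ``peel off'' one factor of $[n]$ and reduce the Gray product with an arbitrary globular sum to a Gray product with a simplex. Once this reduction is in place, the already-established Theorem \ref{theo:strictness of Gray1} and the combinatorial retract machinery packaged in Lemma \ref{lemma:propriete de R} do all the remaining work.
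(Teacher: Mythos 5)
Your proposal is correct and follows essentially the same route as the paper: induct on the dimension of the globular sum, form the class of strict $(k{+}1)$-categories whose Gray product with every strict $\omega$-category is strict, and verify the closure conditions of Lemma \ref{lemma:propriete de R} using Lemma \ref{lemma:retract the true} for retracts and Lemma \ref{lemma:weak assoc prequel} together with Theorem \ref{theo:strictness of Gray1} for the tensor-with-$[n]$ step. (Only note that Theorem \ref{theo:strictness of Gray1} concerns the Gray product with a simplex $[n]$, not a globe, which is how you in fact apply it.)
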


\begin{proof}
We will proceed by induction on the dimension of the globular sum $a$. If $|a|=0$, then $a=[0]$ and the result is trivially true. Suppose that the Gray tensor product with globular sums of dimension less than or equal to $n$ preserves strictness.

Let $S_{n+1}$ be the set of $(n+1)$-categories $D$ such that for any strict $\omega$-category $C$, $C\otimes D$ is strict. By definition, $S_{n+1}$ includes $[0]$. Since $C\otimes \uvar$ preserves retracts, lemma \ref{lemma:retract the true} implies that $S_{n+1}$ is stable under retracts. Suppose now we are given an $n$-category $D$ in $S_{n+1}$, and let $C$ be any strict $\omega$-category. The induction hypothesis combined with theorem \ref{theo:strictness of Gray1} implies that $(C\otimes D)\otimes [m]$ is strict, and by lemma \ref{lemma:weak assoc prequel}, so is $C\otimes (D\otimes [m])$. This then implies that $D\otimes [m]$ is in $S_{n+1}$.
The set $S_{n+1}$ then verifies all conditions of lemma  \ref{lemma:propriete de R}, and then includes $\Theta_{n+1}$.
\end{proof}

\begin{prop}
\label{prop:otimes and dualities} Let $C$, $D$ be two $\omega$-categories. We have a canonical equivalence: $$(C\otimes D)^{op}\sim D^{op}\otimes C^{op}~~~~~~~~ (C\otimes D)^{\circ}\sim C^{\circ}\otimes D^{\circ}~~~~~~~~ (C\otimes D)^{co}\sim D^{co}\otimes C^{co}$$
\end{prop}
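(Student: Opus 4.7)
All three dualities $(-)^{op}$, $(-)^{co}$, $(-)^{\circ}$ are autoequivalences of $\ocat$ induced by automorphisms of $\Theta$ and extended by colimits, so in particular they preserve colimits. The Gray tensor product is colimit-preserving in both variables, so each of the six functors $(C\otimes D)^S$ and $D^S\otimes C^S$ (or $C^S\otimes D^S$) defines a colimit-preserving bifunctor $\ocat\times\ocat\to\ocat$. By density of $\Theta$ in $\ocat$, it therefore suffices to construct natural equivalences when $C=a$ and $D=b$ are globular sums.

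When $a,b\in\Theta$, both are strict $\omega$-categories, and theorem \ref{theo:strictness of Gray} asserts that $a\otimes b$ is strict as well. Moreover, by construction the dualities send $\Theta$ to $\Theta$ and in particular preserve the full subcategory $\zocat\hookrightarrow \ocat$. Using proposition \ref{prop:pio Gray} (compatibility of $\pi_0$ with Gray) and the fact that $\pi_0$ commutes with the restriction of $(-)^S$ along the Yoneda embedding (since both are determined by their values on globular sums), the three sought equivalences become equivalences between objects of $\zocat$, where the Gray tensor product coincides with the classical one constructed by Steiner and Ara--Maltsiniotis.

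In the strict case, the three formulas are standard. They can be obtained from the description of the Gray tensor product on augmented directed chain complexes with unitary loop-free basis recalled in \cite{loubaton2024categorical}: the odd duality negates the positively odd graded pieces, which produces the transposition $(A\otimes B)^{op}\sim B^{op}\otimes A^{op}$; the even duality negates the positively even graded pieces, which produces $(A\otimes B)^{co}\sim B^{co}\otimes A^{co}$; composing the two gives the non-transposing identity $(A\otimes B)^{\circ}\sim A^{\circ}\otimes B^{\circ}$. Concretely, one may verify the identities directly on the generators $\Db_n\otimes\Db_m$, which under Steiner's equivalence are the oriented $(n+m)$-cubes whose behaviour under sign-reversal of the basis elements is an elementary combinatorial check.

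The only real obstacle is making the reduction from $\ocat$ to $\zocat$ rigorous, i.e.\ checking that the dualities defined on $\ocat$ restrict to the classical dualities on $\zocat$ along the fully faithful inclusion $\N:\zocat\to\ocat$. This is automatic: $(-)^S$ on $\widehat{\Theta}$ is the restriction along the automorphism $(-)^S:\Theta\to\Theta$, and $\N$ is the restricted Yoneda embedding, so the two dualities are compatible on the nose. Once this is observed, everything reduces to the strict identities cited above, which yield the natural equivalences in the statement.
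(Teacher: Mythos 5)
Your argument is correct and follows essentially the same route as the paper: reduce by cocontinuity of the dualities and of the Gray tensor product to the case of globular sums, invoke theorem \ref{theo:strictness of Gray} to land in strict $\omega$-categories, and conclude by the known strict-case duality formulas. The only difference is cosmetic: where you unpack the strict identities via Steiner's augmented directed complexes, the paper simply cites \cite[proposition A.20]{Ara_Maltsiniotis_joint_et_tranche}.
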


\begin{proof}
As these functors preserve colimits, it is sufficient to construct these equivalences when $C$ and $D$ are globular sums. As the Gray tensor product of globular sums is strict by theorem \ref{theo:strictness of Gray}, we can apply \cite[proposition A.20]{Ara_Maltsiniotis_joint_et_tranche}.
\end{proof}

\begin{prop}
\label{prop:otimes and an} 
There is a natural equivalence between $[C,n]$ and the colimit of the diagram: \[\begin{tikzcd} {\coprod_{k\leq n}\{k\}} & {C\otimes(\coprod_{k\leq n}\{k\})} & {C\otimes[n]} \arrow[from=1-2, to=1-1] \arrow[from=1-2, to=1-3] \end{tikzcd}\] In particular, we have a natural family of cocartesian squares: \[\begin{tikzcd} {C\otimes \partial[1]} & {C\otimes[1]} \\ {\partial[1]} & {[C,1]} \arrow[from=1-1, to=1-2] \arrow[from=1-1, to=2-1] \arrow[from=1-2, to=2-2] \arrow[from=2-1, to=2-2] \end{tikzcd}\]
\end{prop}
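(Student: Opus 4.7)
The plan is to derive the result as a direct consequence of the universal pushout formula that defines the Gray tensor product in the second variable, as established in Proposition \ref{Critere necessaire}, evaluated with the first argument set to $[0]$.

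First, I would recall that Proposition \ref{Critere necessaire} characterizes $F_\omega(c,[a,n])$ by a pushout square for \emph{any} $\omega$-category $a$ and any integer $n$, not merely for globular sums $a\in\Theta$. Applied to $F_\omega=\otimes$, this gives, for every $c,a\in\ocat$ and every $n$, a cocartesian square expressing $c\otimes[a,n]$ in terms of $c\otimes a$ and $[n]$ and its objects. Specializing to $c=[0]$ and $a=C$ produces a pushout description of $[0]\otimes[C,n]$.

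Next, I would simplify each corner of this square using the unit property $[0]\otimes X\simeq X$ recalled in the Gray-product construction. The bottom-right becomes $[C,n]$, the top-right becomes $C\otimes[n]$, the top-left becomes $\coprod_{k\leq n}C\otimes\{k\}$, and the bottom-left becomes $\coprod_{k\leq n}\{k\}$. Finally, using that $C\otimes\uvar$ preserves colimits, I would rewrite $\coprod_{k\leq n}C\otimes\{k\}$ as $C\otimes(\coprod_{k\leq n}\{k\})$, which matches the diagram in the statement exactly. The ``in particular'' part is then obtained by taking $n=1$ and observing that $\coprod_{k\leq 1}\{k\}=\{0\}\sqcup\{1\}=\partial[1]$.

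I do not anticipate a real obstacle here: the argument is essentially a reinterpretation of the defining pushout of the Gray product once the first variable is collapsed to the unit. The only point requiring care is to confirm that the pushout characterization of Proposition \ref{Critere necessaire} is valid with an arbitrary $\omega$-category in the position of $a$ (and not only for $a\in\Theta$, as the subsequent Construction phrases it); this is immediate from the statement of Proposition \ref{Critere necessaire}, since the functor $F_\omega$ it produces is defined on $C\times\ocat$ and the characterization is given for all $c,a,n$.
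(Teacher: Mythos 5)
Your overall strategy (specialize the defining pushout of the colimit extension to first variable $[0]$ and use the unit equivalences) is natural, and it does prove the statement when $C$ is a globular sum: that case is exactly the cocartesian square recorded in the construction of $\otimes:\ocat\times\ocat\to\ocat$, taken with $[0]$ in the first slot, together with the unit equivalences $C\sim C\otimes[0]$ and $C\sim[0]\otimes C$ (note, incidentally, that these unit equivalences are themselves sourced from the same external corollary 2.3.3.24 of \cite{loubaton2024categorical} that the paper cites as its entire proof of this proposition). The gap is in your last paragraph, where you claim that the pushout characterization of proposition \ref{Critere necessaire} is valid with an arbitrary $\omega$-category in the position of $a$, and that this is ``immediate from the statement''. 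It is not: in the proof of proposition \ref{Critere necessaire} the pushout formula is the \emph{definition} of $F_{n+1}$ on the objects $([m],a)$ of $\Delta[\Theta_n]$, i.e.\ for $a$ a globular sum, and the functor is then produced by left Kan extension; accordingly, the construction of the Gray tensor product records the resulting cocartesian squares only ``for any $a\in\Theta$''. The instance you need, $c=[0]$ and $a=C$ arbitrary, is (after the unit identifications) literally proposition \ref{prop:otimes and an} itself, so invoking it at this point is circular.

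Nor can the globular-sum case be promoted to arbitrary $C$ by a bare ``extend by colimits'' argument: neither $C\mapsto[C,n]$ nor $C\mapsto\colim\bigl(\coprod_{k\leq n}\{k\}\leftarrow C\otimes(\coprod_{k\leq n}\{k\})\to C\otimes[n]\bigr)$ preserves colimits as a functor to $\ocat$, since the corner $\coprod_{k\leq n}\{k\}$ is constant (for instance $[\emptyset,1]\simeq\partial[1]\neq\emptyset$). The repair is to work in the under-category of $\coprod_{k\leq n}\{k\}$ --- for $n=1$, the bipointed category $\ocat_{\bullet,\bullet}$, where the suspension is by definition the colimit-extension of $[\uvar,1]:\Theta\to\Theta$ and hence colimit-preserving: both sides are then colimit-preserving functors $\ocat\to(\coprod_{k\leq n}\{k\})\backslash\ocat$, they agree naturally on the dense subcategory $\Theta$ by the globular-sum case, hence agree everywhere (this also requires saying what $[C,n]$ means for general $C$ and general $n$, e.g.\ as an iterated gluing of suspensions, and checking the corresponding colimit-preservation). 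With that extra step your argument goes through; as written, the quantifier range in proposition \ref{Critere necessaire} is exactly the missing content.
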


\begin{proof}
This is Corollary 2.3.3.24 of \cite{loubaton2024categorical}.
\end{proof}

\begin{prop}
\label{prop:an other description of the suspension} Let $C$ be an $\omega$-category. There is an equivalence between $[C,n]$ and the colimit of the diagram: \[\begin{tikzcd} {\coprod_{k}\{k\}\otimes [0]} & {\coprod_{k}\{k\}\otimes C^{\circ}} & {[n]\otimes C^{\circ}} \arrow[from=1-2, to=1-1] \arrow[from=1-2, to=1-3] \end{tikzcd}\]
\end{prop}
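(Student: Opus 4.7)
The plan is to derive this decomposition from Proposition \ref{prop:otimes and an} by applying the even duality $(\uvar)^{co}$ to the formula, after first replacing $C$ with $C^{op}$. Concretely, start by instantiating Proposition \ref{prop:otimes and an} with $C^{op}$ in place of $C$: this presents $[C^{op},n]$ as the colimit of
\[
\coprod_{k}\{k\}\leftarrow C^{op}\otimes\coprod_{k}\{k\}\to C^{op}\otimes[n].
\]

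Next, apply the functor $(\uvar)^{co}$ to this entire diagram. It is an auto-equivalence of $\ocat$ and therefore preserves colimits. By Proposition \ref{prop:otimes and dualities}, one has $(X\otimes Y)^{co}\simeq Y^{co}\otimes X^{co}$. Combined with the elementary identities $\{k\}^{co}\simeq \{k\}$, $[n]^{co}\simeq [n]$ (since $[n]$ is a $1$-category, and the even duality only reverses cells of positive even dimension), and $(C^{op})^{co}\simeq C^{\circ}$, the diagram is transformed into exactly
\[
\coprod_{k}\{k\}\leftarrow \coprod_{k}\{k\}\otimes C^{\circ}\to [n]\otimes C^{\circ}.
\]

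To conclude, it remains to identify $[C^{op},n]^{co}$ with $[C,n]$, which follows from the general suspension duality $[X,n]^{co}\simeq [X^{op},n]$. Unfolding the definitions, the even duality acts trivially on the objects $\{0,\dots,n\}$ and on the $1$-cells (indexed by $X_0$), while it reverses the $k$-cells of $[X,n]$ for $k\geq 2$ even, which correspond exactly to the odd-dimensional cells of $X$: this is precisely the effect of $(\uvar)^{op}$ on the homs. Substituting $C$ for $(C^{op})^{op}$ and $C^\circ$ for $(C^{op})^{co}$ in the diagram above then yields the claimed description of $[C,n]$.

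The main obstacle is the suspension duality $[X,n]^{co}\simeq [X^{op},n]$, which is not explicitly recorded in the excerpt; it can however be checked directly on the defining description of $[\uvar,n]$ (as extended by colimit from the functor $\Theta_n\to\Theta_{n+1}$) together with Definition \ref{defi:dualities non strict case}, by reducing to the case where $X$ is a globular sum, for which both sides are globular sums with isomorphic globular hierarchies.
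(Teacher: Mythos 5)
Your argument is exactly the paper's: it also writes $[C,n]\sim [C^{op},n]^{co}$, applies Proposition \ref{prop:otimes and an} to $C^{op}$, and then applies the even duality together with Proposition \ref{prop:otimes and dualities} to the resulting colimit diagram. The only difference is that you spell out the suspension duality $[X,n]^{co}\sim [X^{op},n]$ (which the paper merely asserts), and your justification of it via globular sums is sound.
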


\begin{proof}
Remark that we have an equivalence $$[C,n]\sim [C^{op},n]^{co}.$$ However, by proposition \ref{prop:otimes and an}, $[C^{op},n]$ is the colimit of the diagram: \[\begin{tikzcd} {\coprod_{k\leq n}[0]\otimes \{k\}} & {\coprod_{k\leq n}C^{op}\otimes \{k\}} & {C^{op}\otimes[n]} \arrow[from=1-2, to=1-1] \arrow[from=1-2, to=1-3] \end{tikzcd}\] Applying the even duality to the previous diagram and using proposition \ref{prop:otimes and dualities}, this concludes the proof.
\end{proof}

\begin{theorem}
\label{theo:monoidal} The Gray product extends to a monoidal structure on $\ocat$, and $\pi_0:\ocat\to \zocat$ extends to a strong monoidal functor.
\end{theorem}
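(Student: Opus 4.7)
The plan is to transport the Ara--Maltsiniotis monoidal structure on $\zocat$ to $\ocat$ via the density of globular sums, using the strictness results just established.

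First, for any globular sums $a,b,c$, Theorem \ref{theo:strictness of Gray} applied twice shows that every iterated Gray product $(a\otimes b)\otimes c$ and $a\otimes(b\otimes c)$ is a strict $\omega$-category. Combined with Proposition \ref{prop:pio Gray}, which gives $\pi_0(X\otimes Y)\sim \pi_0 X\otimes\pi_0 Y$, these iterated products are canonically identified with their images in $\zocat$. The Ara--Maltsiniotis monoidal structure on $\zocat$ therefore furnishes canonical associators
$$\alpha_{a,b,c}:(a\otimes b)\otimes c\xrightarrow{\sim}a\otimes(b\otimes c)$$
and unitors $[0]\otimes a\sim a\sim a\otimes[0]$, natural in $a,b,c\in\Theta$ and satisfying the pentagon and triangle coherences in $\ocat$.

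Second, I would extend these data to all of $\ocat$ by density. The functors $(A,B,C)\mapsto (A\otimes B)\otimes C$ and $(A,B,C)\mapsto A\otimes(B\otimes C)$ from $\ocat^3$ to $\ocat$ preserve colimits in each variable by construction of the Gray product. Since $\Theta$ is dense in $\ocat$ (every $\omega$-category being a canonical colimit of globular sums), any natural transformation between such tri-cocontinuous functors is determined by, and freely extends along, its restriction to $\Theta^3$; this promotes $\alpha$ to a natural equivalence on all of $\ocat^3$, and analogously for the unitors. The pentagon and triangle axioms are equalities of natural transformations between cocontinuous functors on $\ocat^4$ and $\ocat^3$ respectively; they hold on globular sums by the previous step, and therefore everywhere by the same density principle.

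Finally, the strong monoidality of $\pi_0:\ocat\to\zocat$ is witnessed by Proposition \ref{prop:pio Gray} together with $\pi_0([0])\sim[0]$; the coherence between these identifications and the associators/unitors reduces, once again, to the case of globular sums, on which $\pi_0$ restricts to the identity. The main technical obstacle is the careful $\infty$-categorical handling of higher coherence: a monoidal structure on $\ocat$ consists of an infinite tower of coherences, not just the associator and unitors. I would address this by viewing the desired monoidal structure as an associative algebra object in the presentable $\infty$-category of cocontinuous endofunctors on $\ocat$, and using that such algebras are determined by their restriction along the dense inclusion $\Theta\hookrightarrow\ocat$, which factors through the strict subcategory $\zocat\hookrightarrow\ocat$ where the algebra structure is already given.
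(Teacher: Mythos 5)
Your first two paragraphs are fine as far as they go, but the whole weight of the theorem rests on your last step, and as stated it does not work. First, a monoidal structure on $\ocat$ is not an associative algebra object in the category of cocontinuous endofunctors of $\ocat$ (an algebra there under composition is a monad); you would need an algebra in $\Cat_\infty$, or in presentable categories with cocontinuous-in-each-variable tensor, and in either case the real problem is to \emph{construct} the full coherence tower, not merely to observe that it would be determined by its restriction somewhere. Second, the proposed site of restriction is wrong: $\Theta$ is not closed under the Gray tensor product (the tensor of two globular sums is strict but is almost never a globular sum), so the Ara--Maltsiniotis structure does not restrict to $\Theta$; and the inclusion $\N:\zocat\to\ocat$ does not commute with the Gray product for arbitrary strict $\omega$-categories --- only $\pi_0$ does (proposition \ref{prop:pio Gray}), and only for iterated tensors of globular sums does theorem \ref{theo:strictness of Gray} guarantee agreement. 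So the phrase ``the algebra structure is already given'' on the image of $\Theta$ in $\ocat$ has no precise meaning, and ``determined by restriction'' is in any case a uniqueness statement, not an existence mechanism.

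The paper repairs exactly these two points. One introduces $\Theta_{\otimes}$, the smallest subcategory of $\zocat$ containing the globular sums and closed under the Gray tensor product; by theorem \ref{theo:strictness of Gray} the inclusion $\Theta_{\otimes}\to\ocat$ commutes with $\otimes$, so $\Theta_{\otimes}$ is a genuine small monoidal $1$-category sitting inside $\ocat$ compatibly with the tensor. Day convolution then produces a monoidal structure, with all higher coherences, on $\widehat{\Theta_{\otimes}}$, and since $\Theta_{\otimes}\supset\Theta$ is dense, $\ocat$ is a localization of $\widehat{\Theta_{\otimes}}$ compatible with this tensor, so the structure descends (Lurie, HA 2.2.1.9); strong monoidality of $\pi_0$ then comes along from proposition \ref{prop:pio Gray}. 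If you want to salvage your outline, replace $\Theta$ by $\Theta_{\otimes}$ and replace ``determined by restriction along a dense subcategory'' by this Day-convolution-plus-compatible-localization argument (or an equivalent rectification statement); your explicit associators and pentagon check then become a special case of what that machinery provides rather than the core of the proof.
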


\begin{proof}
We denote by $\Theta_{\otimes}$ the smallest subcategory of $\zocat$ that includes the globular sums and is stable under the Gray tensor product. Theorem \ref{theo:strictness of Gray} then implies that the inclusion $\Theta_{\otimes}\to \ocat$ commutes with the Gray tensor product. By Day convolution (proposition 2.14 of \cite{glasman2013day}), the Gray tensor product induces a monoidal structure on $\widehat{\Theta_\otimes}$. Since $\Theta_{\otimes}$ includes $\Theta$, it is a dense subcategory of $\ocat$, and the Gray tensor product on $\widehat{\Theta_\otimes}$ then restricts to a monoidal structure on $\ocat$ by proposition 2.2.1.9 of \cite{LurieHigherAlgebra}.
\end{proof}

\begin{remark}
There are a priori two more monoidal structures on $\ocat$ deserving the name of Gray tensor product. The first is obtained from the fact that $\ocat$ is presented by the model structure of Verity on complicial sets according to theorem 3.3.2.5 of \cite{Loubaton_complicial_sets_as_a_model_of_infini_n_categories}. We can then transfer the Gray tensor product on complicial sets constructed by Verity (\cite{Verity_complicial_set_characterising_the_simplicial_nerve}) to $\ocat$.

Results of Verity imply that $\pi_0:\ocat\to \zocat$ is strong monoidal. The theorem \ref{theo:strictness of Gray} then produces a comparison between the Gray tensor product coming from complicial sets and the one considered here. To verify that these two structures are the same, we can then restrict to the case where the tensor product of a globular sum with an element of $\Delta$ and refer to theorem 4.3.3.26 of \cite{Loubaton_these}. 

The second monoidal structure is constructed by Campion in \cite{campion2023Gray}. The functor $\pi_0:\ocat\to \zocat$ is strong monoidal, and Campion also shows that the tensor product of two globular sums is strict. It therefore coincides with that of theorem \ref{theo:monoidal}. Moreover, one could have deduced the theorem \ref{theo:monoidal} from Campion's results.
\end{remark}

In proposition 2.3.1.10 of \cite{loubaton2024categorical}, it is shown that $[C,1]\otimes[1]$ is the colimit of the following diagram $$[C,1]\vee[1]\leftarrow [C\otimes\{1\},1]\to [C\otimes[1],1]\leftarrow[C\otimes\{0\},1]\to [1]\vee[C,1]$$ We now want to construct a similar decomposition for $[C,1]\otimes[B,1]$. This will be obtained in proposition \ref{prop:decomposition}. Throughout the end of this section, we will use Steiner theory as presented in section 1.2 of \cite{loubaton2024categorical}.

\begin{definition}
Let $K,L$ be two augmented directed complexes. There are two canonical morphisms $$\triangledown:[K\otimes L,1]\to [K,1]\vee[L,1]~~~~~~~ \triangledown:[K\otimes L,1]\to [L,1]\vee[K,1]$$ that are the unique ones fulfilling $$\triangledown(\{0\}):= \{0\},~~~\triangledown(\{1\}):= \{2\},~~~ \triangledown([x\otimes y,1]):=\left\{ \begin{array}{ll} [x,1]+[y,1] & \text{if $|x|=0$ or $|y|=0$,} \\ 0 & \text{otherwise.} \end{array}\right.$$
\end{definition}

\begin{prop}
\label{prop:appendice formula for cda}
Let $K,L$ be two augmented directed complexes. There is a natural transformation between the colimit of the following diagram: $$
\begin{tikzcd}
	{[K,1]\vee [L,1]} & {[K\otimes\{1\}\otimes L,1]} & {[K\otimes [1]\otimes L,1]} & {[K\otimes\{0\}\otimes L,1]} & {[L,1]\vee [K,1]}
	\arrow[from=1-2, to=1-1]
	\arrow[from=1-2, to=1-3]
	\arrow[from=1-4, to=1-3]
	\arrow[from=1-4, to=1-5]
\end{tikzcd}$$ and $[K,1]\otimes [L,1]$.
\end{prop}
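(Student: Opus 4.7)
The plan is to construct a cone from the given span diagram to $[K,1]\otimes[L,1]$ in the category of augmented directed complexes with atomic loop-free basis, and to conclude by the universal property of the colimit. Since the constructions will be given by explicit formulas on basis elements, naturality in $K$ and $L$ will be automatic.

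First I would define the maps on the two outer vertices. The map $[K,1]\vee[L,1]\to [K,1]\otimes[L,1]$ is the one induced by the composable pair $[K,1]\otimes\{0\}$ and $\{1\}\otimes[L,1]$ (which agree at the vertex $\{1\}\otimes\{0\}$), and symmetrically $[L,1]\vee[K,1]\to [K,1]\otimes[L,1]$ is induced by $\{0\}\otimes[L,1]$ and $[K,1]\otimes\{1\}$. Next, for the middle vertex, I would define $[K\otimes[1]\otimes L,1]\to [K,1]\otimes[L,1]$ by sending a basis element $[x\otimes e\otimes y,1]$ (with $e$ the top generator of $[1]$) to the cell $[x,1]\otimes[y,1]$, while basis elements of the form $[x\otimes\{i\}\otimes y,1]$ are sent to the corresponding cells $[x,1]\otimes\{i\}$ composed with $\{1-i\}\otimes[y,1]$ (or its mirror), so that the image lies on the appropriate boundary face of $[K,1]\otimes[L,1]$. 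The two remaining vertices $[K\otimes\{i\}\otimes L,1]$ are handled by the composites through the center; the map to the center is the inclusion induced by $\{i\}\hookrightarrow[1]$, and the map to the outer vertex is the corresponding $\triangledown$.

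The main technical step — and the step I expect to be the only nontrivial one — is to verify that the two outer squares commute on the nose at the level of augmented directed complexes. Concretely, one must check that $\triangledown$ followed by the outer inclusion into $[K,1]\otimes[L,1]$ agrees with the inclusion $[K\otimes\{i\}\otimes L,1]\hookrightarrow[K\otimes[1]\otimes L,1]$ followed by the middle map defined above. This reduces to a direct calculation on basis elements $[x\otimes y,1]$: using the defining formula for $\triangledown$, such an element maps to $[x,1]+[y,1]$ when $|x|=0$ or $|y|=0$ and to $0$ otherwise, and one checks case by case that this matches the image under the middle map, using that products of the form $[x,1]\otimes\{i\}$ with $|x|>0$ and $i$ on the opposite end of $[1]$ are genuinely lower-dimensional boundary cells in $[K,1]\otimes[L,1]$.

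Once these compatibilities are checked, the universal property of the colimit in augmented directed complexes produces the desired morphism; strictness of all five objects — which follows from Theorem~\ref{theo:strictness of Gray} together with the fact that $[K,1]\vee[L,1]$, $[L,1]\vee[K,1]$, $[K\otimes\{i\}\otimes L,1]$, and $[K\otimes[1]\otimes L,1]$ are obtained by suspensions and joins of strict pieces — means that working in augmented directed complexes via Steiner's equivalence is legitimate, and the constructed morphism transports to a natural morphism in $\ocat$.
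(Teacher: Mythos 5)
Your proposal follows essentially the same route as the paper: an explicit Steiner-style cone with the two outer legs given by $[K,1]\otimes\{0\}\cup\{1\}\otimes[L,1]$ (and its mirror), the middle map sending $[x\otimes[1]\otimes y,1]$ to $[x,1]\otimes[y,1]$ and the endpoint generators to boundary terms (zero when $|x|,|y|>0$), with compatibility checked on basis elements — which is exactly the verification the paper also leaves to the reader. Just note that your endpoint formula must be the ``mirror'' option you mention ($[x\otimes\{0\}\otimes y,1]\mapsto [x,1]\otimes\{1\}+\{0\}\otimes[y,1]$, etc.) to match your chosen outer legs, and that the appeal to Theorem~\ref{theo:strictness of Gray} is unnecessary here since the statement is already at the level of augmented directed complexes; the transfer to $\ocat$ happens only in Proposition~\ref{prop:decomposition}.
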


\begin{proof}
The cone is induced by 
$$
\begin{array}{r}
 [K,1]\vee[L,1]\sim [K,1]\otimes\{0\}\coprod_{[0]}\{1\}\otimes[L,1]\to [K,1]\otimes[L,1],\\
\text{(resp. } [L,1]\vee[K,1]\sim \{0\}\otimes[L,1]\coprod_{[0]} [K,1]\otimes\{1\}\to [K,1]\otimes[L,1]\text{)}
\end{array}$$

and by the morphism 
$$f:[K\otimes [1]\otimes L,1]\to [K,1]\otimes [L,1]$$
defined by the formula 
$$\begin{array}{rcl}
f([x\otimes [1]\otimes y,1])&:=&[x,1]\otimes[y,1],\\
f([x\otimes \{0\}\otimes y,1])&:=&\left\lbrace
\begin{array}{ll}
[x,1]\otimes\{1\}+\{0\}\otimes [y,1] & \text{if $|x|=0$ or $|y|=0$,} \\
0 & \text{otherwise.}
\end{array} \right. \\
f([x\otimes \{1\}\otimes y,1])&:=&\left\lbrace
\begin{array}{ll}
[x,1]\otimes\{0\}+\{1\}\otimes [y,1] & \text{if $|x|=0$ or $|y|=0$,} \\
0 & \text{otherwise.}
\end{array} \right.
\end{array}
$$
We leave it to the reader to check the compatibilities of these three morphisms.
\end{proof}

\begin{prop}
\label{prop:decomposition}
Let $C$ and $D$ be two $\omega$-categories. There is a natural equivalence between $[C,1]\otimes[D,1]$ and the colimit of the diagram $$[C,1]\vee[D,1]\leftarrow [C\otimes\{1\}\otimes D,1]\to [C\otimes[1]\otimes D,1]\leftarrow[C\otimes\{0\}\otimes D,1]\to [D,1]\vee[C,1].$$
\end{prop}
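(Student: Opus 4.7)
The approach is to reduce to the case of globular sums and then apply Steiner's theory of augmented directed complexes together with proposition \ref{prop:appendice formula for cda}.

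First, I would observe that both the colimit of the displayed diagram and the object $[C,1]\otimes[D,1]$ preserve colimits separately in each of the variables $C$ and $D$: the Gray tensor product does so by construction, the suspension $[-,1]$ does so by definition, and wedges together with the finite diagram colimit preserve colimits in each entry. This reduces the claim to the case where $C=a$ and $D=b$ are globular sums.

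Next, for globular sums $a$ and $b$, theorem \ref{theo:strictness of Gray} ensures that all objects appearing in the diagram, as well as the target $[a,1]\otimes[b,1]$, are strict $\omega$-categories admitting atomic and loop-free Steiner bases. By \cite[theorem A.15]{Ara_Maltsiniotis_joint_et_tranche} combined with Steiner's theorem, it suffices to produce the equivalence in the category of augmented directed complexes. The candidate comparison map is assembled from three pieces: the two wedge inclusions $[a,1]\vee[b,1]\simeq [a,1]\otimes\{0\}\amalg_{[0]}\{1\}\otimes[b,1]\to [a,1]\otimes[b,1]$ and $[b,1]\vee[a,1]\simeq \{0\}\otimes[b,1]\amalg_{[0]}[a,1]\otimes\{1\}\to [a,1]\otimes[b,1]$, together with the middle map $[a\otimes[1]\otimes b,1]\to [a,1]\otimes[b,1]$ supplied by proposition \ref{prop:appendice formula for cda}. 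Compatibility along the two span legs $[a\otimes\{\epsilon\}\otimes b,1]$ for $\epsilon\in\{0,1\}$ is precisely what that proposition provides.

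It then remains to check the comparison map is an isomorphism. I would do this by matching atoms on both sides. The atoms of $[a,1]\otimes[b,1]$ are of the form $u\otimes v$ where $u\in\{\{0\},\{1\}\}$ or $u=[x,1]$ for an atom $x$ of $a$, and similarly for $v$. The \emph{boundary} atoms, those in which at least one factor is $\{0\}$ or $\{1\}$, are covered by the two wedge terms; the \emph{interior} atoms $[x,1]\otimes[y,1]$ correspond to atoms $[x\otimes[1]\otimes y,1]$ in the middle term. The two legs into $[a\otimes\{0\}\otimes b,1]$ and $[a\otimes\{1\}\otimes b,1]$ glue the source/target boundaries of these interior atoms onto the wedge terms.

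The main obstacle will be this last verification: checking that the atom-level bijection is compatible with the differential (equivalently with source–target data), and that the two span legs identify exactly the correct boundary atoms without double-counting. The argument is a parametrized generalization of proposition \ref{prop:equation former}, recovered in the special case $b=[0]$ (where $[b,1]=[1]$), and the strategy of that earlier proof should guide the combinatorial bookkeeping.
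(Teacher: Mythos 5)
Your setup matches the paper's proof: reduce to globular sums $a,b$ by colimit preservation, invoke theorem \ref{theo:strictness of Gray} for strictness, and build the comparison cone out of the two wedge inclusions and the map of proposition \ref{prop:appendice formula for cda}. Where you diverge is the final verification, and that is where there is a genuine gap. The colimit in the statement is taken in $\ocat$, i.e.\ it is an $\infty$-categorical colimit; checking that the induced map out of the colimit \emph{computed in augmented directed complexes} (equivalently in $\zocat$) is an isomorphism does not settle the claim, because the nerve $\N:\zocat\to\ocat$ is only a right adjoint and does not preserve colimits in general. Even with all vertices strict and with atomic loop-free bases, the colimit of this zig-zag in $\ocat$ need not a priori be strict, nor need it agree with the strict/ADC colimit; an atom-by-atom bijection compatible with differentials identifies the wrong object unless you separately prove that this particular weak colimit is computed strictly. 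That missing comparison is essentially the content of the statement itself, so the argument as proposed is circular at this point.

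The paper closes exactly this hole differently: after constructing the comparison via proposition \ref{prop:appendice formula for cda}, it observes that both sides are colimit-preserving in $D$, reduces to $D$ a globe, and then invokes lemma \ref{lemma:equation 1}, which identifies $[C,1]\otimes\Db_{n+1}$ with the corresponding colimit \emph{in $\ocat$} by induction on $n$, using the defining pushout formula for the extended Gray tensor product (proposition \ref{Critere necessaire}) with base case proposition \ref{prop:equation former}. In other words, the weak colimit is controlled because it is assembled from the colimits that define $\otimes$ itself, not by a posteriori computation in Steiner's category. To repair your argument you would either have to reproduce this inductive control of the $\infty$-categorical colimit, or prove separately that the relevant strict pushouts are preserved by the inclusion $\zocat\to\ocat$ for this diagram, neither of which the ADC bookkeeping provides.
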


\begin{proof}
Since all the operations involved preserve colimits, it suffices to prove the result when $a$ and $b$ are globular sums. In this case, theorem \ref{theo:strictness of Gray} implies that all objects involved are strict $\omega$-categories. The comparison between the colimit of the diagram and $[C,1]\otimes[D,1]$ is induced by proposition \ref{prop:appendice formula for cda}. As all these morphisms commute with colimits, we can reduce to the case where $D$ is a globe to show that this comparison is an equivalence, and then apply lemma \ref{lemma:equation 1}.
\end{proof}

\begin{prop}
\label{prop:otimes and surjection} Let $f:A\to B$ be an $n$-surjection. Then $f\otimes C:A\otimes C\to B\otimes C$ is $n$-surjective.
\end{prop}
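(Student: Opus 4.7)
The plan is to reduce the claim, via standard closure arguments, to checking it on generators of the class $\mathcal{S}_n$ of $n$-surjective morphisms, tensored with globular sums, and then to proceed by induction on the dimension of the globular sum. Since $\mathcal{S}_n$ is cocomplete (closed under colimits, compositions, pushouts, and retracts) and generated by $\{\partial\Db_{n+1}\to \Db_m : m > n\}$, and since $\uvar\otimes C$ preserves colimits in each variable (having right adjoint $\uvar^C$), it suffices to show that for every generator $f$ and every $\omega$-category $C$, $f \otimes C$ lies in $\mathcal{S}_n$. Since $C \mapsto f\otimes C$ also preserves colimits into the arrow category, and $\mathcal{S}_n$ is closed under colimits therein, the density of globular sums in $\ocat$ allows us to further restrict to $C=a$ a globular sum.

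I then prove $f\otimes a\in \mathcal{S}_n$ by induction on $|a|$. The base case $|a|=0$ yields $f\otimes [0]\cong f$, which is a generator. For the inductive step, the wedge decomposition $a \cong [b_0,1]\vee\cdots\vee[b_{p-1},1]$ with $|b_i|<|a|$ realizes $a$ as an iterated pushout of the various $[b_i,1]$ along $[0]$, so pushout closure of $\mathcal{S}_n$ reduces us to $a=[b,1]$ with $|b|<|a|$. By proposition \ref{prop:otimes and an}, $[b,1]$ is the pushout $\partial[1]\sqcup_{b\otimes\partial[1]} (b\otimes[1])$; tensoring with $f$ and again invoking pushout closure, it suffices that $f\otimes\partial[1]\cong f\sqcup f$, $f\otimes (b\otimes\partial[1])\cong (f\otimes b)\sqcup (f\otimes b)$, and $f\otimes b\otimes[1]\cong (f\otimes b)\otimes[1]$ all lie in $\mathcal{S}_n$. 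The first two follow from the inductive hypothesis applied to $b$ together with closure under coproducts.

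The main obstacle is the third term $(f\otimes b)\otimes[1]$, since $b\otimes[1]$ is not of strictly smaller dimension than $[b,1]$, preventing the direct use of the inductive hypothesis. I would handle this via a secondary induction on $\dim b$, applying the same decomposition recursively, until reaching the base case $b=[0]$, namely showing $f\otimes[1]\in \mathcal{S}_n$. For this final case, I would exploit the explicit decomposition of $\Db_m\otimes[1]$ and $\partial\Db_{n+1}\otimes[1]$ afforded by lemma \ref{lemma:equation 1} (together with the description $\partial\Db_{n+1}\cong\Db_n\sqcup_{\partial\Db_n}\Db_n$ and colimit preservation of $\uvar\otimes[1]$), and identify the resulting comparison as a composite and pushout of generators of $\mathcal{S}_n$, noting in particular that $\partial\Db_k\to \Db_k$ lies in $\mathcal{S}_n$ whenever $k>n$.
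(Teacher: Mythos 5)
Your preliminary reductions are sound: restricting to the generators $\partial\Db_{n+1}\to\Db_m$, then to $C$ a globular sum, the wedge decomposition, proposition \ref{prop:otimes and an}, and associativity of $\otimes$ (theorem \ref{theo:monoidal}) to rewrite $f\otimes(b\otimes[1])\simeq (f\otimes b)\otimes[1]$. But after all of this the entire content of the proposition is concentrated in the statement you postpone to the very end, namely that $(\partial\Db_{n+1}\to\Db_m)\otimes[1]$ is $n$-surjective (note that once you use the primary induction hypothesis on $f\otimes b$ and reduce to generators of the class of $n$-surjections, $b$ has disappeared, so the ``secondary induction on $\dim b$'' is not the relevant recursion), and this final statement is not established by what you propose. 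Writing $\partial\Db_{n+1}=[\partial\Db_n,1]$ and $\Db_m=[\Db_{m-1},1]$ and applying lemma \ref{lemma:equation 1} (equivalently proposition \ref{prop:equation former}) to both sides, four of the five components of the resulting map of colimit diagrams are indeed generators or pushouts of generators, but the middle component is $[\partial\Db_n\otimes[1],1]\to[\Db_{m-1}\otimes[1],1]$, i.e.\ the suspension of $(\partial\Db_n\to\Db_{m-1})\otimes[1]$: exactly the statement you are trying to prove, shifted one dimension down. So this decomposition does not exhibit the comparison as ``a composite and pushout of generators''; closing the argument forces an induction on $n$ together with the auxiliary fact that the suspension carries $(n-1)$-surjections to $n$-surjections, and neither ingredient appears in your sketch. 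Your parenthetical use of $\partial\Db_k\to\Db_k$ being $n$-surjective for $k>n$ (true, but itself needing a small argument) suggests you have in mind the cell-by-cell picture of the Gray cylinder, attaching $e\otimes\{0\}$, $e\otimes\{1\}$ and $e\otimes e_1$ along their boundaries; that picture is available for strict $\omega$-categories via Steiner theory, but it is not what lemma \ref{lemma:equation 1} provides, and importing it into the present setting would require its own justification.

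For comparison, the paper's proof avoids your primary induction on the dimension of the globular sum altogether: it inducts on $n$ from the outset, reduces by colimit-preservation to the maps $\partial\Db_{n+1}\otimes\Db_m\to\Db_k\otimes\Db_m$, and applies proposition \ref{prop:decomposition} to the tensor of two suspensions, so that the only nontrivial component is the suspension of $(\partial\Db_n\to\Db_{k-1})\otimes([1]\otimes\Db_{m-1})$, which is $(n-1)$-surjective by the inductive hypothesis and becomes $n$-surjective after suspension. If you insert precisely these two ingredients (induction on $n$ and the suspension fact) at your last step, your argument closes, but at that point it essentially reproduces the paper's proof with an extra, unnecessary layer of reductions through the monoidal structure.
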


\begin{proof}
As the Gray tensor product commutes with colimits, it is sufficient to demonstrate that for any integers $n<k$, the map $$\partial \Db_{n+1}\otimes C\to \Db_k\otimes C$$ is $n$-surjective for any $\omega$-category $C$. We will proceed by induction on $n$. The case $-1$ is straightforward with the convention $\partial \Db_{-1}:= \Db_{-1}:=\emptyset$. Suppose the case $(n-1)$ is proven. As the Gray tensor product commutes with colimits, we can reduce to the case where $C$ is a globe $\Db_m$. If $m=0$, this is straightforward. If $m>0$, by proposition \ref{prop:decomposition}, we have pushouts:
\[\begin{tikzcd}
	{\partial\Db_{n+1}\otimes(\{0\}\coprod\{1\})} & {\partial \Db_{n+1}\otimes \Db_m} \\
	{\Db_{k}\otimes(\{0\}\coprod\{1\})} & \bullet & {[\partial\Db_{n}\otimes [1]\otimes\Db_{m-1},1]} \\
	& {\Db_k\otimes \Db_m} & {[\Db_{n}\otimes [1]\otimes\Db_{m-1},1]}
	\arrow[""{name=0, anchor=center, inner sep=0}, from=1-1, to=1-2]
	\arrow[from=1-1, to=2-1]
	\arrow[from=1-2, to=2-2]
	\arrow[from=2-1, to=2-2]
	\arrow[from=2-2, to=3-2]
	\arrow[from=2-3, to=2-2]
	\arrow[from=2-3, to=3-3]
	\arrow[""{name=1, anchor=center, inner sep=0}, from=3-3, to=3-2]
	\arrow["\lrcorner"{anchor=center, pos=0.125, rotate=180}, draw=none, from=2-2, to=0]
	\arrow["\lrcorner"{anchor=center, pos=0.125}, draw=none, from=2-2, to=1]
\end{tikzcd}\]
and as the suspension sends $(n-1)$-surjections to $n$-surjections and by induction hypothesis, this demonstrates the case $n$.
\end{proof}

\subsection{Definition of marked $\omega$-categories}

\begin{definition}
A \textit{marked $\omega$-category} is a pair $(C,tC)$ where $C$ is a $\omega$-category and $tC$ is a subhomotopy type of $\coprod_{n>0}C_n$ whose elements are called the \textit{marked cells}, and such that
\begin{enumerate}
\item units are marked,
\item marked $n$-cells are stable under $k$-composition for any $k<n$.
\end{enumerate}
A \textit{morphism of marked $\omega$-categories} is a morphism between the underlying $\omega$-categories that preserves the marking. We denote by $\ocatm$ the category of marked $\omega$-categories.
\end{definition}

\begin{definition}
Given an $\omega$-category $C$, we denote by $C^\sharp:=(C,C_{>0})$ the marked $\omega$-category where every cell of positive dimension is marked, and $C^\flat:=(C,\Ib(C_{\geq 0}))$ the marked $\omega$-category where only identities are marked.

Given a marked $\omega$-category $C$, we denote by $C^{\natural}$ the underlying $\omega$-category.
\end{definition}

\begin{notation}
Let $C$ be a marked $\omega$-category. We will denote simply by $C^{\flat}$ the marked $\omega$-category $(C^\natural)^\flat$ and $C^\sharp$ the marked $\omega$-category $(C^\natural)^\sharp$.
\end{notation}

\begin{construction}
Let $C$ be a $\omega$-category and $M \subset \coprod_{n>0} C_n$ a subhomotopy type of cells. The \textit{closure by composition of $M$}, denoted by $\overline{M}$, is the smallest subhomotopy type of arrows such that $M \subset \overline{M}$ and such that $(C, \overline{M})$ is a marked $\omega$-category. For example, $C^\flat := (C, \overline{\emptyset})$.
A subhomotopy type of cells $M$ is \textit{closed by composition} if $M \sim \overline{M}$.
\end{construction}

\begin{example}
\label{exe:canonical exe of cloture by comp}
Let $F:I\to \ocat$ be a diagram. The closure by composition of the homotopy type of the cell $\cup_{i:I}\im F(i)_{>0}$ is $(\colim_IF)_{>0}$.
\end{example}

\begin{remark}
\label{remark:other description of cloture by composition}
Using the associativity of $k$-compositions (remark \ref{rem:consequence of associativity of composition}), we can see that a subhomotopy type of cells $M$ is closed by composition if and only if:
\begin{enumerate}
\item units are in $M$,
\item for any integer $n$, and any pair of composable $(n+1)$-cells $x,y$ in $M$, the composite $x\circ_n y$ is in $M$,
\item for any integer $m>k+1$, any pair of $k$-composable cells $x,y$, with $x$ a $(k+1)$-cell and $y$ an $m$-cell in $M$, the composite $x\circ_k y$ is in $M$,
\item for any integer $m>k+1$, any pair of $k$-composable cells $y,x$, with $y$ a $(k+1)$-cell and $x$ an $m$-cell in $M$, the composite $y\circ_k x$ is in $M$.
\end{enumerate}
\end{remark}

\begin{prop}
The category $\ocatm$ is cartesian closed.
\end{prop}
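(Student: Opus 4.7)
The plan is to apply the adjoint functor theorem: I will show that $\ocatm$ is presentable and that the functor $-\times C$ preserves small colimits for each $C \in \ocatm$.

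For presentability, one identifies $\ocatm$ with a reflective localization of presheaves on a suitable extension $\widetilde{\Theta}$ of $\Theta$, obtained by freely adjoining for each $n > 0$ a "marked $n$-globe" together with a marking map from $\Db_n$. The localization enforces the usual Segal and saturation conditions on the underlying $\omega$-category, together with the axioms that units are marked and that marked cells are closed under composition. As a presentable localization of a presheaf topos, $\ocatm$ is presentable.

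For colimit preservation, note first that the forgetful functor $(-)^{\natural}\colon \ocatm \to \ocat$ is simultaneously a left adjoint (to $(-)^{\sharp}$) and a right adjoint (to $(-)^{\flat}$) and hence preserves both limits and colimits. Consequently, for a diagram $F\colon I \to \ocatm$, the colimit has underlying $\omega$-category $\colim_I F^{\natural}$ and marking given (by a marked analogue of example \ref{exe:canonical exe of cloture by comp}) as the closure under composition $\overline{\bigcup_i \im(tF(i))}$. The product $(A, tA) \times (C, tC)$ has underlying $A^{\natural} \times C^{\natural}$ and marking the product subhomotopy type $tA \times tC$, which is automatically closed under composition since both factors are. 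The underlying-level colimit compatibility then follows from the cartesian closure of $\ocat$ established in proposition \ref{prop:ncat cartesin closed}. What remains is to verify the equality
$$\overline{\bigcup_i \im(tF(i))} \times tC \;=\; \overline{\bigcup_i \im(tF(i) \times tC)}.$$
The inclusion $(\supseteq)$ is immediate, since the right-hand side is contained in $tA \times tC$ and the latter is closed under composition. For $(\subseteq)$, one argues by induction on the generation of the closure, using the interchange law in $A \times C$ and the unit laws to rewrite any pair $(x,y)$ with $y \in tC$ and $x$ a composite or unit of cells in $\bigcup_i \im(tF(i))$ as a composite of cells lying already in $\bigcup_i \im(tF(i) \times tC)$ or in the units of $A \times C$; here one crucially uses that $0$-cells of $\colim_I F^{\natural}$ come from individual $F(i)$'s.

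The main obstacle is this last inductive verification, which requires a careful interplay between the interchange law, the unit laws, and the decomposition of cells of $\colim_I F^{\natural}$ as composites of cells from individual pieces of the diagram. Once the equality of markings is established, the adjoint functor theorem produces a right adjoint to $-\times C$, yielding the cartesian closedness of $\ocatm$.
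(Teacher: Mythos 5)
Your proof is correct in outline, but it takes a genuinely different route from the paper, whose proof of this proposition is a one-line citation of Proposition 3.1.1.19 of \cite{loubaton2024categorical}; note also that the presentability of $\ocatm$, which you re-sketch via a localization of presheaves on an enlarged globular category, is itself only quoted by the paper from section 3.1.1 of the same reference. What your adjoint-functor-theorem route buys is a self-contained reduction of the statement to ingredients already available here: cartesian closedness of $\ocat$ (proposition \ref{prop:ncat cartesin closed}) handles the underlying level, and the explicit formulas for limits and colimits of marked $\omega$-categories reduce everything to your displayed identity of markings. The price is that the argument is non-constructive (it produces no description of the internal hom, which the cited reference constructs) and that all the real content is concentrated in that one marking identity.

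Two points in that final verification need more care than your sketch provides. First, since the closure $\overline{(\cdot)}$ is defined as a smallest subhomotopy type, ``induction on the generation of the closure'' should be organized as in the proof of lemma \ref{lemma:technicalities composition Gray}: consider the subhomotopy type of cells $x$ of $\colim_I F^{\natural}$ such that $(x,y)\in \overline{\bigcup_i \im\, t(F(i)\times C)}$ for every marked $y$ of the same dimension, and verify that it contains $\bigcup_i \im\, tF(i)$, contains units, and is stable under the operations of remark \ref{remark:other description of cloture by composition}; the rewriting you describe (padding the second coordinate with units and boundary cells so that a composite in $\colim_I F^{\natural}$ lifts to a composite in the product) is exactly what makes these closure properties hold, and in fact only the unit laws, whiskering-stability of markings, and $\Ib_{u\circ_k v}\sim \Ib_u\circ_k\Ib_v$ are used, not interchange. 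Second, the unit case is larger than you indicate: you must handle pairs $(\Ib_{x'},y)$ for an \emph{arbitrary} cell $x'$ of the colimit, not only for $0$-cells. For positive-dimensional $x'$ this requires running the same closure argument again, using example \ref{exe:canonical exe of cloture by comp} to know that all such cells lie in the closure of the images of the $F(i)$; for $0$-cells the surjectivity you invoke is true, but it is not automatic and is justified as in the proof of proposition \ref{prop: characterization of 1 surjective2} (the unit of the localization from presheaves is surjective on objects, so every object of the colimit comes from some $F(i)$). With these adjustments your argument goes through.
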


\begin{proof}
This is proposition 3.1.1.19 of \cite{loubaton2024categorical}.
\end{proof}

\begin{definition}
Given an integer $n$, we denote by $(\Db_n)_t$ the marked $\omega$-category $(\Db_n, \overline{\{e_n\}})$ where $e_n$ is the only non-trivial $n$-cell.
Eventually, we define the category \textit{$t\Theta$} as the (strict) category whose objects are marked globular sums of shape $(\Db_n)_t$ or $a^\flat$.
\end{definition}

\begin{remark}
The category $t\Theta$ is dense in $\ocatm$.
\end{remark}

\begin{remark}
The result of section 3.1.1 of \cite{loubaton2024categorical} implies that $\ocatm$ is a presentable category. If $(C_i,tC_i)$ is a diagram of marked $\omega$-categories indexed by a category $I$, then 
$$\lim_I(C_i,tC_i)\sim (\lim_IC_i,\lim_ItC_i)~~~~ \colim_I(C_i,tC_i)\sim (\colim_IC_i, \overline{\im(\colim_ItC_i)})$$
where $\im(\colim_IC_i)$ denotes the image of the functor $\colim_I tC_i\to \coprod_{n>0}(\colim_IC_i)_n$.
\end{remark}

\begin{prop}
Any subcategory of $\ocatm$ closed under colimits and containing $\Db_n^\flat$ and $(\Db_n)_t$ is $\ocatm$.
\end{prop}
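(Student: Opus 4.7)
The plan is to combine the density of $t\Theta$ in $\ocatm$ with a Segal-type reduction that writes every flat globular sum as a colimit of flat globes. Fixing a subcategory $\Scal \subset \ocatm$ closed under colimits and containing the two families $\Db_n^\flat$ and $(\Db_n)_t$, the immediately preceding remark tells us that $t\Theta$ is dense in $\ocatm$, so any object of $\ocatm$ is the colimit of the canonical diagram $t\Theta_{/X} \to \ocatm$. It therefore suffices to prove $t\Theta \subset \Scal$.

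By definition, the objects of $t\Theta$ are either of the form $(\Db_n)_t$ or of the form $a^\flat$ for a globular sum $a$. The first case is given by hypothesis, so I would focus on the second. The key observation is that the functor $(\uvar)^\flat : \ocat \to \ocatm$ is left adjoint to the forgetful functor $(\uvar)^\natural$: a morphism $C^\flat \to (D, tD)$ in $\ocatm$ is the same datum as a morphism $C \to D$ in $\ocat$, because the marked cells of $C^\flat$ are only units, which are automatically marked in any target. Hence $(\uvar)^\flat$ preserves colimits.

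Since the globular sum $a$ is equivalent in $\ocat$ to its Segal replacement $\Sp_a$, which by construction is an iterated colimit of globes, applying $(\uvar)^\flat$ exhibits $a^\flat$ as a colimit of objects of the form $\Db_k^\flat$, all of which belong to $\Scal$ by hypothesis. This shows $a^\flat \in \Scal$ and completes the reduction. The only genuine verification in this plan is the adjunction $(\uvar)^\flat \dashv (\uvar)^\natural$, which is immediate from unpacking the definition of a morphism of marked $\omega$-categories.
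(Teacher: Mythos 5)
Your proposal is correct and follows essentially the same route as the paper: density of $t\Theta$ in $\ocatm$, plus writing each object of $t\Theta$ as a colimit of objects of the form $\Db_k^\flat$ or $(\Db_n)_t$. The only difference is that you spell out the second step (the paper leaves it implicit), and your justification — $(\uvar)^\flat$ is left adjoint to $(\uvar)^\natural$ and hence preserves colimits, while $a\simeq \Fb(\Sp_a)$ exhibits $a$ as a colimit of globes in $\ocat$ — is sound.
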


\begin{proof}
The claim follows from the fact that $t\Theta$ is dense, and that its elements are colimits of objects of shape $\Db_n^\flat$ or $(\Db_n)_t$.
\end{proof}

\begin{definition}
The \textit{marked suspension} is the colimit-preserving functor $$[\uvar,1]:\ocatm\to \ocatm_{\bullet,\bullet}$$ sending $a^\flat$ to $[a,1]^\flat$ and $(\Db_n)_t$ to $([\Db_n,1])_t$. It admits a right adjoint: $$\begin{array}{lll} \ocatm_{\bullet,\bullet} &\to& \ocatm\\ (C,a,b) &\mapsto& \hom_C(a,b). \end{array}$$
\end{definition}

\begin{definition}
A \textit{marked trivialization over $(A,B)$} is a morphism in the smallest cocomplete class of morphisms that includes $\Ib_n:(\Db_{n+1})_t\to \Db_n^{\flat}$ for any \( n \). 
A \textit{trivialization}  is a morphism in the smallest cocomplete class of morphisms that includes $\Ib_n:\Db_{n+1}\to \Db_n$ for any \( n \). Trivializations are then stable under transfinite composition, pushouts, left cancellation, and retracts.
\end{definition}

\begin{prop}
\label{prop:trivialization is epi}
Trivializations are epimorphisms.
\end{prop}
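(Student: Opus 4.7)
The plan is to reduce to the generators. By definition, the class of trivializations is the smallest cocomplete class containing $\{\Ib_n:\Db_{n+1}\to\Db_n\}_{n\geq 0}$, so it suffices to show (a) the class of epimorphisms in $\ocat$ is itself cocomplete, and (b) each $\Ib_n$ is an epimorphism; the containment then follows by minimality.

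For (a), a morphism $f:A\to B$ is an epimorphism iff for every $\omega$-category $C$ the precomposition map $\Map(B,C)\to\Map(A,C)$ is a monomorphism in $\Hot$. Monomorphisms in $\Hot$ are stable under limits and composition and include identities; since $\Map(-,C)$ converts colimits to limits, epimorphisms in $\ocat$ are automatically stable under colimits in $\Arr(\ocat)$, under composition, and contain identities. Hence the class of epimorphisms is cocomplete.

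For (b), by the Yoneda lemma one has $\Map(\Db_k,C)\simeq C_k$, so the map induced by $\Ib_n$ on mapping spaces is precisely the unit $\Ib:C_n\to C_{n+1}$ defined earlier. The source inclusion $i_n^{-}:\Db_n\to\Db_{n+1}$ is a section of $\Ib_n$: by construction of the unit, $\Ib_n$ collapses the generating $(n+1)$-cell to the identity on its $n$-source, so $\Ib_n\circ i_n^{-}=\id_{\Db_n}$ in $\ocat$. Applying $\Map(-,C)$ yields $\pi_n^{-}\circ\Ib=\id_{C_n}$, exhibiting $\Ib$ as a split monomorphism in $\Hot$. Split monomorphisms are monomorphisms, so $\Ib_n$ is an epimorphism.

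I do not anticipate any real obstacle; the argument rests entirely on the split-mono observation that the source projection retracts the unit, together with the standard fact that epimorphisms in an $\infty$-category are stable under colimits in the arrow category.
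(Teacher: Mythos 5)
Your step (a) is correct: epimorphisms in $\ocat$ are exactly the maps $f$ such that $\Map(f,C)$ is a monomorphism of spaces for every $C$, monomorphisms in $\Hot$ are closed under limits and composition, and $\Map(-,C)$ turns colimits into limits, so the class of epimorphisms is cocomplete and the problem reduces to the generators $\Ib_n:\Db_{n+1}\to\Db_n$.

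The gap is in step (b), at the sentence ``split monomorphisms are monomorphisms''. This is false in $\Hot$: the inclusion $\ast\to S^1$ admits a retraction, yet its fiber is $\Omega S^1\simeq \mathbb{Z}$, so it is not $(-1)$-truncated. Hence from $\pi_n^-\circ \Ib\simeq \id_{C_n}$ you cannot conclude that the unit $\Ib:C_n\to C_{n+1}$ is a monomorphism of spaces, which is exactly the content of the proposition after Yoneda. Moreover, the missing ingredient is not cosmetic: the monomorphy of the unit genuinely uses the saturation part of the definition of an $\omega$-category (locality with respect to $\Sigma^k E_{eq}\to\Db_k$), and it fails for mere Segal objects. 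For example, the Čech nerve of $S^1\to\ast$ is a Segal space whose unit map $X_0\to X_1$ is the diagonal $S^1\to S^1\times S^1$; it has a retraction (a face map) but is not a monomorphism, since its fiber over a diagonal point is again $\Omega S^1$. Since your argument never invokes the $E_{eq}$-locality, it would ``prove'' the statement in a setting where it is false, so the retraction observation cannot suffice. The paper itself does not argue this internally but cites \cite[proposition 2.2.1.50]{loubaton2024categorical}, where the monomorphy of the unit is established using that extra structure; to repair your proof you would need to supply that argument (roughly: by saturation, an $(n+1)$-cell together with an identification of it with a unit determines, essentially uniquely, the $n$-cell it is a unit of).
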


\begin{proof}
This is \cite[proposition 2.2.1.50]{loubaton2024categorical}.
\end{proof}

\begin{definition}
Let $C$ be an $\omega$-category, and $S \subset \coprod_{n>0} C_n$ a sub homotopy type of cells. We define $C[S^{-1}]$ as the pushout:
\[\begin{tikzcd}
	{\coprod_{a\in S}\Db_{|a|}} & C \\
	{\coprod_{a\in S}\Db_{|a|-1}} & {C[S^{-1}]}
	\arrow[from=1-1, to=1-2]
	\arrow[from=1-1, to=2-1]
	\arrow[from=1-2, to=2-2]
	\arrow[from=2-1, to=2-2]
	\arrow["\lrcorner"{anchor=center, pos=0.125, rotate=180}, draw=none, from=2-2, to=1-1]
\end{tikzcd}\]
Proposition \ref{prop:trivialization is epi} implies that $C\to C[S^{-1}]$ is an epimorphism.
\end{definition}

\begin{lemma}
\label{lemma:loc and col} Let $C$ be an $\omega$-category and  $S\subset \coprod_{n>0}C_n$ a sub-homotopy type of cells. The canonical morphism $C[S^{-1}]\to C[{\overline{S}}^{-1}]$ is an equivalence.
\end{lemma}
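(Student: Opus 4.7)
The plan is to use the pushout definition of $C[S^{-1}]$ to rewrite its universal property as: maps $C[S^{-1}] \to D$ correspond bijectively to those maps $f:C\to D$ of $\omega$-categories such that for every $a\in S$, the image $f(a)$ is a unit. This is because pushing out $\Db_{|a|} \to \Db_{|a|-1}$ forces the image of the cell $a$ to coincide with the image of $\Db_{|a|-1}$ under the map $\Db_{|a|}\to\Db_{|a|-1}\to D$, which is by construction a unit.

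Given the inclusion $S\subset\overline{S}$, the canonical map $C[S^{-1}]\to C[\overline{S}^{-1}]$ is immediate. To construct the inverse, let $f:C\to C[S^{-1}]$ be the canonical morphism and set
$$T:=\{c\in \coprod_{n>0}C_n\mid f(c)\text{ is a unit in }C[S^{-1}]\}.$$
By construction, $S\subset T$. Moreover, since $f$ is a morphism of $\omega$-categories, it preserves units and compositions, and composites of units in any $\omega$-category are units. Hence $T$ satisfies all four closure conditions of \cref{remark:other description of cloture by composition}, so $T$ is closed under composition and therefore contains $\overline{S}$. Applying the universal property just recalled, the map $f:C\to C[S^{-1}]$ factors uniquely as $C\to C[\overline{S}^{-1}]\to C[S^{-1}]$.

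It remains to verify that the two maps
$$C[S^{-1}]\to C[\overline{S}^{-1}]\to C[S^{-1}] \qquad\text{and}\qquad C[\overline{S}^{-1}]\to C[S^{-1}]\to C[\overline{S}^{-1}]$$
are identities. Both composites are morphisms under $C$, and by \cref{prop:trivialization is epi} the structural maps $C\to C[S^{-1}]$ and $C\to C[\overline{S}^{-1}]$ are epimorphisms, since each is a pushout of a coproduct of trivializations. Hence each composite agrees with the identity after precomposition with an epimorphism and must therefore be the identity, completing the proof.

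The step most likely to require a careful check is the verification that $T$ is closed under composition, i.e., that composites of units are again units; but this follows at once from the unital structure of an $\omega$-category and the fact that $f$ preserves units and $k$-compositions.
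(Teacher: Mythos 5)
Your proof is correct and follows essentially the same route as the paper: both reduce, via the epimorphism property of proposition \ref{prop:trivialization is epi}, to producing a morphism $C[\overline{S}^{-1}]\to C[S^{-1}]$ under $C$, and both obtain it by observing that the cells of $C$ sent to units in $C[S^{-1}]$ form a composition-closed class containing $S$, hence containing $\overline{S}$. The only point to phrase a touch more carefully is the whiskering cases (3)--(4) of remark \ref{remark:other description of cloture by composition}, where one of the two factors need not lie in your class $T$; there one uses that whiskering a unit by a lower-dimensional cell is again a unit, which is as immediate as the closure fact you invoke.
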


\begin{proof}
As \( C \to C[S^{-1}] \) and \( C \to C[\overline{S}^{-1}] \) are epimorphisms by proposition \ref{prop:trivialization is epi}, the morphism \( C[S^{-1}] \to C[\overline{S}^{-1}] \) is an equivalence if and only if there exists a morphism \( C[\overline{S}^{-1}] \to C[S^{-1}] \) under \( C \). In other words, we must show that for all \( a \in \overline{S} \), the morphism \( \Db_{|a|} \to C[S^{-1}] \) is a unit.

By the definition of \( \overline{S} \), it suffices to show that the subset of elements of \( \overline{S} \) such that \( \Db_{|a|} \to C[S^{-1}] \) is a unit is stable under composition, which is evident.
\end{proof}

\subsection{Gray operations for marked $\omega$-categories}

\begin{definition}
Let $A$ and $B$ be two $\omega$-categories, and $S$ and $T$ two sub homotopy types of cells of $A$ and $B$. We denote by $S\otimes T$ the sub homotopy types consisting of cells $$\Db_{n+m}\xrightarrow{\iota_{n,m}} \Db_{n}\otimes \Db_{m}\xrightarrow{a\otimes b} A\otimes B$$ for $a\in S_n$ and $b\in T_m$, where $\iota_{n,m}$ is the only non-trivial $(n+m)$-cell of the (strict) $\omega$-category $\Db_{n}\otimes \Db_{m}$.
\end{definition}

\begin{notation}
Let $A$ be an $\omega$-category. We denote by $A_{\geq 0}$ the homotopy type of cells of any dimension, and $A_{> 0}$ the homotopy type of all cells of positive dimension.
\end{notation}

\begin{lemma}
\label{lemma:technicalities composition Gray2} Let $A$ and $B$ be two $\omega$-categories. We have equivalences: $$\begin{array}{rcl} (A\otimes B)_{\geq 0}& \sim& \overline{A_{\geq 0}\otimes B_{\geq 0}}\\ (A\otimes B)_{>0}& \sim& \overline{A_{> 0}\otimes B_{\geq 0}\cup A_{\geq 0}\otimes B_{>0}}\\ \end{array}$$
\end{lemma}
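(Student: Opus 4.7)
The plan is as follows. Both equations have the form $(A\otimes B)_\bullet \sim \overline{S_\bullet}$ for an explicit set $S_\bullet$ of elementary tensor cells, and the inclusion $\overline{S_\bullet}\subset (A\otimes B)_\bullet$ is immediate in each case, since the elementary tensors defining $S_\bullet$ are genuine cells of $A\otimes B$ and $(A\otimes B)_\bullet$ is closed under composition by construction. The content is therefore the reverse inclusion.

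I would first reduce to the case where $A$ and $B$ are globular sums. Writing $A\sim \colim_{a\to A}a$ and $B\sim \colim_{b\to B}b$ as colimits of globular sums indexed by $\Theta/A$ and $\Theta/B$, and using that the Gray tensor product commutes with colimits in each variable, one has $A\otimes B\sim \colim_{(a,b)}(a\otimes b)$. Example \ref{exe:canonical exe of cloture by comp} then identifies $(A\otimes B)_{>0}$ with $\overline{\bigcup_{(a,b)}\im(a\otimes b)_{>0}}$. Granting the statement in the globular sum case, one may substitute $(a\otimes b)_{>0}$ by the closure of the corresponding elementary tensors, and use that closure-of-closure is closure, together with the fact that every cell of $A$ factors through some $a\to A$ (and similarly for $B$), to identify the result with $\overline{A_{>0}\otimes B_{\geq 0}\cup A_{\geq 0}\otimes B_{>0}}$. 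The first equation is treated identically, only keeping track of the $0$-cells separately.

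In the globular sum case $A=a$, $B=b$, the tensor product $a\otimes b$ is strict by Theorem \ref{theo:strictness of Gray}, and one may invoke Steiner's theory as recalled in Section~1.2.1 of \cite{loubaton2024categorical}. The Gray tensor product of two atomic loop-free bases is again atomic and loop-free, and its elements are the tensors $e\otimes f$ of basis cells of $a$ and $b$. A basis cell $e\otimes f$ of dimension $n+m$ factors as $\Db_{n+m}\xrightarrow{\iota_{n,m}}\Db_n\otimes \Db_m\xrightarrow{e\otimes f}a\otimes b$, and hence lies in $a_{\geq 0}\otimes b_{\geq 0}$. Since every cell of a strict $\omega$-category with atomic loop-free basis is a composite of basis cells, this proves the first equation. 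The second equation then follows by noting that if $n+m>0$ at least one of $e,f$ has positive dimension, so that each positive-dimensional basis cell already lies in $a_{>0}\otimes b_{\geq 0}\cup a_{\geq 0}\otimes b_{>0}$.

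The main obstacle is the Steiner-theoretic input: verifying that the basis of $a\otimes b$ is the termwise tensor product of the bases of $a$ and $b$ and that the $\iota_{n,m}$ are the top basis cells of $\Db_n\otimes \Db_m$. This is classical, and once it is granted both the globular-sum case and the colimit reduction are essentially formal.
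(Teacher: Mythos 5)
Your proposal is correct, and its skeleton coincides with the paper's: the inclusion $\overline{S_\bullet}\subset (A\otimes B)_\bullet$ is immediate, and the colimit reduction you describe (writing $A\otimes B$ as a colimit of $a\otimes b$ over globular sums, using example \ref{exe:canonical exe of cloture by comp} and colimit-preservation of $\otimes$, then pushing forward closures) is exactly the argument given there. Where you diverge is the base case: the paper disposes of the strict (hence globular-sum) case by citing lemma 2.23 of \cite{Henry_an_inductive_model_structure_for_infini_categories}, whereas you reprove it via Steiner theory, using that $a\otimes b$ is strict (theorem \ref{theo:strictness of Gray}), that the basis of the tensor of augmented directed complexes is the set of termwise tensors $e\otimes f$, and that every cell of the associated $\omega$-category is a composite of atoms. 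That route is viable and essentially self-contained modulo the classical inputs you flag, but two precisions are worth recording: first, ``every cell is a composite of atoms'' is Steiner's freeness theorem and requires a \emph{unital, (strongly) loop-free} basis -- strong loop-freeness being what is stable under $\otimes$ (\cite{Steiner_omega_categories_and_chain_complexes}, \cite{Ara_Maltsiniotis_joint_et_tranche}); this holds for globular sums and their tensor products, as the paper itself exploits in lemma \ref{lemme:preretract}, but it is not a consequence of atomicity alone. Second, to land in the paper's $a_{\geq 0}\otimes b_{\geq 0}$ you need the atom $\langle e\otimes f\rangle$ to be precisely the composite $\Db_{n+m}\xrightarrow{\iota_{n,m}}\Db_n\otimes\Db_m\xrightarrow{\langle e\rangle\otimes\langle f\rangle}a\otimes b$, which is the compatibility of atoms with the tensor product of Steiner complexes; this is the genuine content hidden in your ``classical'' step (and, in effect, what Henry's lemma packages). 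Finally, note that whiskerings by arbitrary lower cells cause no trouble in passing from ``composite of atoms'' to membership in the closure, since all units are marked by definition of the closure. What the paper's citation buys is brevity and a statement valid for all strict $\omega$-categories; what your argument buys is an explicit proof whose only external inputs are Steiner's theorems already used elsewhere in the paper.
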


\begin{proof}
The lemma 2.23 of \cite{Henry_an_inductive_model_structure_for_infini_categories} implies these equivalences in the case where $A,B$ are strict, and so in particular, when they are globular sums.

Let $A,B$ be any two $\omega$-categories. We have an obvious inclusion 
$$A_{\geq 0}\otimes B_{\geq 0}\subset (A\otimes B)_{\geq 0}$$
inducing an inclusion
$$\overline{A_{\geq 0}\otimes B_{\geq 0}}\subset (A\otimes B)_{\geq 0}.$$

Now, for any $a\to A$, $b\to B$, where $a$, $b$ are globular sums, we have the inclusion 
$$(a\otimes b)_{\geq 0}\subset \overline{a_{\geq 0}\otimes b_{\geq 0}}\subset \overline{A_{\geq 0}\otimes B_{\geq 0}}.$$
However, as $\otimes$ is colimit preserving, 
$$(A\otimes B)_{\geq 0}\sim \overline{\cup_{a:\Theta_{/A}, b:\Theta_{/B}}(a\otimes b)_{\geq 0}}$$
This then implies the other inclusion 
$$(A\otimes B)_{\geq 0}\subset \overline{A_{\geq 0}\otimes B_{\geq 0}}$$

We demonstrate similarly the second inclusion.
\end{proof}

\begin{lemma}
\label{lemma:technicalities composition Gray} Let $A$ and $B$ be strict $\omega$-categories, and $M\subset\coprod_{n>0} A_n$, $N\subset \coprod_{n>0}B$. Then $$\overline{\overline{M}\otimes N}\sim \overline{M\otimes N}\sim \overline{M\otimes \overline{N}}.$$
\end{lemma}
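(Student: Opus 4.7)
I plan to prove the equality $\overline{\overline{M}\otimes N}\sim\overline{M\otimes N}$; the companion identity $\overline{M\otimes N}\sim\overline{M\otimes\overline{N}}$ follows by a symmetric argument on the $B$-factor (or via the duality $(A\otimes B)^{op}\sim B^{op}\otimes A^{op}$ of Proposition \ref{prop:otimes and dualities}). One inclusion is free: since $M\subset\overline{M}$, we get $M\otimes N\subset\overline{M}\otimes N$, so $\overline{M\otimes N}\subset\overline{\overline{M}\otimes N}$. For the reverse, by the idempotence of the closure operation it suffices to establish $\overline{M}\otimes N\subset\overline{M\otimes N}$.

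Fix a cell $b\in N$ of positive dimension $m$ and set
\[
S_b := \{\, a\in \coprod_{n>0} A_n \mid a\otimes b\in\overline{M\otimes N} \,\}.
\]
The inclusion $M\subset S_b$ is immediate from the definition, so by the minimality of $\overline{M}$ it is enough to show that $S_b$ is itself closed by composition; letting $b$ range over $N$ then yields $\overline{M}\otimes N\subset\overline{M\otimes N}$.

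To verify the conditions of Remark \ref{remark:other description of cloture by composition} for $S_b$, first consider units: for any cell $c$ of $A$, the cell $\Ib(c)\otimes b$ corresponds to a map $\Db_{|c|+1}\otimes\Db_{m}\to A\otimes B$ which factors through the collapse $\Db_{|c|+1}\otimes\Db_{m}\twoheadrightarrow \Db_{|c|}\otimes\Db_{m}$, and is therefore itself a unit cell in $A\otimes B$, which belongs to $\overline{M\otimes N}$ by closure. For stability under $k$-composition, I use the interchange law inside the strict $\omega$-category $A\otimes B$: given $k$-composable cells $a_1,a_2$ of $A$, the cell $(a_1\circ_k a_2)\otimes b$ admits a decomposition in $A\otimes B$ as an iterated composition built from $a_1\otimes b$ and $a_2\otimes b$, together with whiskerings which, after applying interchange enough times, are expressed as compositions with unit cells $\id_{a_i\otimes c}$ on cells of the form $a_i\otimes c$ for $c$ a boundary subcell of $b$. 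The generators $a_i\otimes b$ lie in $\overline{M\otimes N}$ by hypothesis on $a_1,a_2\in S_b$, and the unit cells lie there by the closure axiom; hence the full composite does as well, proving $a_1\circ_k a_2\in S_b$.

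The main obstacle is producing this decomposition of $(a_1\circ_k a_2)\otimes b$ in a reliable way when the cells involved have mixed positive dimensions. This is a technical but standard computation in Steiner's calculus of augmented directed complexes (Section 1.2.1 of \cite{loubaton2024categorical}), where composition corresponds to a formal sum of basis elements and the Gray tensor product is bilinear; the desired identity then follows from the distributivity of tensor over sum, with the degeneracies of the collapse maps appearing as units. The key structural point which makes the whole argument go through is that the whiskering terms needed to join $a_1\otimes b$ and $a_2\otimes b$ are always units in $A\otimes B$, and units are freely available in $\overline{M\otimes N}$ regardless of whether the underlying boundary subcells of $b$ lie in $N$.
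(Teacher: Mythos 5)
Your overall strategy is the paper's own: reduce to showing $\overline{M}\otimes N\subset\overline{M\otimes N}$, introduce the class of cells $a$ with $a\otimes b\in\overline{M\otimes N}$ (the paper's $K$, your $S_b$), check that it contains all units because $\Ib(x)\otimes b$ is itself a unit of $A\otimes B$, and then check stability under composition; the second identity is obtained symmetrically. Two small remarks on the bookkeeping: to invoke remark \ref{remark:other description of cloture by composition} you also need conditions (3)--(4), where the whiskering cell is \emph{not} assumed to lie in $S_b$; this does follow from your unit case together with condition (2), since $x\circ_k y=(\Ib^{m-k-1}x)\circ_k y$ and iterated units of arbitrary cells lie in $S_b$, but that reduction should be said explicitly.

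The genuine weak point is the step you yourself flag as "the main obstacle". Asserting that the decomposition of $(a_1\circ_k a_2)\otimes b$ "follows from the distributivity of tensor over sum" in Steiner's calculus is not a proof, for two reasons. First, Steiner's calculus applies to $\omega$-categories arising from augmented directed complexes with a suitable (unitary, loop-free, atomic) basis; an arbitrary strict $\omega$-category $A$ (hence $A\otimes B$) carries no such basis, so before any ADC computation you must transport the problem along the classifying map, i.e.\ work in the universal case $\bigl(\Db_n\coprod_{\Db_i}\Db_n\bigr)\otimes\Db_m\to A\otimes B$ — this reduction is exactly what the paper does and your proposal omits. Second, even in that universal case, bilinearity only yields an identity of chains; the substantive claim is that this chain-level identity promotes to an expression of the \emph{cell} $(a_1\circ_k a_2)\otimes b$ as an honest iterated $\omega$-categorical composite of whiskered copies of $a_1\otimes b$ and $a_2\otimes b$ (the whiskerings being composites with units, as you correctly note). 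That promotion is the entire content of the step; the paper does not reprove it but imports it as lemma 2.25 of \cite{Henry_an_inductive_model_structure_for_infini_categories}, whereas your proposal leaves it as an unsupported "standard computation". With the universal-case reduction made explicit and that lemma (or an equivalent explicit Steiner computation) cited or carried out, your argument coincides with the paper's proof.
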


\begin{proof}
We will only show the equality $\overline{M \otimes N} = \overline{\overline{M} \otimes N}$. The equality $\overline{M \otimes N} = \overline{M \otimes \overline{N}}$ can be proved in the same way, and the last equality follows immediately by applying the result to $M$ and $\overline{N}$.

The evident inclusion $M \subset \overline{M}$ implies $\overline{M \otimes N} \subset \overline{\overline{M} \otimes N}$, so it is enough to show that $\overline{M} \otimes N \subset \overline{M \otimes N}$.

Let $K$ be the homotopy type of arrows $a$ in $A$ such that $a \otimes b \in \overline{M \otimes N}$ for all $b \in N$. We need to show that $K$ is closed under identity and composition to finish the proof.

If $a = \Ib_x$, then $a \otimes b = \Ib_{x \otimes b} \in \overline{M \otimes N}$. The homotopy type $K$ then includes all units. Let $a, a' \in A$ be two $k$-composable $n$-cells and let $b \in N$ be an arrow of dimension $m$ in $Y$, such that $a \otimes b$ and $a' \otimes b$ are in $\overline{M \otimes N}$. The two cells $a, a'$ are classified by a morphism $\Db_n \coprod_{\Db_i} \Db_n \to A$ and the cell $b$ by a morphism $\Db_m \to B$. All together, this induces a functor 
$$e:\left( \Db_n \coprod_{\Db_i} \Db_n \right) \otimes \Db_m \to A \otimes B.$$
In particular, this implies that we can reduce the problem to the case where $A$ and $B$ are strict, and we then apply lemma \cite[lemma 2.25]{Henry_an_inductive_model_structure_for_infini_categories} 
to demonstrate that $(a \circ_i a') \otimes b$ is in $\overline{M \otimes N}$. The homotopy type $K$ is then closed under composition, which concludes the proof.
\end{proof}

\begin{definition}
Let $(A,tA)$ and $(B,tB)$ be two marked $\omega$-categories. The \textit{Gray tensor product} of $(A,tA)$ and $(B,tB)$ is the marked $\omega$-category $$(A,tA)\otimes (B,tB):= (A\otimes B, \overline{tA\otimes B_{\geq 0}\cup A_{\geq 0}\otimes tB}) [(tA\otimes B_{>0})^{-1}].$$ By the description of colimits in marked $\omega$-categories, and by lemmas \ref{lemma:loc and col} and \ref{lemma:technicalities composition Gray}, this defines a cocontinuous bifunctor: $$\otimes:\ocatm\times \ocatm\to \ocatm.$$
\end{definition}

\begin{remark}
The definition of the Gray tensor product is strongly asymmetric. For example, the underlying $\omega$-category of $A^\flat\otimes B$ is $A\otimes B$, while we will show in proposition \ref{prop:sharp and times} that the underlying $\omega$-category of $A^\sharp\otimes B$ is $A\times B$.
\end{remark}

\begin{definition}
Let $A$, $B$ be two marked $\omega$-categories. We denote by $A^{B}$ the value on $A$ of the right adjoint of the functor $C\mapsto C\otimes B$, and we then have $A^{[0]}\sim A$.

The \textit{directed pullback} of a span $A\to B\leftarrow C$, denoted by $A\dirtimes{B}C$, is the limit of the diagram 
\[\begin{tikzcd}
	A & {B^{\{0\}}} & {B^{[1]^\sharp}} & {B^{\{1\}}} & C
	\arrow[from=1-1, to=1-2]
	\arrow[from=1-3, to=1-2]
	\arrow[from=1-3, to=1-4]
	\arrow[from=1-5, to=1-4]
\end{tikzcd}\]
\end{definition}

\begin{prop}
\label{prop:dualities marked otimes} Let $A,B$ be two marked $\omega$-categories. We have a canonical equivalence $$(A\otimes B)^{\circ} \sim A^{\circ} \otimes B^{\circ}.$$
\end{prop}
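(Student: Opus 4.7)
The plan is to unwind the definition of the Gray tensor product on marked $\omega$-categories and reduce the statement to the unmarked version, Proposition \ref{prop:otimes and dualities}, using the compatibility of the full duality $(\uvar)^\circ$ with each auxiliary construction appearing in the definition.

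First, I would record the basic facts about $(\uvar)^\circ$ on $\ocatm$. Since $\Db_n^\circ \cong \Db_n$ as a globular sum (only the direction of the globes is reversed, not the dimension), the duality preserves the homotopy type of cells in each dimension, so one has $(A_{\geq 0})^\circ \sim (A^\circ)_{\geq 0}$ and $(A_{>0})^\circ \sim (A^\circ)_{>0}$, while $(tA)^\circ = t(A^\circ)$ by definition of the duality on marked $\omega$-categories.

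Second, I would verify that $(\uvar)^\circ$ commutes with the operation $M \otimes N$ on sub homotopy types of cells. A typical generator is $\Db_{n+m} \xrightarrow{\iota_{n,m}} \Db_n \otimes \Db_m \xrightarrow{a \otimes b} A \otimes B$ with $a \in M_n$ and $b \in N_m$. Applying $(\uvar)^\circ$ and using Proposition \ref{prop:otimes and dualities}, which gives $(A \otimes B)^\circ \sim A^\circ \otimes B^\circ$ on the underlying $\omega$-categories, one obtains the corresponding generator for $M^\circ \otimes N^\circ$ inside $(A^\circ \otimes B^\circ)_{\geq 0}$, since $\iota_{n,m}$ is the unique top-dimensional cell of the strict globular sum $\Db_n \otimes \Db_m$ (by Theorem \ref{theo:strictness of Gray}) and is therefore preserved by the duality. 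This yields $(M \otimes N)^\circ \sim M^\circ \otimes N^\circ$.

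Third, I would observe that closure under composition is intrinsic to the $\omega$-category structure and that the duality is a functor of $\omega$-categories, so $\overline{S^\circ} \sim (\overline{S})^\circ$. Likewise, $(\uvar)^\circ$ commutes with the localization $C[S^{-1}]$, since the latter is a pushout along a coproduct of self-dual maps $\Db_{|a|} \to \Db_{|a|-1}$. Putting this together, the definition unwinds as
\[
(A \otimes B)^\circ \sim \bigl((A^\natural \otimes B^\natural)^\circ,\ \overline{(tA \otimes B_{\geq 0})^\circ \cup (A_{\geq 0} \otimes tB)^\circ}\bigr)\bigl[((tA \otimes B_{>0})^\circ)^{-1}\bigr],
\]
and applying the three points above converts the right-hand side to
\[
\bigl(A^{\natural,\circ} \otimes B^{\natural,\circ},\ \overline{(tA)^\circ \otimes (B^\circ)_{\geq 0} \cup (A^\circ)_{\geq 0} \otimes (tB)^\circ}\bigr)\bigl[((tA)^\circ \otimes (B^\circ)_{>0})^{-1}\bigr],
\]
which is precisely $A^\circ \otimes B^\circ$ by definition. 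The only real obstacle is the second step, where one must check that duality behaves correctly on the tensor of subhomotopy types of cells; but this reduces quickly to the self-duality of the top cell $\iota_{n,m}$ in the strict globular sum $\Db_n \otimes \Db_m$, so the verification is essentially formal.
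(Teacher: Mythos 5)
Your proof is correct and follows the same route as the paper, which simply observes that the statement follows directly from the construction of the marked Gray tensor product together with Proposition \ref{prop:otimes and dualities}. Your write-up just makes explicit the compatibility checks (duality versus $M\otimes N$, closure under composition, and localization) that the paper leaves implicit.
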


\begin{proof}
This directly follows from the construction of the Gray tensor product for marked $\omega$-categories and from form \ref{prop:otimes and dualities}.
\end{proof}

\begin{theorem}
\label{theo:Gray tensor product is monoidal on marked} The Gray tensor product induces a monoidal structure on $\ocatm$.
\end{theorem}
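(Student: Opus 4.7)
The plan is to mimic the proof of Theorem \ref{theo:monoidal} in the marked setting, via a Day convolution argument. The ingredients needed are a small dense subcategory of $\ocatm$ that is stable under the marked Gray tensor product and on which the Gray product is already monoidal; once this is in place, Day convolution extends the monoidal structure to presheaves, and density gives the desired structure on $\ocatm$.

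First, I would define $t\Theta_{\otimes}$ as the smallest subcategory of $\ocatm$ containing $t\Theta$ and closed under the marked Gray tensor product. Since $t\Theta$ is dense in $\ocatm$ (as noted in the remark before Definition of the marked suspension) and $t\Theta \subset t\Theta_{\otimes}$, the subcategory $t\Theta_{\otimes}$ is a fortiori dense in $\ocatm$.

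Second, I would verify that the marked Gray tensor product restricted to $t\Theta_{\otimes}$ defines a monoidal structure at the level of this small subcategory. The underlying $\omega$-category of any object of $t\Theta_{\otimes}$ is obtained by iterated Gray products of globular sums, hence lies in $\Theta_{\otimes} \subset \zocat$ by Theorem \ref{theo:strictness of Gray}. The associator at the level of underlying $\omega$-categories is therefore provided by the strict monoidal structure of Steiner and Ara--Maltsiniotis, and it only remains to check that the marking closure $\overline{(\cdot)}$ and the localization $[(\cdot)^{-1}]$ that appear in the definition of the marked Gray tensor product are distributed correctly by this associator. This compatibility reduces, via Lemmas \ref{lemma:technicalities composition Gray2} and \ref{lemma:technicalities composition Gray}, to elementary manipulations of the homotopy types of cells.

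Third, Day convolution (proposition 2.14 of \cite{glasman2013day}) promotes the monoidal structure on $t\Theta_{\otimes}$ to a monoidal structure on $\widehat{t\Theta_{\otimes}}$. Finally, since $t\Theta_{\otimes}$ is dense in $\ocatm$, proposition 2.2.1.9 of \cite{LurieHigherAlgebra} — exactly as invoked in the proof of Theorem \ref{theo:monoidal} — restricts this monoidal structure from $\widehat{t\Theta_{\otimes}}$ to $\ocatm$, yielding the desired monoidal structure.

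The main obstacle is Step 2: the marked Gray tensor product is strongly asymmetric (as emphasized in the remark after its definition) and involves both a closure-under-composition and a localization on top of the underlying unmarked product. Tracking these two operations coherently through the associator on $\Theta_{\otimes}$, and showing in particular that $\overline{tA \otimes B_{\geq 0} \cup A_{\geq 0} \otimes tB}$ and $tA \otimes B_{>0}$ both compose associatively under iteration, is the nontrivial bookkeeping that the proof really requires.
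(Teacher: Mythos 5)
There is a genuine gap in your Step 2, and it is exactly the point the paper's proof is engineered to avoid. The marked Gray tensor product is not ``unmarked Gray product plus a marking'': by definition it is $(A\otimes B,\overline{tA\otimes B_{\geq 0}\cup A_{\geq 0}\otimes tB})[(tA\otimes B_{>0})^{-1}]$, and the localization changes the underlying $\omega$-category (for instance the underlying $\omega$-category of $A^\sharp\otimes B$ is $A\times B$, by proposition \ref{prop:sharp and times}, not $A\otimes B$). Consequently your claim that every object of $t\Theta_\otimes$ has underlying $\omega$-category an iterated Gray product of globular sums, hence lying in $\Theta_\otimes\subset\zocat$, is false: already $(\Db_n)_t\otimes(\Db_m)_t$ has underlying $\omega$-category a pushout of the form $(\Db_n\otimes\Db_m)[S^{-1}]$, which is neither an object of $\Theta_\otimes$ nor obviously strict (strictness is proved in the paper only for Gray products with globular sums, not for such localizations). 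Without strictness and without the underlying objects agreeing with the strict Gray products, the Steiner/Ara--Maltsiniotis associator is simply not available, so there is no map along which to ``distribute'' the marking and the localization; Lemmas \ref{lemma:technicalities composition Gray2} and \ref{lemma:technicalities composition Gray} concern closures of markings on strict objects and do not address how the localization interacts with a subsequent tensor product, which is where associativity of the triple product actually lives.

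The paper's fix is to postpone the localization: it works in the category $\zocat_{2m}$ of bi-marked strict $\omega$-categories, where a secondary marking records the cells that are to be inverted later, defines a tensor product there, and checks (using the two lemmas you cite) that primary and secondary markings match under the strict associator. Only then does it restrict to $(\Theta_\otimes)_{2m}$, apply Day convolution, and transfer the structure along the localization functor $L:\widehat{(\Theta_\otimes)_{2m}}\to\ocatm$, $(C,t^0C,t^1C)\mapsto (C,t^0C)[(t^1C)^{-1}]$, after verifying that $L(RC\otimes RD)\to C\otimes D$ is an equivalence so that proposition 2.2.1.9 of \cite{LurieHigherAlgebra} applies. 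If you want to salvage your outline, you need this extra layer (or an equivalent device that keeps all computations in the strict world until a single final localization); as written, the associativity check you defer to ``bookkeeping'' cannot be carried out on $t\Theta_\otimes$.
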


\begin{proof}
We denote by $\zocat_{2m}$ the category of bi-marked strict $\omega$-categories, defined as the pullback: $$\zocat_{2m}\sim \zocat_{m}\times_{\zocat}\zocat_{m}.$$ Objects of $\zocat_{2m}$ then correspond to the triplet $(A,t^0A,t^1A)$ where $(A,t^0A)$ and $(A,t^1A)$ are marked $\omega$-category. We will refer to $t^0A$ as the primary marking and $t^1A$ as the secondary marking.

We have a bifunctor: $$\otimes: \zocat_{2m}\times \zocat_{2m}\to \zocat_{2m}$$ sending $(A,t^0A,t^1A)$ and $(B,t^0B,t^1B)$ to $$(A\otimes B, \overline{t^0A\otimes B_{\geq 0}\cup A_{\geq 0}\otimes t^0B},\overline{t^0A\otimes B_{>0}\cup t^1A\otimes B_{\geq 0}\cup A_{\geq 0}\otimes t^1B}).$$

Suppose now we are given three bi-marked strict $\omega$-categories, $(A,t^0A,t^1A)$, $(B,t^0B,t^1B)$, and $(C,t^0C,t^1C)$. We want to show that the isomorphism $$(A\otimes B)\otimes C\sim A\otimes (B\otimes C)$$ promotes to an isomorphism $$(((A,t^0A,t^1A)\otimes (B,t^0B,t^1B))\otimes (C,t^0C,t^1C))\sim (A,t^0A,t^1A)\otimes ((B,t^0B,t^1B)\otimes (C,t^0C,t^1C)).$$ We then have to check that the primary and secondary markings coincide. By lemma \ref{lemma:technicalities composition Gray2} and \ref{lemma:technicalities composition Gray}, the primary marking corresponds to : $$\overline{t^0A\otimes B_{\geq 0}\otimes C_{\geq 0}\cup A_{\geq 0}\otimes t^0B\otimes C_{\geq 0}\cup A_{\geq 0}\otimes B_{\geq 0}\otimes t^0C}$$ and the secondary marking corresponds to $$\begin{array}{r} \overline{t^0A\otimes B_{\geq 0}\otimes C_{>0}\cup A_{\geq 0}\otimes t^0B\otimes C_{>0}\cup t^0A\otimes B_{>0}\otimes C_{\geq 0}}\\ \overline{\cup t^1A\otimes B_{\geq 0}\otimes C_{\geq 0}\cup A_{\geq 0}\otimes t^1B\otimes C_{\geq 0}\cup A_{\geq 0}\otimes B_{\geq 0}\otimes t^1C} \end{array}$$

The monoidal structure $\otimes$ on $\zocat$ then induces a monoidal structure $\otimes$ on $\zocat_{2m}$. We denote by $(\Theta_{\otimes})_{2m}$ the full subcategory of $\zocat_{2m}$ whose objects are the bimarked $\omega$-categories whose underlying $\zo$-categories belong to the category $\Theta_\otimes$ defined in the proof of theorem \ref{theo:monoidal}. 

By restriction, $\otimes$ induces a monoidal structure on $(\Theta_{\otimes})_{2m}$, and so by Day convolution (proposition 2.14 of \cite{glasman2013day}), on $\widehat{(\Theta_{\otimes})_{2m}}$. We furthermore have a localization 
\[\begin{tikzcd}
	{L:\widehat{(\Theta_{\otimes})_{2m}}} & {\ocatm:R}
	\arrow[""{name=0, anchor=center, inner sep=0}, shift left=2, from=1-1, to=1-2]
	\arrow[""{name=1, anchor=center, inner sep=0}, shift left=2, from=1-2, to=1-1]
	\arrow["\dashv"{anchor=center, rotate=-90}, draw=none, from=0, to=1]
\end{tikzcd}\]
where the left adjoint sends an object $(C,t^0C,t^1C)$ of $(\Theta_{\otimes})_{2m}$ to $(C,t^0C)[(t^1C)^{-1}]$. Moreover, by construction, we have a canonical commutative square 
\[\begin{tikzcd}
	{\widehat{(\Theta_{\otimes})_{2m}}\times \widehat{(\Theta_{\otimes})_{2m}}} & {\widehat{(\Theta_{\otimes})_{2m}}} \\
	{\ocatm\times\ocatm} & \ocatm
	\arrow["\otimes", from=1-1, to=1-2]
	\arrow["{L\times L}"', from=1-1, to=2-1]
	\arrow["L", from=1-2, to=2-2]
	\arrow["\otimes"', from=2-1, to=2-2]
\end{tikzcd}\]
and as the Gray tensor product preserves colimits, we can easily check that the canonical natural transformation $$L(RC\otimes RD)\to C\otimes D$$ is an equivalence.

Finally, by proposition 2.2.1.9 of \cite{LurieHigherAlgebra}, we can transfer the monoidal structure on $\widehat{(\Theta_{\otimes})_{2m}}$ to $\ocatm$.
\end{proof}

\begin{prop}
\label{prop:an other description of the suspension marked} Let $C$ be a marked $\omega$-category. There is an equivalence between $[C,1]$ and the colimit of the diagram: \[\begin{tikzcd} {\partial[1]^{\flat}} & {\partial[1]^{\flat}\otimes C^{\circ}} & {[1]^\flat\otimes C^{\circ}} \arrow[from=1-2, to=1-1] \arrow[from=1-2, to=1-3] \end{tikzcd}\]
\end{prop}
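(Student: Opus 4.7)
The plan is to reduce the statement to the unmarked analog, proposition \ref{prop:an other description of the suspension}, and then verify the markings separately.

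First, I would observe that both sides define colimit-preserving functors of $C \in \ocatm$: the left-hand side by construction of the marked suspension, and the right-hand side because it is a pushout of the functors $C \mapsto \partial[1]^\flat$ (constant), $C \mapsto \partial[1]^\flat \otimes C^\circ$, and $C \mapsto [1]^\flat \otimes C^\circ$, each of which is cocontinuous since the Gray tensor product on $\ocatm$ (theorem \ref{theo:Gray tensor product is monoidal on marked}) and the full duality $(-)^\circ$ are cocontinuous.

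Then, by the earlier proposition asserting that $\ocatm$ is generated under colimits by the objects $\Db_n^\flat$ and $(\Db_n)_t$, I would reduce to checking the equivalence on these two families of generators. In both cases the underlying $\omega$-category of $C$ is $\Db_n$, so the underlying equivalence of $\omega$-categories is a direct application of proposition \ref{prop:an other description of the suspension} (with $n=1$), using that $(-)^\circ$ on the underlying $\omega$-category is the usual full duality by proposition \ref{prop:dualities marked otimes}.

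It remains to verify that the markings agree. For $C = \Db_n^\flat$ this is immediate: every object in the colimit diagram is flatly marked, and so is the target $[\Db_n^\flat, 1] = [\Db_n, 1]^\flat$. For $C = (\Db_n)_t$, I would analyze the marking of $[1]^\flat \otimes C^\circ$, which by the definition of the Gray tensor product of marked $\omega$-categories is the closure under composition of $[1]_{\geq 0} \otimes \{e_n\}$, where $e_n$ is the marked top cell of $\Db_n^\circ$. The cell $x_1 \otimes e_n$, for $x_1$ the non-trivial $1$-cell of $[1]$, is the top $(n+1)$-cell, while the other generating marked cells $\{0\} \otimes e_n$ and $\{1\} \otimes e_n$ are identified, via the pushout along $\partial[1]^\flat \otimes C^\circ \to \partial[1]^\flat$, with iterated units on the endpoints $\{0\}$ and $\{1\}$ of the colimit $\Db_{n+1}$. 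This should recover exactly the marking of $([\Db_n, 1])_t = [(\Db_n)_t, 1]$.

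The main obstacle will be the marking bookkeeping in the second case: I must confirm that the closure under composition in $[1]^\flat \otimes C^\circ$ produces no marked cells that, after collapsing, land outside the top cell or units. Since the underlying colimit is $\Db_{n+1}$, whose cells outside the top are iterated units on $\{0\}$ or $\{1\}$, this reduces to observing that each composite of generating marked cells either projects to the top $(n+1)$-cell or to such a unit, which follows directly from the structure of the pushout.
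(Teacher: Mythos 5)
Your proposal is correct and follows essentially the same route as the paper's proof: reduce by cocontinuity to the generating marked globes, obtain the underlying equivalence from proposition \ref{prop:an other description of the suspension}, and then compare markings. The only difference is that you actually carry out the marking verification for $(\Db_n)_t$, which the paper leaves to the reader.
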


\begin{proof}
As all the functors appearing are colimit preserving, it is sufficient to construct this equivalence when $C$ is in $\Theta_t$. Proposition \ref{prop:an other description of the suspension} implies the equivalence at the level of the underlying $\omega$-category. We leave it to the reader to check that the markings agree.
\end{proof}

\begin{prop}
\label{prop:decomposition marked case} Let $C$ and $D$ be marked $\omega$-categories. There is a canonical equivalence between $[C,1]\otimes[D,1]$ and the colimit of the diagram $$[C,1]\vee[D,1]\leftarrow [C\otimes\{1\}\otimes D,1]\to [C\otimes[1]^\flat \otimes D,1]\leftarrow[C\otimes\{0\}\otimes D,1]\to [D,1]\vee[C,1].$$ There is a canonical equivalence between $[C,1]\otimes [1]^\sharp$ and the colimit of the diagram $$[C,1]\vee [1]^\sharp \leftarrow [C\otimes\{1\},1]\to [C\otimes[1]^\sharp,1]\leftarrow[C\otimes\{0\},1]\to [1]^\sharp\vee[C,1].$$
\end{prop}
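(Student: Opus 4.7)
The plan is to reduce both equivalences to proposition \ref{prop:decomposition} applied to the underlying $\omega$-categories, and then verify that the markings agree, following the pattern of the proof of proposition \ref{prop:an other description of the suspension marked}. Since all functors involved are colimit-preserving, we can work representably and check the statement on the underlying data before adding the marking information.

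First I would construct a canonical cocone in $\ocatm$ from the five-term diagram to $[C,1]\otimes[D,1]$. The maps from $[C,1]\vee[D,1]$ and $[D,1]\vee[C,1]$ arise from the inclusions $[C,1]\otimes\{0\}\cup \{1\}\otimes[D,1]\hookrightarrow [C,1]\otimes[D,1]$ and $\{0\}\otimes[D,1]\cup[C,1]\otimes\{1\}\hookrightarrow [C,1]\otimes[D,1]$ provided by the monoidal structure (theorem \ref{theo:Gray tensor product is monoidal on marked}). The map from the middle term $[C\otimes[1]^\flat\otimes D,1]$ is the natural three-fold Gray map constructed analogously to the morphism $f$ of proposition \ref{prop:appendice formula for cda}. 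Compatibility of these maps with the structural maps of the diagram is immediate from the construction. Applying proposition \ref{prop:decomposition} to $C^\natural$ and $D^\natural$ then shows that the induced comparison map from the colimit to $[C,1]\otimes[D,1]$ is already an equivalence on underlying $\omega$-categories.

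It remains to check that the markings agree. Using the description of colimits in $\ocatm$, the marking on the colimit is the closure under composition of the union of the images of the markings of each piece, followed by the localization induced by any $tA\otimes B_{>0}$-type inversions. Unwinding the definition of the marked Gray tensor product $(A,tA)\otimes(B,tB):=(A\otimes B,\overline{tA\otimes B_{\geq 0}\cup A_{\geq 0}\otimes tB})[(tA\otimes B_{>0})^{-1}]$ together with the marked suspension, the generators of the marking on $[C,1]\otimes[D,1]$ are of the form $\Sigma(x)\otimes y$ for $x\in tC$ together with $z\otimes\Sigma(w)$ for $w\in tD$, with all cells of the form $\Sigma(x)\otimes y'$ for $y'$ of positive dimension inverted. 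Each such generator is visibly in the image of exactly one of the five pieces, and conversely each marked cell of a piece maps to a marked cell of $[C,1]\otimes[D,1]$, so the two closures and localizations match. The second equivalence, concerning $[C,1]\otimes[1]^\sharp$, is proven in exactly the same way, the only new point being the extra marking coming from the marked $1$-cell of $[1]^\sharp$, which is carried by the pieces $[C,1]\vee[1]^\sharp$, $[C\otimes[1]^\sharp,1]$, and $[1]^\sharp\vee[C,1]$.

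The main obstacle is the combinatorial bookkeeping of markings across the decomposition, in particular keeping track of how the asymmetric localization step $[(tA\otimes B_{>0})^{-1}]$ in the definition of the marked Gray tensor product interacts with the suspension appearing on each piece; one must verify that cells forced to be invertible on one side correspond precisely to cells forced to be invertible on the other. As in the proof of proposition \ref{prop:an other description of the suspension marked}, this is a routine but careful unwinding of definitions and may reasonably be left to the reader once the underlying equivalence is established.
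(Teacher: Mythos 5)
Your proposal takes essentially the same route as the paper, whose proof simply notes that the statement follows directly from the definition of the marked Gray tensor product together with proposition \ref{prop:decomposition}; your added detail on constructing the cocone and matching the markings is a spelled-out version of that same argument. One small caution: since the underlying $\omega$-category of a marked Gray tensor product is a \emph{localization} of the unmarked one, the step where you claim the comparison is "already an equivalence on underlying $\omega$-categories" by proposition \ref{prop:decomposition} alone is slightly imprecise — that step already needs the localization bookkeeping you defer to the end — but as localizations are themselves colimits and hence commute with the decomposition, this is exactly the routine verification the paper leaves implicit.
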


\begin{proof}
This directly follows from the definition of the marked Gray tensor product and from proposition \ref{prop:decomposition}.
\end{proof}

\begin{lemma}
\label{lemma:cancel to get cartesian}
Let $C$ and $D$ be two $\omega$-categories. The canonical morphism $$\phi_{C,D}:(C\otimes D)[(C_{>0}\otimes D_{>0})^{-1}]\to C\times D$$ is an equivalence.
\end{lemma}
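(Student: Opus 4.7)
The plan is to reduce to the case where $C$ and $D$ are globular sums via colimit-preservation arguments, and then verify the equivalence in the strict setting using Steiner's framework (or, equivalently, by induction using the suspension decompositions already established).

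First I would check that both sides of $\phi_{C,D}$ preserve colimits in each variable. For $C\times D$, this is immediate from cartesian closure of $\ocat$ (Proposition \ref{prop:ncat cartesin closed}). For the source, the Gray tensor preserves colimits in each variable by construction, so it remains to see that the localization is compatible with colimits in $(C,D)$. Writing $C = \colim_i C_i$, Example \ref{exe:canonical exe of cloture by comp} shows that $(\colim_i C_i)_{>0}$ is the closure under composition of $\bigcup_i \operatorname{Im}((C_i)_{>0})$; by Lemma \ref{lemma:loc and col}, the localization at $C_{>0}\otimes D_{>0}$ therefore agrees with the localization at the (non-closed) union $\bigcup_i (C_i)_{>0} \otimes D_{>0}$, and writing the localization as a pushout makes this compatible with the colimit over $i$. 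Hence it suffices to verify the claim for $C = a$ and $D = b$ two globular sums.

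For globular sums, Theorem \ref{theo:strictness of Gray} gives that $a\otimes b$ is a strict $\omega$-category, while $a\times b$ is strict because $\N: \zocat\to \ocat$ is a right adjoint and therefore preserves the cartesian product. In Steiner's framework (Section~1.2 of \cite{loubaton2024categorical}), the augmented directed complex of $a\otimes b$ has basis $\{e_x\otimes e_y\}$ indexed by pairs of basis elements, and the cells of $a_{>0}\otimes b_{>0}$ correspond exactly to the tensors $e_x\otimes e_y$ with $|x|,|y|>0$.

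The crucial remaining step is to identify the localization $(a\otimes b)[(a_{>0}\otimes b_{>0})^{-1}]$, computed in $\ocat$, with $a\times b$. One natural route is induction on the dimensions of $a$ and $b$, using the suspension decomposition of Proposition \ref{prop:decomposition} (and Lemma \ref{lemma:equation 1}) to rewrite $[a',1]\otimes[b',1]$ as a pushout of lower-dimensional pieces, together with the analogous pushout description of $[a',1]\times[b',1]$. The inductive hypothesis lets us replace each localized Gray tensor in the decomposition by the corresponding cartesian product, and a diagram chase identifies the resulting colimit with $a\times b$. Alternatively, in Steiner's framework one can verify directly that setting the mixed basis elements $e_x\otimes e_y$ (for $|x|,|y|>0$) to units in the ADC yields precisely the ADC of the cartesian product $a\times b$.

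The principal obstacle is this last combinatorial identification: whether approached through Steiner's theory (tracking source and target maps on mixed basis elements) or through iterated pushouts, careful bookkeeping is required. The strictness of the relevant objects, however, means the entire argument can be carried out using concrete augmented directed complexes, making the verification tractable.
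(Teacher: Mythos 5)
Your reduction to globular sums is fine, and your care in checking that the localization is compatible with colimits (via Example \ref{exe:canonical exe of cloture by comp} and Lemma \ref{lemma:loc and col}) is if anything more explicit than the paper's one-line reduction. The problem is that the step you yourself flag as ``the crucial remaining step'' is where the entire proof lives, and neither of your two proposed routes actually closes it. The Steiner/ADC shortcut does not work as stated: the localization $(a\otimes b)[(a_{>0}\otimes b_{>0})^{-1}]$ is a pushout along trivializations computed in the $\infty$-category $\ocat$, not a quotient computed in $\zocat$ or in the category of augmented directed complexes. There is no a priori reason this homotopy pushout is strict, or that it is computed by collapsing the mixed basis elements $e_x\otimes e_y$; its strictness is only known \emph{a posteriori}, precisely because the lemma identifies it with $a\times b$. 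Theorem \ref{theo:strictness of Gray} controls Gray tensors of strict objects, not localizations of them, so ``verify directly in the ADC that killing mixed cells gives the product'' presupposes essentially what is to be proved.

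Your inductive route is indeed the paper's strategy, but as written it is a plan rather than a proof, and the missing mechanics are exactly the nontrivial ones. The paper needs: the base case $\phi_{[1],[1]}$, extended to $\phi_{[k],[l]}$ by colimits; the associativity of $\otimes$ (Theorem \ref{theo:monoidal}) together with Lemma \ref{lemma:technicalities composition Gray2} to compute the localization of the \emph{triple} tensor $c\otimes[1]\otimes d$ at the union of mixed cells and identify it with $c\times[1]\times d$ (equation \eqref{eq:an eq}) — this three-fold interchange step does not appear in your sketch at all; Proposition \ref{prop:trivialization is epi} and Lemma \ref{lemma:loc and col} to push the localization through the colimit decomposition of Proposition \ref{prop:decomposition}, i.e.\ to know that localizing the suspension decomposition termwise computes the localization of $[c,1]\otimes[d,1]$; and finally the identification of the resulting colimits with $c\times d$ and with $[c,1]\times[d,1]$, which is not a formal diagram chase but rests on propositions 2.2.1.59 and 2.2.1.29 of \cite{loubaton2024categorical}. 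One also needs the observation that every globular sum of dimension $p$ is a colimit of suspensions $[f,1]$ with $|f|=p-1$ to run the double induction. Without these ingredients the argument is a correct outline of the right approach, but the proof itself is missing.
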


\begin{proof}
As $\phi_{\uvar,\uvar}$ commutes with limits, it is sufficient to demonstrate the result when $C$ and $D$ are globular sums $c,d$, and we will proceed by induction on $(|c|,|d|)$.

First, let us remark that $\phi_{[1],[1]}$ is an equivalence. By extension by colimit, this implies that $\phi_{[k],[l]}$ is an equivalence for any $k,l$. Suppose now the result is true for any globular sum of the stage $(n,m)$. In particular, this implies that $\phi_{C,D}$ is an equivalence for any $n$-category $C$ and $m$-category $D$. By the associativity of $\otimes$ and by lemma \ref{lemma:technicalities composition Gray2}, the induction hypothesis then implies that  
\begin{equation}
\label{eq:an eq}
\begin{array}{rcl}
((c\otimes [1])\otimes d)[\cup_{i+j+k=1}\left(c_{\geq 1-i}\otimes [1]_{\geq 1-j}\otimes d_{\geq 1-k}\right)^{-1} ]&\sim & (c \otimes [1])[(c_{>0}\otimes [1]_{>0})^{-1}]\times d\\
&\sim & c \times [1]\times d
\end{array}
\end{equation}
for any globular sums $c$ and $d$ such that $|c|=n-1$ and $|d|=m$. 

Proposition \ref{prop:decomposition} provides a canonical morphism $[c\otimes[1]\otimes d,1]\to [c,1]\otimes [d,1]$. By construction, this morphism sends a cell $[x\otimes y\otimes z,1]$ onto $[x,1]\otimes [z,1]$ when $y$ is the unique non-degenerate one-cell of $[1]$, and onto a unit if $|y|=0$ and $|x|+|z|>0$. 

The proposition \ref{prop:decomposition}, together with proposition \ref{prop:trivialization is epi} and lemma \ref{lemma:loc and col}, then implies that $([c,1]\otimes [d,1])[([c,1]_{>0}\otimes [d,1]_{>0})^{-1}]$ is the colimit of the diagram:
\[\begin{tikzcd}
	{[(c\otimes\{1\}\otimes d)[(c_{> 0}\otimes d_{>0})^{-1}],1]} & {[c,1]\vee[d,1]} \\
	{[(c\otimes[1]\otimes d)[\cup_{i+j=1}(c_{\geq i}\otimes [1]_{\geq j}\otimes d_{\geq i})^{-1}],1]} \\
	{[(c\otimes\{0\}\otimes d)[(c_{> 0}\otimes d_{>0})^{-1}],1]} & {[d,1]\vee[c,1]}
	\arrow[from=1-1, to=1-2]
	\arrow[from=1-1, to=2-1]
	\arrow[from=3-1, to=2-1]
	\arrow[from=3-1, to=3-2]
\end{tikzcd}\]
However, the equation \eqref{eq:an eq} implies that $(c\otimes[1]\otimes d)[\cup_{i+j=1}(c_{\geq i}\otimes [1]_{\geq j}\otimes d_{\geq i})^{-1}]$ is the colimit of the diagram
\[\begin{tikzcd}
	{c_0\times[1]} & {c_0\times[1]\times d} & {c\times [1]\times d} & {c\times[1]\times d_0} & {[1]\times d}
	\arrow[from=1-2, to=1-1]
	\arrow[from=1-2, to=1-3]
	\arrow[from=1-4, to=1-3]
	\arrow[from=1-4, to=1-5]
\end{tikzcd}\]
which is equivalent to $c\times d$ by \cite[proposition 2.2.1.59]{loubaton2024categorical}. 

Thus, $([c,1]\otimes [d,1])[([c,1]_{>0}\otimes [d,1]_{>0})^{-1}]$ is the colimit of the following diagram:
\[\begin{tikzcd}
	{[c\times d,1]} & {[c,1]\vee[d,1]} \\
	{[c\times d,1]} \\
	{[c\times d,1]} & {[d,1]\vee[c,1]}
	\arrow[from=1-1, to=1-2]
	\arrow[from=1-1, to=2-1]
	\arrow[from=3-1, to=2-1]
	\arrow[from=3-1, to=3-2]
\end{tikzcd}\]
and is then equivalent to $[c,1]\times [d,1]$ by \cite[proposition 2.2.1.29]{loubaton2024categorical}. As every globular sum of dimension $p$ is a colimit of globular sums of shape $[f,1]$ with $|f|=p-1$, this implies the case $(n,m+1)$. We demonstrate similarly the case $(n+1,m)$.
\end{proof}

\begin{prop}
\label{prop:sharp and times}
Let $C$ be an $\omega$-category and $D$ a marked $\omega$-category. The canonical morphism $$C^\sharp \otimes D \sim C^\sharp \times D$$ is an equivalence.
\end{prop}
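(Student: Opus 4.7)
The plan is to verify separately that both sides of the equivalence share the same underlying $\omega$-category and the same marking. For the underlying $\omega$-category, the definition of the marked Gray tensor product yields $(C^\sharp\otimes D)^\natural = (C\otimes D^\natural)[(C_{>0}\otimes D^\natural_{>0})^{-1}]$, which is equivalent to $C\times D^\natural$ by lemma \ref{lemma:cancel to get cartesian}, matching $(C^\sharp\times D)^\natural$. It remains to compare the two markings as subhomotopy types of positive-dimensional cells of $C\times D^\natural$. The marking of $C^\sharp\times D$ is the product marking: the positive-dimensional cells $(c,d)$ with $d\in tD$, since $tC^\sharp = C_{>0}$ automatically contains every positive-dimensional cell. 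The marking of $C^\sharp\otimes D$ is the closure under composition of the image under the projection $p:C\otimes D^\natural\to C\times D^\natural$ of $C_{>0}\otimes D^\natural_{\geq 0}\cup C_{\geq 0}\otimes tD$.

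For the inclusion $t(C^\sharp\otimes D)\subseteq t(C^\sharp\times D)$, I would trace through the projection: a generating cell $a\otimes b$ with $a\in C_n$ and $b\in D^\natural_m$ satisfying $n>0$ or $b\in tD$ projects under $p$ to the $(n+m)$-cell $(\Ib^m(a),\Ib^n(b))$ of $C\times D^\natural$. In either case $\Ib^n(b)$ belongs to $tD$: if $n>0$ then $\Ib^n(b)$ is a unit and units are always marked, while if $b\in tD$ then $\Ib^n(b)\in tD$ since $tD$ is closed under units. As the image lies in the marking of $C^\sharp\times D$, which is itself closed under composition, the full closure is contained.

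For the reverse inclusion, given a cell $(c,d)\in (C\times D^\natural)_k$ of dimension $k>0$ with $d\in tD$, let $c^+$ denote the $0$-target of $c$ and $d^-$ the $0$-source of $d$. The two $k$-cells $(c,\Ib^k(d^-))$ and $(\Ib^k(c^+),d)$ are $0$-composable (their respective $0$-target and $0$-source both equal $(c^+,d^-)$) with composite $(c,d)$. The first factor is the image under $p$ of $c\otimes d^-\in C_{>0}\otimes D^\natural_{\geq 0}$ (using that $c$ is positive-dimensional), and the second is the image of $c^+\otimes d\in C_{\geq 0}\otimes tD$. Hence $(c,d)$ lies in the closure under composition, completing the comparison. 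The main technical input required is the identification of the image of a Gray-tensor cell under $p$: the canonical map $\Db_n\otimes \Db_m\to \Db_n\times \Db_m$ should send the top cell $\iota_{n,m}$ to $(\Ib^m(e_n),\Ib^n(e_m))$, which can be verified by examining the two projections to $\Db_n$ and $\Db_m$ separately.
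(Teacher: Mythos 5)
Your proposal is correct, and it shares the paper's essential input: the identification of the underlying $\omega$-category of $C^\sharp\otimes D$ with $C\times D^\natural$ via lemma \ref{lemma:cancel to get cartesian}. Where you diverge is in how the markings are matched. The paper writes $C^\sharp\otimes D$, directly from the definition of the marked Gray tensor product, as a colimit of a zig-zag whose outer terms $\tau_0C\otimes D$ and $C^\sharp\otimes\tau_0 D$ carry the marking data, and then invokes lemma \ref{lemma:cancel to get cartesian} on the middle term, leaving the final identification of the glued marking with the product marking implicit. You instead compare the two markings by a two-sided inclusion at the level of cells: the image of a generator $a\otimes b$ under the projection is $(\Ib^{|b|}(a),\Ib^{|a|}(b))$, whose second component is always in $tD$, giving one inclusion; and the whiskering decomposition $(c,d)\sim(\Ib^k(c^+),d)\circ_0(c,\Ib^k(d^-))$ exhibits any marked cell of $C^\sharp\times D$ as a composite of images of generators from $C_{>0}\otimes D_{\geq 0}$ and $C_{\geq 0}\otimes tD$, giving the other. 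This is more elementary and in effect spells out exactly the step the paper's proof compresses into ``which concludes the proof''; the cost is that you need the explicit computation of the projection on Gray cells (which you correctly justify componentwise) and the observation that the marking of the localization is the closure of the image of the marking, both of which follow from the description of colimits in $\ocatm$. Your decomposition argument is also essentially the same mechanism that underlies lemma \ref{lemma:marking on left cartesian fibration}, so it fits naturally with the paper's toolkit.
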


\begin{proof}
By construction, $C^\sharp\otimes D$ is the colimit of the diagram
\[\begin{tikzcd}
	{\tau_0C\otimes D} & {\tau_0C\otimes D^\flat} & {(C\otimes D^\flat)[(C_{>0}\otimes D_{>0})^{-1}]} & {C\otimes\tau_0D^\flat} & {C^\sharp\otimes\tau_0D^\flat}
	\arrow[from=1-2, to=1-1]
	\arrow[from=1-2, to=1-3]
	\arrow[from=1-4, to=1-3]
	\arrow[from=1-4, to=1-5]
\end{tikzcd}\]
By lemma \ref{lemma:cancel to get cartesian}, $(C\otimes D^\flat)[(C_{>0}\otimes D_{>0})^{-1}]$ is equivalent to $C\times D^\flat$, which concludes the proof.
\end{proof}

\begin{prop}
\label{prop:otimes and epi}
Let $C, D$ be two marked $\omega$-categories. The morphism $C\otimes D\to C\times \tau_0^iD$ is an epimorphism.
\end{prop}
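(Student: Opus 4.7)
The plan is to factor the natural map as
\[ C \otimes D \to C \otimes \tau_0^i D \xrightarrow{\sim} C \times \tau_0^i D, \]
and to show that the first arrow is an epimorphism while the second is an equivalence.

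I first establish that $D \to \tau_0^i D$ is an epimorphism in $\ocatm$, by factoring it as $D \to D^\sharp \to \tau_0^i D$. The map $D \to D^\sharp$ is an epimorphism: the pushout $D^\sharp \coprod_D D^\sharp$ has underlying $\omega$-category $D$ and marking given by the closure by composition of the union of the two copies of $D_{>0}$, which is just $D_{>0}$, so the pushout equals $D^\sharp$. The map $D^\sharp \to \tau_0^i D$ is obtained as the pushout in $\ocatm$ of the (coproduct of) marked trivializations $\coprod_{a \in D_{>0}} (\Db_{|a|})_t \to \coprod_{a \in D_{>0}} \Db_{|a|-1}^\flat$: the underlying computation reproduces the defining pushout for $\tau_0^i D$, while the marking is trivial since all positive cells of $\tau_0^i D$ are units. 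Each marked trivialization $\Ib_n : (\Db_{n+1})_t \to \Db_n^\flat$ is an epimorphism in $\ocatm$ (the pushout $\Db_n^\flat \coprod_{(\Db_{n+1})_t} \Db_n^\flat$ has underlying $\omega$-category $\Db_n$ by proposition \ref{prop:trivialization is epi} and trivial marking, hence equals $\Db_n^\flat$), and epimorphisms are stable under colimits, so $D^\sharp \to \tau_0^i D$ is an epimorphism.

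Since $C \otimes -$ is cocontinuous by theorem \ref{theo:Gray tensor product is monoidal on marked}, it preserves epimorphisms, yielding that $C \otimes D \to C \otimes \tau_0^i D$ is an epimorphism. For the second arrow, set $X := \tau_0^i D$: this is a homotopy type whose positive cells are all units, so viewed as a marked $\omega$-category it carries the trivial marking, and one has $X \cong \colim_X [0]$ in $\ocatm$. Both $C \otimes -$ and $C \times -$ preserve colimits (the latter by cartesian closedness of $\ocatm$) and both send $[0]$ to $C$, whence $C \otimes X \cong \colim_X C \cong C \times X$ naturally. The main subtlety is realizing $D^\sharp \to \tau_0^i D$ via marked rather than plain trivializations, which relies on the fact that every positive cell of $D^\sharp$ is already marked, so the localization steps in the construction of $\tau_0^i$ can be carried out by pushouts of $\Ib_n : (\Db_{n+1})_t \to \Db_n^\flat$ rather than of $\Ib_n : \Db_{n+1} \to \Db_n$.
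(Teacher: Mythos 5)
Your overall architecture is the same as the paper's: reduce everything to showing that $D\to \tau_0^iD$ is an epimorphism of marked $\omega$-categories, push this through the cocontinuous functor $C\otimes\uvar$, and identify $C\otimes \tau_0^iD$ with $C\times \tau_0^iD$ by writing $\tau_0^iD$ as a colimit of copies of $[0]$. That last identification, and your computation showing $D\to D^\sharp$ is an epimorphism, are correct. The gap is in your treatment of $D^\sharp\to \tau_0^iD$: you speak of "the defining pushout for $\tau_0^iD$", but $\tau_0^i$ is not defined by a pushout — it is the left adjoint of the inclusion $\Hot\to\ocat$. The assertion that the one-step localization $D[(D_{>0})^{-1}]$ already coincides with $\tau_0^iD$ is a substantive claim: the colimit description of cells only gives that every positive cell of that pushout is a unit up to homotopy, i.e. that each map $\Ib:P_n\to P_{n+1}$ is essentially surjective, whereas being a homotopy type requires these maps to be equivalences of homotopy types. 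The missing injectivity is exactly where the saturation maps $\Sigma^kE_{eq}\to\Db_k$ must enter (for non-saturated Segal-type objects the claim is false), and neither the paper nor your argument establishes it. Your chain of epimorphisms genuinely depends on it: without it you only obtain that $D^\sharp\to P$ is an epimorphism for the pushout $P$, and the remaining comparison $P\to\tau_0^iD$ is untreated.

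The paper's (very terse) proof avoids this issue. Homotopy types are precisely the $\omega$-categories local with respect to the trivializations $\Ib_n:\Db_{n+1}\to\Db_n$, so $D\to\tau_0^iD$ is the unit of the corresponding reflective localization and therefore lies in the cocomplete closure $\widehat{\{\Ib_n\}}$ of the trivializations (it is built from the $\Ib_n$ by pushouts and transfinite composition, not necessarily by a single pushout); since epimorphisms form a cocomplete class and each $\Ib_n$ is an epimorphism by proposition \ref{prop:trivialization is epi}, the unit is an epimorphism. From there your marked bookkeeping is also heavier than needed: an epimorphism on underlying $\omega$-categories whose codomain carries the forced, unit-only marking is automatically an epimorphism in $\ocatm$, by the description of pushouts of marked $\omega$-categories (the marking of the codiagonal pushout is the closure of the image of the marking of the codomain). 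So keep your strategy, but either prove the identification $D[(D_{>0})^{-1}]\simeq\tau_0^iD$ using saturation, or replace that step by the softer "unit lies in $\widehat{\{\Ib_n\}}$" argument.
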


\begin{proof}
The proposition \ref{prop:trivialization is epi} implies that $D\to  \tau_0^iD$ is an equivalence. The result then follows from the fact that $C\otimes\uvar$ preserves colimit. 
\end{proof}

\section{Two-sided fibrations}
\label{section:Two-sided fibrations}

  \subsection{Biinitial morphisms}

\begin{definition}
Let $A$ be a marked $\omega$-category. We denote by $\I_{A,B}$ the set of morphisms of $\ocatm_{/A\times B}$ of shape 
\[\begin{tikzcd}
	& {X\otimes [1]^\sharp} \\
	{X\otimes \{0\}} & A\times B
	\arrow[from=1-2, to=2-2]
	\arrow[from=2-1, to=1-2]
	\arrow[from=2-1, to=2-2]
\end{tikzcd}\]
such that $X\otimes [1]^\sharp\to A$ factors through $X$, and $\F_{A,B}$ the set of morphisms of $\ocatm_{/A\times B}$ of shape 
\[\begin{tikzcd}
	& {X\otimes [1]^\sharp} \\
	{X\otimes \{1\}} & A\times B
	\arrow[from=1-2, to=2-2]
	\arrow[from=2-1, to=1-2]
	\arrow[from=2-1, to=2-2]
\end{tikzcd}\]
such that $X\otimes [1]^\sharp\to B$ factors through $X$, where $X$ is either $\Db_n^\flat$ or $(\Db_n)_t$.

We denote by $\IF_{A, B}$ the set of morphisms over $A\times B$ that are either in $\I_{A,B}$ or in $\F_{A,B}$. A morphism is \textit{biinitial over $(A,B)$} if it is in $\widehat{\IF_{A,B}}$. Biinitial morphisms are then stable under colimits, transfinite composition, pushouts, left cancellation, and retracts.
\end{definition}

\begin{remark}
As $X\otimes [1]^\sharp\to X$ is an epimorphism by proposition \ref{prop:otimes and epi}, the fact that $X\otimes[1]^\sharp\to A$ or $X\otimes[1]^\sharp\to B$ indeed factors through $X$ is a property of this morphism and not a structure.
\end{remark}

\begin{remark}
\label{rem:stabilites of biinitial by duality}
Let $i$ be a morphism over $A\times B$.
The proposition \ref{prop:dualities marked otimes} implies that $i$ is biinitial over $(A,B)$ if and only if $i^\circ$ is biinitial over $(B^\circ,A^\circ)$.
\end{remark}

\begin{remark}
\label{rem:biinitial stable by composition}. Let $A\to A'$ and $B\to B'$ be two morphisms. The functor $\ocatm_{/A\times B}\to \ocatm_{/A'\times B'}$ sends $\widehat{\IF_{A,B}}$ onto $\widehat{\IF_{A',B'}}$.
\end{remark}

\begin{example}
\label{exe:the easiest example of initial and final morphism} By the stability of initial and final morphisms under colimits, for any marked $\omega$-category $C$, a morphism \[\begin{tikzcd} & {C\otimes [1]^\sharp} \\ {C\otimes \{0\}} & {A\times B} \arrow[from=1-2, to=2-2] \arrow[from=2-1, to=1-2] \arrow[from=2-1, to=2-2] \end{tikzcd}\] is biinitial as long as $C\otimes[1]^\sharp$ factors through $A$, and a morphism \[\begin{tikzcd} & {C\otimes [1]^\sharp} \\ {C\otimes \{1\}} & {A\times B} \arrow[from=1-2, to=2-2] \arrow[from=2-1, to=1-2] \arrow[from=2-1, to=2-2] \end{tikzcd}\] is biinitial as long as $C\otimes[1]^\sharp$ factors through $B$.
\end{example}

\begin{lemma}
\label{lemma:biinitial stable by otimes} If $i:C\to D$ is initial over $(A,B)$, then $E\otimes i$ is also a deformation retract over $(E\otimes A,E\otimes B)$, and so is $E\times i$ over $(E\times A,E\times B)$.
\end{lemma}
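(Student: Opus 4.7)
The plan is to reduce to the generating initial morphisms $i\colon X\otimes\{0\} \to X\otimes[1]^\sharp$ (with the required factoring $X\otimes[1]^\sharp\to X\to A$) and to exploit the monoidal structure on $\ocatm$ in order to rewrite $E\otimes i$ and $E\times i$ again as deformation retracts. Since $E\otimes\uvar$ and $E\times\uvar$ preserve colimits, and both the retract and the homotopy witnessing a deformation retract are natural, it suffices to check the claim on a single such generator. Note that the ``base'' $X$ in the conclusion is allowed to be an arbitrary marked $\omega$-category (as in the extension recorded in Example \ref{exe:the easiest example of initial and final morphism}), not just a globe.

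For $E\otimes i$, the associativity of the Gray tensor product (Theorem \ref{theo:Gray tensor product is monoidal on marked}) gives canonical equivalences
\[
E\otimes(X\otimes\{0\}) \sim (E\otimes X)\otimes\{0\}, \qquad E\otimes(X\otimes[1]^\sharp) \sim (E\otimes X)\otimes[1]^\sharp,
\]
under which $E\otimes i$ becomes the canonical inclusion $(E\otimes X)\otimes\{0\} \to (E\otimes X)\otimes[1]^\sharp$. The required factoring $(E\otimes X)\otimes[1]^\sharp\to E\otimes X\to E\otimes A$ is obtained by applying the functor $E\otimes\uvar$ to the factoring $X\otimes[1]^\sharp\to X\to A$ that comes with $i$. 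This exhibits $E\otimes i$ as a deformation retract over $(E\otimes A, E\otimes B)$ with new base $E\otimes X$.

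For $E\times i$, the cartesian product does not commute with $\otimes[1]^\sharp$, so one cannot identify $E\times(X\otimes[1]^\sharp)$ directly with $Y\otimes[1]^\sharp$. The plan is to route through Proposition \ref{prop:sharp and times}: applying the first step with $E$ replaced by $E^\sharp$ one obtains the result for $E^\sharp\otimes i \sim E^\sharp\times i$ over $(E^\sharp\times A, E^\sharp\times B)$, and then one descends along the canonical comparison $E\to E^\sharp$ (which is the identity on underlying $\omega$-categories and enlarges the marking to the full one) to $E\times i$. The principal obstacle is precisely this descent: one must verify that the new base $E\otimes X$ and the factoring condition through $E\times A$ survive the passage from $E^\sharp$-markings to $E$-markings, using that $E\times\uvar$ and $E^\sharp\times\uvar$ differ only in how markings are transported. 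The $E\otimes i$ case is a clean application of associativity, while the $E\times i$ case is where the marking bookkeeping does the real work.
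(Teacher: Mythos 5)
Your reduction to the generating morphisms is sound ($E\otimes\uvar$ is cocontinuous by construction, $E\times\uvar$ is cocontinuous because $\ocatm$ is cartesian closed, and the class of morphisms $j$ over $A\times B$ such that $E\otimes j$ is biinitial over $(E\otimes A,E\otimes B)$ is cocomplete), and your treatment of $E\otimes i$ — associativity from Theorem \ref{theo:Gray tensor product is monoidal on marked} to rewrite $E\otimes(X\otimes\{0\})\to E\otimes(X\otimes[1]^\sharp)$ as $(E\otimes X)\otimes\{0\}\to(E\otimes X)\otimes[1]^\sharp$, transport of the factoring condition, then Example \ref{exe:the easiest example of initial and final morphism} to allow the arbitrary base $E\otimes X$ — is correct. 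The paper's own proof is only the word ``straightforward,'' so there is nothing to compare against, but this half is fine.

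The $E\times i$ half has a genuine gap exactly where you flag it, and the descent you propose does not go through. Knowing that $E^\sharp\times i\sim E^\sharp\otimes i$ is biinitial over $(E^\sharp\times A,E^\sharp\times B)$ gives no formal handle on $E\times i$: biinitiality is membership in the cocompletely generated class $\widehat{\IF_{A,B}}$, and there is no descent principle along the marking-enlarging comparison $E\to E^\sharp$. The only stability available in that direction (Remark \ref{rem:biinitial stable by composition}) pushes biinitial morphisms \emph{forward} to larger bases, which is the wrong way round, and shrinking markings can genuinely destroy biinitiality — the $\sharp$ on $[1]$ in the generators is essential. So the step you call ``the principal obstacle'' is not marking bookkeeping; it is the entire content of this half of the lemma and is left unproved. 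The workable route is the one the paper sets up a few paragraphs later: the generator $X\otimes\{0\}\to X\otimes[1]^\sharp$ carries a left Gray deformation retract structure (retraction $X\otimes[1]^\sharp\to X$, deformation $X\otimes\min$ via Proposition \ref{prop:sharp and times}), and this structure \emph{is} preserved by $E\times\uvar$, because the extra $[1]^\sharp$ can be contracted into the $E$-factor through the marked trivialization $E\otimes[1]^\sharp\to E$ of Example \ref{exe:cotimes 1 to c is a trivialization}; explicitly, the deformation of $E\times(X\otimes[1]^\sharp)$ is the composite
$$\bigl(E\times(X\otimes[1]^\sharp)\bigr)\otimes[1]^\sharp\longrightarrow\bigl(E\otimes[1]^\sharp\bigr)\times\bigl((X\otimes[1]^\sharp)\otimes[1]^\sharp\bigr)\longrightarrow E\times(X\otimes[1]^\sharp).$$
One then concludes by the retract-of-a-pushout argument of Proposition \ref{prop:left Gray deformation retract are initial}, whose proof does not use the present lemma, so there is no circularity despite the forward reference.
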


\begin{proof}
Straightforward.
\end{proof}

\begin{lemma}
\label{lemma:deformation retract stable by suspension} If $i:C\to D$ is biinitial $(A,B)$, then $[i,1]$ is a right deformation retract over $([B,1],[A,1])$.
\end{lemma}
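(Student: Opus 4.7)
The strategy is to reduce the claim to the generating morphisms of $\widehat{\IF_{A,B}}$ and then to exhibit the suspension of each such generator as a morphism in $\widehat{\F_{[B,1],[A,1]}}$ (the class of right deformation retracts) by applying proposition \ref{prop:decomposition marked case}. Both $\widehat{\IF_{A,B}}$ and $\widehat{\F_{[B,1],[A,1]}}$ are closed under colimits, transfinite composition, pushouts, left cancellation and retracts, and since the marked suspension $[\uvar,1]:\ocatm\to\ocatm_{\bullet,\bullet}$ preserves colimits, it is enough to treat the cases $i\in\I_{A,B}$ and $i\in\F_{A,B}$ separately.

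Consider first a generator $i:X\otimes\{0\}\to X\otimes[1]^\sharp$ in $\I_{A,B}$, where $X$ is of the form $\Db_n^\flat$ or $(\Db_n)_t$ and where the composite $X\otimes[1]^\sharp\to A\times B\to A$ factors through $X$. Applying the suspension gives $[i,1]:[X,1]\to [X\otimes[1]^\sharp,1]$. By proposition \ref{prop:decomposition marked case}, the target $[X\otimes[1]^\sharp,1]$ appears as the central component in the colimit diagram presenting $[X,1]\otimes[1]^\sharp$, together with canonical maps from $[X\otimes\{0\},1]=[X,1]$ and $[X\otimes\{1\},1]=[X,1]$. The inclusion $[X,1]\otimes\{1\}\hookrightarrow [X,1]\otimes[1]^\sharp$ is a candidate generator of $\F_{[B,1],[A,1]}$: the factorization condition $[X,1]\otimes[1]^\sharp\to[B,1]\times[A,1]\to[A,1]$ factoring through $[X,1]$ follows from suspending the assumption $X\otimes[1]^\sharp\to A$ factors through $X$, noting that each piece in the decomposition of $[X,1]\otimes[1]^\sharp$ inherits its map to $[A,1]$ from $X\to A$. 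I would then express $[i,1]$ via a pushout involving this generator and the coprojection $[X\otimes[1]^\sharp,1]\to[X,1]\otimes[1]^\sharp$, and apply left cancellation in $\widehat{\F_{[B,1],[A,1]}}$ to conclude.

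The case $i\in\F_{A,B}$ is handled symmetrically, using the inclusion $[X,1]\otimes\{0\}\hookrightarrow[X,1]\otimes[1]^\sharp$ as generator (where now the $[B,1]$-projection condition is in play), or alternatively by invoking remark \ref{rem:stabilites of biinitial by duality} together with proposition \ref{prop:dualities marked otimes} to reduce to the already-treated case.

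\textbf{Main obstacle.} The delicate point is bookkeeping the five-piece colimit of proposition \ref{prop:decomposition marked case} accurately enough to recognize $[i,1]$ as sitting inside a pushout whose other leg is a bona fide generating morphism of $\F_{[B,1],[A,1]}$, and to verify that the required factorization condition through $[X,1]$ genuinely holds for \emph{each} component of the decomposition. In particular, the swap $(A,B)\mapsto([B,1],[A,1])$ corresponds geometrically to the asymmetric wedge structure $[X,1]\vee[1]^\sharp$ versus $[1]^\sharp\vee[X,1]$ at the two ends of the decomposition, and the care required is essentially to match up the appropriate end of the cylinder on $X$ with the correct projection after suspension.
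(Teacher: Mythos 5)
There is a genuine gap at the final step of your argument. The five-term decomposition of \cref{prop:decomposition marked case} runs the wrong way for what you want: it exhibits $[X,1]\otimes[1]^\sharp$ as obtained from the middle piece $[X\otimes[1]^\sharp,1]$ by attaching the two wedges, so the only thing pushouts give you for free is that the \emph{coprojection} $q:[X\otimes[1]^\sharp,1]\to[X,1]\otimes[1]^\sharp$ lies in $\widehat{\IF_{[B,1],[A,1]}}$ (it is a composite of pushouts of edge inclusions, once you have extended the structure map degenerately over the extra edges). Even granting that the composite $q\circ[i,1]$ is biinitial, deducing that $[i,1]$ itself is biinitial from "$q$ and $q\circ[i,1]$ are biinitial" requires \emph{right} cancellation (two-out-of-three), which the class $\widehat{\IF}$ is not known to satisfy; the listed stability is under left cancellation only, which cancels a biinitial map \emph{pre}composed with the map you are after and so never arises here. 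There is also a smaller inaccuracy: by the $\triangledown$-formulas underlying \cref{prop:decomposition marked case}, the suspended endpoint $[X\otimes\{0\},1]$ maps into the wedge $[1]^\sharp\vee[X,1]$ through a whiskering by the marked edge, so $q\circ[i,1]$ is not literally the generator $[X,1]\otimes\{1\}\to[X,1]\otimes[1]^\sharp$; and on the $[B,1]$-side the structure map does not "inherit" itself over the wedge pieces — one has to choose a degenerate extension on the extra edges, which you do not address. So the pushout-plus-left-cancellation mechanism, as stated, does not close.

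Two ways to repair it. The paper avoids generators entirely: by \cref{prop:an other description of the suspension marked}, $[C,1]$ is the colimit of $\partial[1]^\flat\leftarrow\partial[1]^\flat\otimes C^{\circ}\to[1]^\flat\otimes C^{\circ}$, so $[i,1]$ is a colimit of the maps $\partial[1]^\flat\otimes i^{\circ}$ and $[1]^\flat\otimes i^{\circ}$; one then concludes from \cref{rem:stabilites of biinitial by duality} (passing to $i^{\circ}$ over $(B^{\circ},A^{\circ})$), \cref{lemma:biinitial stable by otimes}, and stability of biinitial morphisms under colimits. Alternatively, if you want to keep your generator-by-generator reduction (which is itself fine), the legitimate closure property to use is \emph{retracts} rather than cancellation: each generator of $\IF_{A,B}$ is a left (or right) Gray deformation retract over $(A,B)$ by \cref{prop:example of left Gray deformation}; show that suspension turns this into a right (resp.\ left) Gray deformation retract over $([B,1],[A,1])$ — the suspended deformation $[X,1]\otimes[1]^\sharp\to[X\otimes[1]^\sharp,1]\to[X,1]$ is exactly what \cref{prop:decomposition marked case} lets you build by gluing over the two wedges — and then invoke \cref{prop:left Gray deformation retract are initial}, whose retract argument is precisely the substitute for the cancellation step your sketch needs.
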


\begin{proof}
By proposition \ref{prop:an other description of the suspension marked}, the morphism $[i,1]:[C,1]\to [D,1]$ is the horizontal colimit of the diagram:
% https://q.uiver.app/#q=WzAsOSxbMiwwLCJbMV1dXFxmbGF0XFxvdGltZXMgQ15cXGNpcmMiXSxbMiwxLCJbMV1dXFxmbGF0XFxvdGltZXMgRF5cXGNpcmMiXSxbMSwwLCJcXHBhcnRpYWxbMV1dXFxmbGF0XFxvdGltZXMgQ15cXGNpcmMiXSxbMSwxLCJcXHBhcnRpYWxbMV1dXFxmbGF0XFxvdGltZXMgRF5cXGNpcmMiXSxbMCwwLCJcXHBhcnRpYWxbMV1dXFxmbGF0Il0sWzAsMSwiXFxwYXJ0aWFsWzFdXlxcZmxhdCJdLFsxLDIsIihcXHBhcnRpYWxbMV1dXFxmbGF0XFxvdGltZXMgQilcXHRpbWVzIChcXHBhcnRpYWxbMV1dXFxmbGF0XFxvdGltZXMgQSkiXSxbMCwyLCIoXFxwYXJ0aWFsWzFdXlxcZmxhdClcXHRpbWVzIChcXHBhcnRpYWxbMV1dXFxmbGF0KSJdLFsyLDIsIihbMV1dXFxmbGF0XFxvdGltZXMgQilcXHRpbWVzIChbMV1dXFxmbGF0XFxvdGltZXMgQSkiXSxbMyw1XSxbMiw0XSxbMiwwXSxbMywxXSxbNSw3XSxbNiw3XSxbNiw4XSxbMyw2XSxbMSw4XSxbMCwxXSxbMiwzXSxbNCw1XV0=
\[\begin{tikzcd}
	{\partial[1]^\flat} & {\partial[1]^\flat\otimes C^\circ} & {[1]^\flat\otimes C^\circ} \\
	{\partial[1]^\flat} & {\partial[1]^\flat\otimes D^\circ} & {[1]^\flat\otimes D^\circ} \\
	{(\partial[1]^\flat)\times (\partial[1]^\flat)} & {(\partial[1]^\flat\otimes B)\times (\partial[1]^\flat\otimes A)} & {([1]^\flat\otimes B)\times ([1]^\flat\otimes A)}
	\arrow[from=1-1, to=2-1]
	\arrow[from=1-2, to=1-1]
	\arrow[from=1-2, to=1-3]
	\arrow[from=1-2, to=2-2]
	\arrow[from=1-3, to=2-3]
	\arrow[from=2-1, to=3-1]
	\arrow[from=2-2, to=2-1]
	\arrow[from=2-2, to=2-3]
	\arrow[from=2-2, to=3-2]
	\arrow[from=2-3, to=3-3]
	\arrow[from=3-2, to=3-1]
	\arrow[from=3-2, to=3-3]
\end{tikzcd}\]
The result then follows from remark \ref{rem:stabilites of biinitial by duality} and from lemma \ref{lemma:biinitial stable by otimes}.
\end{proof}

\begin{cor}
\label{cor:when glob inclusion are final and initial} Let $(\Db_{n+1})_t\to A\times B$ be a morphism such that $(\Db_{n+1})_t\to A$ factors through $\Db_{n}^\flat$. If $n$ is even, $i_{n}^-:\Db_{n}^\flat\to (\Db_{n+1})_t$ is biinitial over $(A,B)$, and if $n$ is odd, $i_{n}^+:\Db_{n}^\flat\to (\Db_{n+1})_t$ is biinitial over $(A,B)$. Similarly, if $(\Db_{n+1})_t\to B$ factors through $\Db_{n}^\flat$, and $n$ is even, $i_{n}^+:\Db_{n}^\flat\to (\Db_{n+1})_t$ is biinitial over $(A,B)$, and if $n$ is odd, $i_{n}^-:\Db_{n}^\flat\to (\Db_{n+1})_t$ is biinitial over $(A,B)$.
\end{cor}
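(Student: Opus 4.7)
My plan is to argue by induction on $n$, proving the $A$-side and $B$-side statements simultaneously. The base case $n=0$ is immediate from the definitions: the morphism $i_0^-:\Db_0^\flat\to(\Db_1)_t$ coincides with $X\otimes\{0\}\to X\otimes[1]^\sharp$ for $X=[0]$, so the hypothesis that $(\Db_1)_t\to A$ factors through $\Db_0^\flat$ is exactly the condition placing $i_0^-$ in $\I_{A,B}\subset\widehat{\IF_{A,B}}$. The $B$-side base case is analogous via $\F_{A,B}$.

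For the inductive step, given $(\Db_{n+2})_t\to A\times B$ with the $A$-component factoring through $\Db_{n+1}^\flat$, I would use the identifications $(\Db_{n+2})_t=[(\Db_{n+1})_t,1]$ and $\Db_{n+1}^\flat=[\Db_n^\flat,1]$, together with the suspension--hom adjunction, to desuspend the data into a morphism $(\Db_{n+1})_t\to A'\times B'$ where $A':=\hom_A(a_0,a_1)$ and $B':=\hom_B(b_0,b_1)$; the given factorization descends to $(\Db_{n+1})_t\to A'$ factoring through $\Db_n^\flat$. Precomposing with the swap $B'\times A'\cong A'\times B'$, I would reinterpret this as a $B$-side factorization with respect to the pair $(B',A')$ and invoke the $B$-side inductive hypothesis (with the opposite parity) to obtain that the appropriate $i_n^\pm$ is biinitial over $(B',A')$. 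Applying Lemma \ref{lemma:deformation retract stable by suspension} then suspends this to biinitiality of $[i_n^\pm,1]=i_{n+1}^\pm$ over $([A',1],[B',1])$, and a pushforward along the counits $[A',1]\to A$ and $[B',1]\to B$, via Remark \ref{rem:biinitial stable by composition}, concludes the argument.

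The main obstacle is that Lemma \ref{lemma:deformation retract stable by suspension} swaps the order of its two arguments: a morphism biinitial over $(X,Y)$ suspends to one biinitial over $([Y,1],[X,1])$. A direct induction matching $A$-side to $A$-side would therefore yield biinitiality over $(B,A)$ rather than the desired $(A,B)$. The fix outlined above -- interleaving the two cases and precomposing with a swap of factors -- precisely cancels the swap introduced by the lemma. This interleaving also accounts for the parity alternation in the statement: suspension preserves the sign $\pm$ while the swap exchanges which factor plays the role of ``$A$'', so the $\pm$ index must flip between $n$ and $n+1$, in agreement with the corollary.
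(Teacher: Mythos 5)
Your proposal is correct and takes essentially the same route as the paper, whose entire proof reads ``the case $n=0$ is trivial, and we can proceed by induction on $n$ using Lemma \ref{lemma:deformation retract stable by suspension}''. The interleaving of the $A$- and $B$-side statements via the factor swap, which you spell out to cancel the swap $(A,B)\mapsto([B,1],[A,1])$ built into that lemma (and which produces the parity alternation), is exactly the bookkeeping the paper leaves implicit.
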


\begin{proof}
The case $n=0$ is trivial, and we can proceed by induction on $n$ using proposition \ref{lemma:deformation retract stable by suspension}.
\end{proof}

\begin{definition}
A \textit{marked trivialization over $(A,B)$} is a morphism in the smallest cocomplete class of morphisms that includes $\Ib_n:(\Db_{n+1})_t\to \Db_n^{\flat}$ for any \( n \). Marked trivialization are then stable under colimits, transfinite composition, pushouts, left cancellation, and retracts.
\end{definition}

\begin{example}
\label{exe:cotimes 1 to c is a trivialization} The proposition 3.2.1.8 of \cite{loubaton2024categorical} states that $C\otimes [1]^\sharp\to C$ is a marked trivialization.
\end{example}

\begin{prop}
A marked trivialization $i:C\to B$ over $A\times B$ is biinitial over $(A,B)$.
\end{prop}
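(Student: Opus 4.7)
The plan is to reduce the claim to the generating morphisms $\Ib_n : (\Db_{n+1})_t \to \Db_n^\flat$ and then to identify each such generator as biinitial via a cancellation argument. Since biinitial morphisms over $(A, B)$ form by definition the cocomplete class $\widehat{\IF_{A,B}}$ (stable in particular under left cancellation and retracts), and marked trivializations are by definition the smallest cocomplete class containing the $\Ib_n$, it suffices to check that for every map $\Db_n^\flat \to A \times B$ the induced morphism $\Ib_n : (\Db_{n+1})_t \to \Db_n^\flat$, viewed over $A \times B$, is biinitial over $(A, B)$.

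To handle each such generator, I would exploit the factorization $\mathrm{id}_{\Db_n^\flat} = \Ib_n \circ i_n^\pm$, where $i_n^-, i_n^+ : \Db_n^\flat \to (\Db_{n+1})_t$ denote the globular source and target inclusions. Because the induced map $(\Db_{n+1})_t \to A \times B$ factors through $\Db_n^\flat$, both of its projections to $A$ and to $B$ factor through $\Db_n^\flat$. Corollary \ref{cor:when glob inclusion are final and initial} then applies on both sides and yields, irrespective of the parity of $n$, that both $i_n^-$ and $i_n^+$ are biinitial over $(A, B)$. Since $\mathrm{id}_{\Db_n^\flat}$ is trivially biinitial, applying left cancellation in $\widehat{\IF_{A,B}}$ to $\mathrm{id}_{\Db_n^\flat} = \Ib_n \circ i_n^\pm$ gives that $\Ib_n$ itself is biinitial, which is what is needed.

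The only point requiring some care is the passage between the slice category $\ocatm_{/A\times B}$, where $\widehat{\IF_{A,B}}$ is defined, and $\ocatm$, where marked trivializations are defined; but this should be routine, since colimits, pushouts, and transfinite compositions in the slice are computed on underlying objects, so any marked trivialization equipped with a structure map to $A \times B$ automatically lies in the cocomplete closure over $A \times B$ of the morphisms $\Ib_n$. I do not anticipate a serious obstacle, the content of the argument being concentrated in Corollary \ref{cor:when glob inclusion are final and initial}, which has already done the geometric work for the globular inclusions.
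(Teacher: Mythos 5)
Your argument is correct and follows essentially the same route as the paper's proof: reduce by cocompleteness to the generators $\Ib_n$, observe via Corollary \ref{cor:when glob inclusion are final and initial} that a globular section $i_n^\alpha$ of $\Ib_n$ is biinitial over $(A,B)$ (the paper only needs one of the two, chosen according to parity, whereas you note both apply), and conclude by left cancellation from $\Ib_n\circ i_n^\alpha=\mathrm{id}$. The extra remark about passing between $\ocatm$ and the slice $\ocatm_{/A\times B}$ is a reasonable precision that the paper leaves implicit.
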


\begin{proof}
As biinitial morphisms over $(A,B)$ are closed under colimits, it is sufficient to demonstrate that for any $n$, the morphism $\Ib_n$ is biinitial over $(A,B)$. According to corollary \ref{cor:when glob inclusion are final and initial}, there exists $\alpha\in\{-,+\}$ such that $i_n^\alpha$ is biinitial over $(A,B)$. As $\Ib_n$ is a retraction of this morphism, and as biinitial morphisms over $(A,B)$ are closed under left cancellation, $\Ib_n$ is biinitial over $(A,B)$.
\end{proof}

\begin{definition}
Let $A, B$ be two $\omega$-categories. Let $i:C\to D$ be a morphism over $A\times B$. A \textit{left Gray deformation retract structure over $(A,B)$} for $i$ is the data of a \textit{retract} $r:D\to C$ over $A$, a \textit{deformation} $\psi:D\otimes [1]^\sharp\to D$ over $A$\footnote{where we consider the morphism $D\otimes[1]\to A\otimes[1]\to A\otimes[0]\sim A$}, and equivalences over $A$: $$ri\sim id_C~~~~~\psi_{|D\otimes\{0\}}\sim ir~~~~~\psi_{|D\otimes\{1\}}\sim id_D~~~~~ \psi_{|C\otimes[1]^\sharp}\sim i\cst_C$$ By abuse of notation, such data will just be denoted by $(i,r,\psi)$. A morphism $i:C\to D$ over $A\times B$ is a \textit{left Gray deformation retract} if it admits a left deformation retract structure.

We define similarly the notion of \textit{right Gray deformation retract (structure) over $(A,B)$} by interchanging $0$ and $1$ and $A$ and $B$ in the previous definition.
\end{definition}

\begin{remark}
\label{rem:stabilites of Gray retract by duality}
Let $i$ be a morphism over $A\times B$.
The proposition \ref{prop:dualities marked otimes} implies that $i$ is a left Gray retract over $(A,B)$ if and only if $i^\circ$ is a right Gray retract over $(B^\circ,A^\circ)$.
\end{remark}

\begin{prop}
\label{prop:example of left Gray deformation} Suppose given a morphism $C\otimes[1]^\sharp\to A\times B$ such that $C\otimes[1]^\sharp\to A$ factors through $C$. Then $C\otimes\{0\}\to C\otimes[1]^\sharp$ is a left Gray deformation retract over $(A,B)$. Conversely, if $C\otimes[1]^\sharp\to B$ factors through $A$, $C\otimes\{1\}\to C\otimes[1]^\sharp$ is a right Gray deformation retract over $(A,B)$.
\end{prop}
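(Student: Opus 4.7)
Write $D := C\otimes[1]^\sharp$ and take $i:C \cong C\otimes\{0\}\hookrightarrow D$. The first assertion requires producing a retract $r:D\to C$ over $A$ and a deformation $\psi:D\otimes[1]^\sharp\to D$ over $A$ with the four prescribed restrictions. For $r$, I would take the marked trivialization $C\otimes[1]^\sharp\to C$ of example \ref{exe:cotimes 1 to c is a trivialization}; this is defined over $A$ because, by hypothesis, the structure map $C\otimes[1]^\sharp\to A$ factors through $C$, i.e.\ through $r$. The identity $ri\sim \id_C$ is immediate from the description of $r$ as the collapse $[1]^\sharp\to[0]$ tensored with $C$.

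The deformation $\psi$ will be $\id_C\otimes m$, where $m:[1]^\sharp\otimes[1]^\sharp\to[1]^\sharp$ is built as follows. By proposition \ref{prop:sharp and times} we have an equivalence $[1]^\sharp\otimes[1]^\sharp\sim[1]^\sharp\times[1]^\sharp$, and the poset morphism $\min:[1]\times[1]\to[1]$ in $\cat\subset\ocat$ lifts to a morphism of marked $\omega$-categories $[1]^\sharp\times[1]^\sharp\to[1]^\sharp$ since all positive-dimensional cells in the source and target are marked. Composing gives $m$.

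The four boundary identities reduce, after $\id_C\otimes\uvar$, to the pointwise identities $m|_{\{0\}}=\cst_0$, $m|_{\{1\}}=\id_{[1]^\sharp}$, $m|_{\{0\}\otimes[1]^\sharp}=\cst_0$ and $m|_{[1]^\sharp\otimes\{1\}}=\id_{[1]^\sharp}$, all of which are obvious for $\min$. These yield respectively $\psi|_{D\otimes\{0\}}\sim ir$, $\psi|_{D\otimes\{1\}}\sim\id_D$, $\psi|_{C\otimes[1]^\sharp}\sim i\cst_C$, and $\psi|_{D\otimes\{1\}}\sim\id_D$ again. The deformation is over $A$ by a direct check: since $C\otimes[1]^\sharp\to A$ factors through $C$, both compositions $D\otimes[1]^\sharp\to D\to A$ and $D\otimes[1]^\sharp\to A\otimes[1]^\sharp\to A$ collapse each $[1]^\sharp$-factor and agree with the structure map of $C$ to $A$.

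For the second assertion I would either repeat the construction using $\max$ in place of $\min$, or deduce it formally from the first by applying remark \ref{rem:stabilites of Gray retract by duality}: taking duals $(\uvar)^\circ$ exchanges $\{0\}$ and $\{1\}$ inside $[1]^\sharp$ (via proposition \ref{prop:dualities marked otimes}) and swaps the roles of $A$ and $B$, converting a left Gray deformation retract structure on $C\otimes\{0\}\to C\otimes[1]^\sharp$ over $(A,B)$ into a right Gray deformation retract structure on $C^\circ\otimes\{1\}\to C^\circ\otimes[1]^\sharp$ over $(B^\circ,A^\circ)$. The only mild subtlety is checking that $\min$ indeed descends through the equivalence of proposition \ref{prop:sharp and times} to a morphism $[1]^\sharp\otimes[1]^\sharp\to[1]^\sharp$; once that is in hand the rest of the verifications are purely formal manipulations of restrictions.
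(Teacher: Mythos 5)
Your construction is exactly the paper's: the retract is the collapse $C\otimes[1]^\sharp\to C\otimes[0]\sim C$, and the deformation is $C\otimes\min$ through the identification $C\otimes[1]^\sharp\otimes[1]^\sharp\sim C\otimes([1]\times[1])^\sharp$ supplied by proposition \ref{prop:sharp and times}, with the second assertion handled by duality (the paper likewise leaves the boundary-restriction checks to the reader). Your verification sketch is fine, so this is correct and essentially the same proof.
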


\begin{proof}
We will show only the first assertion; the second follows by dualities. The retract is then given by $C\otimes[1]^\sharp\to C\otimes[0]\sim C$, and the deformation by the functor 
$$C\otimes[1]^\sharp\otimes[1]^\sharp\sim C\otimes ([1]\times[1])^\sharp\xrightarrow{C\otimes \min} C\otimes[1]^\sharp$$
where $\min:[1]\times[1]\to[1]$ is the functor sending $(i,j)$ to $\min(i,j)$. We recall that the first equivalence follows from proposition \ref{prop:sharp and times}. We leave it to the reader to check that this retract and deformation induce a left Gray deformation retract structure on $C\otimes\{0\}\to C\otimes[1]^\sharp$.
\end{proof}

\begin{prop}
\label{prop:canonical example of Gray deformation retract} For any marked $\omega$-category $B$, $B\to B^{[1]^\sharp}$ is both a left and right Gray deformation retract over $B\times B$.
\end{prop}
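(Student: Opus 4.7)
The plan is to exhibit the left Gray deformation retract structure on $i:B\to B^{[1]^\sharp}$ explicitly, using the $\min$ map on $[1]\times[1]$, and then do the symmetric construction with $\max$ for the right Gray deformation retract structure. By adjunction, the morphism $i$ corresponds to the counit-like projection $B\otimes[1]^\sharp\to B\otimes[0]\sim B$ induced by $[1]^\sharp\to[0]$, and the structure morphism $B^{[1]^\sharp}\to B\times B$ is given by evaluation at the two endpoints, i.e. $(\ev_0,\ev_1)$. In particular $(\ev_0,\ev_1)\circ i$ is the diagonal.

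For the left structure, I take $r:=\ev_0:B^{[1]^\sharp}\to B$, which is automatically over the first copy of $B$, and satisfies $r\circ i\sim\id_B$ since $\ev_0\circ i\sim\id$. To construct $\psi:B^{[1]^\sharp}\otimes[1]^\sharp\to B^{[1]^\sharp}$, I use proposition \ref{prop:sharp and times} to identify $[1]^\sharp\otimes[1]^\sharp$ with $([1]\times[1])^\sharp$, apply the functor $(\uvar)^\sharp$ to $\min:[1]\times[1]\to[1]$ to obtain a morphism $\min^\sharp:[1]^\sharp\otimes[1]^\sharp\to[1]^\sharp$, and then define $\psi$ as the adjoint of the composite
$$B^{[1]^\sharp}\otimes[1]^\sharp\otimes[1]^\sharp\xrightarrow{\id\otimes\min^\sharp}B^{[1]^\sharp}\otimes[1]^\sharp\xrightarrow{\varepsilon}B,$$
where $\varepsilon$ is the counit of the adjunction $\uvar\otimes[1]^\sharp\dashv\uvar^{[1]^\sharp}$.

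Verifying the identities is then routine and amounts to restricting the adjoint of $\psi$ to the four faces of $[1]^\sharp\otimes[1]^\sharp$. At the deformation endpoint $j=1$, $\min(k,1)=k$, so $\psi_{|B^{[1]^\sharp}\otimes\{1\}}$ corresponds to $\varepsilon$ itself, hence equals $\id_{B^{[1]^\sharp}}$. At $j=0$, $\min(k,0)=0$, so $\psi_{|B^{[1]^\sharp}\otimes\{0\}}$ factors through $\ev_0$ followed by $i$, giving $i\circ r$. Restricted to the image of $i$, the map $\varepsilon\circ(i\otimes\id)$ is the contraction $B\otimes[1]^\sharp\to B$, and precomposing with $\id\otimes\min^\sharp$ still gives this contraction; its adjoint is $i\circ\cst_B$, as required. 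The fact that $\psi$ lies over the first $B$ follows similarly: $\ev_0\circ\psi$ evaluated at $(b,j)$ is $\varepsilon(b,\min(0,j))=\varepsilon(b,0)=\ev_0(b)$, matching the projection.

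For the right Gray deformation retract structure, I perform the dual construction: take $r:=\ev_1$ and replace $\min$ by $\max$, interchanging the roles of $\{0\}$ and $\{1\}$. Alternatively, one can invoke remark \ref{rem:stabilites of Gray retract by duality} together with proposition \ref{prop:dualities marked otimes} to deduce the right structure from the left one after applying $(\uvar)^\circ$, noting that $B^{[1]^\sharp}$ is self-dual in the appropriate sense since $[1]^\sharp$ is fixed by $(\uvar)^\circ$ up to reversal of the endpoints. There is no real obstacle here; the only subtlety is the use of proposition \ref{prop:sharp and times} to make sense of the $\min$ and $\max$ construction at the level of marked Gray tensor products, after which everything reduces to the same idea as in the proof of proposition \ref{prop:example of left Gray deformation}.
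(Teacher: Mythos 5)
Your proposal is correct and follows essentially the same route as the paper: the retraction is evaluation at $0$, and the deformation is the map induced by $\min$ via the identification $[1]^\sharp\otimes[1]^\sharp\sim([1]\times[1])^\sharp$ from proposition \ref{prop:sharp and times} (you phrase it through the tensor/counit adjoint, the paper through the cotensor $B^{\min}:B^{[1]^\sharp}\to B^{([1]\times[1])^\sharp}\sim(B^{[1]^\sharp})^{[1]^\sharp}$, which is the same construction), with the right-hand structure obtained symmetrically using $\max$ or duality. Your endpoint verifications match the required identities, so there is nothing to add.
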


\begin{proof}
We will show that this morphism is a left Gray deformation retract; the other case is demonstrated similarly. The retraction is given by the evaluation at $0$, i.e., the morphism $B^{[1]^\sharp}\to B^{\{0\}}$, and the deformation by the morphism $$B^{[1]^\sharp}\otimes[1]^\sharp\to B^{[1]^\sharp}$$ induced by adjunction from the morphism $$B^{\min}:B^{[1]^\sharp}\to B^{([1]\times [1])^\sharp}\sim (B^{[1]^\sharp})^{[1]^\sharp}.$$
\end{proof}

\begin{prop}
\label{prop:left Gray deformation retract are initial} Left and right Gray deformation retracts over $(A,B)$ are biinitial over $(A,B)$.
\end{prop}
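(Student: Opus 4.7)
The plan is to show that $i:C\to D$ is a retract, in the slice $\ocatm_{/A\times B}$, of a concrete biinitial morphism built from the Gray deformation data, and then to invoke closure of the biinitial class under retracts. The case of a right Gray deformation retract reduces to the left one via Remarks \ref{rem:stabilites of Gray retract by duality} and \ref{rem:stabilites of biinitial by duality}: applying the full duality $(\uvar)^\circ$ interchanges the two notions while preserving biinitiality.

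Assume therefore $i:C\to D$ is a left Gray deformation retract with retract $r:D\to C$ and deformation $\psi:D\otimes[1]^\sharp\to D$. Write $\beta:D\to A\times B$ for the structure map of $D$ and set $\alpha:=\beta\circ i$. The key construction is the pushout
$$P:=D\otimes\{0\}\cup_{C\otimes\{0\}}C\otimes[1]^\sharp,$$
with its canonical comparison map $f:P\to Q$, where $Q:=D\otimes[1]^\sharp$. Equip $Q$ with the structure map $\beta\circ\psi:Q\to A\times B$, so that the induced structure on $P$ restricts to $\alpha\circ r$ on $D\otimes\{0\}$ and to $\alpha$ composed with the projection $C\otimes[1]^\sharp\to C$ on $C\otimes[1]^\sharp$. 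Because $\psi$ is over $A$, the $A$-component of $\beta\circ\psi$ factors through the projection $Q\to D$; Example \ref{exe:the easiest example of initial and final morphism} applied with $X=D$ then gives that $D\otimes\{0\}\to Q$ is biinitial. The same reasoning shows $C\otimes\{0\}\to C\otimes[1]^\sharp$ is biinitial, and hence so is its pushout $D\otimes\{0\}\to P$. As the composite $D\otimes\{0\}\to P\xrightarrow{f} Q$ equals the biinitial inclusion $D\otimes\{0\}\to Q$, left cancellation for the saturated class $\widehat{\IF_{A,B}}$ yields that $f$ is biinitial over $(A,B)$.

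The next step is to exhibit $i$ as a retract of $f$ in $\ocatm_{/A\times B}$. Set
$$a:C\xrightarrow{\{1\}}C\otimes[1]^\sharp\hookrightarrow P,\qquad b:D=D\otimes\{1\}\hookrightarrow Q,$$
and define $p:P\to C$ by $r$ on $D\otimes\{0\}$ and by the projection $C\otimes[1]^\sharp\to C$ on $C\otimes[1]^\sharp$; these agree on $C\otimes\{0\}$ thanks to $ri\sim\id_C$. Set $q:=\psi$. The defining identities $\psi|_{D\otimes\{0\}}\sim ir$, $\psi|_{D\otimes\{1\}}\sim\id_D$ and $\psi|_{C\otimes[1]^\sharp}\sim i\cdot\mathrm{cst}_C$ then imply $pa\sim\id_C$, $qb\sim\id_D$, and the commutativity of the squares
\[\begin{tikzcd}
C \arrow[r,"a"] \arrow[d,"i"'] & P \arrow[d,"f"] & C \arrow[l,"p"'] \arrow[d,"i"]\\
D \arrow[r,"b"'] & Q & D \arrow[l,"q"]
\end{tikzcd}\]
by direct inspection on each piece of $P$. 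The compatibility of $a,b,p,q$ with the structure maps to $A\times B$ follows from the ``over $A$'' hypotheses on $r$ and $\psi$ combined with the choice of $\beta\circ\psi$ as the structure on $Q$. Closure of biinitial morphisms under retracts then finishes the argument.

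The principal subtlety, and the reason for equipping $Q$ with $\beta\circ\psi$ rather than with the naive projection $Q\to D\to A\times B$, is that $r$ and $\psi$ are only assumed to be over $A$: the chosen structure makes the compatibility of $p$ and $q$ with the $B$-component tautological rather than an additional hypothesis to verify.
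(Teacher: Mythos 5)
Your proof is correct and follows essentially the same route as the paper: you exhibit $i$ as a retract, in $\ocatm_{/A\times B}$, of an explicitly biinitial map built from the cylinder $D\otimes[1]^\sharp$ via example \ref{exe:the easiest example of initial and final morphism}, pushout stability, left cancellation, and closure under retracts, handling the right-handed case by duality. The only (harmless) variation is that you retract $i$ off the comparison map $D\otimes\{0\}\cup_{C\otimes\{0\}}C\otimes[1]^\sharp\to D\otimes[1]^\sharp$ endowed with the structure map $\beta\circ\psi$, whereas the paper collapses the cylinder on $C$ and retracts $i$ off $D\otimes\{0\}\to D\otimes_C[1]^\sharp$; your explicit choice of $\beta\circ\psi$ makes the compatibilities over $A\times B$ (which the paper leaves implicit) transparent.
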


\begin{proof}
Let $ i:C\to D$ be a left Gray deformation retract over $(A,B)$, and $r,\phi$ the retraction and the deformation. We define $ D\otimes_C[1]^\sharp$ as the following pushout:
\[
\begin{tikzcd}[ampersand replacement=\&]
	{C\otimes[1]^\sharp} \& {D\otimes[1]^\sharp} \\
	C \& {D\otimes_C[1]^\sharp}
	\arrow[from=1-1, to=1-2]
	\arrow[from=1-1, to=2-1]
	\arrow[from=1-2, to=2-2]
	\arrow[from=2-1, to=2-2]
	\arrow["\lrcorner"{anchor=center, pos=0.125, rotate=180}, draw=none, from=2-2, to=1-1]
\end{tikzcd}
\]
Consider now the diagram:
\[
\begin{tikzcd}
	{C\otimes\{0\}} & {C\otimes[1]^\sharp} & C \\
	& {A\times B}
	\arrow["i", from=1-1, to=1-2]
	\arrow[from=1-1, to=2-2]
	\arrow[from=1-2, to=1-3]
	\arrow[from=1-2, to=2-2]
	\arrow[from=1-3, to=2-2]
\end{tikzcd}
\]
It follows from stability by left cancellation and from example \ref{exe:the easiest example of initial and final morphism} that $ C\otimes [1]^\sharp\to C$ is biinitial over $(A,B)$. By stability by pushout and composition, and using once again example \ref{exe:the easiest example of initial and final morphism}, so is $ D\otimes\{0\}\to D\otimes_C[1]^\sharp$. Eventually, the diagram
\[
\begin{tikzcd}[ampersand replacement=\&]
	C \& {D\otimes\{0\}} \&\& C \\
	{D\otimes\{1\}} \& {D\otimes_C [1]^\sharp} \& {D\otimes[1]^\sharp} \& D
	\arrow["i", from=1-1, to=1-2]
	\arrow["i"', from=1-1, to=2-1]
	\arrow["r", from=1-2, to=1-4]
	\arrow[from=1-2, to=2-2]
	\arrow["i", from=1-4, to=2-4]
	\arrow[from=2-1, to=2-2]
	\arrow[from=2-2, to=2-3]
	\arrow["\psi"', from=2-3, to=2-4]
\end{tikzcd}
\]
seen as a diagram in $ \ocatm_{/A\times B}$, expresses $ i$ as a retract of $ D\otimes \{0\}\to D\otimes_C [1]^\sharp$. The morphism $ i$ is then biinitial over $(A,B)$. The case of the right Gray deformation retract is proven similarly.
\end{proof}

\subsection{Definition of two-sided fibrations}

\begin{definition}
A span $X\to A\times B$ is a \textit{two-sided fibration} if for every biinitial morphism $i:C\to D$ over $(A,B)$, any square of shape
\[\begin{tikzcd}
	C & X \\
	D & {A\times B}
	\arrow[from=1-1, to=1-2]
	\arrow["i"', from=1-1, to=2-1]
	\arrow[from=1-2, to=2-2]
	\arrow[dashed, from=2-1, to=1-2]
	\arrow[from=2-1, to=2-2]
\end{tikzcd}\]
admits a unique lifting. A bifibration is then an $\IF_{A,B}$ local object of $\ocatm_{/A\times B}$. 

We denote by $\BCart(A,B)$ the subcategory of $\ocatm_{/A\times B}$ spanned by two-sided fibrations, and we then have a canonical adjunction
\begin{equation}
\label{eq:adjonction two sided fibration and morphism over a b}
\begin{tikzcd}
	{\Fb:\ocatm_{/A\times B}} & {\BCart(A,B):\iota}
	\arrow[""{name=0, anchor=center, inner sep=0}, shift left=2, from=1-1, to=1-2]
	\arrow[""{name=1, anchor=center, inner sep=0}, shift left=2, from=1-2, to=1-1]
	\arrow["\dashv"{anchor=center, rotate=-90}, draw=none, from=0, to=1]
\end{tikzcd}
\end{equation}
where the right adjoint is fully faithful. The functor $\Fb$ is called the \textit{two-sided replacement functor}.
\end{definition}

\begin{remark}
As two-sided fibrations are defined as local objects for some set of morphisms, they are stable under limits and retracts in $\BCart(A,B)$.
\end{remark}

\begin{prop}
\label{prop:bicart stable by pullback} Let $S:X\to A\times B$ be a two-sided fibration, and $i:A'\to A$, $j:B\to B'$ two morphisms. Then $$(i,j)^*S:X\to A'\times B'$$ is a two-sided fibration.
\end{prop}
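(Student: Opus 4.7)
The plan is to reduce the lifting property for the modified span to that for the original $S$. Unpacking the notation, set $Y := X \times_A A'$ (pullback of $X \to A$ along $i$), which sits over $A' \times B'$ via the composite $Y \to A' \times B \xrightarrow{\id_{A'} \times j} A' \times B'$. I would factor the argument through the intermediate span $Y \to A' \times B$.

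For the first factor, $Y \to A' \times B$ is a two-sided fibration by a plain pullback argument. Given a biinitial morphism $f : C \to D$ over $(A', B)$ and a lifting problem, post-composition with $i \times \id_B : A' \times B \to A \times B$ turns $f$ into a biinitial morphism over $(A, B)$ by remark \ref{rem:biinitial stable by composition}. Projecting $Y \to X$ transports the lifting problem to one for $X \to A \times B$, which has a unique solution by the two-sided fibration property of $S$. Paired with the forced map $D \to A'$, the universal property of $Y = X \times_A A'$ yields a unique lift $D \to Y$.

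For the second factor, I address the post-composition by $\id \times j$. Given a biinitial $f : C \to D$ over $(A', B')$ and a lifting problem for $Y \to A' \times B'$, I would construct a compatible map $D \to A' \times B$ (extending the given $D \to A'$) by producing a $D \to B$ whose $j$-composite recovers the given $\delta_B : D \to B'$. For a type $\F$ morphism, where $\delta_B$ factors through $C$ by hypothesis, such a $D \to B$ arises naturally from the retract $D \to C \to Y \to B$ afforded by the Gray deformation retract structure (proposition \ref{prop:example of left Gray deformation}). For type $\I$, only the $A'$-component of $D$ factors through $C$, and the same retract construction produces a canonical candidate for the $B$-component. Once $D \to A' \times B$ is in place, $f$ becomes a biinitial morphism over $(A', B)$ in the relevant sense, and the first factor provides the lift.

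The main obstacle is the second factor: verifying compatibility of the constructed $D \to B$ with $\delta_B$ through $j$, and establishing uniqueness. In the type $\F$ case compatibility is immediate because $\delta_B$ equals the retract of its restriction to $C$, but in the type $\I$ case the $B'$-map is unconstrained and need not admit a lift through $j$ at all, so the argument cannot proceed via a naive reduction. I expect the resolution to exploit the Gray deformation retract structure more intrinsically (propositions \ref{prop:example of left Gray deformation} and \ref{prop:left Gray deformation retract are initial}), either by showing that the fiber of $j_* : \Map(D, B) \to \Map(D, B')$ over $\delta_B$ is contractible relative to the given boundary data on $C$, or by lifting the biinitial morphism itself (rather than the lifting problem) to one over $(A', B)$, using the generating shape $X_0 \otimes [1]^\sharp$ of $\IF_{A', B'}$. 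Identifying the correct formal mechanism for this step is the crux of the argument.
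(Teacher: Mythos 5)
Your first step is correct and is, in substance, the entirety of the paper's argument: the paper proves this proposition in one line by invoking Remark \ref{rem:biinitial stable by composition}, exactly as you do for the factor $Y=X\times_A A'\to A'\times B$. A lifting problem for the base change against a biinitial morphism over $(A',B)$ transposes, via the universal property of the pullback, to a lifting problem for $S$ against the image of that morphism in $\ocatm_{/A\times B}$, which is again biinitial by the remark; uniqueness follows because the $A'$-component of any lift is forced.

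The genuine gap is your second factor, and it cannot be repaired: post-composing a two-sided fibration with $\id_{A'}\times j$ for $j:B\to B'$ does \emph{not} in general yield a two-sided fibration. Take $X=A=B=[0]$, which is a two-sided fibration by Proposition \ref{prop:left fib over flat}, and let $B'=[1]^\sharp$ with $j=\{0\}$. The resulting span $[0]\to[0]\times[1]^\sharp$ fails the unique lifting property against the generator $\{0\}\to[1]^\sharp$ of $\I_{[0],[1]^\sharp}$ whose structure map to $[1]^\sharp$ is the identity: the only candidate lift $[1]^\sharp\to[0]$ composes to the constant map at $0$ in $B'$, which is not the identity, so no lift exists. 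This is precisely the obstruction you isolated in the type-$\I$ case, and there is no ``correct formal mechanism'' waiting to be found --- the claim you are trying to prove in that step is false. What has gone wrong is a too-literal reading of the statement: the direction of $j$ is a typo, and $(i,j)^*S$ is meant to be the base change of $S$ along $i\times j:A'\times B'\to A\times B$ (compare how the proposition is applied in Propositions \ref{prop:fibrations induced by bifibrations} and \ref{prop:directed pullback i bifib}, both of which use it for pullbacks). With that reading, your first-factor argument applied to both legs simultaneously already finishes the proof, and no post-composition step is needed.
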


\begin{proof}
This directly follows from the remark \ref{rem:biinitial stable by composition}.
\end{proof}

\begin{prop}
Let $X_i \to A_i \times B_i$ be a family of two-sided fibrations. Then $$\lim_i X_i \to \lim_i A_i \times \lim_i B_i$$ is a two-sided fibration.
\end{prop}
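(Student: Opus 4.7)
The plan is to verify the defining unique right lifting property of a two-sided fibration componentwise and then reassemble the component lifts via the universal property of the limit. Concretely, fix a biinitial morphism $f:C\to D$ over $(\lim_i A_i,\lim_i B_i)$ together with a lifting problem
\[\begin{tikzcd}
	C & {\lim_i X_i} \\
	D & {\lim_i A_i\times\lim_i B_i}
	\arrow[from=1-1, to=1-2]
	\arrow["f"', from=1-1, to=2-1]
	\arrow[from=1-2, to=2-2]
	\arrow[from=2-1, to=2-2]
\end{tikzcd}\]
For each index $j$, postcomposing the structural map $D\to \lim_i A_i\times \lim_i B_i$ with the pair of projections to $A_j\times B_j$ and invoking Remark \ref{rem:biinitial stable by composition} for the maps $\lim_i A_i\to A_j$ and $\lim_i B_i\to B_j$ exhibits $f$ as biinitial over $(A_j,B_j)$. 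Since $X_j\to A_j\times B_j$ is a two-sided fibration, the induced square admits a unique lift $\ell_j:D\to X_j$.

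The next step is to observe that the family $(\ell_j)_j$ is automatically a cone over the diagram $j\mapsto X_j$. For any arrow $j\to k$ in the indexing category, both $X_j\to X_k$ precomposed with $\ell_j$ and $\ell_k$ itself solve the unique lifting problem for $f$ against $X_k\to A_k\times B_k$, and hence coincide. By the universal property of $\lim_i X_i$ the family assembles into a unique morphism $\ell:D\to\lim_i X_i$ that makes the original square commute; uniqueness of $\ell$ follows from the uniqueness of each $\ell_j$.

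I do not anticipate any substantive obstacle. The argument is an instance of the general fact that a class of objects cut out by a unique right lifting property is stable under limits whenever the left class is stable under the ambient base-change — here provided directly by Remark \ref{rem:biinitial stable by composition} applied to the structural projections out of the limits $\lim_i A_i$ and $\lim_i B_i$. The only point deserving a line of verification is that the two projections together realize the base-change needed to apply that remark, which is immediate from the definition of the product.
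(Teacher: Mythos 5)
The paper itself offers no argument (its proof reads ``Straightforward''), and your strategy --- push the lifting problem down to each index via Remark \ref{rem:biinitial stable by composition} applied to the projections $\lim_i A_i\to A_j$, $\lim_i B_i\to B_j$, solve it there, and reassemble --- is exactly the intended one; the identification of that remark as the key input is correct.

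The one step that is not literally valid as written is the reassembly. In this $\infty$-categorical setting you cannot produce a morphism $D\to\lim_i X_i$ from a family of lifts $\ell_j$ together with the observation that $\ell_j$ and $\ell_k$ agree for each arrow $j\to k$: a map into a limit requires a coherent cone, not just pairwise (homotopy) compatibility, and likewise ``unique lift'' must be read as contractibility of the space of lifts. The standard repair keeps your idea but works with spaces of lifts throughout: since two-sided fibrations are precisely the $\IF_{A,B}$-local objects of $\ocatm_{/A\times B}$, and since the relative mapping space $\Map_{/\lim A\times\lim B}(D,\lim_i X_i)$ is computed as $\lim_i \Map_{/A_i\times B_i}(D,X_i)$ (mapping spaces send the limit in the target to a limit, and the fiber over the structural map commutes with this limit), the space of solutions to your lifting problem is the limit over $i$ of the corresponding spaces for the $X_i$. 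Each of these is contractible by your base-change observation, so the limit is contractible, which is the required unique lifting property. With this rephrasing your proof is complete.
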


\begin{proof}
Straightforward.
\end{proof}

\begin{prop}
\label{prop:left fib over flat}
If $f:X\to A^\flat\times B^\flat$ is a two-sided fibration, then the canonical morphism $X^\flat\to X$ is an equivalence. Conversely, any morphism $X^\flat \to A^\flat\times B^\flat$ is a two-sided fibration.

In particular, if $X\to A\times B$ is a two-sided fibration, then for any $x:A$, $y:B$, the fiber $X_{a,b}$ is trivially marked.
\end{prop}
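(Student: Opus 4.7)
The plan is to establish the three assertions separately, with the first two being dual facets of the same phenomenon: lifting problems become trivial when the target of the bifibration is flat-marked.

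For the necessity, I would consider a marked $(n+1)$-cell of $X$ as a morphism $(\Db_{n+1})_t\to X$ and compose with $f$. Since $A^\flat\times B^\flat$ carries only trivial markings, the marked top cell of $(\Db_{n+1})_t$ must map to a unit; hence the composite factors through $\Db_n^\flat$ via the marked trivialization $\Ib_n:(\Db_{n+1})_t\to\Db_n^\flat$. Being a marked trivialization, $\Ib_n$ is biinitial over $(A^\flat,B^\flat)$, so the unique right lifting property of $f$ yields a filler $\Db_n^\flat\to X$. Uniqueness then forces the original map $(\Db_{n+1})_t\to X$ to factor through $\Ib_n$, exhibiting the marked cell as a unit. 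This shows $X^\flat\to X$ is an equivalence.

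For the converse, I would show that any morphism $g:X^\flat\to A^\flat\times B^\flat$ satisfies the unique right lifting property against every generator in $\I_{A^\flat,B^\flat}\cup\F_{A^\flat,B^\flat}$, which suffices by closure of biinitial morphisms under colimits. The technical heart is the observation that, for any marked $\omega$-category $Y$, the projection $Y\otimes[1]^\sharp\to Y$ is a marked trivialization by example~\ref{exe:cotimes 1 to c is a trivialization}, and more generally every marked trivialization induces an equivalence on mapping spaces into flat-marked objects. The latter holds at the generator $\Ib_n$ because any marked top cell in $M^\flat$ is forced to be a unit, uniquely determined by its underlying $n$-cell; the property then propagates by closure under colimits. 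Given this, a generating lifting problem against $i:Y\otimes\{0\}\to Y\otimes[1]^\sharp$ (with $Y=\Db_n^\flat$ or $(\Db_n)_t$) forces every map $Y\otimes[1]^\sharp\to A^\flat\times B^\flat$ to factor uniquely through $Y$, and likewise for maps to $X^\flat$; the lift is then forced to be $Y\otimes[1]^\sharp\to Y\sim Y\otimes\{0\}\to X^\flat$, and both compatibility with the outer square and uniqueness of the lift follow from these equivalences. The $\F$-generators are handled symmetrically, using $\{1\}$ in place of $\{0\}$.

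The ``in particular'' statement will follow from stability of two-sided fibrations under pullback (proposition~\ref{prop:bicart stable by pullback}): the fiber $X_{a,b}\to[0]\times[0]$ is a two-sided fibration over $[0]^\flat\times[0]^\flat$, and the first part of the proposition applies to give $X_{a,b}^\flat\sim X_{a,b}$. The main technical obstacle I anticipate is cleanly establishing the lemma that marked trivializations become equivalences upon mapping into flat-marked objects; once that ingredient is in hand, both directions of the proposition follow essentially formally.
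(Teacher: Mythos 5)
Your proposal is correct and follows essentially the same route as the paper: the first assertion via marked trivializations being biinitial over $(A^\flat,B^\flat)$, the converse via the unique right lifting property of maps between flat-marked objects against marked trivializations together with example \ref{exe:cotimes 1 to c is a trivialization}, and the fiber statement via pullback stability (proposition \ref{prop:bicart stable by pullback}). You merely spell out the details the paper leaves implicit (the unique factorization through $Y\otimes[1]^\sharp\to Y$ rests on trivializations being epimorphisms, proposition \ref{prop:trivialization is epi}).
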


\begin{proof}
The first assertion is a consequence of the fact that marked trivializations are biinitial. The second assertion is a direct consequence of example \ref{exe:cotimes 1 to c is a trivialization} as $X^\flat \to A^\flat\times B^\flat$ obviously has the unique left lifting property against marked trivializations.
\end{proof}

\begin{prop}
\label{prop:cartesian fibration between arrow} Let $(p,q):X\to A\times B$ be a two-sided fibration, and let $x,y$ be two objects of $X$. Then, the morphism $p:\hom_X(x,y)\to \hom_B(qx,qy)\times \hom_A(px,py)$ is a two-sided fibration.
\end{prop}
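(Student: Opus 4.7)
The plan is to use the suspension-hom adjunction to translate the lifting problem against $\hom_X(x,y)\to\hom_B(qx,qy)\times\hom_A(px,py)$ into a lifting problem against $(p,q):X\to A\times B$, which then follows from the two-sided fibration hypothesis.

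First, given a biinitial morphism $i:C\to D$ over $(\hom_B(qx,qy),\hom_A(px,py))$, I would apply lemma \ref{lemma:deformation retract stable by suspension} (with $A':=\hom_B(qx,qy)$ and $B':=\hom_A(px,py)$) to conclude that $[i,1]:[C,1]\to[D,1]$ is a right Gray deformation retract over $([\hom_A(px,py),1],[\hom_B(qx,qy),1])$, which is where the order swap between $A$ and $B$ crucially enters. Proposition \ref{prop:left Gray deformation retract are initial} then shows $[i,1]$ is biinitial over the same pair. Postcomposing with the canonical counit maps $[\hom_A(px,py),1]\to A$ and $[\hom_B(qx,qy),1]\to B$ and invoking remark \ref{rem:biinitial stable by composition}, I conclude that $[i,1]$ is biinitial over $(A,B)$.

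Next, a lifting square for $i$ against $\hom_X(x,y)\to\hom_B(qx,qy)\times\hom_A(px,py)$ corresponds, via the adjunction between the suspension and $\hom$ applied componentwise, to a lifting square for $[i,1]$ against $(p,q):X\to A\times B$, in which the horizontal maps send the two basepoints $\{0\},\{1\}$ to $x,y$ in $X$ and to $(px,qx),(py,qy)$ in $A\times B$. Since $(p,q)$ is a two-sided fibration and $[i,1]$ is biinitial over $(A,B)$, this admits a unique lift $h:[D,1]\to X$.

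The one thing to verify is that $h$ is automatically compatible with the basepoints so that it corresponds under the adjunction to a lift $D\to\hom_X(x,y)$. This is not really an obstacle: the suspension functor is the identity on the two basepoints $\{0\},\{1\}$, so the equation $h\circ[i,1]=$ (original top map) forces $h|_{\{0\}}=x$ and $h|_{\{1\}}=y$. Uniqueness of $h$ yields uniqueness of the lift. The main conceptual point of the argument is really the coordinate swap supplied by lemma \ref{lemma:deformation retract stable by suspension}, which is what makes the ordering $\hom_B\times\hom_A$ in the statement natural.
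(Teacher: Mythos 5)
Your proof is correct and follows essentially the same route as the paper, whose proof consists of a single citation of lemma \ref{lemma:deformation retract stable by suspension}: the intended argument is precisely what you spell out, namely that suspending a biinitial morphism over $(\hom_B(qx,qy),\hom_A(px,py))$ yields a right Gray deformation retract (hence a biinitial morphism) over the swapped suspended pair, which pushes forward to a biinitial morphism over $(A,B)$, and the lifting problem transposes through the suspension--hom adjunction with the basepoint compatibility you note. The coordinate swap you emphasize is exactly the content of that lemma, so your write-up is just a more detailed account of the paper's proof.
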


\begin{proof}
This directly follows from the last assertion of lemma \ref{lemma:deformation retract stable by suspension}.
\end{proof}

\begin{definition}
A morphism $X\to A$ is a \textit{right cartesian fibration} if the morphism $X\to A\times 1$ is a two-sided fibration. Conversely, a morphism $X\to B$ is a \textit{left cartesian fibration} if the morphism $X\to 1\times B$ is a two-sided fibration.

We denote by $\RCart(A)$ (resp. $\LCart(B)$) the subcategory of $\ocatm_{/A}$ (resp. $\ocatm_{/B}$) whose objects are left (resp. right) cartesian fibrations.
\end{definition}

\begin{prop}
\label{prop:fibrations induced by bifibrations} Let $X\to A\times B$ be a two-sided fibration. Then $$A^\flat\times_AX\to B$$ is a left cartesian fibration, and $$X\times_BB^\flat\to A$$ is a right cartesian fibration.
\end{prop}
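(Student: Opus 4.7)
The strategy is to combine pullback stability of two-sided fibrations with a rigidity property of $A^\flat$ with respect to the marked interval $[1]^\sharp$. First, by proposition \ref{prop:bicart stable by pullback}, pulling back $X \to A \times B$ along $A^\flat \to A$ yields a two-sided fibration $A^\flat \times_A X \to A^\flat \times B$. Since by definition a morphism $Y \to B$ is a left cartesian fibration exactly when $Y \to 1 \times B$ is a two-sided fibration, and this is a locality condition, it suffices to verify the unique right lifting property against the generators of $\I_{1,B}$ and $\F_{1,B}$.

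The key observation is that $A^\flat$ is local with respect to all marked trivializations. Indeed, for the generator $\Ib_n : (\Db_{n+1})_t \to \Db_n^\flat$, both hom sets into $A^\flat$ compute the $n$-cells of $A$: on one side directly, and on the other side since the marked $(n+1)$-cells of $A^\flat$ are precisely the units on $n$-cells, and the comparison map is the unit bijection. Locality passes to the saturation (it is closed under colimits, compositions, pushouts, left cancellations, and retracts), so $A^\flat$ is local with respect to the entire class of marked trivializations, and in particular with respect to the marked trivialization $Z \otimes [1]^\sharp \to Z$ from example \ref{exe:cotimes 1 to c is a trivialization} (where $Z$ is $\Db_n^\flat$ or $(\Db_n)_t$). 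Equivalently, every morphism $Z \otimes [1]^\sharp \to A^\flat$ factors uniquely through the projection to $Z$.

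For a lifting problem against a generator $i : Z \otimes \{\epsilon\} \hookrightarrow Z \otimes [1]^\sharp$ in $\I_{1,B}$ (case $\epsilon = 0$) or $\F_{1,B}$ (case $\epsilon = 1$), I use the universal property of the pullback $A^\flat \times_A X$. The $A^\flat$-component of the map out of $Z \otimes \{\epsilon\}$ extends uniquely, by the rigidity from the previous paragraph, to a map $Z \otimes [1]^\sharp \to A^\flat$. The composite $Z \otimes [1]^\sharp \to A^\flat \to A$ then factors through $Z$, so together with the given map $Z \otimes [1]^\sharp \to B$, this promotes $i$ to a morphism in $\I_{A,B}$ (respectively $\F_{A,B}$, whose $B$-side factorization is already built into the hypothesis on the generator). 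The two-sided fibration property of $X \to A \times B$ then gives the unique lift on the $X$-factor, which combines with the $A^\flat$-extension to produce the required lift to $A^\flat \times_A X$.

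The statement that $X \times_B B^\flat \to A$ is a right cartesian fibration follows by a symmetric argument, or equivalently by applying the case just proved to the dualized two-sided fibration $X^\circ \to B^\circ \times A^\circ$ via proposition \ref{prop:dualities marked otimes} and remark \ref{rem:stabilites of biinitial by duality}, noting that the duality swaps the two factors as well as the left/right distinction for cartesian fibrations. The main obstacle is the rigidity of $A^\flat$ in the second paragraph; once that is secured, the rest is formal manipulation of the pullback's universal property.
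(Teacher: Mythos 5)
Your proposal is correct and follows essentially the same route as the paper: pullback stability (proposition \ref{prop:bicart stable by pullback}) together with the fact that every morphism $Z\otimes[1]^\sharp\to A^\flat$ factors uniquely through $Z$ — exactly your rigidity step, which the paper invokes via example \ref{exe:cotimes 1 to c is a trivialization}, merely phrasing the lifting by first lifting the $B$-component to $A^\flat\times B$ instead of using the pullback's universal property directly. Your justification of the rigidity from the generators $\Ib_n$ is fine, with the small remark that the ``unit bijection'' implicitly uses that $\Ib_n$ is a monomorphism, which is exactly what proposition \ref{prop:trivialization is epi} (trivializations are epimorphisms) supplies.
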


\begin{proof}
We will show only the first claim; the second is proven similarly. We have to show that the diagram of the shape 
\[\begin{tikzcd}
	{c\otimes\{0\}} & {A^\flat\times_AX} & X \\
	& {A^\flat\times B} & {A\times B} \\
	{c\otimes[1]^\sharp} & B
	\arrow[from=1-1, to=1-2]
	\arrow[from=1-1, to=3-1]
	\arrow[from=1-2, to=1-3]
	\arrow[from=1-2, to=2-2]
	\arrow["\lrcorner"{anchor=center, pos=0.125}, draw=none, from=1-2, to=2-3]
	\arrow[from=1-3, to=2-3]
	\arrow[from=2-2, to=2-3]
	\arrow["{\pi_B}", from=2-2, to=3-2]
	\arrow[from=3-1, to=3-2]
\end{tikzcd}\]
admits a unique lifting. Remark first that we can uniquely lift the morphism $c\otimes[1]^\sharp\to B$ to $A^\flat\times B$ as the projection $\pi_B$ has the unique left lifting property against marked trivialization, and as every morphism $c\otimes[1]^\sharp\to A^\flat$ uniquely factors through $c$ by example \ref{exe:cotimes 1 to c is a trivialization}. It is then sufficient to demonstrate that $A^\flat\times_AX\to A^\flat\times B$ is a two-sided fibration, which directly follows from proposition \ref{prop:bicart stable by pullback}.
\end{proof}

\begin{lemma}
\label{lemma: a partial biinitial} Suppose we are given a diagram 
\[\begin{tikzcd} {(\Db_n)_t} & {(\Db_n)_t\vee[1]^\sharp} \\ & {A\times B} \arrow["\triangledown", from=1-1, to=1-2] \arrow[from=1-1, to=2-2] \arrow[from=1-2, to=2-2] \end{tikzcd}\]
 where $\triangledown$ is the morphism preserving the endpoints. If the induced morphism $[1]^\sharp\to B$ factors through $[0]$, then $\triangledown$ is biinitial over $(A,B)$.
\end{lemma}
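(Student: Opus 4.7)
The plan is to exhibit $\triangledown$ as a right Gray deformation retract over $(A,B)$ and then to apply proposition \ref{prop:left Gray deformation retract are initial}. I will define the retract $r:(\Db_n)_t\vee[1]^\sharp\to(\Db_n)_t$ to be the identity on the first summand and to collapse the extra $[1]^\sharp$ onto the unit at the endpoint $1\in(\Db_n)_t$; on objects this sends $0,1,2\mapsto 0,1,1$. The hypothesis that the induced $[1]^\sharp\to B$ factors through $[0]$ guarantees that $r$ is over $B$, since the extra summand is already constant on the $B$-side, so the two composites to $B$ agree. The equality $r\triangledown=\id_{(\Db_n)_t}$ holds because $\triangledown$ sends the $n$-cell $e_n$ of $(\Db_n)_t$ to the composite $e_n\circ_0 \Ib(e_1)$ in $(\Db_n)_t\vee[1]^\sharp$, which $r$ sends back to $e_n\circ_0 \Ib_1=e_n$.

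For the deformation $\psi:((\Db_n)_t\vee[1]^\sharp)\otimes[1]^\sharp\to(\Db_n)_t\vee[1]^\sharp$, I use that the Gray tensor preserves colimits and the pushout presentation $(\Db_n)_t\vee[1]^\sharp\simeq(\Db_n)_t\cup_{[0]}[1]^\sharp$ to write the source as a pushout $((\Db_n)_t\otimes[1]^\sharp)\cup_{[1]^\sharp}([1]^\sharp\otimes[1]^\sharp)$. On the second piece, I apply proposition \ref{prop:sharp and times} to identify $[1]^\sharp\otimes[1]^\sharp\simeq([1]\times[1])^\sharp$ and take the map induced by $\max:[1]\times[1]\to[1]$ followed by the second-summand inclusion $[1]^\sharp\hookrightarrow(\Db_n)_t\vee[1]^\sharp$; this gives identity at $\otimes\{0\}$ and a collapse to the object $2$ at $\otimes\{1\}$. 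On the first piece, I construct a map $\phi:(\Db_n)_t\otimes[1]^\sharp\to(\Db_n)_t\vee[1]^\sharp$ interpolating between the first-summand inclusion $\iota$ at $\otimes\{0\}$ and $\triangledown$ at $\otimes\{1\}$ (so on objects: $(0,0),(1,0),(0,1),(1,1)\mapsto 0,1,0,2$), with $\{0\}\otimes e_1$ sent to the identity at $0$, with $\{1\}\otimes e_1$ sent to the extra $1$-cell, and with the top marked $(n+1)$-cell $e_n\otimes e_1$ sent to the identity on the composite cell. The two pieces agree on the overlap $[1]^\sharp\simeq[0]\otimes[1]^\sharp$, where both restrict to the second-summand inclusion.

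The deformation retract axioms $\psi|_{\otimes\{0\}}=\id$, $\psi|_{\otimes\{1\}}=\triangledown r$, and $\psi\circ(\triangledown\otimes\id)=\triangledown\cst$ are then immediate by inspection of both pieces. The condition that $\psi$ is over $B$ reduces to the hypothesis $[1]^\sharp\to B$ factors through $[0]$: on the $(\Db_n)_t\otimes[1]^\sharp$ piece, the composite $n$-cell and $e_n$ coincide in $B$ because the extra $[1]^\sharp$ is trivial there; on the $[1]^\sharp\otimes[1]^\sharp$ piece the entire contribution to $B$ is constant.

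The main obstacle is the existence and marking-preservation of $\phi$. Two routes are available: one is to use proposition \ref{prop:decomposition marked case} to present $(\Db_n)_t\otimes[1]^\sharp=[(\Db_{n-1})_t,1]\otimes[1]^\sharp$ as the colimit of a diagram involving $(\Db_n)_t\vee[1]^\sharp$, $[(\Db_{n-1})_t\otimes[1]^\sharp,1]$, and $[1]^\sharp\vee(\Db_n)_t$, and to define compatible maps out of each piece (inductively in $n$); the other is to reduce via theorem \ref{theo:strictness of Gray} to the strict $\omega$-category setting and apply Steiner theory to construct the map, then verify by direct check that every generating marked cell of $(\Db_n)_t\otimes[1]^\sharp$ lands in a marked cell of $(\Db_n)_t\vee[1]^\sharp$, the rest following from closure under composition.
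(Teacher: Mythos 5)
Your route is genuinely different from the paper's. The paper disposes of the lemma in a few lines: it writes $\triangledown$ as a colimit, in the arrow category over $A\times B$, of the three morphisms $[0]\to[0]$, $\{1\}\to[1]^\sharp$ and $(\Db_n)_t\times\{1\}\to(\Db_n)_t\times[1]^\sharp$ (the hypothesis that $[1]^\sharp\to B$ is constant makes the last two biinitial, via example \ref{exe:the easiest example of initial and final morphism} and lemma \ref{lemma:biinitial stable by otimes}), and concludes by stability of biinitial morphisms under colimits, so that no Gray cylinder on $(\Db_n)_t$ ever has to be analysed. You instead exhibit $\triangledown$ as a right Gray deformation retract over $(A,B)$ and invoke proposition \ref{prop:left Gray deformation retract are initial}. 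This is a legitimate strategy, and your bookkeeping is right: the right-handed notion is the one matching the constancy hypothesis on the $B$-side, the retract $r$, the identification $[1]^\sharp\otimes[1]^\sharp\simeq([1]\times[1])^\sharp$ via proposition \ref{prop:sharp and times} and the $\max$-piece are correct, and the ``over $B$'' conditions do reduce to the hypothesis.

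The caveat is that the whole technical content of your argument is the map $\phi:(\Db_n)_t\otimes[1]^\sharp\to(\Db_n)_t\vee[1]^\sharp$, which is exactly what you leave as a sketch. Your first completion route does go through: in the colimit of proposition \ref{prop:decomposition marked case} for $[(\Db_{n-1})_t,1]\otimes[1]^\sharp$, take the identity on the piece $(\Db_n)_t\vee[1]^\sharp$, collapse-then-$\triangledown$ on the piece $[1]^\sharp\vee(\Db_n)_t$, and on the middle piece $[(\Db_{n-1})_t\otimes[1]^\sharp,1]$ the suspension of the canonical projection $(\Db_{n-1})_t\otimes[1]^\sharp\to(\Db_{n-1})_t$ (example \ref{exe:cotimes 1 to c is a trivialization}); its restrictions to both ends are identities, which is what the two gluing maps require, and marking preservation is then automatic. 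Also, the axiom $\psi\circ(\triangledown\otimes[1]^\sharp)\sim\triangledown\,\cst$ is not ``immediate by inspection of both pieces'': $\triangledown\otimes[1]^\sharp$ does not factor through either piece of your pushout, since the whiskered $n$-cell crosses the wedge point, so this identity needs its own small verification (a further decomposition of $(\Db_n)_t\otimes[1]^\sharp$, or a check in the strict model). In short: correct in outline and a genuinely different argument, but noticeably heavier than the paper's colimit trick, and the construction of $\phi$ together with the third retract axiom must actually be carried out before the proof is complete.
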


\begin{proof}
Remark that $\nabla$ is the following colimit of morphisms in $\ocat_{/A\times B}$: \[\begin{tikzcd} {[0]} & {\{0\}\times \{1\}} & {(\Db_n)_t\times\{1\}} \\ {[0]} & {\{0\}\times[1]^\sharp} & {(\Db_n)_t\times [1]^\sharp} \\ & {A\times B} \arrow[from=1-1, to=2-1] \arrow[from=1-2, to=1-1] \arrow[from=1-2, to=1-3] \arrow[from=1-2, to=2-2] \arrow[from=1-3, to=2-3] \arrow[from=2-1, to=3-2] \arrow[from=2-2, to=2-1] \arrow[from=2-2, to=2-3] \arrow[from=2-2, to=3-2] \arrow[from=2-3, to=3-2] \end{tikzcd}\] As all these morphisms are biinitial over $(A,B)$, so is their colimit.
\end{proof}

\begin{lemma}
\label{lemma:marking on left cartesian fibration} Let $(f,g):X\to A\times B$ be a left cartesian fibration. Then $X$ is equivalent to the marked $\omega$-category whose underlying $\omega$-category is $X^\natural$ and whose marking is freely generated by the marked cells in $X$ that are either over an object of $A$ or an object of $B$. More formally, we claim that the canonical morphism $$A^\flat\times_A X\coprod_{X^\flat}X\times_BB^\flat\to X$$ is an equivalence.
\end{lemma}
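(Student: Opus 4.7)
The plan is to show that the canonical morphism $\phi : Y \to X$ with $Y := A^\flat\times_A X \coprod_{X^\flat} X\times_B B^\flat$ is an equivalence in $\ocatm$. Since the forgetful functor $(\uvar)^\natural : \ocatm \to \ocat$ preserves both pullbacks and pushouts, the underlying $\omega$-category $Y^\natural$ equals $X^\natural$ and $\phi^\natural$ is the identity; thus it suffices to compare markings as subhomotopy types of $X^\natural_{>0}$. The marking of $Y$ is by construction the closure under composition of
\[
M := \{c \in X_{>0}: f(c)\text{ is a unit in }A\} \cup \{c \in X_{>0}: g(c)\text{ is a unit in }B\},
\]
which is visibly contained in $tX$; the nontrivial content of the lemma is the reverse inclusion $tX \subseteq \overline{M}$.

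I prove $tX \subseteq \overline{M}$ by induction on the dimension $n\geq 1$ of a marked cell $\alpha : (\Db_n)_t \to X$. For the base case $n = 1$, writing $x$ and $y$ for the source and target of $\alpha$, the image $(f\alpha, g\alpha)$ in $A\times B$ extends to a map $[1]^\sharp \vee [1]^\sharp \to A \times B$ whose first summand sends $[1]^\sharp$ to $(\id_{fx}, g\alpha)$ and whose second summand sends $[1]^\sharp$ to $(f\alpha, \id_{gy})$; the long-edge composite in $A\times B$ recovers $(f\alpha, g\alpha)$ by the product interchange law. Since the second summand has $B$-component factoring through $[0]$, lemma \ref{lemma: a partial biinitial} identifies the long-edge inclusion $\triangledown : [1]^\sharp \to [1]^\sharp\vee[1]^\sharp$ as biinitial over $(A,B)$. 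The two-sided fibration property then produces a unique lift $[1]^\sharp \vee [1]^\sharp \to X$ of the wedge extending $\alpha$, exhibiting $\alpha = \alpha_2\circ\alpha_1$ with $f\alpha_1 = \id_{fx}$ and $g\alpha_2 = \id_{gy}$. Both $\alpha_1,\alpha_2$ are marked in $X$ (since all $1$-cells of the wedge $[1]^\sharp\vee[1]^\sharp$ are marked), and lie respectively in the markings of $A^\flat\times_A X$ and $X\times_B B^\flat$, so $\alpha \in \overline{M}$.

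For the inductive step $n\geq 2$, view a marked $n$-cell $\alpha$ of $X$ with $0$-source $x$ and $0$-target $y$ as a marked $(n-1)$-cell $\tilde\alpha$ in $\hom_X(x,y)$ via the suspension--hom adjunction. By proposition \ref{prop:cartesian fibration between arrow}, the induced morphism
\[
\hom_X(x,y) \to \hom_A(fx,fy) \times \hom_B(gx,gy)
\]
is itself a two-sided fibration, so the inductive hypothesis applied to this fibration at dimension $n-1$ decomposes $\tilde\alpha$, within the hom-category, as a composition of cells whose image in one of the two factors is a unit. Translating back to $X$, this expresses $\alpha$ as a composition at dimensions $\geq 1$ in $X$ of cells in $M$, hence $\alpha \in \overline{M}$.

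The central difficulty is to arrange the base-case decomposition so that lemma \ref{lemma: a partial biinitial} applies: that lemma demands the freshly added $[1]^\sharp$ to be trivial on the $B$-side, which is precisely why the factorization $(f\alpha, g\alpha) = (f\alpha, \id_{gy}) \circ (\id_{fx}, g\alpha)$ must be chosen with the $(f\alpha, \id_{gy})$-factor placed second. The remaining verifications---tracking the identifications $Y^\natural = X^\natural$ and $tY = \overline{M}$ through colimits in $\ocatm$, and checking that closure by composition computed inside $\hom_X(x,y)$ is mapped into $\overline{M}\subseteq tX$ under the suspension identification---are routine once the key biinitiality is in place.
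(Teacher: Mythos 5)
Your proof is correct and rests on the same two pillars as the paper's own argument: stability of two-sided fibrations under the hom construction (proposition \ref{prop:cartesian fibration between arrow}) and the whiskering decomposition supplied by lemma \ref{lemma: a partial biinitial}. The organization differs in one respect worth recording. The paper's induction proves the sharper statement that every marked $n$-cell is a composite of marked cells lying over $0$-cells (objects) of $A$ or of $B$; to get this it must invoke lemma \ref{lemma: a partial biinitial} with the marked $n$-globe at \emph{every} stage, after the hom-reduction has only brought the cell over a $1$-cell of $A$ or of $B$. You instead prove the weaker statement that $tX$ is contained in the closure of the marked cells whose image in $A$ or in $B$ is a unit (of any cell, not necessarily of an object); since $tA^\flat$ consists of all units, this is exactly what the displayed formal statement requires, and it lets you confine lemma \ref{lemma: a partial biinitial} to dimension $1$, the inductive step being pure hom-reduction plus the (indeed routine) compatibility of closures under the marked suspension correspondence. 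So your route is marginally more economical, at the price of establishing slightly less than the informal phrasing ``over an object of $A$ or an object of $B$'' taken literally; for the morphism in the statement this makes no difference. One correction is needed in your set-up: as written, $M$ consists of \emph{all} cells whose image in $A$ (resp.\ $B$) is a unit, with no requirement that they be marked in $X$; with that definition neither ``$M$ is visibly contained in $tX$'' nor ``$tY=\overline{M}$'' holds. Take instead $M=\{c\in tX: f(c)\text{ a unit}\}\cup\{c\in tX: g(c)\text{ a unit}\}$; your argument only ever produces marked cells anyway (as you note for $\alpha_1,\alpha_2$, and as the marked suspension adjunction gives in the inductive step), so nothing else changes.
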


\begin{proof}
We will show by induction on $n$ and on all two-sided fibrations $(f,g):X\to A\times B$ that any marked $n$-cell $u$ in $X$ is a composite of marked $n$-cells which are either over a $0$-cell of $A$ or a $0$-cell of $B$.

The initialization is trivial, and suppose then the result is true at the stage $n$. Let $u$ be a marked $n$-cell in $X$ with $0$-source $a$ and $0$-target $b$. Applying the induction hypothesis to the bifibration $\hom_X(a,b)\to \hom_B(g(a),g(b))\times \hom_A(f(a),f(b))$, we can reduce to the case where $u$ is either over a $1$-cell of $A$ or over a $1$-cell of $B$. We will then suppose that we are in the first case, the other being treated similarly. This data then induces a diagram:
% https://q.uiver.app/#q=WzAsNSxbMiwwLCJYIl0sWzIsMSwiQVxcdGltZXMgQiJdLFswLDAsIihcXFRiX24pX3QiXSxbMCwxLCIoXFxEYl9uKV90XFx2ZWVbMV1eXFxzaGFycCJdLFsxLDEsIlsxXV5cXHNoYXJwXFx0aW1lcyAoXFxEYl9uKV90Il0sWzIsMywiXFx0cmlhbmdsZWRvd24iLDJdLFsyLDBdLFswLDEsIihmLGcpIl0sWzMsNF0sWzQsMSwiZih2KVxcdGltZXMgZyh1KSIsMl1d
\[\begin{tikzcd}
	{(\Db_n)_t} && X \\
	{(\Db_n)_t\vee[1]^\sharp} & {[1]^\sharp\times (\Db_n)_t} & {A\times B}
	\arrow[from=1-1, to=1-3]
	\arrow["\triangledown"', from=1-1, to=2-1]
	\arrow["{(f,g)}", from=1-3, to=2-3]
	\arrow[from=2-1, to=2-2]
	\arrow["{f(v)\times g(u)}"', from=2-2, to=2-3]
\end{tikzcd}\]
which admits a unique lift by lemma \ref{lemma: a partial biinitial}. This then implies that the cell $u$ is a whiskering $u'\circ_0\overline{v}$ where $v$ is over an object of $B$ and $u'$ is over an object of $A$.
\end{proof}

\begin{cor}
\label{cor:morphism between is an equivalence when equivalence on fiberbifibrant case} Suppose we are given a morphism
% https://q.uiver.app/#q=WzAsNCxbMCwwLCJYIl0sWzEsMCwiWSJdLFswLDEsIkFeXFxzaGFycCJdLFsxLDEsIkJeXFxzaGFycCJdLFswLDEsIlxccGhpIl0sWzAsMl0sWzAsM10sWzEsMl0sWzEsM11d
\[\begin{tikzcd}
	X & Y \\
	{A^\sharp} & {B^\sharp}
	\arrow["\phi", from=1-1, to=1-2]
	\arrow[from=1-1, to=2-1]
	\arrow[from=1-1, to=2-2]
	\arrow[from=1-2, to=2-1]
	\arrow[from=1-2, to=2-2]
\end{tikzcd}\]
between two two-sided fibrations. Then $\phi$ is an equivalence if and only if for any $a:A$ and $b: B$, $$\phi_{a,b}:X_{a,b}\to Y_{a,b}$$ is an equivalence.
\end{cor}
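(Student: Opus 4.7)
The ``only if'' direction is immediate: for any $(a,b)$, the fiber $X_{a,b}$ is the pullback of $\phi$ along $\{(a,b)\} \to A^\sharp \times B^\sharp$, and pullbacks preserve equivalences.

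For the ``if'' direction, the strategy is to first reduce to showing the underlying $\omega$-categorical morphism $\phi^\natural: X^\natural \to Y^\natural$ is an equivalence. By lemma \ref{lemma:marking on left cartesian fibration}, the marking of a two-sided fibration over $A^\sharp \times B^\sharp$ is canonically determined by the marked cells whose image in $A$ or in $B$ is a unit (and the fibers over objects are trivially marked by proposition \ref{prop:left fib over flat}); a parallel inductive argument to the one below, carried out on marked cells, will then show that the markings correspond once the underlying $\omega$-categorical equivalence is established.

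To show $\phi^\natural$ is an equivalence, I proceed by induction on $n$, showing that $\phi$ induces an equivalence on the homotopy type of $n$-cells $X_n \to Y_n$. For $n = 0$, the decomposition $X_0 \cong \coprod_{(a,b) \in A_0 \times B_0} (X_{a,b})_0$ (valid because $(A^\sharp \times B^\sharp)_0 = A_0 \times B_0$, with $(a,b)$-fiber equal to $(X_{a,b})_0$) together with the fiberwise hypothesis yields the result. For $n \geq 1$, since $X_n$ fibers over $X_0 \times X_0$ via $0$-source and $0$-target with fiber at $(x,y)$ the homotopy type $\hom_X(x,y)_{n-1}$, it suffices (using the $n = 0$ case) to show that the induced morphism $\phi_{x,y}: \hom_X(x,y) \to \hom_Y(\phi x, \phi y)$ is an equivalence on $(n-1)$-cells. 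By proposition \ref{prop:cartesian fibration between arrow}, $\phi_{x,y}$ is itself a morphism between two-sided fibrations, now over $\hom_B(qx, qy)^\sharp \times \hom_A(px, py)^\sharp$, and one applies the corollary recursively, the recursion being well-founded in $n$.

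The main obstacle is the recursive step: to apply the corollary to $\phi_{x,y}$, one must verify that $\phi_{x,y}$ is fiberwise an equivalence on $(\beta, \alpha)$-fibers, where $\beta$ is a $1$-cell of $B$ and $\alpha$ is a $1$-cell of $A$. The key idea is to decompose such a $1$-cell of $A \times B$ as the composite $(\alpha, \Ib_{b_1}) \circ_0 (\Ib_{a_0}, \beta)$, in which each factor has one component a unit. Corollary \ref{cor:when glob inclusion are final and initial} applies to these marked factors in the sharp base $A^\sharp \times B^\sharp$ and produces biinitial inclusions whose unique lifts against two-sided fibrations identify the corresponding partial fibers of $X$ and $Y$ with object-fibers such as $X_{a_0, b_1}$ and $Y_{a_0, b_1}$. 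The hypothesis on object-fibers then yields the required equivalence on $(\beta, \alpha)$-fibers, closing the induction.
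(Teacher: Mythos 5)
Your overall strategy is genuinely different from the paper's: you attempt to re-prove fiberwise detection of equivalences from scratch, by induction on the dimension of cells, passing to hom-fibrations via proposition \ref{prop:cartesian fibration between arrow}. The paper instead argues in one stroke: it quotes the external fact that equivalences between left (resp.\ right) cartesian fibrations are detected on fibers (corollary 3.2.2.8 of \cite{loubaton2024categorical}), applies it twice (first to $X_a\to \{a\}\times B$ and $X_b\to A\times\{b\}$, then, via proposition \ref{prop:fibrations induced by bifibrations}, to $A^\flat\times_AX$ and $X\times_BB^\flat$), and concludes with the marked decomposition $X\sim A^\flat\times_AX\coprod_{X^\flat}X\times_BB^\flat$ of lemma \ref{lemma:marking on left cartesian fibration}. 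Your route would be more self-contained if it worked, but the step you yourself flag as the main obstacle contains a genuine gap.

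Concretely: the fiber of $\hom_X(x,y)\to \hom_B(qx,qy)^\sharp\times\hom_A(px,py)^\sharp$ over an object $(\beta,\alpha)$ consists of the $1$-cells $x\to y$ of $X$ lying over $(\alpha,\beta)$, together with the higher cells between them lying over identities. This is \emph{not} an object-fiber such as $X_{a_0,b_1}$, as your text asserts; at best it should be equivalent to a hom-object $\hom_{X_{a_0,b_1}}(\beta_!x,\alpha^*y)$, where $\beta_!x$ and $\alpha^*y$ are the endpoints of the unique lifts of $(\Ib_{a_0},\beta)$ out of $x$ and of $(\alpha,\Ib_{b_1})$ into $y$. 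Corollary \ref{cor:when glob inclusion are final and initial}, or directly the unique lifting property against $\I_{A,B}$ and $\F_{A,B}$ (proposition \ref{prop: another characterization}), only gives you the existence and uniqueness of these transports; it does not give the equivalence of the $(\beta,\alpha)$-fiber with $\hom_{X_{a_0,b_1}}(\beta_!x,\alpha^*y)$. That equivalence is the statement that the lifted cells are (co)cartesian in the hom-wise sense, i.e.\ that pre- and post-composition with them induce equivalences of the relevant fiber $\omega$-categories at all dimensions, and nothing in the present paper supplies it — this is exactly the nontrivial content that the paper's proof imports wholesale from \cite{loubaton2024categorical} through the fiberwise-detection corollary. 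Without it, the fiberwise hypothesis for $\phi_{x,y}$ cannot be verified and the induction does not close. (The treatment of markings is also only sketched; the paper obtains it for free from lemma \ref{lemma:marking on left cartesian fibration}, whereas your "parallel induction on marked cells" would again need the same missing transport input.)
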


\begin{proof}
This is obviously necessary. Suppose then that $\phi$ induces equivalences on fibers. As equivalences between left (resp. right) Cartesian fibrations are detected on fibers by \cite[corollary 3.2.2.8]{loubaton2024categorical}, this implies that for any $a:A$, $b:B$, the morphisms 
\[\begin{tikzcd}
	{X_b} & {Y_b} & {X_a} & {Y_a} \\
	& {A\times\{b\}} && {\{a\}\times B}
	\arrow["{\phi_b}", from=1-1, to=1-2]
	\arrow[from=1-1, to=2-2]
	\arrow[from=1-2, to=2-2]
	\arrow["{\phi_a}", from=1-3, to=1-4]
	\arrow[from=1-3, to=2-4]
	\arrow[from=1-4, to=2-4]
\end{tikzcd}\]
are equivalences. Using once again the fact that equivalences between left (resp. right) Cartesian fibrations are detected on fibers, and using proposition \ref{prop:fibrations induced by bifibrations} to apply this result, this implies that the two morphisms 
\[\begin{tikzcd}
	{A^\flat\times_AX} & {A^\flat\times_AY} & {X\times_BB^\flat} & {Y\times_BB^\flat} \\
	& A && B
	\arrow["{\phi_1}", from=1-1, to=1-2]
	\arrow[from=1-1, to=2-2]
	\arrow[from=1-2, to=2-2]
	\arrow["{\phi_2}", from=1-3, to=1-4]
	\arrow[from=1-3, to=2-4]
	\arrow[from=1-4, to=2-4]
\end{tikzcd}\]
are equivalences. By lemma \ref{lemma:marking on left cartesian fibration}, that concludes the proof.
\end{proof}

\begin{lemma}
\label{lemma:explicit factorization} For any $A$, the morphism $A^{[1]^\sharp}\to A\times A$ is a two-sided fibration.
\end{lemma}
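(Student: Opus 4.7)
The plan is to verify the unique right lifting property of $A^{[1]^\sharp}\to A\times A$ against the generators of $\widehat{\IF_{A,A}}$. Since this class is generated under colimits by $\IF_{A,A}$, it suffices to handle the two families defining $\I_{A,A}$ and $\F_{A,A}$. Using proposition \ref{prop:dualities marked otimes} together with remark \ref{rem:stabilites of biinitial by duality} (and the self-duality $(A^{[1]^\sharp})^\circ\simeq (A^\circ)^{[1]^\sharp}$ that follows from these), the two families are interchanged by the even duality applied to $A$, so I can restrict attention to $\I_{A,A}$.

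Let $i:X\otimes\{0\}_u\to X\otimes[1]^\sharp_u$ be a generator in $\I_{A,A}$, with $X$ either $\Db_n^\flat$ or $(\Db_n)_t$, and with the hypothesis that the first component of $X\otimes[1]^\sharp_u\to A\times A$ factors as $p\circ\pi:X\otimes[1]^\sharp_u\to X\to A$. Applying the adjunction $(-)\otimes[1]^\sharp\dashv(-)^{[1]^\sharp}$ turns the lifting problem into an extension problem for maps out of $X\otimes[1]^\sharp_u\otimes[1]^\sharp_v$: given $\alpha:X\otimes\{0\}_u\otimes[1]^\sharp_v\to A$ together with the two boundary maps $\phi_0=p\circ\pi$ and $\phi_1$ on $X\otimes[1]^\sharp_u\otimes\partial[1]_v$ (compatible with $\alpha$ at the corners), find a unique $\beta$ extending them.

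For existence, I would use proposition \ref{prop:sharp and times} to identify $X\otimes[1]^\sharp_u\otimes[1]^\sharp_v$ with $X\otimes([1]_u\times[1]_v)^\sharp$, then construct $\beta$ via the explicit poset map $\sigma:[1]\times[1]\to[2]$, $\sigma(u,v):=v(1+u)$. Using the decomposition $[2]^\sharp=[1]^\sharp\cup_{\{0\}}[1]^\sharp$ and the compatibility of $\alpha$ and $\phi_1$ at $(0,1)$, the boundary data assembles into a well-defined map $\eta:X\otimes[2]^\sharp\to A$ given by $\alpha$ on the first arrow and $\phi_1$ on the second. I set $\beta:=\eta\circ(X\otimes\sigma^\sharp)$, and a direct check on each edge of $[1]\times[1]$ confirms that $\beta$ extends the prescribed data (the key point being that $\sigma$ collapses the bottom edge, which matches $\phi_0=p\circ\pi$ factoring through $X$).

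For uniqueness, the main obstacle is showing that the fiber of the restriction $\Map(X\otimes([1]\times[1])^\sharp,A)\to\Map(S,A)$ at the given boundary data is contractible, where $S$ is the "three sides" together with the factorization constraint on the bottom. The approach I would take is to observe that the quotient of $X\otimes([1]\times[1])^\sharp$ obtained by collapsing the bottom edge $X\otimes[1]^\sharp_u\otimes\{0\}_v$ to $X\otimes\{0\}_u\otimes\{0\}_v$ is equivalent to $X\otimes[2]^\sharp$ (since in the $\sharp$-marked world the "walking commutative triangle" coincides with the walking composite). Under this equivalence the constrained extension problem becomes the universal property of the pushout $[2]^\sharp=[1]^\sharp\cup_{\{0\}}[1]^\sharp$, which determines $\eta$ (and hence $\beta$) uniquely. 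Making this quotient argument precise, in particular for $X=(\Db_n)_t$ where extra markings interact with the collapse, is the delicate step.
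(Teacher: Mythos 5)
Your route is essentially the paper's: transpose along $-\otimes[1]^\sharp\dashv(-)^{[1]^\sharp}$, identify $[1]^\sharp\otimes[1]^\sharp\simeq([1]\times[1])^\sharp$, collapse the edge on which the map is forced to factor through $X$, and conclude from the Segal equivalence $[1]^\sharp\amalg_{[0]}[1]^\sharp\simeq[2]^\sharp$; your $\sigma$ is exactly that collapse map. The step you flag as delicate is in fact immediate and involves no interaction with the marking of $X$: the collapse happens entirely in the $([1]\times[1])^\sharp$ factor, and since $X\otimes-$ preserves colimits, one has $X\otimes\bigl(([1]\times[1])^\sharp\amalg_{([1]\times\{0\})^\sharp}[0]\bigr)\simeq X\otimes[2]^\sharp$, which is (up to the evident symmetry in the coordinates) precisely the cocartesian square displayed in the paper's proof and is valid for any $X$, marked or not. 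Combined with the fact that $X\otimes[1]^\sharp\to X$ is an epimorphism (proposition \ref{prop:otimes and epi}), so that factoring through $X$ is a property and maps out of $X\otimes([1]\times[1])^\sharp$ satisfying the bottom constraint correspond bijectively to maps out of $X\otimes[2]^\sharp$, your uniqueness argument closes exactly as the paper's does.
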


\begin{proof}
Suppose we are given a diagram
\begin{equation}
\label{eq:square is explicit factorization}
\begin{tikzcd}
	{C\otimes\{0\}} & {A^{[1]^\sharp}} \\
	{C\otimes[1]^\sharp} & {A\times A}
	\arrow[from=1-1, to=1-2]
	\arrow[from=1-1, to=2-1]
	\arrow[from=1-2, to=2-2]
	\arrow["{(\pi_1,\pi_2)}"', from=2-1, to=2-2]
\end{tikzcd}
\end{equation}
such that $\pi_1$ factors through $C$. As we have a cocartesian square
\[\begin{tikzcd}
	{[1]^\sharp\times\{1\}} & {[1]^\sharp\times[1]^\sharp} \\
	{[0]} & {[1]^\sharp\vee[1]^\sharp}
	\arrow[from=1-1, to=1-2]
	\arrow[from=1-1, to=2-1]
	\arrow[from=1-2, to=2-2]
	\arrow[from=2-1, to=2-2]
	\arrow["\lrcorner"{anchor=center, pos=0.125, rotate=180}, draw=none, from=2-2, to=1-1]
\end{tikzcd}\]
the square \eqref{eq:square is explicit factorization} corresponds to a morphism 
$C\to A^{[2]^\sharp}$ 
and its unique lift is the induced morphism
$$C\to A^{[1]^\sharp}\xrightarrow{A^{\triangledown}} A^{[2]^\sharp}.$$

The case of the lifting property in a diagram
\[\begin{tikzcd}
	{C\otimes\{1\}} & {A^{[1]^\sharp}} \\
	{C\otimes[1]^\sharp} & {A\times A}
	\arrow[from=1-1, to=1-2]
	\arrow[from=1-1, to=2-1]
	\arrow[from=1-2, to=2-2]
	\arrow["{(\pi_1,\pi_2)}"', from=2-1, to=2-2]
\end{tikzcd}\]
such that $\pi_2$ factors through $C$ is similar.
\end{proof}

\begin{prop}
\label{prop:directed pullback i bifib} For any span $A\to B\leftarrow C$ of $\omega$-categories, the morphism $$A\dirtimes{B}C\to A\times C$$ is a two-sided fibration.
\end{prop}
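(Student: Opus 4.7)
The plan is to realize $A\dirtimes{B}C\to A\times C$ as a pullback of the already-known two-sided fibration $B^{[1]^\sharp}\to B\times B$ from lemma \ref{lemma:explicit factorization}, and then invoke stability of two-sided fibrations under pullback.

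More precisely, I would first unpack the definition of the directed pullback: $A\dirtimes{B}C$ is the limit of the diagram
$$A\to B^{\{0\}}\leftarrow B^{[1]^\sharp}\to B^{\{1\}}\leftarrow C,$$
and since $B^{\{0\}}\times B^{\{1\}}\simeq B\times B$, this limit can be rewritten as the pullback
$$A\dirtimes{B}C\;\simeq\; (A\times C)\times_{B\times B}B^{[1]^\sharp},$$
where the map $A\times C\to B\times B$ is induced by the two legs of the span. The induced projection $A\dirtimes{B}C\to A\times C$ is, by construction, the pullback along this map of the projection $(\pi_1,\pi_2):B^{[1]^\sharp}\to B\times B$.

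By lemma \ref{lemma:explicit factorization}, the map $(\pi_1,\pi_2):B^{[1]^\sharp}\to B\times B$ is a two-sided fibration. Applying proposition \ref{prop:bicart stable by pullback} to the morphisms $A\to B$ and $C\to B$ then yields that $A\dirtimes{B}C\to A\times C$ is a two-sided fibration, which concludes.

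There is essentially no obstacle here beyond identifying the directed pullback with the pullback of $B^{[1]^\sharp}\to B\times B$; everything else is an immediate consequence of results that have already been established.
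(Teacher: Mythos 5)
Your proposal is correct and is essentially the paper's own argument: the paper also deduces the statement directly from lemma \ref{lemma:explicit factorization} (that $B^{[1]^\sharp}\to B\times B$ is a two-sided fibration) together with the stability of two-sided fibrations under pullback (proposition \ref{prop:bicart stable by pullback}). The identification $A\dirtimes{B}C\simeq (A\times C)\times_{B\times B}B^{[1]^\sharp}$ you spell out is exactly the step the paper leaves implicit.
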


\begin{proof}
This directly follows from lemma \ref{lemma:explicit factorization} and from the stability of two-sided fibration by pullback.
\end{proof}

\begin{prop}
\label{prop: another characterization} A span $(f,g):X\to A\times B$ is a two-sided fibration if and only if 
\begin{enumerate}
\item Every square of shape
\[\begin{tikzcd}
	{\{0\}} & X \\
	{[1]^\sharp} & {A\times B}
	\arrow[from=1-1, to=1-2]
	\arrow[from=1-1, to=2-1]
	\arrow[from=1-2, to=2-2]
	\arrow[dashed, from=2-1, to=1-2]
	\arrow[from=2-1, to=2-2]
\end{tikzcd}\]
where $[1]^\sharp\to A$ factors through $[0]$ admits a unique lifting.

Every square of shape
\[\begin{tikzcd}
	{\{1\}} & X \\
	{[1]^\sharp} & {A\times B}
	\arrow[from=1-1, to=1-2]
	\arrow[from=1-1, to=2-1]
	\arrow[from=1-2, to=2-2]
	\arrow[dashed, from=2-1, to=1-2]
	\arrow[from=2-1, to=2-2]
\end{tikzcd}\]
where $[1]^\sharp\to B$ factors through $[0]$ admits a unique lifting.
\item Every square of shape
\[\begin{tikzcd}
	{\Db_n^\flat} & X & {(\Db_n)_t} & X \\
	{\Db_n^\flat\vee[1]^\sharp} & {A\times B} & {(\Db_n)_t\vee[1]^\sharp} & {A\times B}
	\arrow[from=1-1, to=1-2]
	\arrow["\triangledown", from=1-1, to=2-1]
	\arrow[from=1-2, to=2-2]
	\arrow[from=1-3, to=1-4]
	\arrow[from=1-3, to=2-3]
	\arrow[from=1-4, to=2-4]
	\arrow[dashed, from=2-1, to=1-2]
	\arrow[from=2-1, to=2-2]
	\arrow[dashed, from=2-3, to=1-4]
	\arrow[from=2-3, to=2-4]
\end{tikzcd}\]
where $[1]^\sharp\to A$ factors through $[0]$ admits a unique lifting.

Every square of shape
\[\begin{tikzcd}
	{\Db_n^\flat} & X & {(\Db_n)_t} & X \\
	{[1]^\sharp\vee\Db_n^\flat} & {A\times B} & {[1]^\sharp\vee(\Db_n)_t} & {A\times B}
	\arrow[from=1-1, to=1-2]
	\arrow["\triangledown", from=1-1, to=2-1]
	\arrow[from=1-2, to=2-2]
	\arrow[from=1-3, to=1-4]
	\arrow[from=1-3, to=2-3]
	\arrow[from=1-4, to=2-4]
	\arrow[dashed, from=2-1, to=1-2]
	\arrow[from=2-1, to=2-2]
	\arrow[dashed, from=2-3, to=1-4]
	\arrow[from=2-3, to=2-4]
\end{tikzcd}\]
where $[1]^\sharp\to B$ factors through $[0]$ admits a unique lifting.

\item For any $x,y$ in $X$, the morphism:
$$\hom_X(x,y)\to \hom_B(g(x),g(y))\times \hom_A(f(x),f(y))$$
is a two-sided fibration.
\end{enumerate}
\end{prop}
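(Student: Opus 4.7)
The plan is to prove the two directions separately. Necessity is direct: the morphisms in (1) admit left/right Gray deformation retract structures by proposition~\ref{prop:example of left Gray deformation}, hence are biinitial over $(A,B)$ by proposition~\ref{prop:left Gray deformation retract are initial}; the morphisms in (2) are biinitial by lemma~\ref{lemma: a partial biinitial}; and (3) is exactly proposition~\ref{prop:cartesian fibration between arrow}.

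For sufficiency, I would assume (1), (2), and (3) and show that $(f,g)\colon X \to A\times B$ has the unique right lifting property against every generator $C \otimes \{\epsilon\} \to C \otimes [1]^\sharp$ of $\IF_{A,B}$, with $C \in \{\Db_n^\flat, (\Db_n)_t\}$ and $\epsilon \in \{0,1\}$ (together with the appropriate one-sided factorization hypothesis), proceeding by induction on $n$. The base case $n = 0$ is exactly condition (1).

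For the inductive step, I would write $C = [C', 1]$ with $C' \in \{\Db_{n-1}^\flat, (\Db_{n-1})_t\}$ (using that the marked suspension sends flat globes to flat globes and marked globes to marked globes) and apply proposition~\ref{prop:decomposition marked case} to present $C \otimes [1]^\sharp$ as a colimit involving the two wedge pieces $C \vee [1]^\sharp$ and $[1]^\sharp \vee C$ and the suspended middle piece $[C' \otimes [1]^\sharp, 1]$, glued along copies of $C$. The lifting problem then decomposes into compatible sub-problems on each piece. The wedge piece whose orientation matches the factorization hypothesis is handled directly by the corresponding part of (2), with the required factorization on the $A$- or $B$-side inherited from the fact that the $[1]^\sharp$-direction of $C \otimes [1]^\sharp$ already collapses on the relevant side. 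The middle piece transposes under the suspension--hom adjunction into a lifting problem for $C' \otimes \{0\} \to C' \otimes [1]^\sharp$ against the two-sided fibration $\hom_X(x,y) \to \hom_B(g(x), g(y)) \times \hom_A(f(x), f(y))$ supplied by (3), which is solved by the inductive hypothesis. The remaining wedge piece is then forced: its boundary data is determined by the lifts already constructed on the adjacent pieces, and uniqueness of each partial lift provides the unique compatible extension. Coherence on the gluing copies of $C$ is automatic from uniqueness.

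I expect the main obstacle to be carefully tracking the factorization constraints on the $A$- and $B$-sides through the colimit decomposition, and in particular verifying that the ``forced'' extension of the wedge piece whose orientation does not immediately match (2) is well-defined and unique (since the inherited factorization is on the wrong side for direct application of (2) on that wedge, the lift must be constructed via the adjacent middle piece). The symmetric generator $C \otimes \{1\} \to C \otimes [1]^\sharp$ is handled by interchanging the roles of $A$ and $B$, or more efficiently by invoking the duality of remark~\ref{rem:stabilites of biinitial by duality}.
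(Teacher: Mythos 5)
Your skeleton is the same as the paper's: necessity via lemma~\ref{lemma: a partial biinitial} (and its dual) together with proposition~\ref{prop:cartesian fibration between arrow}, and sufficiency by decomposing the generating cylinder inclusion $C\otimes\{0\}\to C\otimes[1]^\sharp$ (for $C$ a marked globe) through proposition~\ref{prop:decomposition marked case}, transposing the suspended middle piece against the hom two-sided fibration of condition~(3), using closure of the unique-lifting class under pushouts and composition, and invoking duality for the mirrored generators. Two small points of bookkeeping: no induction is needed for the middle piece, since (3) asserts the full two-sided fibration property on homs, so the transposed problem is solved outright (and with the gluing conventions of \ref{prop:decomposition marked case} the transposed generator sits at $\{1\}$ rather than $\{0\}$, the roles of the two factors being exchanged on homs).

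The genuine gap is exactly where you flag the ``main obstacle'': your assignment of conditions to the pieces is off, and the ``forced'' step is not an argument. Starting from $C\otimes\{0\}$, the passage to the first wedge $C\vee[1]^\sharp$ is a pushout of $\{0\}\to[1]^\sharp$, i.e.\ a condition-(1) problem (one only adjoins an edge at a $0$-vertex whose lift is already given); condition~(2) cannot be applied there, since at that stage no whiskering factorization of an already-lifted cell is in play. Conversely, the remaining wedge is precisely where condition~(2) is indispensable: the last map in the composite is a cobase change of $\triangledown:C\to[1]^\sharp\vee C$, and the cells it adjoins --- the entire copy of $C$ at the other end of the cylinder together with the edge at the remaining vertex --- are not determined by the lifts already built on adjacent pieces, so ``determined by boundary data plus uniqueness'' does not produce them; one genuinely needs the unique $\triangledown$-lift. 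The ``wrong side'' mismatch that pushed you to this workaround comes from the way the side-conditions are paired with the two wedges in the printed statement of~(2): the pairing furnished by lemma~\ref{lemma: a partial biinitial} and its dual (the $B$-condition with $C\vee[1]^\sharp$, the $A$-condition with $[1]^\sharp\vee C$) is the one that two-sided fibrations actually satisfy and the one the sufficiency argument needs. With that reading, condition~(1) handles the first wedge, (3) the suspended middle, and (2) the final $\triangledown$-pushout, and the composite-of-pushouts argument closes as in the paper; as written, your plan leaves the decisive wedge unjustified.
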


\begin{proof}
Suppose first that $(f,g):X\to A\times B$ is a left cartesian fibration. It obviously fulfills the condition $(1)$. The lemma \ref{lemma: a partial biinitial} and its dual version imply that $(f,g)$ verifies the condition $(2)$. The condition $(3)$ is a consequence of proposition \ref{prop:cartesian fibration between arrow}.

Suppose now that $(f,g):X\to A\times B$ verifies the three conditions. The homotopy type of morphisms of $\ocatm_{/A\times B}$ having the unique right lifting property against $S$ is closed under composition and pushout. It then includes morphisms of the form 
\[\begin{tikzcd}
	{\Db_n^\flat\otimes\{0\}} & {\Db_n^\flat\otimes[1]^\sharp} \\
	& {A\times B}
	\arrow[from=1-1, to=1-2]
	\arrow[from=1-1, to=2-2]
	\arrow[from=1-2, to=2-2]
\end{tikzcd}\]
where $\Db_n^\flat\otimes[1]^\sharp\to A$ factors through $\Db_n^\flat$ as, by proposition \ref{prop:decomposition marked case}, this morphism is the composition of the following sequence of morphisms over $A\times B$:
\[\begin{tikzcd}
	{\{0\}} & {\Db_n} \\
	{[1]^\sharp} & {\Db_n\vee[1]^\sharp} & {[\Db_{n-1}\otimes\{1\},1]} \\
	{[\Db_{n-1}\otimes\{0\},1]} & \bullet & {[\Db_{n-1}\otimes[1]^\sharp,1]} \\
	{[1]^\sharp \vee \Db_n} & {\Db_n\otimes[1]^\sharp}
	\arrow[from=1-1, to=1-2]
	\arrow[from=1-1, to=2-1]
	\arrow[from=1-2, to=2-2]
	\arrow[from=2-1, to=2-2]
	\arrow["\lrcorner"{anchor=center, pos=0.125, rotate=180}, draw=none, from=2-2, to=1-1]
	\arrow[from=2-2, to=3-2]
	\arrow["\triangledown"', from=2-3, to=2-2]
	\arrow[from=2-3, to=3-3]
	\arrow[from=3-1, to=3-2]
	\arrow[from=3-1, to=4-1]
	\arrow["\lrcorner"{anchor=center, pos=0.125, rotate=90}, draw=none, from=3-2, to=2-3]
	\arrow[from=3-2, to=4-2]
	\arrow[from=3-3, to=3-2]
	\arrow[from=4-1, to=4-2]
	\arrow["\lrcorner"{anchor=center, pos=0.125, rotate=180}, draw=none, from=4-2, to=3-1]
\end{tikzcd}\]

We show similarly that $(f,g)$ is local with respect to morphisms
\[\begin{tikzcd}
	{(\Db_n)_t\otimes\{0\}} & {(\Db_n)_t\otimes[1]^\sharp} \\
	& {A\times B}
	\arrow[from=1-1, to=1-2]
	\arrow[from=1-1, to=2-2]
	\arrow[from=1-2, to=2-2]
\end{tikzcd}\]
where $(\Db_n)_t\otimes[1]^\sharp\to A$ factors through $(\Db_n)_t$. The span $(f,g)$ is then local with respect to $\I_{A,B}$, and we demonstrate similarly that it is local with respect to $\F_{A,B}$. The span $(f,g)$ is then a two-sided fibration.
\end{proof}

\subsection{$(\omega,1)$-double marked categories}

\begin{definition}
A simplicial object $D_\bullet:\Delta^{op}\to \ocatm$ is a \textit{$(\omega,1)$-double marked category} if for any integer $k$, the canonical map: $$D_k\to D_1 \times_{D_0}... \times_{D_0}D_1$$ is an equivalence. We denote by $\oDcatm$ the full subcategory of $\Fun(\Delta^{op},\ocatm)$ spanned by $(\omega,1)$-double marked categories.
\end{definition}

\begin{definition}
Let $K$ be a category and $C$ a marked $\omega$-category. We denote by $\langle C,K\rangle_\bullet:\Delta^{op}\to \ocatm$ the $(\omega,1)$-double marked category whose value on $n$ is given by: $$\langle C,K\rangle_n:= C\times K_n$$ This association defines a cocontinuous functor: $$\langle \uvar,\uvar\rangle:\cat \times \ocatm \to (\omega,1)\Dcatm$$
\end{definition}

\begin{remark}
\label{rem univ of langle omega1}
By some manipulation on localization of presheaf category, we can show that the adjunction induced by left Kan extension,
% https://q.uiver.app/#q=WzAsMixbMCwwLCJcXGxhbmdsZVxcdXZhcixcXHV2YXJcXHJhbmdsZV8hOlxcd2lkZWhhdHtcXERlbHRhXFx0aW1lcyBcXG9jYXRtfSJdLFsxLDAsIiAoXFxvbWVnYSwxKVxcRGNhdG0iXSxbMCwxLCIiLDAseyJvZmZzZXQiOi0yfV0sWzEsMCwiIiwwLHsib2Zmc2V0IjotMn1dLFsyLDMsIiIsMCx7ImxldmVsIjoxLCJzdHlsZSI6eyJuYW1lIjoiYWRqdW5jdGlvbiJ9fV1d
\[\begin{tikzcd}
	{\langle\uvar,\uvar\rangle_!:\widehat{\Delta\times \ocatm}} & { (\omega,1)\Dcatm}
	\arrow[""{name=0, anchor=center, inner sep=0}, shift left=2, from=1-1, to=1-2]
	\arrow[""{name=1, anchor=center, inner sep=0}, shift left=2, from=1-2, to=1-1]
	\arrow["\dashv"{anchor=center, rotate=-90}, draw=none, from=0, to=1]
\end{tikzcd}\]
is a localization. Any colimit preserving functor $ F:(\omega,1)\Dcatm\to X$ is then a restriction of the colimit preserving functor $\widehat{\Delta\times \ocatm}\to X$ sending $[n],C$ onto $F(\langle C,[n]\rangle)$.
\end{remark}

\begin{construction}
Let $D_\bullet$ be a $(\omega,1)$-double marked category and $f:A\to D_0$ a morphism. We denote by $f^*D_\bullet$ the $(\omega,1)$-double marked category whose value on $n$ fits in the pullback:
\[\begin{tikzcd}
	{f^*D_n} & {D_n} \\
	{\prod_{k\leq n }A_{[0]}} & {\prod_{k\leq n }D_{[0]}}
	\arrow[from=1-1, to=1-2]
	\arrow[from=1-1, to=2-1]
	\arrow["\lrcorner"{anchor=center, pos=0.125}, draw=none, from=1-1, to=2-2]
	\arrow[from=1-2, to=2-2]
	\arrow[from=2-1, to=2-2]
\end{tikzcd}\]
\end{construction}

\begin{definition}
A $(\omega,1)$-double marked category $D_\bullet$ is accompanied if 
\begin{enumerate}
\item $D_0 \sim D_0^\sharp$.
\item The canonical morphism $D_1 \to D_0^\sharp \times D_0^\sharp$ is a two-sided fibration.
\end{enumerate}
\end{definition}

\begin{remark}
If $D_\bullet$ is an accompanied $(\omega,1)$-double marked category and $f:A\to D_0$ is a morphism, then $f^*D$ is accompanied.
\end{remark}

\begin{lemma}
\label{lemma:characterization equivalence accompanied marked case.} A morphism $\phi:D_\bullet\to C_\bullet$ between accompanied $(\omega,1)$-double marked categories is an equivalence if and only if: \begin{enumerate} \item The induced morphism $C_0\to D_0$ is an equivalence. \item For any $x,y$ in $C_0$, the induced morphism $(C_1)_{x,y}\to (D_1)_{\phi(x),\phi(y)}$ is an equivalence. \end{enumerate}
\end{lemma}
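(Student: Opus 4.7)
The plan is to use the Segal condition to reduce the question to showing that $\phi_0$ and $\phi_1$ are equivalences, and then to apply the fiberwise detection of equivalences between two-sided fibrations (corollary \ref{cor:morphism between is an equivalence when equivalence on fiberbifibrant case}) to handle $\phi_1$.

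Necessity is immediate. For sufficiency, the Segal condition ensures that for every $n$, the simplicial objects $D_\bullet$ and $C_\bullet$ satisfy
$$D_n \simeq D_1\times_{D_0}\cdots\times_{D_0}D_1,\qquad C_n \simeq C_1\times_{C_0}\cdots\times_{C_0}C_1,$$
so $\phi$ is an equivalence of simplicial objects as soon as $\phi_0$ and $\phi_1$ are equivalences. Condition (1) yields the former.

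For $\phi_1$, the accompaniedness hypothesis provides equivalences $D_0\simeq D_0^\sharp$ and $C_0\simeq C_0^\sharp$ together with two-sided fibrations $D_1\to D_0^\sharp\times D_0^\sharp$ and $C_1\to C_0^\sharp\times C_0^\sharp$. Using the equivalence $\phi_0$, one can pull back the second fibration along $\phi_0\times\phi_0$ to obtain, by proposition \ref{prop:bicart stable by pullback}, a two-sided fibration $\phi_0^*C_1\to D_0^\sharp\times D_0^\sharp$. The morphism $\phi_1$ factors through a morphism $\widetilde{\phi}_1 : D_1\to \phi_0^*C_1$ of two-sided fibrations over $D_0^\sharp\times D_0^\sharp$, and the projection $\phi_0^*C_1\to C_1$ is already an equivalence since $\phi_0\times\phi_0$ is.

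It therefore remains to see that $\widetilde{\phi}_1$ is an equivalence. By corollary \ref{cor:morphism between is an equivalence when equivalence on fiberbifibrant case}, this reduces to checking equivalence on fibers $(D_1)_{x,y}\to (\phi_0^*C_1)_{x,y}\simeq (C_1)_{\phi(x),\phi(y)}$ for every pair $x,y\in D_0$, which is exactly condition (2). The only mild subtlety is making sure the identification of fibers is correct and that the pulled-back fibration is indeed a two-sided fibration over a sharply marked base so that corollary \ref{cor:morphism between is an equivalence when equivalence on fiberbifibrant case} applies; both are formal consequences of accompaniedness together with the stability results of the preceding subsection.
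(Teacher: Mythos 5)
Your proof is correct and follows essentially the same route as the paper: the paper's proof simply cites corollary \ref{cor:morphism between is an equivalence when equivalence on fiberbifibrant case} (fiberwise detection of equivalences between two-sided fibrations), and your Segal-condition reduction plus the pullback along $\phi_0\times\phi_0$ are exactly the routine details being left implicit there.
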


\begin{proof}
This is a direct consequence of the fact that equivalences between two-sided fibrations are detected on fibers, as proven in corollary \ref{cor:morphism between is an equivalence when equivalence on fiberbifibrant case}.
\end{proof}

\subsection{Double initial morphisms}

\begin{definition}
We denote by $\DI$ the set of morphisms of shape $$\langle X\otimes\{0\},[0]\rangle\to \langle X\otimes[1]^\sharp,[0]\rangle~~~~~~\mbox{or} ~~~~~~\langle X,\{0\}\rangle\to \langle X\otimes[1]^\sharp,[1]\rangle$$ for $X$ being either $\Db_n$ or $(\Db_n)_t$ and $k$ an integer. A morphism is double initial if it is in $\widehat{\DI}$. Double initial morphisms are then stable under colimits, transfinite compositions, pushouts, left cancellation, and retracts.
\end{definition}

\begin{example}
\label{exemple:intiial and double initial}
If $i:C\to D$ is an initial morphism between $1$-categories, then for any marked $\omega$-category $X$, 
$$\langle X,C\rangle \to \langle X,D\rangle$$
is double initial.
\end{example}

\begin{lemma}
\label{lemma:the easiest example of double initial morphism} 
For any $(\omega,1)$-double marked categories $D_\bullet$, the morphism $$D_\bullet\times \langle [0],\{0\}\rangle\to D_\bullet\times \langle [0],[1]\rangle$$ is double initial.
\end{lemma}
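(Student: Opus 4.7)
\emph{Proof plan.} The strategy is to reduce to the representable case $D_\bullet = \langle C,[n]\rangle$ via the density-type Remark \ref{rem univ of langle omega1}, then invoke Example \ref{exemple:intiial and double initial}.

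Since $\{0\}\cong [0]$ is terminal in $\cat$, $\langle[0],\{0\}\rangle$ is terminal in $(\omega,1)\Dcatm$, so the morphism under consideration is identified with the natural map $D_\bullet\to D_\bullet\times \langle[0],[1]\rangle$ induced in the second slot by the vertex inclusion $\{0\}\hookrightarrow [1]$.

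By Remark \ref{rem univ of langle omega1}, $D_\bullet$ is canonically a colimit in $(\omega,1)\Dcatm$ of objects of the form $\langle C,[n]\rangle$ with $C\in\ocatm$ and $n\geq 0$. The endofunctor $(-)\times \langle[0],[1]\rangle$ on $(\omega,1)\Dcatm$ preserves colimits: the product is computed levelwise via $(D\times \langle[0],[1]\rangle)_k=D_k\times [1]_k$; since $[1]$ already satisfies the Segal condition the result is automatically in $(\omega,1)\Dcatm$ without further localization, and $\ocatm$ is cartesian closed. Consequently the arrow-valued functor $D_\bullet\mapsto \bigl(D_\bullet\to D_\bullet\times \langle[0],[1]\rangle\bigr)$ preserves colimits, and by the stability of double initial morphisms under colimits we reduce to $D_\bullet=\langle C,[n]\rangle$.

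In that case, the elementary product identity $\langle C,K\rangle\times \langle C',K'\rangle\cong \langle C\times C',K\times K'\rangle$, immediate from the formula $\langle C,K\rangle_k=C\times K_k$, identifies the morphism with $\langle C,j\rangle:\langle C,[n]\rangle\to \langle C,[n]\times [1]\rangle$, where $j$ is the height-$0$ inclusion $[n]=[n]\times\{0\}\hookrightarrow [n]\times [1]$. The functor $j$ is the product of $\mathrm{id}_{[n]}$ with the generating initial morphism $\{0\}\hookrightarrow [1]$; equivalently, it is left adjoint to the projection $\pi:[n]\times [1]\to [n]$ (with identity unit, and counit $(k,0)\to (k,l)$), hence an initial functor between $1$-categories. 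Example \ref{exemple:intiial and double initial} then yields that $\langle C,j\rangle$ is double initial, which concludes the proof.

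The main potential obstacle is the colimit-preservation claim in the second paragraph: it depends on $(\omega,1)\Dcatm$ being cartesian closed, or more weakly on the fact that the Segal localization is compatible with exponentiation by the Segal object $\langle[0],[1]\rangle$. This is standard for Segal-type localizations of a cartesian closed presheaf category, but any fully rigorous write-up should spell it out.
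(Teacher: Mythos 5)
Your proof is correct and follows essentially the same route as the paper's: reduce by colimit-preservation of the cartesian product to the representable case $\langle C,[n]\rangle$, identify the map with $\langle C,[n]\times\{0\}\rangle\to\langle C,[n]\times[1]\rangle$, and conclude from the initiality of $[n]\times\{0\}\to[n]\times[1]$ together with Example \ref{exemple:intiial and double initial}. Your extra remark justifying why $(-)\times\langle[0],[1]\rangle$ preserves colimits is a reasonable elaboration of a step the paper takes for granted.
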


\begin{proof}
As the cartesian product commutes with colimits, we can reduce to the case where $D_\bullet$ is of shape $\langle X,[n]\rangle$. As we have $\langle X,[n]\rangle\times \langle [0],\{0\}\rangle\sim \langle X, [n]\times\{0\}\rangle$ and $\langle X,[n]\rangle\times \langle [0],[1]\rangle\sim \langle X, [n]\times[1]\rangle$, this follows from the fact that $[n]\times\{0\}\to [n]\times[1]$ is an initial object between $1$-categories and from example \ref{exemple:intiial and double initial}.
\end{proof}

\begin{definition}
A \textit{left retract structure} for a morphism $i_\bullet:C_\bullet\to D_\bullet$ is the data of a \textit{retract} $r_\bullet:D_\bullet\to C_\bullet$, a \textit{deformation} $\psi_\bullet:D_\bullet\otimes \langle [0],[1]\rangle\to D$, and equivalences: $$r_\bullet i_\bullet\sim id_{C_\bullet}~~~~~(\psi_\bullet)_{|D_\bullet\times\{0\}}\sim i_\bullet r_\bullet ~~~~~(\psi_\bullet)_{|D_\bullet \times\{1\}}\sim id_{D_\bullet}~~~~~ (\psi_\bullet)_{|C_\bullet\times\langle [0],[1]\rangle}\sim i_\bullet \cst_{C_\bullet}$$ A morphism $i:C\to D$ over $A\times B$ is a \textit{left deformation retract} if it admits a left deformation retract structure. By abuse of notation, such data will just be denoted by $(i,r,\psi)$.
\end{definition}

\begin{prop}
\label{prop:left Gray deformation of double stuff retract are initial} Left deformation retracts are double initial.
\end{prop}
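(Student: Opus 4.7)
The plan is to mimic the proof of Proposition~\ref{prop:left Gray deformation retract are initial}, replacing the Gray cylinder $\otimes[1]^\sharp$ by the cartesian cylinder $\times\langle[0],[1]\rangle$ and biinitial by double initial morphisms. Concretely, I will construct a pushout ``mapping cylinder'' $D_\bullet\times_{C_\bullet}\langle[0],[1]\rangle$, identify its $\{0\}$-endpoint inclusion as a double initial morphism using Lemma~\ref{lemma:the easiest example of double initial morphism} together with general stability properties, and then use the data $(r_\bullet,\psi_\bullet)$ to display $i_\bullet$ as a retract of this morphism; stability of $\widehat{\DI}$ under retracts will conclude.

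First I would check that the projection $C_\bullet\times\langle[0],[1]\rangle\to C_\bullet$ is double initial. Indeed, it appears in the factorization
\[
C_\bullet\simeq C_\bullet\times\langle[0],\{0\}\rangle \to C_\bullet\times\langle[0],[1]\rangle \to C_\bullet
\]
of $\mathrm{id}_{C_\bullet}$, whose first map is double initial by Lemma~\ref{lemma:the easiest example of double initial morphism} and whose overall composite is an equivalence (hence in $\widehat{\DI}$). Left cancellation then yields the projection.

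Next I would form the pushout
\[
\begin{tikzcd}
C_\bullet\times\langle[0],[1]\rangle & D_\bullet\times\langle[0],[1]\rangle \\
C_\bullet & D_\bullet\times_{C_\bullet}\langle[0],[1]\rangle
\arrow[from=1-1, to=1-2]
\arrow[from=1-1, to=2-1]
\arrow[from=1-2, to=2-2]
\arrow[from=2-1, to=2-2]
\end{tikzcd}
\]
along $i_\bullet\times\mathrm{id}$. By stability under pushout, the right vertical map is double initial. Precomposing with the double initial inclusion $D_\bullet\times\langle[0],\{0\}\rangle\to D_\bullet\times\langle[0],[1]\rangle$ produces a double initial morphism $\iota_0: D_\bullet\to D_\bullet\times_{C_\bullet}\langle[0],[1]\rangle$.

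Finally, the left deformation retract structure assembles into the diagram
\[
\begin{tikzcd}
C_\bullet & D_\bullet & C_\bullet \\
D_\bullet & D_\bullet\times_{C_\bullet}\langle[0],[1]\rangle & D_\bullet
\arrow["i_\bullet", from=1-1, to=1-2]
\arrow["i_\bullet"', from=1-1, to=2-1]
\arrow["r_\bullet", from=1-2, to=1-3]
\arrow["\iota_0"', from=1-2, to=2-2]
\arrow["i_\bullet", from=1-3, to=2-3]
\arrow["\iota_1"', from=2-1, to=2-2]
\arrow["\pi", from=2-2, to=2-3]
\end{tikzcd}
\]
where $\iota_1$ is the symmetric $\{1\}$-endpoint inclusion and $\pi$ is the morphism out of the pushout induced by $\psi_\bullet$ and $i_\bullet\cst_{C_\bullet}$, which agree on $C_\bullet\times\langle[0],[1]\rangle$ thanks to $(\psi_\bullet)_{|C_\bullet\times\langle[0],[1]\rangle}\simeq i_\bullet\cst_{C_\bullet}$. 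The remaining retract compatibilities are exactly the other three relations of the deformation structure: $r_\bullet i_\bullet\simeq\mathrm{id}$ gives the top composite, $(\psi_\bullet)_{|D_\bullet\times\{1\}}\simeq\mathrm{id}$ gives the bottom composite, and $(\psi_\bullet)_{|D_\bullet\times\{0\}}\simeq i_\bullet r_\bullet$ gives commutativity of the right square; the left square commutes automatically since both composites $C_\bullet\to D_\bullet\times_{C_\bullet}\langle[0],[1]\rangle$ factor through the universal map $C_\bullet\to D_\bullet\times_{C_\bullet}\langle[0],[1]\rangle$ coming from the pushout. Stability of $\widehat{\DI}$ under retracts concludes. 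The main point requiring care — not a serious obstacle — is the bookkeeping verification that the four data of a left deformation retract correspond precisely to the commutativities required by this retract diagram.
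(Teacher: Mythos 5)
Your proof is correct and follows essentially the same route as the paper: you form the mapping-cylinder pushout $D_\bullet\times_{C_\bullet}\langle[0],[1]\rangle$, show its endpoint inclusion is double initial via Lemma \ref{lemma:the easiest example of double initial morphism}, left cancellation, pushout and composition stability, and then use $(r_\bullet,\psi_\bullet)$ to exhibit $i_\bullet$ as a retract of that inclusion, concluding by retract stability of $\widehat{\DI}$. The bookkeeping you flag (well-definedness of $\pi$ via $(\psi_\bullet)_{|C_\bullet\times\langle[0],[1]\rangle}\sim i_\bullet\cst_{C_\bullet}$ and the four retract compatibilities) checks out and matches the paper's argument.
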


\begin{proof}
Let $ i:C_\bullet \to D_\bullet$ be a left deformation retract. We define $ D_\bullet\times_C\langle [0],[1]\rangle$ as the following pushout:
\[
\begin{tikzcd}
	{C_\bullet\times\langle [0],[1]\rangle} & {D_\bullet\times\langle [0],[1]\rangle} \\
	{C_\bullet} & {D_\bullet\times_{C_\bullet} \langle [0],[1]\rangle}
	\arrow[from=1-1, to=1-2]
	\arrow[from=1-1, to=2-1]
	\arrow[from=1-2, to=2-2]
	\arrow[from=2-1, to=2-2]
	\arrow["\lrcorner"{anchor=center, pos=0.125, rotate=180}, draw=none, from=2-2, to=1-1]
\end{tikzcd}
\]
Consider now the diagram:
\[
\begin{tikzcd}
	{C_\bullet\times\{0\}} & {C_\bullet\times\langle [0],[1]\rangle} \\
	& {C_\bullet}
	\arrow["{i_\bullet}", from=1-1, to=1-2]
	\arrow[equals, from=1-1, to=2-2]
	\arrow[from=1-2, to=2-2]
\end{tikzcd}
\]
It follows from stability by left cancellation and from lemma \ref{lemma:the easiest example of double initial morphism} that $ C_\bullet\times\langle [0],[1]\rangle\to C_\bullet$ is double initial. By stability by pushout and composition, and using once again example \ref{exe:the easiest example of initial and final morphism}, so is $ D_\bullet\times\{0\}\to D_\bullet\times_C\langle [0],[1]\rangle$.

Eventually, the diagram
\[
\begin{tikzcd}
	{C_\bullet} & {D_\bullet\times\{0\}} && {C_\bullet} \\
	{D_\bullet\times\{1\}} & {D_\bullet\times_C\langle [0],[1]\rangle} & {D_\bullet\times\langle [0],[1]\rangle} & {D_\bullet}
	\arrow["{i_\bullet}", from=1-1, to=1-2]
	\arrow["i"', from=1-1, to=2-1]
	\arrow["{r_\bullet}", from=1-2, to=1-4]
	\arrow[from=1-2, to=2-2]
	\arrow["i", from=1-4, to=2-4]
	\arrow[from=2-1, to=2-2]
	\arrow[from=2-2, to=2-3]
	\arrow["{\psi_\bullet}"', from=2-3, to=2-4]
\end{tikzcd}
\]
expresses $ i_\bullet$ as a retract of $ D_\bullet\times \{0\}\to D_\bullet\times_{C_\bullet}\langle [0],[1]\rangle$. The morphism $ i_\bullet$ is then double initial.
\end{proof}

\begin{construction}
Let $D_\bullet$ be an $(\omega,1)$-double category and $x$ an object of $D_0$. We denote by $(D_\bullet)_{x/}$ the $(\omega,1)$-double marked categories defined by the formula: \[\begin{tikzcd} {(D_n)_{x/}} & {D_{n+1}} \\ {\{x\}} & {D_{\{0\}}} \arrow[from=1-1, to=1-2] \arrow[from=1-1, to=2-1] \arrow["\lrcorner"{anchor=center, pos=0.125}, draw=none, from=1-1, to=2-2] \arrow[from=1-2, to=2-2] \arrow[from=2-1, to=2-2] \end{tikzcd}\]
\end{construction}

\begin{lemma}
\label{lemma:double initial example} The morphism $\{x\} \to (D_\bullet)_{x/}$ is double initial.
\end{lemma}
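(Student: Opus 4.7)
The plan is to equip the morphism $i_\bullet:\{x\}\to (D_\bullet)_{x/}$ with a left deformation retract structure, and then invoke Proposition \ref{prop:left Gray deformation of double stuff retract are initial}.

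Take $r_\bullet:(D_\bullet)_{x/}\to \{x\}$ to be the terminal map. For the deformation $\psi_\bullet:(D_\bullet)_{x/}\times \langle [0],[1]\rangle\to (D_\bullet)_{x/}$, note that $\langle [0],[1]\rangle_n = [1]_n = \Hom_{\Delta}([n],[1])$, and each $f\in [1]_n$ is determined by the unique $j_f\in \{0,1,\ldots,n+1\}$ with $f^{-1}(1)=\{j_f,j_f+1,\ldots,n\}$. I define a morphism $\phi_f:[n+1]\to [n+1]$ in $\Delta$ by setting $\phi_f(i):=0$ for $i\leq j_f$ and $\phi_f(i):=i$ for $i>j_f$. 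Since $\phi_f(0)=0$, the induced structure map $D_{n+1}\to D_{n+1}$ restricts to a map $(D_n)_{x/}\to (D_n)_{x/}$, and assembling these for $f\in [1]_n$ defines $\psi_n$. Each $\psi_n$ is a morphism of marked $\omega$-categories because it is built from morphisms of $\Delta$ applied to the marked simplicial object $D_\bullet$.

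Naturality of $\psi_\bullet$ in $[n]\in \Delta^{op}$ reduces to the pointwise identity: for every $\delta:[m]\to [n]$ in $\Delta$, letting $\overline{\delta}:[m+1]\to [n+1]$ be the map with $\overline{\delta}(0)=0$ and $\overline{\delta}(k)=\delta(k-1)+1$ for $k\geq 1$ (which governs the simplicial structure of $(D_\bullet)_{x/}$), one has
$$\phi_f\circ \overline{\delta} \;=\; \overline{\delta}\circ \phi_{f\circ \delta}.$$
This is proved by a straightforward case analysis on whether $\delta(k-1)<j_f$ or $\delta(k-1)\geq j_f$, noting that $j_{f\circ \delta}$ is by construction the smallest $k\in [m]$ with $\delta(k)\geq j_f$.

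The four identities required of a left deformation retract structure are then immediate: $r_\bullet i_\bullet = \id_{\{x\}}$ is trivial; $\phi_f=\id$ when $f\equiv 1$ yields $\psi_\bullet|_{(D_\bullet)_{x/}\times \{1\}}=\id$; $\phi_f$ is constant at $0$ when $f\equiv 0$, so $\sigma\circ \phi_f$ is the degenerate $(n+1)$-simplex at $x=\sigma(0)$, giving $\psi_\bullet|_{(D_\bullet)_{x/}\times \{0\}}=i_\bullet r_\bullet$; and on $\{x\}\times \langle [0],[1]\rangle$ the simplices are already degenerate at $x$, so $\sigma\circ \phi_f$ remains so, matching $i_\bullet \cst_{\{x\}}$. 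Applying Proposition \ref{prop:left Gray deformation of double stuff retract are initial} finishes the proof. The only substantive step is the naturality identity; everything else is formal.
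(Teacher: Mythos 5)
Your proof is correct and follows essentially the same route as the paper: you exhibit $\{x\}\to (D_\bullet)_{x/}$ as a left deformation retract and invoke Proposition \ref{prop:left Gray deformation of double stuff retract are initial}, with your reparametrization $\phi_f$ (collapsing the initial segment up to $j_f$ onto the cone point) being exactly the paper's map $d(\phi):k\mapsto k\times\phi(k)$ once the index shift between $[1]_n$ and $[n+1]$ is made explicit. The only difference is that you spell out the simplicial naturality and the boundary identities, which the paper leaves implicit.
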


\begin{proof}
We will show that this morphism admits a left deformation retract structure, which will conclude the proof by proposition \ref{prop:left Gray deformation of double stuff retract are initial}. The retraction is given by the unique morphism $(D_\bullet)_{x/}\to \{x\}$ and the deformation is $$\phi:(D_\bullet)_{x/}\times \langle[0],[1]\rangle\to (D_\bullet)_{x/}$$ induced for any $n$ by the morphism $$D_{n+1}\times [1]\to D_{n+1}$$ sending $u:\Db_{n+1}$ and $\phi:[n+1]\to [1]$ onto $d(\phi)^* u$ where $d(\phi):[n+1]\to [n+1]$ is the simplicial map whose value on $k$ is $k\times \phi(k)$.
\end{proof}

\subsection{Left fibration in $(\omega,1)$-double marked categories}

\begin{definition}
\label{defi:left fib in double} 
A \textit{left fibration} is a morphism having the unique left lifting property against $\DI$. Unfolding the definition, a morphism $E_\bullet\to D_\bullet$ is a left fibration if 
\begin{enumerate}
\item $E_0\to D_0$ is a left cartesian fibration.
\item for any integer $k$, the canonical square
\[\begin{tikzcd}
	{E_k} & {E_{\{0\}}} \\
	{D_k} & {D_{\{0\}}}
	\arrow[from=1-1, to=1-2]
	\arrow[from=1-1, to=2-1]
	\arrow["\lrcorner"{anchor=center, pos=0.125}, draw=none, from=1-1, to=2-2]
	\arrow[from=1-2, to=2-2]
	\arrow[from=2-1, to=2-2]
\end{tikzcd}\]
is cartesian.
\end{enumerate}

We denote by $\LFib(D_\bullet)$ the subcategory of $(\omega,1)\Dcatm_{/D_\bullet}$ spanned by two-sided fibrations, and we then have a canonical adjunction
\begin{equation}
\label{eq:adjonction left fibration of double}
\begin{tikzcd}
	{\Fb:(\omega,1)\Dcatm_{/D_\bullet}} & {\LFib(D_\bullet):\iota}
	\arrow[""{name=0, anchor=center, inner sep=0}, shift left=2, from=1-1, to=1-2]
	\arrow[""{name=1, anchor=center, inner sep=0}, shift left=2, from=1-2, to=1-1]
	\arrow["\dashv"{anchor=center, rotate=-90}, draw=none, from=0, to=1]
\end{tikzcd}
\end{equation}
where the right adjoint is fully faithful. The functor $\Fb$ is the \textit{left fibrant replacement functor}.
\end{definition}

\begin{prop}
\label{prop:explicit factorization double case} Let $D_\bullet$ be an accompanied $(\omega,1)$-double category. The canonical morphism $$(D_\bullet)_{x/} \to D_\bullet $$ is the left fibrant replacement of $\{x\} \to D_\bullet$.
\end{prop}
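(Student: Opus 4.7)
The plan is to produce the factorization $\{x\}\to (D_\bullet)_{x/}\to D_\bullet$ and identify the right-hand map with the left fibrant replacement of $\{x\}\to D_\bullet$. Lemma \ref{lemma:double initial example} already provides that $\{x\}\to (D_\bullet)_{x/}$ is double initial, so by the localization adjunction \eqref{eq:adjonction left fibration of double}, it suffices to show that $(D_\bullet)_{x/}\to D_\bullet$ --- induced levelwise by the face map $d^0$ discarding the initial vertex $x$ --- is a left fibration in the sense of definition \ref{defi:left fib in double}.

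To verify condition (1), I observe that $(D_0)_{x/}$ is the fiber over $x$ of the source map $d_1:D_1\to D_0$, with map to $D_0$ given by the restriction of the target $d_0:D_1\to D_0$. Since $D_\bullet$ is accompanied, $D_0\sim D_0^\sharp$ and the combined map $(d_1,d_0):D_1\to D_0^\sharp\times D_0^\sharp$ is a two-sided fibration. Pulling back along $\{x\}\times D_0^\sharp\hookrightarrow D_0^\sharp\times D_0^\sharp$ via proposition \ref{prop:bicart stable by pullback} yields a two-sided fibration $(D_0)_{x/}\to \{x\}\times D_0^\sharp\simeq D_0^\sharp$, which is by definition a left cartesian fibration.

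To verify condition (2), I use the Segal condition on $D_\bullet$, which provides an equivalence $D_{k+1}\simeq D_1\times_{D_0}D_k$ pairing the target of $D_1$ with the source of $D_k$. Pulling back along $\{x\}\to D_0$ (the source of the leftmost $D_1$-factor) gives $(D_k)_{x/}\simeq (D_0)_{x/}\times_{D_0}D_k$, which is precisely the cartesianness of the square in condition (2), once one identifies the structural map $(D_k)_{x/}\to (D_0)_{x/}$ induced by $\{0\}\hookrightarrow [k]$ (via the décalage defining the simplicial structure of $(D_\bullet)_{x/}$) with the operation "take the first edge" of an $(n+1)$-simplex starting at $x$.

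Neither step is genuinely hard: (1) is a direct application of accompaniedness and the closure of two-sided fibrations under pullback, while (2) is the Segal condition in situ. The only care required lies in a bookkeeping check matching the indexing shift in the décalage construction of $(D_\bullet)_{x/}$ with the structural maps of $D_\bullet$ --- in particular, that the face map $d^0$ gives the canonical projection to $D_\bullet$ and that $\{0\}\hookrightarrow [k]$ on the base corresponds to $\{0,1\}\hookrightarrow [k+1]$ on the total space. Once this is verified, the universal property of the adjunction \eqref{eq:adjonction left fibration of double} concludes.
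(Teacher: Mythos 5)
Your argument is correct and is essentially the paper's own proof: both invoke Lemma \ref{lemma:double initial example} for double initiality of $\{x\}\to(D_\bullet)_{x/}$, obtain condition (1) of Definition \ref{defi:left fib in double} from accompaniedness by pulling back the two-sided fibration $D_1\to D_0^\sharp\times D_0^\sharp$ along $\{x\}\times D_0$, and obtain condition (2) from the Segal condition. The only difference is that you spell out the indexing/décalage bookkeeping that the paper leaves implicit.
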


\begin{proof}
The lemma \ref{lemma:double initial example} implies that $\{x\} \to (D_\bullet)_{x/}$ is double initial. Moreover, we have by construction a cartesian square:
% https://q.uiver.app/#q=WzAsNCxbMSwwLCJEXzEiXSxbMSwxLCJEXzBcXHRpbWVzIERfMCJdLFswLDEsIlxce3hcXH1cXHRpbWVzIERfMCJdLFswLDAsIihEXzApX3t4L30iXSxbMywyXSxbMywwXSxbMCwxXSxbMiwxXSxbMywxLCIiLDEseyJzdHlsZSI6eyJuYW1lIjoiY29ybmVyIn19XV0=
\[\begin{tikzcd}
	{(D_0)_{x/}} & {D_1} \\
	{\{x\} \times D_0} & {D_0 \times D_0}
	\arrow[from=1-1, to=1-2]
	\arrow[from=1-1, to=2-1]
	\arrow["\lrcorner"{anchor=center, pos=0.125}, draw=none, from=1-1, to=2-2]
	\arrow[from=1-2, to=2-2]
	\arrow[from=2-1, to=2-2]
\end{tikzcd}\]
As $D_\bullet$ is accompanied, the right vertical map is a two-sided cartesian fibration, which implies that the right one is a left cartesian fibration. Moreover, as $D_\bullet$ verifies the Segal condition, we have a Cartesian square:
\[\begin{tikzcd}
	{(D_n)_{x/}} & {(D_{0})_{x/}} \\
	{D_n} & {D_0}
	\arrow[from=1-1, to=1-2]
	\arrow[from=1-1, to=2-1]
	\arrow["\lrcorner"{anchor=center, pos=0.125}, draw=none, from=1-1, to=2-2]
	\arrow[from=1-2, to=2-2]
	\arrow[from=2-1, to=2-2]
\end{tikzcd}\]
These two facts imply that $(D_\bullet)_{x/} \to D_\bullet$ is a left fibration of $(\omega,1)$-double marked categories, which concludes the proof.
\end{proof}

\subsection{Marked $(\omega,1)$-effectivity}

\begin{definition}
A \textit{$(\omega,1)$-filtration} is a morphism $A_0\to A_1$ between $\omega$-categories. A $(\omega,1)$-filtration is \textit{surjective} if it is a filtration such that $A_0\to A_1$ is $0$-surjective.

We denote by $\Filt_{\omega,1}$ the category $\Fun([1],\ocat)$ and $\Filts_{\omega,1}$ the full subcategory of surjective $(\omega,1)$-filtrations.
\end{definition}

\begin{definition}
Let $k\leq n$ be two elements of $\mathbb{N}\cup\{\omega\}$. We denote by $\Filt_{\omega,1}$ the category $\Fun([1],\ocat)$, and $\Filts_{\omega,1}$ the subcategory of $\Filt_{\omega,1}$ whose objects are 0-surjective morphisms.
\end{definition}

\begin{construction}
\label{cons:cechnerve marked case} Given an $(\omega,1)$-filtration $A_\bullet$, its \textit{directed marked Čech nerve}, denoted by $\C_{\bullet}(A_0 \to A_1)$, is the $(\omega,1)$-double marked category whose value on $n$ fits in the cartesian square:
\[\begin{tikzcd}
	{\C_{n}(A_0 \to A_1)} & {(A_1^{\sharp})^{[n]^\sharp}} \\
	{\prod_{k\leq n}A_0^\sharp} & {\prod_{k\leq n}A_1^\sharp}
	\arrow[from=1-1, to=1-2]
	\arrow[from=1-1, to=2-1]
	\arrow["\lrcorner"{anchor=center, pos=0.125}, draw=none, from=1-1, to=2-2]
	\arrow[from=1-2, to=2-2]
	\arrow[from=2-1, to=2-2]
\end{tikzcd}\]
Given a $(\omega,1)$-double marked category $D_{\bullet}$, its \textit{realization} is the $(\omega,1)$-filtration $|D|_\bullet:=D_0^\natural\to |D|_1^{\natural}$ where $$|D|_1:= \coend_{\Delta} D_n \otimes [n]^\sharp.$$

These two functors fit into an adjunction:
\begin{equation}
\label{cons:adj betwn highly cech marked case}
\begin{tikzcd}
	{|\uvar|:(\omega,1)\Dcat_m} & {\Filt_{\omega,1}:\C}
	\arrow[""{name=0, anchor=center, inner sep=0}, shift left=2, from=1-1, to=1-2]
	\arrow[""{name=1, anchor=center, inner sep=0}, shift left=2, from=1-2, to=1-1]
	\arrow["\dashv"{anchor=center, rotate=-90}, draw=none, from=0, to=1]
\end{tikzcd}
\end{equation}
\end{construction}

\begin{remark}
By construction, we have 
$$\C_1(A_0\to A_1):= A_0 \dirtimes{A_1} A_0.$$
\end{remark}

\begin{remark}
\label{remark: an easy description of relation case 1} It can be useful to remark that the functor $|\uvar|$ is the restriction of the colimit preserving functor $$|\uvar|:\Fun(\Delta^{op},\ocatm)\to \Filt_{\omega,1}$$ defined by the formula: $$|\langle C,n\rangle| := \coprod_k C^\natural \otimes\{k\}\to (C\otimes[n]^\sharp)^\natural$$
\end{remark}

\begin{lemma}
\label{lemma:cech is accompagnated}
Let $A_\bullet$ be a $(\omega,1)$-filtration. The $(\omega,1)$-double marked category $\C_\bullet(A)$ is accompanied.
\end{lemma}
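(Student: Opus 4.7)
The plan is to unwind the definition of "accompanied" for the Čech nerve $\C_\bullet(A)$ and check the two required conditions directly from the pullback description of $\C_n(A)$, using the two main structural facts we have already established about two-sided fibrations: lemma \ref{lemma:explicit factorization} (that $(A_1^\sharp)^{[1]^\sharp} \to A_1^\sharp \times A_1^\sharp$ is a two-sided fibration) and proposition \ref{prop:bicart stable by pullback} (stability under pullback).

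First I would check condition (1), namely $\C_0(A) \sim \C_0(A)^\sharp$. Since $(-)^{[0]^\sharp}$ is the identity, the defining pullback of $\C_0(A)$ collapses to $A_0^\sharp \times_{A_1^\sharp} A_1^\sharp \sim A_0^\sharp$, which is already sharp. This step is essentially a tautology and presents no obstacle.

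Then I would check condition (2), namely that $\C_1(A) \to \C_0(A)^\sharp \times \C_0(A)^\sharp \sim A_0^\sharp \times A_0^\sharp$ is a two-sided fibration. Unwinding the definition of the directed marked Čech nerve at $n=1$, we see that $\C_1(A)$ fits into the cartesian square
\[
\begin{tikzcd}
\C_1(A) \arrow[r] \arrow[d] & (A_1^\sharp)^{[1]^\sharp} \arrow[d] \\
A_0^\sharp \times A_0^\sharp \arrow[r] & A_1^\sharp \times A_1^\sharp
\end{tikzcd}
\]
By lemma \ref{lemma:explicit factorization}, the right-hand vertical map is a two-sided fibration, and by proposition \ref{prop:bicart stable by pullback}, two-sided fibrations are stable under pullback along arbitrary morphisms of product shape. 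Hence the left-hand vertical map $\C_1(A) \to A_0^\sharp \times A_0^\sharp$ is a two-sided fibration, which is precisely condition (2).

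Neither step is a genuine obstacle: both reduce to invoking stated structural results and carrying out an elementary pullback manipulation. The only thing to be attentive to is the identification of $\C_0(A)$ with $A_0^\sharp$, which is what makes the target of the map in condition (2) match the expected $\C_0(A)^\sharp \times \C_0(A)^\sharp$.
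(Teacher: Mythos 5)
Your proposal is correct and follows essentially the paper's own route: the paper simply cites Proposition \ref{prop:directed pullback i bifib} (the directed pullback $A\dirtimes{B}C\to A\times C$ is a two-sided fibration), which is itself proved by exactly your combination of Lemma \ref{lemma:explicit factorization} and pullback-stability (Proposition \ref{prop:bicart stable by pullback}), so you have merely inlined that intermediate result. The explicit check of condition (1), $\C_0(A)\sim A_0^\sharp$, is a harmless addition that the paper leaves implicit.
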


\begin{proof}
We have to check that for any $f:A\to B$, the morphism $\C_1(f)\to \C_0(f)\times \C_0(f)$ is a two-sided fibration, which directly follows from proposition \ref{prop:directed pullback i bifib}.
\end{proof}

\begin{lemma}
\label{lemma:realization is 0 surj} Let $D_\bullet$ be a $(\omega,1)$-double  marked category. The $(\omega,1)$-filtration $|D|_\bullet$ is surjective.
\end{lemma}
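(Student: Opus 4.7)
The plan is to reduce the statement, via a colimit argument, to the case of generators $\langle C,[n]\rangle$ for $C\in\ocatm$ and $n\geq 0$. By remark \ref{remark: an easy description of relation case 1}, the realization extends to a colimit-preserving functor $|\uvar|:\Fun(\Delta^{op},\ocatm)\to \Filt_{\omega,1}$ whose value on a generator $\langle C,[n]\rangle$ is the morphism $\coprod_{k\leq n}C^\natural\to (C\otimes[n]^\sharp)^\natural$. Any $(\omega,1)$-double marked category is in particular an object of $\Fun(\Delta^{op},\ocatm)$, hence canonically a colimit of such generators.

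Next, I would observe that the subcategory $\Filts_{\omega,1}\subset \Filt_{\omega,1}$ of surjective filtrations is closed under colimits: colimits in $\Filt_{\omega,1}=\Fun([1],\ocat)$ are computed pointwise, so the arrow underlying a colimit of filtrations is the colimit of the underlying arrows, and the class of $0$-surjective morphisms in $\ocat$ is cocomplete by definition (it is the closure under colimit of $\{\partial\Db_1\to \Db_m, m>0\}$). Combining these two facts with the fact that $|\uvar|$ preserves colimits, it is enough to show that the realization of each generator $\langle C,[n]\rangle$ is a surjective $(\omega,1)$-filtration.

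For a generator, by proposition \ref{prop: characterization of 1 surjective2}, it suffices to check that the morphism $\coprod_{k\leq n}C^\natural\to (C\otimes[n]^\sharp)^\natural$ is surjective on objects. The key computation is that the $0$-cells of $C\otimes[n]^\sharp$ are identified with $C_0\times[n]_0$ and are exactly covered by the images of the canonical inclusions $C\otimes\{k\}\to C\otimes[n]^\sharp$ for $k\leq n$. This follows from proposition \ref{prop:otimes and an} (which identifies $(C\otimes[n])_0$ with $C_0\times[n]_0$) together with the observation that, in the definition of the marked Gray tensor product, the localization at cells of the shape $tC\otimes [n]_{>0}$ takes place only at cells of positive dimension and therefore does not affect $0$-cells. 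The morphism is thus in fact bijective on $0$-cells, which gives the required surjection on objects.

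I do not anticipate any substantial obstacle; the only point requiring care is the bookkeeping around the localizations inherent in the marked Gray tensor product, but since these localizations affect only cells of positive dimension the argument on $0$-cells goes through unchanged.
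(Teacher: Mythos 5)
Your argument is correct and is essentially the paper's own proof: reduce via the colimit-preserving description of $|\uvar|$ from remark \ref{remark: an easy description of relation case 1}, use stability of $0$-surjective morphisms under colimits, and observe that the generator realization $\coprod_{k\leq n}C^\natural\to (C\otimes[n]^\sharp)^\natural$ is surjective (indeed an equivalence) on objects. The only quibble is the citation of proposition \ref{prop:otimes and an}, which concerns the suspension rather than directly identifying $(C\otimes[n])_0$ with $C_0\times[n]_0$; the paper treats that identification, and the fact that the marking localizations only touch cells of positive dimension, as immediate.
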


\begin{proof}
This directly follows from the remark \ref{remark: an easy description of relation case 1} as $0$-surjective morphisms are stable under colimits and as $ \coprod_k C^\natural \to (C\otimes[n]^\sharp)^\natural$ obviously induces an equivalence on objects.
\end{proof}

\begin{construction}
We can easily check that $ |\uvar|_1$ sends $ \langle X \otimes \{0\}, [0] \rangle \to \langle X \otimes [1]^\sharp, [0] \rangle$ and $ \langle X, \{0\} \rangle \to \langle X, [1] \rangle$ to $ X \otimes \{0\} \to X \otimes [1]^\sharp$. As this functor preserves colimits, it sends double initial morphisms to initial morphisms, and then induces, for any $(\omega,1)$-double marked category $D_\bullet$, a derived adjunction:

\begin{equation}
\label{eq: adjunction of double bar}
\begin{tikzcd}
	{|\uvar|_1^{D}:\LFib(D_\bullet)} & {\LCart(|D_\bullet|^\sharp):\N_{D}}
	\arrow[""{name=0, anchor=center, inner sep=0}, shift left=2, from=1-1, to=1-2]
	\arrow[""{name=1, anchor=center, inner sep=0}, shift left=2, from=1-2, to=1-1]
	\arrow["\dashv"{anchor=center, rotate=-90}, draw=none, from=0, to=1]
\end{tikzcd}
\end{equation}

Where the left adjoint is natural in $ D_\bullet:\oDcatm$ and the right adjoint is natural in $ D_\bullet:\oDcatm^{op}$.
\end{construction}

\begin{prop}
\label{prop:double bar is eq} 
For any $(\omega,1)$-double marked category $D_\bullet$, the functor $|\uvar|_1^{D}: \LFib(D_\bullet)\to \LCart(|D_\bullet|^\sharp)$ is an equivalence of categories.
\end{prop}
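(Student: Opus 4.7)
The plan is to reduce to a small class of $D_\bullet$ by descent and then handle the base case by directly unpacking definitions.

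First, I would observe that both assignments $D_\bullet\mapsto \LFib(D_\bullet)$ and $D_\bullet\mapsto \LCart(|D_\bullet|^\sharp)$ convert colimits in $\oDcatm$ into limits of categories, via pullback of fibrations. For $\LFib$ this is immediate from the definition by a unique left lifting property. For $\LCart(|D_\bullet|^\sharp)$ it rests on the fact that $|\uvar|_1$ is colimit-preserving (Remark \ref{remark: an easy description of relation case 1}) together with the stability of left cartesian fibrations under pullback (Proposition \ref{prop:bicart stable by pullback}). Since the adjunction \eqref{eq: adjunction of double bar} is natural in $D_\bullet$, and since by Remark \ref{rem univ of langle omega1} every $(\omega,1)$-double marked category is a colimit of representables $\langle C,[n]\rangle$, it suffices to establish the equivalence when $D_\bullet=\langle C,[n]\rangle$; a further descent argument in the variable $C$ then reduces to the case $C\in t\Theta$.

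In this reduced setting, $|D_\bullet|_1=C^\natural\otimes [n]^\sharp$. On one side, by unpacking Definition \ref{defi:left fib in double}, a left fibration over $\langle C,[n]\rangle$ is equivalent to a left cartesian fibration $E_0\to C\times[n]^\flat$ together with Segal-cartesian compatibility data over $[n]$. On the other side, a left cartesian fibration over $C\otimes[n]^\sharp$ can be reconstructed from its pullbacks along the vertex inclusions $C\otimes\{k\}\hookrightarrow C\otimes[n]^\sharp$ using the explicit decomposition of Proposition \ref{prop:decomposition marked case}; the extra lifting conditions reduce, via Proposition \ref{prop: another characterization} and the deformation-retract criterion of Proposition \ref{prop:left Gray deformation retract are initial}, to exactly the same Segal-cartesian data. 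The functor $|\uvar|_1^D$ realizes this dictionary by construction, which yields the equivalence.

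The hard step will be the matching in the base case between generating double initial morphisms in $\oDcatm_{/\langle C,[n]\rangle}$ and generating initial morphisms in $\ocatm_{/(C\otimes[n]^\sharp)^\sharp}$ under $|\uvar|_1$. Executing this carefully requires combined use of Proposition \ref{prop:decomposition marked case} to decompose $C\otimes[n]^\sharp$ in terms of suspensions and Gray products, of Corollary \ref{cor:when glob inclusion are final and initial} to identify which globular inclusions become initial on each side, and of the marked analogue of the retract arguments used in the proof of Proposition \ref{prop:left Gray deformation retract are initial}, in order to ensure that the two localization procedures on either side of the adjunction are identified.
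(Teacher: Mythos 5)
There is a genuine gap, and it sits exactly where the paper instead imports a theorem from \cite{loubaton2024categorical}. After your reduction to $D_\bullet=\langle C,[n]\rangle$, the statement becomes the equivalence between $\Fun([n],\LCart(C))$ (which is what $\LFib(\langle C,[n]\rangle)$ unwinds to) and $\LCart((C\otimes[n]^\sharp)^\sharp)$ (since $|\langle C,[n]\rangle|_1\sim(C\otimes[n]^\sharp)^\natural$). This is a straightening-type statement over the Gray cylinder, and it is precisely \cite[proposition 4.1.3.28]{loubaton2024categorical} (together with the comparison adjunction of construction 4.1.3.1 there), which is the key input the paper's own proof uses to show that the right adjoint $\N_{D}$ is an equivalence. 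The in-paper tools you cite for this base case --- Proposition \ref{prop:decomposition marked case}, Proposition \ref{prop: another characterization}, Corollary \ref{cor:when glob inclusion are final and initial} and the retract argument of Proposition \ref{prop:left Gray deformation retract are initial} --- only let you compare lifting/locality conditions on either side; they neither construct an inverse (equivalently, show the unit and counit of the comparison adjunction are equivalences) nor show that an arbitrary left cartesian fibration over $(C\otimes[n]^\sharp)^\sharp$ is recovered from its restrictions along the slices $C\otimes\{k\}$, which is the actual content. The sentence ``the functor $|\uvar|_1^{D}$ realizes this dictionary by construction'' is therefore an assertion, not an argument.

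The descent step is also under-justified. That $D_\bullet\mapsto\LFib(D_\bullet)$ and $D_\bullet\mapsto\LCart(|D_\bullet|^\sharp)$ send colimits of $(\omega,1)$-double marked categories to limits of categories is not ``immediate from the unique lifting property'', nor a consequence of pullback stability (Proposition \ref{prop:bicart stable by pullback} only gives contravariant functoriality): a lifting property makes fibrations stable under limits in a fixed slice, but says nothing about comparing fibrations over a colimit with compatible families over the pieces, and $\ocatm$ is not a topos, so its slices do not satisfy descent. The paper obtains exactly this from \cite[corollary 4.1.2.17]{loubaton2024categorical} ($\LCart(\uvar)$ sends colimits to limits), combined with the end formula $\LFib(D_\bullet)\sim\cocoend_\Delta\Fun([n],\LCart(D_n))$. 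If you allow yourself those two external results, your descent-plus-base-case scheme collapses into essentially the paper's proof; without them, both halves of your argument remain open.
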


\begin{proof}
We will show that the right adjoint $\N_{D}$ is an equivalence of categories. The explicit description of the left fibration of a $(\omega,1)$-double marked category given in definition \ref{defi:left fib in double} implies that $$ \LFib(D_\bullet) \sim \cocoend_\Delta \Fun([n], \LCartc(D_n)).$$ The functor $\N_{D}$ is equivalent to the composite: $$\LCart(\coend_\Delta D_n \otimes [n]^\sharp)^\sharp \to \cocoend_\Delta \LCart((D_n \otimes [n]^\sharp)^\sharp) \to \cocoend_\Delta \Fun([n], \LCartc(D_n))$$ where the right arrow is induced by the family of right adjoints in the adjunctions: \[\begin{tikzcd} {\Fun([k], \LCartc(D_n))} & {\LCart((D_n \otimes [k]^\sharp)^\sharp)} \arrow[""{name=0, anchor=center, inner sep=0}, shift left=2, from=1-1, to=1-2] \arrow[""{name=1, anchor=center, inner sep=0}, shift left=2, from=1-2, to=1-1] \arrow["\dashv"{anchor=center, rotate=-90}, draw=none, from=0, to=1] \end{tikzcd}\] defined in \cite[construction 4.1.3.1]{loubaton2024categorical}. Moreover, proposition 4.1.3.28 of \cite{loubaton2024categorical} implies that these adjunctions are an adjoint equivalence, and corollary 4.1.2.17 of \cite{loubaton2024categorical} states that $\LCart(\uvar)$ sends colimits to limits. As a consequence, the functor $\N_{D}$ is an equivalence, which concludes the proof.
\end{proof}

\begin{lemma}
\label{lemma:equivalence on fiber} Let $D_\bullet$ be an accompanied $(\omega,1)$-double category, and $x,y$ two objects of $D_0$. The canonical morphism $$(D_1)_{(x,y)}^\natural\to \hom_{|D|_1^\natural}(|x|_1,|y|_1)$$ is an equivalence.
\end{lemma}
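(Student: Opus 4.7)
The plan is to apply the equivalence $|\uvar|_1^D : \LFib(D_\bullet) \sim \LCart(|D|_1^\sharp)$ of Proposition \ref{prop:double bar is eq} to the left fibration $(D_\bullet)_{x/} \to D_\bullet$, and then compare the fiber over $y$ on each side.

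First, by Lemma \ref{lemma:double initial example} the map $\{x\} \to (D_\bullet)_{x/}$ is double initial, and by Proposition \ref{prop:explicit factorization double case} the map $(D_\bullet)_{x/} \to D_\bullet$ is a left fibration. Since $|\uvar|_1$ sends double initial morphisms to initial morphisms (the construction preceding Proposition \ref{prop:double bar is eq}), the image $\{x\} \to |(D_\bullet)_{x/}|_1$ is initial over $|D|_1^\sharp$. Combined with the fact that $|(D_\bullet)_{x/}|_1 \to |D|_1^\sharp$ is a left cartesian fibration (being in the essential image of $|\uvar|_1^D$), the universal property of left cartesian fibrant replacement identifies $|(D_\bullet)_{x/}|_1$ with the slice of $|D|_1^\sharp$ under $x$, whose fiber at $y$ is $\hom_{|D|_1^\natural}(x,y)^\flat$, trivially marked by Proposition \ref{prop:left fib over flat}.

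Next, I compute the fiber at $y$ on the double category side. The pullback $(D_\bullet)_{x/} \times_{D_\bullet} \{y\}_\bullet$ along the constant simplicial object is a left fibration over $\{y\}_\bullet$, and the Segal condition of Definition \ref{defi:left fib in double} forces any left fibration over $\{y\}_\bullet$ to be constant simplicial on its $0$-part. Using the Segal condition on $D_\bullet$ (degeneracies selecting identities), that $0$-part is $(D_0)_{x/} \times_{D_0} \{y\} \sim (D_1)_{(x,y)}$, trivially marked by Proposition \ref{prop:left fib over flat} since $D_1 \to D_0^\sharp \times D_0^\sharp$ is a two-sided fibration by the accompanied hypothesis.

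The main subtlety is to verify that these two pullback operations correspond under the equivalence $|\uvar|_1^D$, i.e., that pulling back along $\{y\}_\bullet \to D_\bullet$ on the left matches pulling back along $\{y\} \to |D|_1^\sharp$ on the right. This amounts to naturality of $|\uvar|_1^D$ in the base combined with the elementary identification $|\{y\}_\bullet|_1 \sim \{y\}$ (which follows from the construction of the realization). Granted this, the resulting equivalence of the two computed fibers unfolds to the canonical comparison $(D_1)_{(x,y)}^\natural \to \hom_{|D|_1^\natural}(x,y)$, completing the proof.
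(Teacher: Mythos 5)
Your proposal is correct and takes essentially the same route as the paper: both rest on $(D_\bullet)_{x/}$ being the left fibrant replacement of $\{x\}\to D_\bullet$ (Proposition \ref{prop:explicit factorization double case}), on the equivalence of Proposition \ref{prop:double bar is eq}, and on identifying the fibrant replacement of $\{|x|_1\}\to |D|_1^\sharp$ with the slice $(|D|_1^\sharp)_{|x|_1/}$, for which the paper cites \cite[remark 3.2.2.14]{loubaton2024categorical}. The only divergence is the final fiber comparison, which the paper packages by transporting back through the right adjoint $\N_{D}$ and evaluating at $[1]$ to obtain an explicit cartesian square whose fiber over $y$ gives the equivalence, rather than by the base-change naturality argument you sketch for the subtlety you flag.
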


\begin{proof}
Proposition \ref{prop:explicit factorization double case} states that $(D_\bullet)_{x/}$ is the fibrant replacement of $\{x\}\to D_\bullet$, which implies it is then sent to the fibrant replacement of $\{|x|_1\}\to |D|_1^\sharp$. By \cite[remark 3.2.2.14]{loubaton2024categorical}, this fibrant replacement is $(|D|_1^\sharp)_{|x|_1/}\to |D|_1^\sharp$. 

We then have $$|(D_\bullet)_{x/}|_1^{D}\sim (|D|_1^\sharp)_{|x|_1/}$$
and by proposition \ref{prop:double bar is eq}, 
$$(D_\bullet)_{x/}\sim \N_{D}((|D|_1^\sharp)_{|x|_1/}).$$
By evaluating  on $[1]$, we then have a cartesian square:
% https://q.uiver.app/#q=WzAsNCxbMCwwLCIoRF8xKV94Il0sWzAsMSwiXFx7eFxcfVxcdGltZXMgRF8wXlxcc2hhcnAiXSxbMSwwLCIofERfMXxee1xcc2hhcnB9KV57WzFdXntcXHNoYXJwfX0iXSxbMSwxLCJEXzBeXFxzaGFycFxcdGltZXMgRF5cXHNoYXJwXzAiXSxbMiwzXSxbMSwzXSxbMCwyXSxbMCwxXSxbMCwzLCIiLDEseyJzdHlsZSI6eyJuYW1lIjoiY29ybmVyIn19XV0=
\[\begin{tikzcd}
	{(D_1)_x} & {(|D_1|^{\sharp})^{[1]^{\sharp}}} \\
	{\{x\}\times D_0^\sharp} & {D_0^\sharp\times D^\sharp_0}
	\arrow[from=1-1, to=1-2]
	\arrow[from=1-1, to=2-1]
	\arrow["\lrcorner"{anchor=center, pos=0.125}, draw=none, from=1-1, to=2-2]
	\arrow[from=1-2, to=2-2]
	\arrow[from=2-1, to=2-2]
\end{tikzcd}\]
Taking the fiber over $y$, we get the desired equivalence.
\end{proof}

\begin{theorem}
\label{theo:eff of n1 dcat marked} 
The realization of a $(\omega,1)$-double category is always a surjective $(\omega,1)$-filtration, and the directed marked Čech nerve of a $(\omega,1)$-filtration is always an accompanied $(\omega,1)$-double marked category.

The realization and the directed marked Čech nerve induce inverse equivalences:
$$|\uvar|:(\omega,1)\Dcatmc\sim \Filts_{\omega,1}:\C.$$
\end{theorem}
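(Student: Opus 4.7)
The first two assertions are exactly Lemmas~\ref{lemma:realization is 0 surj} and~\ref{lemma:cech is accompagnated} respectively. For the equivalence, my plan is to show separately that (a) the unit $\eta_{D_\bullet}: D_\bullet \to \C_\bullet|D_\bullet|$ is an equivalence for every accompanied $D_\bullet$, and (b) the counit $\epsilon_{A_\bullet}: |\C_\bullet A_\bullet| \to A_\bullet$ is an equivalence for every surjective filtration $A_\bullet$; together these yield the desired pair of inverse equivalences.

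For (a), both $D_\bullet$ and $\C_\bullet|D_\bullet|$ are accompanied (the second by Lemma~\ref{lemma:cech is accompagnated}), so I will invoke Lemma~\ref{lemma:characterization equivalence accompanied marked case.} and reduce to checking the equivalence only on the $0$-level and on each fiber $(D_1)_{x,y}$. The $0$-level map is the identity of $D_0$ by construction. The defining pullback square of construction~\ref{cons:cechnerve marked case} identifies the fiber of $\C_1|D_\bullet|$ over $(x,y)\in D_0\times D_0$ with $\hom_{|D|_1^\sharp}(|x|_1,|y|_1)$; Proposition~\ref{prop:left fib over flat} guarantees that both this fiber and $(D_1)_{x,y}$ are trivially marked, and Lemma~\ref{lemma:equivalence on fiber} then delivers the required equivalence of the underlying $\omega$-categories.

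For (b), I will exploit (a) via the triangle identities and then invoke the surjectivity hypothesis. Fix a surjective filtration $A_\bullet$. Since $\C_\bullet A_\bullet$ is accompanied, step~(a) forces $\eta_{\C_\bullet A_\bullet}$ to be an equivalence, and the triangle identity $\C(\epsilon_{A_\bullet}) \circ \eta_{\C_\bullet A_\bullet} = \id$ then forces $\C(\epsilon_{A_\bullet})$ to be an equivalence as well. In particular $\C_1(\epsilon_{A_\bullet})$ is an equivalence between the two-sided fibrations $\C_1|\C_\bullet A_\bullet|$ and $\C_1 A_\bullet$ over $A_0^\sharp \times A_0^\sharp$, so by Corollary~\ref{cor:morphism between is an equivalence when equivalence on fiberbifibrant case} it induces an equivalence on each fiber. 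Writing $f : |\C A|_1 \to A_1$ for the $1$-component of $\epsilon_{A_\bullet}$, this says that for every $x,y \in A_0$ the induced map $\hom_{|\C A|_1}(|x|_1,|y|_1) \to \hom_{A_1}(x,y)$ is an equivalence in $\ocat$. To promote this to an equivalence of $f$ itself, surjectivity enters: $A_0 \to A_1$ is a $0$-surjection by assumption, and $A_0 = |\C A|_0 \to |\C A|_1$ is a $0$-surjection by Lemma~\ref{lemma:realization is 0 surj}, so every object of $A_1$ and of $|\C A|_1$ is equivalent to the image of some object of $A_0$. Consequently $f$ is essentially surjective on objects, and the fibered fully-faithfulness above propagates by essential surjectivity to an equivalence on hom-$\omega$-categories between arbitrary pairs of objects; hence $f$ is an equivalence of $\omega$-categories.

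The hard part will be the last step of (b), namely passing from the equivalence of $\C(\epsilon_{A_\bullet})$ back to an equivalence of $\epsilon_{A_\bullet}$ itself. The functor $\C_1$ only records hom-objects between pairs of points coming from $A_0$, so the surjectivity hypothesis is genuinely indispensable for reconstructing $A_1$ from this data; this is also the precise reason why the adjunction cannot be restricted to an equivalence on the full category $\Filt_{\omega,1}$.
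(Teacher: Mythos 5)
Your proposal is correct and follows essentially the same route as the paper: the unit is handled by reducing to the $0$-level and to fibers via Lemma \ref{lemma:characterization equivalence accompanied marked case.} (equivalently Corollary \ref{cor:morphism between is an equivalence when equivalence on fiberbifibrant case}) together with Lemma \ref{lemma:equivalence on fiber}, and the counit by combining $0$-surjectivity (Lemma \ref{lemma:realization is 0 surj} plus left cancellation) with local fully faithfulness propagated through essentially surjective lifts of objects to $A_0$. The only cosmetic difference is that you obtain the counit's fiberwise fully faithfulness from the triangle identity and the already-proven unit equivalence, whereas the paper applies Lemma \ref{lemma:equivalence on fiber} directly to $\C_\bullet(p)$; the two are interchangeable.
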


\begin{proof}
The first assertion is lemmas \ref{lemma:realization is 0 surj} and \ref{lemma:cech is accompagnated}. It remains to show that the realization and the directed marked Čech nerve induce the given equivalences. Let $ D_\bullet$ be an accompanied category. By construction, we have an equivalence $ \C_0(|D_\bullet|) \sim D_0$. The unit of the adjunction induces a diagram :

 \[
\begin{tikzcd}
	{D_1} & {\C_1(|D_\bullet|)} \\
	{D_0} & {D_0}
	\arrow["{\nu_1}", from=1-1, to=1-2]
	\arrow[from=1-1, to=2-1]
	\arrow[from=1-1, to=2-2]
	\arrow[from=1-2, to=2-1]
	\arrow[from=1-2, to=2-2]
\end{tikzcd}
\]

The lemma \ref{lemma:equivalence on fiber} implies that the unit is an equivalence on fibers, and as these two spans are bifibrations, the corollary \ref{cor:morphism between is an equivalence when equivalence on fiberbifibrant case} implies that $ \nu_1$ is an equivalence. As $ D_\bullet$ and $ \C_\bullet(|D|)$ are Segal objects, this implies the desired equivalence.

Suppose now we are given a $ 0$-surjection $ p:C \to D$. Lemma \ref{lemma:realization is 0 surj} implies that $ |\C(p)|: C \to |\C(p)|_1^{\natural}$ is $ 0$-surjective, and so is $ \eta:|\C_\bullet(p)|_1^{\natural} \to D$ by left cancellation. Let $ x, y$ be two objects of $|\C_\bullet(p)|_1^\natural$, and let $ \tilde{x}, \tilde{y}$ be two objects of $C$ that are sent to $ x$ and $ y$. This induces a diagram :

\[\begin{tikzcd}
	& {\hom_{|C_\bullet(p)|_1^\natural}(\tilde x,\tilde{y})} \\
	{\hom_{|C_\bullet(p)|_1^\natural}(x,y)} && {\hom_D(\eta(x),\eta(y))}
	\arrow["{\sim}",from=1-2, to=2-1]
	\arrow[from=1-2, to=2-3]
	\arrow[from=2-1, to=2-3]
\end{tikzcd}\]

However, the lemma \ref{lemma:equivalence on fiber} implies that the upper left morphism is an equivalence, and so is the lower one. The morphism $ \eta:|\C_\bullet(p)|_1^{\natural} \to D$ is then fully faithful. As we already know that it is $ 0$-surjective, this implies that it is an equivalence. As a consequence, we get $ |\C_\bullet(p)| \sim p$.
\end{proof}

\section{Effectivity of accompanied $(n,m)$-double categories}
\label{section:Effectivity of accompanied $(n,m)$-double categories}
\subsection{$(n,m)$-Double categories}

\begin{definition}
\label{defi:of nmdouble cate}
Let $n,m\in \Nb\cup\{\omega\}$. A \textit{$(n,m)$-double category} is a functor $D_\bullet:\Theta_m^{op}\to \ncat$ such that for any globular sum $a$, the canonical morphism $D_a\to D_{\Sp_a}$ is an equivalence. We denote by $(n,m)\Dcat$ the category of $(n,m)$-double categories\footnote{We decided to call $(n,m)$-double categories the $m$-categories in the $n$-categories to stick with the convention of denoting $(n,m)$-categories the $m$-categories in $n$-homotopy types.}.
\end{definition}

\begin{definition}
Let $K$ be an $m$-category and $C$ an $n$-category. We denote by $\langle C,K\rangle_\bullet:\Delta^{op}\to \ocat$ the $(n,m)$-double category whose value on $n$ is given by: $$\langle C,K\rangle_a:= C\times K_a$$ This association defines a cocontinuous functor: $$\ncat \times m\mbox{-$\cat$} \to (n,m)\Dcat$$
\end{definition}

\begin{remark}
\label{rem univ of langle nm}
By some manipulation on localization of presheaf category, we can show that the adjunction induced by left Kan extension,
\[\begin{tikzcd}
	{\langle\uvar,\uvar\rangle_!:\widehat{\Theta_m\times \ncat}} & { (n,m)\Dcat}
	\arrow[""{name=0, anchor=center, inner sep=0}, shift left=2, from=1-1, to=1-2]
	\arrow[""{name=1, anchor=center, inner sep=0}, shift left=2, from=1-2, to=1-1]
	\arrow["\dashv"{anchor=center, rotate=-90}, draw=none, from=0, to=1]
\end{tikzcd}\]
is a localization. Any colimit preserving functor $ F: (n,m)\Dcat\to X$ is then a restriction of the colimit preserving functor $\widehat{\Theta_m\times \ncat}\to X$ sending $a,C$ onto $F(\langle C,a\rangle)$.
\end{remark}

\begin{definition}
Let $ D_\bullet$ be a $(n,m)$-double category and $(x,y)$ two objects of $ D_{0}$. We denote by $ \hom^v_{D}(x,y)_\bullet$ the $(n-1,m)$-double category whose value on $ a:\Theta$ fits in the pullback:
\[
\begin{tikzcd}
	{\hom^v_{D}(x,y)_\bullet} && {D_a^{[1]}} \\
	{\{x\}\times\{y\}} & {D_0\times D_0} & {D_a\times D_a}
	\arrow[from=1-1, to=1-3]
	\arrow[from=1-1, to=2-1]
	\arrow["\lrcorner"{anchor=center, pos=0.125}, draw=none, from=1-1, to=2-2]
	\arrow[from=1-3, to=2-3]
	\arrow[from=2-1, to=2-2]
	\arrow[from=2-2, to=2-3]
\end{tikzcd}
\]
and $ \hom^h_{D}(x,y)_\bullet$ the $(n,m-1)$-double category whose value on $ a:\Theta$ fits in the pullback:
\[
\begin{tikzcd}
	{\hom^h_{D}(x,y)_a} & {D_{[a,1]}} \\
	{\{x\}\times\{y\}} & {D_0\times D_0}
	\arrow[from=1-1, to=1-2]
	\arrow[from=1-1, to=2-1]
	\arrow["\lrcorner"{anchor=center, pos=0.125}, draw=none, from=1-1, to=2-2]
	\arrow[from=1-2, to=2-2]
	\arrow[from=2-1, to=2-2]
\end{tikzcd}
\]

Eventually, given $ k\leq n$ and $ l\leq m$, we define $ \tau_{k,l}D_\bullet$ as the $(k,l)$-double category whose value on $ a\in \Theta_l$ is 
$$ \tau_{k,l}D_a := \tau_k(D_a). $$
\end{definition}

\subsection{Accompanied $(n,1)$-double categories}

We will start by recalling some results on $(1,1)$-double categories proven by Ruit in \cite{Ruit2025}.

\begin{notation}
Let $D_\bullet$ be a $(1,1)$-double category. The $1$-cell of $D_0$ will be called a \textit{vertical $1$-cell} of $D_\bullet$, and we will use the notation $f:a\to b$ to signify that $\pi_0^{-,v}(g)\sim a$ and $\pi_0^{+,v}(g)\sim b$. The objects of $D_1$ are called \textit{horizontal $1$-cells}. When we write $f:a\proto b$, this means that $f$ is a vertical $1$-cell such that $\pi_0^{-,h}(f)\sim a$ and $\pi_0^{+,h}(f)\sim b$. The $1$-cells of $D_1$ will be pictured as squares, as shown below:
\[
\begin{tikzcd}
	\bullet & \bullet \\
	\bullet & \bullet
	\arrow["{\pi_0^{-,v}(\alpha)}", from=1-1, to=1-2]
	\arrow["{\pi_0^{-,h}(\alpha)}"', from=1-1, to=2-1]
	\arrow["\alpha"{description}, draw=none, from=1-1, to=2-2]
	\arrow["{\pi_0^{+,h}(\alpha)}", from=1-2, to=2-2]
	\arrow["{\pi_0^{+,v}(\alpha)}"', from=2-1, to=2-2]
\end{tikzcd}
\]
Finally, we will denote by $\uvar\circ^v_0\uvar$ the $0$-composition in each of the $1$-categories $C_n$, and $\uvar\circ^h_0\uvar$ the canonical functor: $$D_1\times_{D_0}D_1\sim D_2\xrightarrow{D_{d^1}}D_1$$
\end{notation}

\begin{definition}
Let $ D_\bullet$ be a $(1,1)$-double category.

A vertical $1$-cell $f:a\to b$ is \textit{companionable} if there exists a horizontal $1$-cell $\overline{f}:a\proto b$ called \textit{a companion of $f$}, two squares
\[\begin{tikzcd}
	a & a && a & b \\
	a & b && b & b
	\arrow["\shortmid"{marking}, equals, from=1-1, to=1-2]
	\arrow[equals, from=1-1, to=2-1]
	\arrow["{\psi_f}"{description}, draw=none, from=1-1, to=2-2]
	\arrow["f", from=1-2, to=2-2]
	\arrow["{\overline{f}}", "\shortmid"{marking}, from=1-4, to=1-5]
	\arrow["f"', from=1-4, to=2-4]
	\arrow["{\phi_f}"{description}, draw=none, from=1-4, to=2-5]
	\arrow[equals, from=1-5, to=2-5]
	\arrow["{\overline{f}}"', "\shortmid"{marking}, from=2-1, to=2-2]
	\arrow["{|}"{marking, allow upside down}, "\shortmid"{marking}, equals, from=2-4, to=2-5]
\end{tikzcd}\]
that are called respectively the \textit{companionship unit} and the \textit{companionship counit}, and equivalences:
\[
\begin{tikzcd}
	a & a & b & a & b \\
	a & b & b & a & b
	\arrow["\shortmid"{marking}, equals, from=1-1, to=1-2]
	\arrow[equals, from=1-1, to=2-1]
	\arrow["{\overline{f}}", "\shortmid"{marking}, from=1-2, to=1-3]
	\arrow["\psi_f"{description}, draw=none, from=1-2, to=2-1]
	\arrow["f"', from=1-2, to=2-2]
	\arrow["\phi_f"{description}, draw=none, from=1-3, to=2-2]
	\arrow[""{name=0, anchor=center, inner sep=0}, equals, from=1-3, to=2-3]
	\arrow["{\overline{f}}", "\shortmid"{marking}, from=1-4, to=1-5]
	\arrow[""{name=1, anchor=center, inner sep=0}, equals, from=1-4, to=2-4]
	\arrow["{Ib^v_{\overline{f}}}"{description}, draw=none, from=1-4, to=2-5]
	\arrow[equals, from=1-5, to=2-5]
	\arrow["{\overline{f}}"', "\shortmid"{marking}, from=2-1, to=2-2]
	\arrow["\shortmid"{marking}, equals, from=2-2, to=2-3]
	\arrow["{\overline{f}}"', "\shortmid"{marking}, from=2-4, to=2-5]
	\arrow["\sim"{description}, draw=none, from=0, to=1]
\end{tikzcd}
\]
and 
\[
\begin{tikzcd}
	a & a \\
	a & b & a & a \\
	b & b & b & b
	\arrow["\shortmid"{marking}, equals, from=1-1, to=1-2]
	\arrow[equals, from=1-1, to=2-1]
	\arrow["f", from=1-2, to=2-2]
	\arrow["{\overline{f}}", "\shortmid"{marking}, from=2-1, to=2-2]
	\arrow["f"', from=2-1, to=3-1]
	\arrow["\psi_f"{description}, draw=none, from=2-2, to=1-1]
	\arrow["\phi_f"{description}, draw=none, from=2-2, to=3-1]
	\arrow[""{name=0, anchor=center, inner sep=0}, equals, from=2-2, to=3-2]
	\arrow["\shortmid"{marking}, equals, from=2-3, to=2-4]
	\arrow[""{name=1, anchor=center, inner sep=0}, "f"', from=2-3, to=3-3]
	\arrow["{\Ib^h_f}"{description}, draw=none, from=2-3, to=3-4]
	\arrow["f", from=2-4, to=3-4]
	\arrow["\shortmid"{marking}, equals, from=3-1, to=3-2]
	\arrow["\shortmid"{marking}, equals, from=3-3, to=3-4]
	\arrow["\sim"{description}, draw=none, from=0, to=1]
\end{tikzcd}
\]
A $(1,1)$-double category $D_\bullet$ is \textit{accompanied} if every vertical cell is companionable. We denote by $(1,1)\Dcatc$ the full subcategory of $(1,1)\Dcat$ whose objects are accompanied $(1,1)$-double categories.
\end{definition}

\begin{prop}[Ruit] \label{prop:unicity of companion 1} Let $f:a\to b$ be a companionable vertical $1$-cell. The homotopy type of companionship units of $f$ as well as the homotopy type of companionship units of $f$ is contractible.
\end{prop}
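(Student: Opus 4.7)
The plan is to adapt the classical argument that adjoints in a $2$-category are unique up to contractible choice. Let $\mathrm{Comp}(f)$ denote the homotopy type of full companion data $(\overline{f},\psi_f,\phi_f,\alpha_f,\beta_f)$ for the given vertical $1$-cell $f$, and let $\mathrm{Unit}(f)$ (resp.\ $\mathrm{Counit}(f)$) denote the homotopy type of pairs $(\overline{f},\psi_f)$ (resp.\ $(\overline{f},\phi_f)$) that extend to some such companion structure. Both claims reduce to showing that $\mathrm{Comp}(f)$ is contractible and that the two forgetful maps $\mathrm{Comp}(f)\to \mathrm{Unit}(f)$ and $\mathrm{Comp}(f)\to \mathrm{Counit}(f)$ are equivalences.

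The first step is the \emph{mate construction}. Given two elements of $\mathrm{Comp}(f)$, with underlying horizontal $1$-cells $\overline{f}$ and $\overline{f}'$, I form a comparison square $\theta\colon \overline{f}\Rightarrow \overline{f}'$ by horizontally composing $\psi'_f$ on the left with $\phi_f$ on the right: the shared vertical edge $f$ matches, and the resulting square has identities as its left and right legs. The triangle equivalences $\alpha_f$ and $\beta'_f$, together with the interchange law for $\circ^h_0$ and $\circ^v_0$ (cf.\ remark \ref{rem:consequence of associativity of composition}), show that the symmetric square $\theta'\colon \overline{f}'\Rightarrow \overline{f}$ is a two-sided inverse to $\theta$. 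Moreover $\theta$ carries $\psi_f$ to $\psi'_f$ and $\phi_f$ to $\phi'_f$ up to canonical higher equivalences, yielding an equivalence between the two companion data. A coherent enhancement of this construction contracts $\mathrm{Comp}(f)$ onto any chosen basepoint.

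The second step is to show that the forgetful map $\mathrm{Comp}(f)\to \mathrm{Unit}(f)$ is an equivalence; the counit case is dual. The cleanest way to do this is to phrase companionability as a representability property: the pair $(\overline{f},\psi_f)$ should represent, on the $\infty$-category of horizontal arrows $a\proto b$, the presheaf sending $g$ to the homotopy type of squares
$$
\begin{array}{c}
a \xrightarrow{=} a \\
\downarrow = \quad \quad \downarrow f \\
a \xrightarrow{\;g\;} b
\end{array}
$$
The data $(\phi_f,\alpha_f,\beta_f)$ is precisely a witness of this representability (the counit being the universal element under the Yoneda embedding, and the triangle identities encoding that the induced map is an inverse to evaluation at $\psi_f$). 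Hence the fiber of $\mathrm{Comp}(f)\to \mathrm{Unit}(f)$ over any point is contractible by Yoneda.

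The main obstacle is the homotopy-coherent bookkeeping: the triangle identities are equivalences $\alpha_f,\beta_f$ rather than equalities, so the mate construction must be carried out as a coherent diagram of squares, and the representability formulation must be promoted to an $\infty$-categorical universal property. In particular, verifying that $\theta\circ\theta'$ is coherently equivalent to the identity requires pasting several copies of $\alpha$ and $\beta$ and invoking the interchange law repeatedly. Packaging the whole argument via the representability characterization, as sketched above, appears to be the least painful route, and is presumably the one followed by Ruit in \cite{Ruit2025}.
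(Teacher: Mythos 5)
First, note that the paper does not actually prove this statement: its ``proof'' is a citation to \cite[theorem 2.5.13]{Ruit2025}, so the only meaningful comparison is with Ruit's argument, which (like your second step) runs through a universal-property characterization of companions rather than through an explicit mate construction. Your overall strategy is therefore morally in the right place, but as written it has a genuine gap at exactly the point that makes the $\infty$-categorical statement nontrivial. In Step 1, the mate construction only produces, for any two full companion data, an equivalence between them; in the $\infty$-setting this shows at best that the space $\mathrm{Comp}(f)$ is connected (indeed that any two points are joined by a path), and ``a coherent enhancement of this construction contracts $\mathrm{Comp}(f)$'' is precisely the infinite hierarchy of coherences you would have to supply -- it cannot be waved through, and nobody proves contractibility this way. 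The standard repair is to identify the space of companionship data with a space of initial objects of an auxiliary $\infty$-category, or with the space of representations of a fixed presheaf, and then invoke the theorem that such spaces are empty or contractible; that identification, not the mate calculus, is the real content, and it is what Ruit's proof supplies.

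Your Step 2 gestures at this, but it is deployed only to show that the fibers of $\mathrm{Comp}(f)\to \mathrm{Unit}(f)$ are contractible, so it does not substitute for the missing contractibility of $\mathrm{Comp}(f)$. Moreover, even the fiber claim is asserted rather than proved: the fiber over a fixed pair $(\overline{f},\psi_f)$ is the space of triples $(\phi_f,\alpha_f,\beta_f)$ completing it, and identifying this with a ``space of witnesses of representability'' is itself a nontrivial statement (the analogue of: given the unit of an adjunction, the space of compatible counits together with triangle-identity witnesses is contractible). There is also a subtlety in the choice of presheaf: squares with identity top and left edges and right edge $f$ only encode a globular universal property of $\psi_f$, whereas reconstructing the counit $\phi_f$ (whose boundary is not of that shape) requires the stronger cocartesianness property quantifying over general squares -- this is the property appearing in theorem \ref{theo:jaco}, and one must be careful not to invoke that theorem here, since in Ruit's development it comes after, and relies on, the uniqueness statement you are trying to prove. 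So the proposal identifies the correct tools but leaves the two decisive steps (contractibility of $\mathrm{Comp}(f)$, and the precise universal-property identification of units/counits) unproven.
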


\begin{proof}
This is \cite[theorem 2.5.13]{Ruit2025}.
\end{proof}

\begin{lemma}
\label{lemma:companion stable by comp}
Let $D_\bullet$ be a $(1,1)$-double category, and $f:a\to b$, $g:b\to x$ two companionable $1$-cells. The $1$-cell $g\circ_0^v f$ is then companionable.
\end{lemma}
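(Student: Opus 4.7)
The plan is to take $\overline{g\circ^v_0 f}:=\overline{g}\circ^h_0\overline{f}$ as the candidate companion, and to build the companionship unit and counit by pasting those of $f$ and $g$. Concretely, first form the square $\alpha:=\Ib^v_{\overline{f}}\circ^h_0\psi_g$, which by the compatibility of $\Ib^v_{\overline{f}}$ with $\psi_g$ along the vertical edge $\id_b$ is a square with top $\overline{f}$, bottom $\overline{g}\circ^h_0\overline{f}$, left $\id_a$, and right $g$. Then define
\[
\psi_{gf}:=\psi_f\circ^v_0\alpha,
\]
which is a square with top $\id_a$, bottom $\overline{g}\circ^h_0\overline{f}$, left $\id_a\circ^v_0\id_a\simeq\id_a$, and right $g\circ^v_0 f$, so it has the correct shape for a companionship unit. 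Dually, set $\phi_{gf}:=(\phi_f\circ^h_0\Ib^v_{\overline{g}})\circ^v_0\phi_g$ for the counit.

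The next step is to verify the two equivalence conditions. For the first, unwind $\psi_{gf}\circ^h_0\phi_{gf}$ into a $2\times 3$ grid of squares. By the interchange law (which follows from the associativity of $0$-composition in each $D_n$ combined with the Segal equivalence $D_2\simeq D_1\times_{D_0}D_1$), this pasting can equally be computed by first taking the horizontal composites row by row. Doing so splits the grid into $\psi_f\circ^h_0\phi_f$ stacked above $\psi_g\circ^h_0\phi_g$ (with the middle column absorbed into the identity squares $\Ib^v_{\overline{f}}$ and $\Ib^v_{\overline{g}}$); the first companionship equivalences for $f$ and $g$ then reduce these to $\Ib^v_{\overline{f}}$ and $\Ib^v_{\overline{g}}$, whose horizontal composite is $\Ib^v_{\overline{g}\circ^h_0\overline{f}}=\Ib^v_{\overline{gf}}$. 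The second equivalence $\phi_{gf}\circ^v_0\psi_{gf}\simeq\Ib^h_{g\circ^v_0 f}$ is proved symmetrically, using interchange to recombine the pasting into a vertical stack of $\phi_f\circ^v_0\psi_f\simeq\Ib^h_f$ and $\phi_g\circ^v_0\psi_g\simeq\Ib^h_g$.

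The only real obstacle is the two-dimensional pasting bookkeeping: since we are working in an $\infty$-category of double categories rather than strict $1$-categories, every application of associativity or interchange produces a canonical equivalence rather than an equality. However, the Segal condition on $D_\bullet$ supplies all of the relevant comparison maps, and Proposition \ref{prop:unicity of companion 1} guarantees that any two candidate units (resp.\ counits) for a companionable cell become canonically identified once one verifies the equivalence conditions; this removes any ambiguity in the coherences. All remaining verifications are routine applications of the functoriality of $D_\bullet$ on $\Delta^{op}$ and the naturality of the Segal decompositions.
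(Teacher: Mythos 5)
Your proposal matches the paper's proof: the paper takes exactly the same candidate companion $\overline{g}\circ^h_0\overline{f}$ and defines the unit and counit by the same pastings of $\psi_f,\psi_g,\phi_f,\phi_g$ with the identity squares $\Ib^v_{\overline{f}},\Ib^v_{\overline{g}},\Ib^h_f,\Ib^h_g$ (it merely displays them as $2\times 2$ grids and leaves the companionship equivalences to the reader). Your additional verification of those equivalences via interchange (functoriality of $D_2\simeq D_1\times_{D_0}D_1\to D_1$) is correct and simply fills in what the paper omits.
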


\begin{proof}

We leave it to the reader to check that a companion is given by $\overline{f}\circ_0^v\overline{g}$, and the companionship unit and counit are given by the composite:
\[\begin{tikzcd}
	a & a & a && a & b & c \\
	a & b & b && b & b & c \\
	a & b & c && c & c & c
	\arrow["\shortmid"{marking}, equals, from=1-1, to=1-2]
	\arrow[equals, from=1-1, to=2-1]
	\arrow["{\psi_f}"{description}, draw=none, from=1-1, to=2-2]
	\arrow["\shortmid"{marking}, equals, from=1-2, to=1-3]
	\arrow["f", from=1-2, to=2-2]
	\arrow["{\Ib^h_{\overline{f}}}"{description}, draw=none, from=1-2, to=2-3]
	\arrow["f", from=1-3, to=2-3]
	\arrow["{\overline{f}}", "\shortmid"{marking}, from=1-5, to=1-6]
	\arrow["f"', from=1-5, to=2-5]
	\arrow["{\phi_f}"{description}, draw=none, from=1-5, to=2-6]
	\arrow["{\overline{g}}", "\shortmid"{marking}, from=1-6, to=1-7]
	\arrow[equals, from=1-6, to=2-6]
	\arrow["{\Ib^v_g}"{description}, draw=none, from=1-6, to=2-7]
	\arrow[equals, from=1-7, to=2-7]
	\arrow["{\overline{f}}"', "\shortmid"{marking}, from=2-1, to=2-2]
	\arrow[equals, from=2-1, to=3-1]
	\arrow["{\Ib^v_f}"{description}, draw=none, from=2-1, to=3-2]
	\arrow["\shortmid"{marking}, equals, from=2-2, to=2-3]
	\arrow[equals, from=2-2, to=3-2]
	\arrow["{\phi_f}"{description}, draw=none, from=2-2, to=3-3]
	\arrow["g", from=2-3, to=3-3]
	\arrow["\shortmid"{marking}, equals, from=2-5, to=2-6]
	\arrow["g"', from=2-5, to=3-5]
	\arrow["{\Ib^h_{\overline{g}}}"{description}, draw=none, from=2-5, to=3-6]
	\arrow["{\overline{g}}", "\shortmid"{marking}, from=2-6, to=2-7]
	\arrow["g"', from=2-6, to=3-6]
	\arrow["{\psi_f}"{description}, draw=none, from=2-6, to=3-7]
	\arrow[equals, from=2-7, to=3-7]
	\arrow["{\overline{f}}"', "\shortmid"{marking}, from=3-1, to=3-2]
	\arrow["{\overline{g}}"', "\shortmid"{marking}, from=3-2, to=3-3]
	\arrow["\shortmid"{marking}, equals, from=3-5, to=3-6]
	\arrow["\shortmid"{marking}, equals, from=3-6, to=3-7]
\end{tikzcd}\]
\end{proof}

\begin{definition}
\label{defi:cartesian cell}
Let $D_\bullet$ be a $(1,1)$-double category. A square $\alpha:s\proto t$ in $D_1$ is \textit{bicartesian} if $s$ and $t$ are companionable, and if it is of the shape:
% https://q.uiver.app/#q=WzAsOCxbMCwwLCJhIl0sWzAsMSwiYiJdLFsxLDEsImIiXSxbMSwwLCJiIl0sWzIsMSwiYyJdLFsyLDAsImMiXSxbMywxLCJkIl0sWzMsMCwiYyJdLFswLDEsInMiLDJdLFsxLDIsIiIsMix7ImxldmVsIjoyLCJzdHlsZSI6eyJib2R5Ijp7Im5hbWUiOiJiYXJyZWQifSwiaGVhZCI6eyJuYW1lIjoibm9uZSJ9fX1dLFswLDMsIlxcb3ZlcmxpbmV7c30iLDAseyJzdHlsZSI6eyJib2R5Ijp7Im5hbWUiOiJiYXJyZWQifX19XSxbMiw0LCJmIiwyLHsic3R5bGUiOnsiYm9keSI6eyJuYW1lIjoiYmFycmVkIn19fV0sWzMsNSwiZiIsMCx7InN0eWxlIjp7ImJvZHkiOnsibmFtZSI6ImJhcnJlZCJ9fX1dLFszLDIsIiIsMSx7ImxldmVsIjoyLCJzdHlsZSI6eyJoZWFkIjp7Im5hbWUiOiJub25lIn19fV0sWzUsNCwiIiwwLHsibGV2ZWwiOjIsInN0eWxlIjp7ImhlYWQiOnsibmFtZSI6Im5vbmUifX19XSxbNyw2LCJ0Il0sWzQsNiwiXFxvdmVybGluZXt0fSIsMix7InN0eWxlIjp7ImJvZHkiOnsibmFtZSI6ImJhcnJlZCJ9fX1dLFs1LDcsIiIsMCx7ImxldmVsIjoyLCJzdHlsZSI6eyJib2R5Ijp7Im5hbWUiOiJiYXJyZWQifSwiaGVhZCI6eyJuYW1lIjoibm9uZSJ9fX1dLFs1LDYsIlxccHNpX3QiLDEseyJzdHlsZSI6eyJib2R5Ijp7Im5hbWUiOiJub25lIn0sImhlYWQiOnsibmFtZSI6Im5vbmUifX19XSxbMCwyLCJcXHBoaV9zIiwxLHsic3R5bGUiOnsiYm9keSI6eyJuYW1lIjoibm9uZSJ9LCJoZWFkIjp7Im5hbWUiOiJub25lIn19fV0sWzEyLDExLCJcXEliIF92ZiIsMSx7InNob3J0ZW4iOnsic291cmNlIjoyMCwidGFyZ2V0IjoyMH0sInN0eWxlIjp7ImJvZHkiOnsibmFtZSI6Im5vbmUifSwiaGVhZCI6eyJuYW1lIjoibm9uZSJ9fX1dXQ==
\[\begin{tikzcd}
	a & b & c & c \\
	b & b & c & d
	\arrow["{\overline{s}}", "\shortmid"{marking}, from=1-1, to=1-2]
	\arrow["s"', from=1-1, to=2-1]
	\arrow["{\phi_s}"{description}, draw=none, from=1-1, to=2-2]
	\arrow[""{name=0, anchor=center, inner sep=0}, "f", "\shortmid"{marking}, from=1-2, to=1-3]
	\arrow[equals, from=1-2, to=2-2]
	\arrow["\shortmid"{marking}, equals, from=1-3, to=1-4]
	\arrow[equals, from=1-3, to=2-3]
	\arrow["{\psi_t}"{description}, draw=none, from=1-3, to=2-4]
	\arrow["t", from=1-4, to=2-4]
	\arrow["\shortmid"{marking}, equals, from=2-1, to=2-2]
	\arrow[""{name=1, anchor=center, inner sep=0}, "f"', "\shortmid"{marking}, from=2-2, to=2-3]
	\arrow["{\overline{t}}"', "\shortmid"{marking}, from=2-3, to=2-4]
	\arrow["{\Ib _vf}"{description}, draw=none, from=0, to=1]
\end{tikzcd}\]
for some horizontal cell $f$.
\end{definition}

\begin{lemma} \label{lemma:stability of cartesian cell by comp1} Let $D_\bullet$ be a $(1,1)$-double category. Bicartesian cells are stable under horizontal and vertical compositions and contain horizontal identities of companionable $1$-cells.
\end{lemma}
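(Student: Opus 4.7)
The plan is to verify the three assertions in turn --- (a) horizontal identities, (b) horizontal composition, (c) vertical composition --- relying systematically on the two companionship identities $\psi_f\circ_0^h\phi_f\sim \Ib^v_{\overline{f}}$ and $\psi_f\circ_0^v\phi_f\sim \Ib^h_f$ packaged in the definition of companion. For (a), given companionable $s:a\to b$, I would exhibit $\Ib^h_s$ as a bicartesian cell by identifying an appropriate horizontal cell $f$ so that $\phi_s\circ_0^h \Ib^v_f\circ_0^h \psi_s\sim \Ib^h_s$; the natural candidate starts from the second companionship identity, which presents $\Ib^h_s$ as the vertical composite $\psi_s\circ_0^v \phi_s$, and an interchange argument then rewrites this vertical stacking as a horizontal composite in the required bicartesian form.

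For (b), given $\alpha:s\proto t$ and $\beta:t\proto u$ bicartesian via $f_1$ and $f_2$, I would horizontally concatenate the two decompositions to obtain
\[
\alpha\circ_0^h\beta \sim \phi_s\circ_0^h\Ib^v_{f_1}\circ_0^h(\psi_t\circ_0^h\phi_t)\circ_0^h\Ib^v_{f_2}\circ_0^h\psi_u,
\]
then invoke the first companionship identity to collapse the parenthesized factor to $\Ib^v_{\overline{t}}$ and absorb the three consecutive vertical identity squares into $\Ib^v_{f_1\circ \overline{t}\circ f_2}$, thereby exhibiting the composite in bicartesian form with realizing horizontal cell $f_1\circ\overline{t}\circ f_2$.

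For (c), suppose $\alpha:s\proto t$ and $\alpha':s'\proto t'$ are bicartesian via $f$ and $f'$, with $s,s'$ and $t,t'$ vertically composable. Lemma~\ref{lemma:companion stable by comp} constructs an explicit companion of $s\circ_0^v s'$, together with explicit formulas for $\psi_{s\circ_0^v s'}$ and $\phi_{s\circ_0^v s'}$ as vertical composites of the companionship data of the factors interleaved with cells of type $\Ib^h$ and $\Ib^v$, and similarly for $t,t'$. I would place $\alpha'\circ_0^v\alpha$ inside a $3\times 3$ grid whose rows are the bicartesian decompositions of $\alpha'$ and $\alpha$ padded by identity cells, apply the interchange law to re-group columns, identify the outer columns with the prescribed $\phi_{s\circ_0^v s'}$ and $\psi_{t\circ_0^v t'}$ coming from the proof of Lemma~\ref{lemma:companion stable by comp}, and collapse the middle column to a single $\Ib^v_h$ for an explicit composite horizontal cell $h$ built from $f,f'$ and companions of the factors. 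The main obstacle is precisely this bookkeeping in part (c): matching the rearranged grid to the explicit composite companionship data requires several careful applications of the interchange law together with the two companionship identities. Parts (a) and (b), once those identities are in hand, reduce to essentially direct calculations.
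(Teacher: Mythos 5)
Your part (b) is correct and is the intended ``straightforward'' argument (concatenate the two decompositions, collapse $\psi_t\circ_0^h\phi_t$ to $\Ib^v_{\overline{t}}$ by the first companionship identity, and merge the middle factors into $\Ib^v_{f_1\circ\overline{t}\circ f_2}$); you should only add a word, via Proposition \ref{prop:unicity of companion 1}, to identify the companionship witnesses of the shared edge $t$ used by the two cells. Your part (c) also follows the route the paper indicates, namely to use the explicit companionship (co)units of a composite constructed in the proof of Lemma \ref{lemma:companion stable by comp}.

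The genuine gap is part (a), and more generally your commitment to realizing everything in the exact three-square shape of Definition \ref{defi:cartesian cell}. There is no horizontal cell $f$ with $\phi_s\circ_0^h\Ib^v_f\circ_0^h\psi_s\sim\Ib^h_s$: any such composite has top edge $\overline{s}\circ_0^h f$ and bottom edge $f\circ_0^h\overline{s}$, whereas $\Ib^h_s$ has identity top and bottom edges, and no interchange argument can alter the boundary of a cell; your proposed rewriting would force $\overline{s}$ to be invertible, which fails in general. The horizontal unit enters the class of bicartesian cells through the second companionship identity itself, which exhibits $\Ib^h_s$ as the vertical pasting of $\psi_s$ and $\phi_s$, i.e.\ of a cocartesian-type and a cartesian-type square --- this is the description actually used in the proof of Theorem \ref{theo:jaco}, where bicartesian cells are identified with vertical pastings of cartesian and cocartesian lifts, and it is what makes the degeneracy $D_0\to D_1$ preserve the marking. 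The same warning applies to the last step of your part (c): if you insist on producing a single middle cell $h$ so that the vertical composite equals $\phi_{s'\circ_0^v s}\circ_0^h\Ib^v_h\circ_0^h\psi_{t'\circ_0^v t}$, such an $h$ need not exist (already in the double category of commutative squares of a poset, two squares each admitting a diagonal filler can paste vertically to a square admitting none); the grid bookkeeping should instead exhibit the composite as a vertical pasting of cartesian and cocartesian lifts built from the companion data of Lemma \ref{lemma:companion stable by comp}, rather than aim at the single three-fold horizontal form.
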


\begin{proof}
The stability by horizontal composition and the fact that identities of companionable $1$-cells are bicartesian cells is straightforward. The stability by horizontal composition is easily verified using the explicit description of the (co)unit companionship of a composite of vertical arrows given in the proof of lemma \ref{lemma:companion stable by comp}.
\end{proof}

\begin{definition}
Let $D_\bullet$ be a $(1,1)$-double category. We denote by $t^cD_0$ the marking on $D_0$ consisting of companionable $1$-cells. We denote by $t^cD_{1}$ the marking on $D_1$ consisting of bicartesian cells. Eventually, we denote by $t^cD_n$ the marking on $D_{[n]}$ defined as $$t^cD_{[n]}:= t^cD_1\times_{t^cD_0}t^cD_1\times...\times_{t^cD_0}t^cD_1.$$ As lemma \ref{lemma:stability of cartesian cell by comp1} states that bicartesian cells are stable under horizontal composition, this implies that $(D_{[n]},t^cD_{[n]})$ is a simplicial marked category, which we will denote as $(D,t^cD)_\bullet$.
\end{definition}

\begin{theorem}[Ruit] \label{theo:jaco} Let $D_\bullet$ be a $(1,1)$-double category. The morphism $$(D,t^cD)_1\to(D,t^cD)_0\times(D,t^cD)_0$$ is a two-sided fibration.
\end{theorem}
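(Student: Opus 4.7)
The plan is to apply proposition \ref{prop: another characterization} to the span
$$(D, t^cD)_1 \to (D, t^cD)_0 \times (D, t^cD)_0.$$
Since $D_\bullet$ is a $(1,1)$-double category, both $(D, t^cD)_0$ and $(D, t^cD)_1$ are marked $1$-categories when regarded as marked $\omega$-categories. Consequently, the instances of condition (2) with $n \geq 2$ are vacuous (a marked $1$-category has no cells above dimension one to map into), and condition (3) amounts to checking a two-sided fibration property on hom between horizontal arrows — these homs are mere sets, so the condition is automatic. It remains to verify condition (1) and condition (2) at $n=1$.

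For condition (1), consider the case where the right-hand factor carries a companionable vertical $w: b \to b'$ and the left-hand factor is trivial on an object $a$. An object of $(D, t^cD)_1$ above $(a, b)$ is a horizontal arrow $h: a \proto b$, and I propose the bicartesian lift
$$\Ib^v_{h} \circ^h \psi_w,$$
a square with left side $\mathrm{id}_a$, right side $w$, top $h$, and bottom $h \circ^h_0 \overline{w}$. Instantiating definition \ref{defi:cartesian cell} with $s = \mathrm{id}_a$, $t = w$, and middle horizontal $f = h$, this composite realizes the required bicartesian shape. The symmetric subcase (left factor carries a marked vertical $v$, right factor trivial) is handled dually via $\phi_v \circ^h \Ib^v_{h}$. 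Uniqueness of both lifts follows from proposition \ref{prop:unicity of companion 1}: any competing bicartesian lift must involve the same contractibly unique companionship unit or counit, and its middle horizontal $f$ is forced to equal $h$ by the top-boundary constraint.

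Condition (2) at $n=1$ then reduces at once to condition (1). Given an arbitrary $1$-cell $\alpha: h \to h'$ of $(D, t^cD)_1$ (arising from $\Db_1^\flat$) and a marked vertical $w$ to append (arising from $[1]^\sharp$, with trivial image on the other factor), the required marked extension is the lift produced by condition (1) applied to the endpoint $h' = t(\alpha)$ and the marked vertical $w$, pasted after $\alpha$. Both existence and uniqueness transfer from the previous paragraph. The most delicate point in the whole argument is ensuring that the contractibility supplied by proposition \ref{prop:unicity of companion 1} upgrades to genuine unicity of the entire bicartesian square, not merely of its companionship components; the explicit pasting formula above already displays the candidate, so the bulk of the remaining work lies in this uniqueness check.
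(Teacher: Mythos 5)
Your reduction to proposition \ref{prop: another characterization} is a reasonable framework, and your candidate lifts (the pastings $\Ib^v_h\circ^h\psi_w$ and $\phi_v\circ^h\Ib^v_h$) are exactly the squares the paper writes down. But there are two genuine gaps. First, you misread condition (2): the vertical map in those squares is $\triangledown$, the \emph{endpoint-preserving} map $\Db_n^\flat\to\Db_n^\flat\vee[1]^\sharp$ (as in lemma \ref{lemma: a partial biinitial}), not the inclusion of the first wedge summand. So condition (2) is not ``append a marked cell after a given cell'' (which would indeed just be condition (1) again); it is a unique \emph{factorization} statement: a cell of $(D,t^cD)_1$ whose projection to the base decomposes as a whiskering by $(\mathrm{id},w)$ must itself decompose, uniquely, as a whiskering by a bicartesian square lying over $(\mathrm{id},w)$. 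This is precisely the cocartesian/cartesian universal property of the companionship squares, i.e.\ the substance of Ruit's theorem 2.6.8, which the paper's proof simply invokes; your proposal never proves it, and it does not follow from proposition \ref{prop:unicity of companion 1}. Second, even for condition (1), uniqueness is not established: you need the space of bicartesian squares with prescribed top $h$ and sides $(\mathrm{id}_a,w)$ to be contractible, whereas definition \ref{defi:cartesian cell} only asserts the \emph{existence} of a decomposition $\psi_t\circ^h\Ib_v f\circ^h\phi_s$, and proposition \ref{prop:unicity of companion 1} gives contractibility of the units/counits attached to a fixed vertical cell. That yields at best that any two lifts are equivalent, not unique lifting in the $\infty$-categorical sense; you acknowledge this yourself, but it is the heart of the matter, not a remaining routine check.

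Two smaller points: the $n\geq 2$ instances of condition (2) are not vacuous (there are degenerate maps $\Db_n^\flat\to(D,t^cD)_1$, so those lifting problems must still be addressed, even if they reduce to the $n\leq 1$ cases), and the homs in condition (3) are homotopy types rather than sets — the condition does hold, but the correct reason is proposition \ref{prop:left fib over flat} (any flat marked span over a flat base is a two-sided fibration). By contrast, the paper's proof is essentially a citation: Ruit's theorem 2.6.8 and its dual supply the cocartesian and cartesian lifts together with their universal properties, and the only thing checked is that bicartesian cells in the sense of definition \ref{defi:cartesian cell} are exactly the vertical composites of these lifts. A self-contained argument along your lines would amount to reproving that theorem, namely establishing the unique-factorization property of the squares $\Ib^v_h\circ^h\psi_w$ and $\phi_v\circ^h\Ib^v_h$, which your sketch does not do.
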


\begin{proof}
The theorem 2.6.8 of \cite{Ruit2025} and its dual version imply that a cocartesian lift of $(f:a\proto b,\Ib^v_b:b\to b,t:c\to d)$ is given by the composition of 
% https://q.uiver.app/#q=WzAsNixbMCwxLCJiIl0sWzAsMCwiYiJdLFsxLDEsImMiXSxbMSwwLCJjIl0sWzIsMSwiZCJdLFsyLDAsImMiXSxbMCwyLCJmIiwyLHsic3R5bGUiOnsiYm9keSI6eyJuYW1lIjoiYmFycmVkIn19fV0sWzEsMywiZiIsMCx7InN0eWxlIjp7ImJvZHkiOnsibmFtZSI6ImJhcnJlZCJ9fX1dLFsxLDAsIiIsMSx7ImxldmVsIjoyLCJzdHlsZSI6eyJoZWFkIjp7Im5hbWUiOiJub25lIn19fV0sWzMsMiwiIiwwLHsibGV2ZWwiOjIsInN0eWxlIjp7ImhlYWQiOnsibmFtZSI6Im5vbmUifX19XSxbNSw0LCJ0Il0sWzIsNCwiXFxvdmVybGluZXt0fSIsMix7InN0eWxlIjp7ImJvZHkiOnsibmFtZSI6ImJhcnJlZCJ9fX1dLFszLDUsIiIsMCx7ImxldmVsIjoyLCJzdHlsZSI6eyJib2R5Ijp7Im5hbWUiOiJiYXJyZWQifSwiaGVhZCI6eyJuYW1lIjoibm9uZSJ9fX1dLFszLDQsIlxccHNpX3QiLDEseyJzdHlsZSI6eyJib2R5Ijp7Im5hbWUiOiJub25lIn0sImhlYWQiOnsibmFtZSI6Im5vbmUifX19XSxbNyw2LCJcXEliIF92ZiIsMSx7InNob3J0ZW4iOnsic291cmNlIjoyMCwidGFyZ2V0IjoyMH0sInN0eWxlIjp7ImJvZHkiOnsibmFtZSI6Im5vbmUifSwiaGVhZCI6eyJuYW1lIjoibm9uZSJ9fX1dXQ==
\[\begin{tikzcd}
	b & c & c \\
	b & c & d
	\arrow[""{name=0, anchor=center, inner sep=0}, "f", "\shortmid"{marking}, from=1-1, to=1-2]
	\arrow[equals, from=1-1, to=2-1]
	\arrow["\shortmid"{marking}, equals, from=1-2, to=1-3]
	\arrow[equals, from=1-2, to=2-2]
	\arrow["{\psi_t}"{description}, draw=none, from=1-2, to=2-3]
	\arrow["t", from=1-3, to=2-3]
	\arrow[""{name=1, anchor=center, inner sep=0}, "f"', "\shortmid"{marking}, from=2-1, to=2-2]
	\arrow["{\overline{t}}"', "\shortmid"{marking}, from=2-2, to=2-3]
	\arrow["{\Ib _vf}"{description}, draw=none, from=0, to=1]
\end{tikzcd}\]
and a cartesian lift of $(f:a\proto b,s:a\to b\Ib^v_c:c\to c)$ is given by the composition of
% https://q.uiver.app/#q=WzAsNixbMSwxLCJiIl0sWzEsMCwiYiJdLFsyLDEsImMiXSxbMiwwLCJjIl0sWzAsMSwiYiJdLFswLDAsImEiXSxbMCwyLCJmIiwyLHsic3R5bGUiOnsiYm9keSI6eyJuYW1lIjoiYmFycmVkIn19fV0sWzEsMywiZiIsMCx7InN0eWxlIjp7ImJvZHkiOnsibmFtZSI6ImJhcnJlZCJ9fX1dLFsxLDAsIiIsMSx7ImxldmVsIjoyLCJzdHlsZSI6eyJoZWFkIjp7Im5hbWUiOiJub25lIn19fV0sWzMsMiwiIiwwLHsibGV2ZWwiOjIsInN0eWxlIjp7ImhlYWQiOnsibmFtZSI6Im5vbmUifX19XSxbNCwwLCIiLDIseyJsZXZlbCI6Miwic3R5bGUiOnsiYm9keSI6eyJuYW1lIjoiYmFycmVkIn0sImhlYWQiOnsibmFtZSI6Im5vbmUifX19XSxbNSwxLCJcXG92ZXJsaW5le3N9IiwwLHsic3R5bGUiOnsiYm9keSI6eyJuYW1lIjoiYmFycmVkIn19fV0sWzUsMCwiXFxwaGlfcyIsMSx7InN0eWxlIjp7ImJvZHkiOnsibmFtZSI6Im5vbmUifSwiaGVhZCI6eyJuYW1lIjoibm9uZSJ9fX1dLFs1LDQsInMiLDJdLFs3LDYsIlxcSWIgX3ZmIiwxLHsic2hvcnRlbiI6eyJzb3VyY2UiOjIwLCJ0YXJnZXQiOjIwfSwic3R5bGUiOnsiYm9keSI6eyJuYW1lIjoiYmFycmVkIn19fV1d
\[\begin{tikzcd}
	a & b & c \\
	b & b & c
	\arrow["{\overline{s}}", "\shortmid"{marking}, from=1-1, to=1-2]
	\arrow["s"', from=1-1, to=2-1]
	\arrow["{\phi_s}"{description}, draw=none, from=1-1, to=2-2]
	\arrow[""{name=0, anchor=center, inner sep=0}, "f", "\shortmid"{marking}, from=1-2, to=1-3]
	\arrow[equals, from=1-2, to=2-2]
	\arrow[equals, from=1-3, to=2-3]
	\arrow["\shortmid"{marking}, equals, from=2-1, to=2-2]
	\arrow[""{name=1, anchor=center, inner sep=0}, "f"', "\shortmid"{marking}, from=2-2, to=2-3]
	\arrow["{\Ib _vf}"{description}, draw=none, from=0, to=1]
\end{tikzcd}\]
Eventually, bicartesian cells in the sense of \ref{defi:cartesian cell} exactly correspond to vertical compositions of these cocartesian and cartesian lifts.
\end{proof}

The goal of this section is to generalize the previous theorem to a $(n,1)$-double category.

\begin{notation}
Let $D_\bullet$ be a $(n,1)$-double category. We will denote by $\pi_k^{-,v},\pi_k^{+,v}:(C_l)_m\to (C_l)_k$ the functors sending a $m$-cell of $C_l$ to its $k$-source and target. We will denote by $\pi_0^{-,v},\pi_0^{+,v}:C_1\to C_0$ the two functors induced by the simplicial map $\{0\}\to [1]$ and $\{1\}\to [1]$. When we write $f:a\proto b$, this means that $f$ is a cell of $C_1$ such that $\pi_0^{-,h}(f)\sim a$ and $\pi_0^{+,h}(f)\sim b$. Finally, we will denote by $\uvar\circ^v_k\uvar$ the $k$-compositions in each of the $n$-categories $C_m$, and $\uvar\circ^h_0\uvar$ the canonical functor: $$D_1\times_{D_0}D_1\sim D_2\xrightarrow{D_{d^1}}D_1$$
\end{notation}

\begin{definition}
\label{defi:accompange}
Let $n\in \Nb\cup\{\omega\}$, and let $D_\bullet$ be a $(n,1)$-double category. We define by induction on $0<k<n$ the notion of \textit{companionable $k$-cell} of $D_0$. A $1$-cell $\alpha$ in $D_0$ is \textit{companionable} if the corresponding vertical $1$-cell of $\tau_{1,1}D_\bullet$ is companionable. A $k$-cell $\alpha$ in $D_0$ is \textit{companionable} if the corresponding $(k-1)$-cell of $\hom_D^v(\pi_0^{-,v}(\alpha),\pi_0^{-,v}(\alpha))_{\bullet^{op}}$ is companionable.

We denote by $(n,1)\Dcatc$ the full subcategory of $(n,1)\Dcat$ whose objects are accompanied $(n,1)$-double categories.
\end{definition}

\begin{remark}
\label{rem:accom closed}
Let $C_\bullet\to D_\bullet$ be a morphism that is an equivalence when evaluated on $[0]$. As companion structures are preserved by any morphism of $(n,1)$-double categories, if $C_\bullet$ is accompanied, so is $D_\bullet$.
\end{remark}

\begin{remark}
Let $n\in \mathbb{N}\cup\{\omega\}$ and let $D_\bullet$ be a $(n,1)$-double category. If $n<\omega$, the $(n,1)$-double category $D_\bullet$ is accompanied if and only if 
\begin{enumerate}
\item $\tau_{1,1}D_\bullet$ is accompanied.
\item for any $x,y\in D_0$, the $(n-1,1)$-double category $\hom^v_D(x,y)_{\bullet^{op}}$ is accompanied.
\end{enumerate}
If $n=\omega$, the $(n,1)$-double category $D_\bullet$ is accompanied if and only if for any $k>0$, $\tau_{k,1}(D_\bullet)$ is accompanied.
\end{remark}

\begin{lemma}
\label{lemma:stability of companion by comp2}Let $D_\bullet$ be a $(n,1)$-double category. Let $\alpha$ be a $k$-cell of $D_0$ with $k>1$, and $f:a\to b$ a $1$-cell of $D$ such that $f$ and $\alpha$ are composable. Then, if $\alpha$ is companionable, so is $\alpha\circ^v_0 f$.
\end{lemma}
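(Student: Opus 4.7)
The plan is to exhibit $\alpha\circ^v_0 f$ as the image of $\alpha$ under a morphism of $(n-1,1)$-double categories induced by whiskering with $f$, and then invoke the fact that morphisms of $(n,1)$-double categories preserve companionability at every dimension.

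Set $x:=\pi_0^{-,v}(\alpha)$ and $y:=\pi_0^{+,v}(\alpha)$; composability of $f$ with $\alpha$ means one endpoint of $f$ agrees with $x$ or $y$, and the two cases are symmetric, so I focus on whiskering at the $0$-source and assume $f:a\to x$. Regarding $f$ as a $1$-cell of each $n$-category $D_{[k]}$ via the degeneracy $D_0\to D_{[k]}$, whiskering by $f$ inside the $n$-category $D_{[k]}$ defines a functor
$$\hom^v_{D_{[k]}}(x,y)\longrightarrow \hom^v_{D_{[k]}}(a,y).$$
Via the pullback defining $\hom^v_D$, this is a morphism $f^\ast_k:\hom^v_D(x,y)_{[k]}\to \hom^v_D(a,y)_{[k]}$, natural in $[k]$. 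Since both sides satisfy the Segal condition, the family $(f^\ast_k)_k$ assembles into a morphism of $(n-1,1)$-double categories
$$f^\ast:\hom^v_D(x,y)_{\bullet^{op}}\longrightarrow \hom^v_D(a,y)_{\bullet^{op}}.$$

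Writing $\widetilde\alpha$ for the $(k-1)$-cell of $\hom^v_D(x,y)_{\bullet^{op}}$ corresponding to $\alpha$, the $[0]$-component of $f^\ast$ is by construction the $0$-composition with $f$ inside $D_0$, so $f^\ast(\widetilde\alpha)=\widetilde{\alpha\circ^v_0 f}$. By hypothesis $\widetilde\alpha$ is companionable. Since morphisms of $(n,1)$-double categories preserve companionability---the companion and its companionship unit and counit, together with the two displayed equivalences of any $1$-cell companion, are diagrams preserved by any functor, and the higher-dimensional case follows by recursion through the $\hom^v$ construction---$f^\ast(\widetilde\alpha)=\widetilde{\alpha\circ^v_0 f}$ is companionable in $\hom^v_D(a,y)_{\bullet^{op}}$. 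By the recursive definition of companionability for a $k$-cell, this is exactly the statement that $\alpha\circ^v_0 f$ is companionable.

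The substantive point, and the main obstacle, is the formal verification by induction on cell dimension that any morphism $F:C_\bullet\to D_\bullet$ of $(n,1)$-double categories preserves companionability. The $1$-dimensional case is immediate since a companion, a companionship unit and counit, and the two exhibited $(1,1)$-equivalences all transport under $F$. The induction step uses that $F$ functorially induces morphisms of $(n-1,1)$-double categories $\hom^v_C(x',y')_{\bullet^{op}}\to \hom^v_D(Fx',Fy')_{\bullet^{op}}$, so that companionability of a $k$-cell, being the companionability of the corresponding $(k-1)$-cell in $\hom^v$, is propagated by the inductive hypothesis.
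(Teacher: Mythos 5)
Your proof is correct and takes essentially the same route as the paper's: the paper also exhibits $\alpha\circ^v_0 f$ as the image of $\alpha$ under a whiskering functor $f_!$ between vertical hom $(n-1,1)$-double categories (whiskering on the target side rather than the source, which is symmetric as you note) and concludes by the fact that morphisms of $(n,1)$-double categories preserve companion structure. The only difference is one of detail: the paper treats both the construction of $f_!$ and the preservation of companionability as immediate, while you spell out the levelwise construction and the inductive argument for preservation.
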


\begin{proof}
The $1$-cell $f$ induces a functor: $$f_!:\hom_D^v(\pi_0^{-,v} \alpha,a)_{\bullet^{op}}\to \hom_D^v(\pi_0^{-,v} \alpha,b)_{\bullet^{op}}$$ that sends $\alpha$ to $\alpha\circ_0 f$. As a morphism of $(n,1)$-double categories preserves companion structure, this concludes the proof.
\end{proof}

\begin{prop}
\label{prop:comp stable by comp} 
Companionable vertical cells are closed under composition.
\end{prop}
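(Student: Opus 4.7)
The plan is to induct on the dimension $k$ of the two companionable cells $\alpha,\beta$. The base case is $k=1$, forcing $i=0$, which is exactly Lemma~\ref{lemma:companion stable by comp}. For the inductive step with $k\geq 2$, I would split on the dimension $i$ at which $\alpha$ and $\beta$ are composed.

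The easier sub-case is $i>0$. Here $\alpha$ and $\beta$ share $0$-source $x:=\pi_0^{-,v}\alpha$ and $0$-target $y:=\pi_0^{+,v}\alpha$, so they correspond to $i{-}1$-composable $(k{-}1)$-cells $\tilde\alpha,\tilde\beta$ of the $(n-1,1)$-double category $\hom^v_D(x,y)_{\bullet^{op}}$, and $(\alpha\circ_i\beta)^{\sim}=\tilde\alpha\circ_{i-1}\tilde\beta$. By definition~\ref{defi:accompange}, companionability of $\alpha$ and $\beta$ in $D_\bullet$ is exactly companionability of $\tilde\alpha$ and $\tilde\beta$ in the hom, so the inductive hypothesis (applied at dimension $k-1$ in the double category $\hom^v_D(x,y)_{\bullet^{op}}$) gives that $\tilde\alpha\circ_{i-1}\tilde\beta$ is companionable, and hence, unfolding the definition once more, $\alpha\circ_i\beta$ is companionable in $D_\bullet$.

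The more interesting sub-case is $i=0$ with $k>1$. Then $0+1=1<k$, so remark~\ref{rem:consequence of associativity of composition} yields the identity
$$\alpha\circ_0\beta\ \sim\ (\alpha\circ_0\pi_1^+\beta)\circ_{k-1}(\pi_1^-\alpha\circ_0\beta).$$
Both $\pi_1^+\beta$ and $\pi_1^-\alpha$ are $1$-cells, so by Lemma~\ref{lemma:stability of companion by comp2} (together with the symmetric left-whiskering version, obtained either by applying the same argument with the functor $f^*:\hom^v_D(a,\pi_0^{+,v}\beta)_{\bullet^{op}}\to\hom^v_D(\pi_0^{-,v}\alpha,\pi_0^{+,v}\beta)_{\bullet^{op}}$ induced by the $1$-cell $f=\pi_1^-\alpha$, or by passing to the $op$ dual), the whiskerings $\alpha\circ_0\pi_1^+\beta$ and $\pi_1^-\alpha\circ_0\beta$ are companionable $k$-cells. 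They are $(k-1)$-composable, and $k-1>0$, so the $i>0$ sub-case already established finishes the argument.

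The only minor subtlety is noting that Lemma~\ref{lemma:stability of companion by comp2} as stated whiskers on one side, while the decomposition above uses whiskers on both sides; this is not a real obstacle since the argument of that lemma (translating along the functor on hom double categories induced by a $1$-cell, which preserves all companion data) is manifestly symmetric in left and right whiskering. Everything else is a clean double induction on $(k,i)$, with the two sub-cases feeding into each other in the correct order within a fixed $k$.
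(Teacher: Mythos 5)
Your argument is correct and is essentially the paper's own proof unpacked: the paper runs the same recursion through $\hom^v_D(x,y)_{\bullet^{op}}$ with the same base inputs (lemma \ref{lemma:companion stable by comp} for composites of companionable $1$-cells and lemma \ref{lemma:stability of companion by comp2}, the latter tacitly together with its left-whiskered twin), except that it packages your explicit appeal to the interchange identity of remark \ref{rem:consequence of associativity of composition} into a citation of remark \ref{remark:other description of cloture by composition} and organizes the induction by composition index rather than by cell dimension. The only point you leave implicit is that closure under composition also concerns composites of companionable cells of unequal dimensions, which reduce to your equal-dimension case by inserting identity cells once one notes (as the paper does at the start of its proof) that units are companionable.
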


\begin{proof}
Remark first that units obviously are companionable. We will then use the remark \ref{remark:other description of cloture by composition} and its notation to demonstrate the result. We will proceed by induction. The case $k=0$ follows from lemmas \ref{lemma:stability of cartesian cell by comp1} and \ref{lemma:stability of companion by comp2}. The recursive step is demonstrated using the fact that given a $(n,1)$-double category $D_\bullet$ and an integer $m>0$, a $m$-cell $\alpha$ in $D_0$ is companionable if and only if the corresponding $(n-1)$-cell of $\hom^v_{D}(\pi_0^{-,v}\alpha,\pi_0^{+,v}\alpha)_{\bullet^{op}}$ is companionable.
\end{proof}

\begin{definition}
Let $ D_\bullet$ be an $(n,1)$-double category, and let $ f:a \proto b$ and $ f':a' \proto b'$ be two objects of $ D_1$. We denote by $ \hom^v_D(f,g)_\bullet$ the $(n-1,1)$-double category whose value on $ k$ is: \[
\hom^v_D(f,g)_k := \coprod_{\phi:[k]\to [1]} \hom_{D_k}( D_{\phi}(f), D_{\phi}(f')).
\]
\end{definition}

\begin{remark}
If $D_\bullet$ is an $(n,1)$-double category and $f:a \proto b$ and $f':a' \proto b'$ are two objects of $D_1$, then $$\hom^v_D(f,f')_0 := \hom_{D_0}(a,b) \coprod \hom_{D_0}(a',b')$$ and $$\hom^v_D(f,g)_1 := \hom_{D_1}(\Ib^h_a,\Ib^h_b) \coprod \hom_{D_1}(f,f') \coprod \hom_{D_1}(\Ib^h_{a'},\Ib^h_{b'}).$$
\end{remark}

\begin{definition}
Let $n\in \Nb\cup\{\omega\}$, and let $D_\bullet$ be a $(n,1)$-double category. We define by induction on $0<k<n$ the notion of \textit{bicartesian $k$-cell} of $D_1$. A $1$-cell $\alpha$ in $D_1$ is \textit{bicartesian} if the corresponding square of $\tau_{1,1}D_\bullet$ is bicartesian. A $k$-cell $\alpha$ in $D_1$ is \textit{bicartesian} if the corresponding $(k-1)$-cell of $\hom^v_D(\pi_0^{-,v}(\alpha),\pi_0^{-,v}(\alpha))_{\bullet^{op}}$ is bicartesian.
\end{definition}

\begin{lemma}
\label{lemma:stability of cartesian cell by comp2} Let $D_\bullet$ be a $(n,1)$-double category. Let $\alpha$ be a $k$-cell of $D_1$ for $k>1$, and $\beta:f\to g$ a $1$-cell of $D_1$ such that $\beta$ and $\alpha$ are $0$-composable. Then, if $\alpha$ is a bicartesian cell, so is $\alpha\circ^v_0 \beta$.
\end{lemma}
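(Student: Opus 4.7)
The plan is to mirror the proof of lemma~\ref{lemma:stability of companion by comp2} \emph{verbatim}, with ``companionable'' replaced by ``bicartesian''. The $1$-cell $\beta : f \to g$ in the $n$-category $D_1$ induces, via the simplicial structure maps $D_\phi : D_1 \to D_k$ for $\phi : [k]\to[1]$, a family of whiskering operations, which I claim assemble into a morphism of $(n-1,1)$-double categories
$$\beta_{!}: \hom^{v}_{D}(\pi^{-,v}_{0}\alpha,\pi^{+,v}_{0}\alpha)_{\bullet^{op}} \;\longrightarrow\; \hom^{v}_{D}(f,\pi^{+,v}_{0}\alpha)_{\bullet^{op}}$$
carrying the $(k-1)$-cell of the source that corresponds to $\alpha$ to the $(k-1)$-cell of the target that corresponds to $\alpha \circ^{v}_{0}\beta$. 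Given such a morphism, bicartesianness of $\alpha\circ^{v}_{0}\beta$ will follow immediately from that of $\alpha$, provided we know that morphisms of $(n-1,1)$-double categories preserve bicartesian cells.

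To establish this last auxiliary fact I would argue by induction on the dimension. The base case of $(1,1)$-double categories is the crucial input: by definition~\ref{defi:cartesian cell}, a bicartesian square is a specific horizontal paste of a companionship unit $\psi_s$, an identity square $\Ib_v f$, and a companionship counit $\phi_t$; and, by proposition~\ref{prop:unicity of companion 1}, the homotopy types of companionship units and counits of a companionable $1$-cell are contractible, so any functor of $(1,1)$-double categories necessarily sends the companion (together with its unit and counit) of a companionable $1$-cell to the companion of its image. The inductive step is then formal: the definition of ``bicartesian $k$-cell of $D_1$'' for $k>1$ only refers to the $\hom^{v}_D$ construction and the truncation $\tau_{1,1}$, both of which are manifestly functorial in $D_\bullet$, so bicartesianness is preserved dimension by dimension.

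The step I expect to require the most care is not the induction itself but the construction of $\beta_!$ and the verification that it is a morphism of $(n-1,1)$-double categories. Concretely one has to check that the whiskering with the images $D_\phi(\beta)$ is compatible with the simplicial face and degeneracy maps of $D_\bullet$; this is routine bookkeeping but touches the same asymmetry between the two coordinates of $\hom^{v}_{D}$ that appears in its defining formula. Once this compatibility is recorded, tracking the image of $\alpha$ through $\beta_!$ produces $\alpha\circ^{v}_{0}\beta$ by construction, and the proof closes by invoking the preservation of bicartesian cells established above.
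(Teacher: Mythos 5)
Your proposal is correct and follows essentially the same route as the paper: the paper's proof is exactly the observation that whiskering with $\beta$ induces a functor of vertical hom $(n-1,1)$-double categories sending (the cell corresponding to) $\alpha$ to $\alpha\circ_0^v\beta$, and then concludes because morphisms of $(n,1)$-double categories preserve bicartesian cells. The only difference is that the paper takes this last preservation fact for granted, whereas you sketch its (correct) inductive justification; note that in the base case contractibility of the space of companionship (co)units is not even needed, since a functor carries a companion structure to a companion structure on the nose.
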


\begin{proof}
The $1$-cell $f$ induces a functor: $$\beta_!:\hom_D^v(\pi_0^{-,v} \alpha,f)_{\bullet^{op}}\to \hom_D^v(\pi_0^{-,v} \alpha,g)_{\bullet^{op}}$$ that sends $\alpha$ to $\alpha\circ_0 \beta$. As a morphism of $(n,1)$-double categories preserves bicartesian cell, this concludes the proof.
\end{proof}

\begin{prop}
\label{prop:cartesian cells are a marking} Let $D_\bullet$ be a $(n,1)$-double category. Bicartesian cells are stable under vertical and horizontal composition and contain horizontal units of companionable cells.
\end{prop}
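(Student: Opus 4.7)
The plan is to imitate the proof of Proposition \ref{prop:comp stable by comp}, using the inductive definition of bicartesian cells together with Lemmas \ref{lemma:stability of cartesian cell by comp1} and \ref{lemma:stability of cartesian cell by comp2}, and reducing all higher-dimensional operations to the analogous operations in the hom double categories $\hom^v_D(f,f')_{\bullet^{op}}$ in dimension one lower.

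First I would check that the horizontal unit $\Ib^h_\alpha$ of a companionable $k$-cell $\alpha$ of $D_0$ is bicartesian. For $k=1$ this is the corresponding statement in Lemma \ref{lemma:stability of cartesian cell by comp1} applied to $\tau_{1,1}D_\bullet$. For $k>1$, the cell $\Ib^h_\alpha$ of $D_1$ corresponds, via the recursive definition, to the horizontal unit of $\alpha$ regarded as a companionable $(k-1)$-cell of a hom double category $\hom^v_D(\pi_0^{-,v}\alpha,\pi_0^{+,v}\alpha)_{\bullet^{op}}$ (which is companionable by Proposition \ref{prop:comp stable by comp}), and the induction hypothesis applies.

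Next I would treat vertical composition, using the criterion of Remark \ref{remark:other description of cloture by composition}. Units are covered by the previous paragraph, since they are horizontal units of units of $D_0$, which are companionable. The vertical $0$-composition of two bicartesian $1$-cells of $D_1$ is Lemma \ref{lemma:stability of cartesian cell by comp1}, while a vertical $0$-whiskering of a higher-dimensional bicartesian cell by a $1$-cell of $D_1$ is Lemma \ref{lemma:stability of cartesian cell by comp2}. For $k$-composition of two $(k+1)$-cells, or $k$-whiskering of a higher cell by a $(k+1)$-cell, with $k\geq 1$, the cells in question may be identified with cells of a common hom double category $\hom^v_D(f,f')_{\bullet^{op}}$ of dimension $(n-1,1)$; the operation there corresponds to a vertical composition or whiskering of bicartesian cells of strictly lower dimension, so the induction hypothesis closes the case. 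Horizontal composition is handled similarly: the base case of two horizontally composable bicartesian $1$-cells of $D_1$ is Lemma \ref{lemma:stability of cartesian cell by comp1}, and for higher $k$-cells the horizontal composition in $D_1$ agrees, via the Segal map $D_{[a,1]}\to D_a\times_{D_0}\cdots\times_{D_0}D_a$, with horizontal composition of bicartesian cells in hom double categories of lower dimension.

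The main obstacle is the bookkeeping needed to match up, at each inductive step, the vertical and horizontal composition operations of $D_\bullet$ with the corresponding operations in the hom double categories $\hom^v_D(f,f')_{\bullet^{op}}$; this is essentially a functoriality check for the Segal maps and for the formation of vertical homs, and once carried out the two inductions run in parallel with the proof of Proposition \ref{prop:comp stable by comp}.
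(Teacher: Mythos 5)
Your proposal is correct and follows essentially the same route as the paper: an induction on the composition dimension via Remark \ref{remark:other description of cloture by composition}, with the base cases supplied by Lemmas \ref{lemma:stability of cartesian cell by comp1} and \ref{lemma:stability of cartesian cell by comp2}, and the recursive step carried out by identifying a higher bicartesian cell of $D_1$ with the corresponding lower-dimensional bicartesian cell of the hom double category $\hom^v_D(\pi_0^{-,v}\alpha,\pi_0^{+,v}\alpha)_\bullet$, in parallel with the proof of Proposition \ref{prop:comp stable by comp}. The extra detail you give on horizontal units and horizontal composition is only a spelled-out version of what the paper dismisses as straightforward.
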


\begin{proof}
The stability by horizontal composition of bicartesian cells and the fact that horizontal and vertical units of companionable cells are bicartesian cells is straightforward.

We will then use the remark \ref{remark:other description of cloture by composition} and its notation to demonstrate the stability by vertical composition. We will proceed by induction on $k$. The case $k=0$ follows from lemmas \ref{lemma:stability of cartesian cell by comp1} and \ref{lemma:stability of cartesian cell by comp2}. The recursive step is demonstrated using the fact that given a $(n,1)$-double category $D_\bullet$ and an integer $m>0$, a $m$-cell $\alpha$ in $D_1$ is a bicartesian cell if and only if the corresponding $(n-1)$-cell of $\hom^v_{D}(\pi_0^{-,v}\alpha,\pi_0^{+,v}\alpha)_\bullet$ is a bicartesian cell.
\end{proof}

\begin{definition}
Let $D_\bullet$ be a $(n,1)$-double category. We denote by $t^cD_0$ the marking on $D_0$ consisting of companionable cells. We denote by $t^cD_{1}$ the marking on $D_1$ consisting of bicartesian cells. Eventually, we denote by $t^cD_n$ the marking on $D_{[n]}$ defined as $$t^cD_{[n]}:= t^cD_1\times_{t^cD_0}t^cD_1\times...\times_{t^cD_0}t^cD_1.$$ As proposition \ref{prop:cartesian cells are a marking} states that bicartesian cells are stable under horizontal composition, this implies that $(D_{[n]},t^cD_{[n]})$ is a simplicial marked $n$-category, which we will denote as $(D,t^cD)_\bullet$.
\end{definition}

\begin{remark}
\label{rem:foncteur preserves accompanied marking} Any functor $f:C_\bullet\to D_\bullet$ verifies  $f(tC_\bullet)\subset tD_\bullet$.
\end{remark}

\begin{definition}
Let $1 < k \leq n$ be two integers. The functor $[\Db_{k-1},\uvar]:\Delta\to \Theta_n$ sending $[n]$ onto $[\Db_{k-1},n]$ induces a morphism $$\intw_k:\ncat\to \cat$$ sending an $n$-category $C$ to the category whose objects are the same and morphisms are $k$-cells.
\end{definition}

\begin{lemma}
\label{lemma:lifting of db} Let $1 \leq k \leq n < \omega$, and let $D_\bullet$ be a $(n,1)$-double category. Any diagram of shape \[ \begin{tikzcd} {\Db_k^\flat} & {(C_1,t^{c}C_1)} \\ {[1]^\sharp \vee \Db_k^\flat} & {(C_0,t^{c}C_0) \times (C_0,t^{c}C_0)} \arrow[from=1-1, to=1-2] \arrow[from=1-1, to=2-1] \arrow[from=1-2, to=2-2] \arrow[dashed, from=2-1, to=1-2] \arrow[from=2-1, to=2-2] \end{tikzcd} \] where the first projection $[1] \vee \Db_n \to C_0$ factors through $\Db_n$, admits a unique lift.
\end{lemma}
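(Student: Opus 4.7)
The plan is to reduce this lifting problem to the one-dimensional case, where it is resolved by Ruit's theorem \ref{theo:jaco}.

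First I would unpack the diagram. A lift is a map $[1]^\sharp \vee \Db_k^\flat \to (D_1, t^c D_1)$ extending the given top map $\alpha : \Db_k^\flat \to (D_1, t^c D_1)$. Since $\alpha$ already determines the object $f_1 := \pi_0^{-,v}(\alpha) \in D_1$ (the $0$-source of $\alpha$ in the $n$-category $D_1$, i.e.\ a horizontal $1$-cell of $D_\bullet$), the extra data amounts to a bicartesian $1$-cell $\beta$ of $D_1$ whose $0$-target equals $f_1$ and whose projection in $(D_0, t^c D_0) \times (D_0, t^c D_0)$ is $(\mathrm{id}, h)$, where $h$ is the companionable $1$-cell of $D_0$ prescribed by the non-trivial factor of the bottom map (the hypothesis ``first projection factors through $\Db_k$'' forces the trivial factor to be an identity).

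Next I would observe that, although $\alpha$ has dimension $k$, the missing datum $\beta$ is only a $1$-cell of $D_1$, and that by the very definition of the markings $t^c D_0$ and $t^c D_1$, a $1$-cell of $D_0$ is companionable iff its image in the $(1,1)$-truncation $\tau_{1,1} D_\bullet$ is, and similarly for bicartesian $1$-cells of $D_1$. Consequently, the existence and uniqueness of $\beta$ descend to an analogous lifting problem in $\tau_{1,1} D_\bullet$: produce a unique bicartesian square with one vertical side the identity, the other equal to $h$, and prescribed bottom horizontal $f_1$. This is exactly the $(co)$cartesian lift furnished by the two-sided fibration property of Ruit's theorem \ref{theo:jaco}, constructed explicitly as the horizontal composition of the horizontal identity $\Ib^v$ of an appropriate horizontal cell with the companionship unit or counit of $h$.

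The main bookkeeping I expect concerns matching the ``first projection trivial'' convention of the lemma with the $A/B$ factor conventions in Ruit's theorem, in order to decide whether the required lift is cartesian or cocartesian. Once this is sorted out, uniqueness follows from the contractibility of companionship units and counits (proposition \ref{prop:unicity of companion 1}), and existence is given by the explicit construction recalled in the proof of theorem \ref{theo:jaco}.
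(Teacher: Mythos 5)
Your argument is correct, and it reaches the same endgame as the paper (Ruit's theorem \ref{theo:jaco} applied to an auxiliary $(1,1)$-double category), but by a different reduction. You unwind the wedge $[1]^\sharp\vee\Db_k^\flat$ as a pushout over $[0]$, so that the unknown datum is a single bicartesian $1$-cell $\beta$ of $D_1$ with prescribed $0$-target $f_1$ and prescribed projections $(\mathrm{id},h)$; since all of this data, and the markings $t^cD_0$, $t^cD_1$ in dimension $\leq 1$, are by definition detected in $\tau_{1,1}D_\bullet$, you land on a generating lifting problem for the span $(\tau_{1,1}D,t^c)_1\to(\tau_{1,1}D,t^c)_0^{\times 2}$ and quote \ref{theo:jaco}. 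The paper instead transports the whole square along the functor $\intw_k$ (objects and $k$-cells), so that the top becomes $[1]^\flat$ and the lifting problem is one for the $(1,1)$-double category $\intw_kD_\bullet$, resolved by \ref{theo:jaco} together with proposition \ref{prop: another characterization}; the advantage of that template is that it also handles the marked-globe companion statement, lemma \ref{lemma:lifting of db2}, where the wedge $[1]^\sharp\vee(\Db_k)_t$ carries a marked whiskered $k$-cell and your flat-wedge simplification would need the extra input that bicartesian cells are stable under whiskering (lemma \ref{lemma:stability of cartesian cell by comp2}). Your route is arguably more direct for the present lemma, since it avoids having to check that lifts genuinely correspond under the passage to $\intw_k$.

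Two small points of hygiene. First, the orientation bookkeeping you flag is real but harmless: since \ref{theo:jaco} asserts a two-sided fibration, both the source-prescribed and target-prescribed generating lifts are available, and the hypothesis that one projection of the wedge is degenerate places you in exactly one of the two generating configurations (compare condition (2) of proposition \ref{prop: another characterization}). Second, uniqueness of the lift should be cited from the unique right lifting property built into the definition of a two-sided fibration (equivalently condition (1) of proposition \ref{prop: another characterization}), not from proposition \ref{prop:unicity of companion 1} as such; the latter is what underlies Ruit's proof but is not itself the statement you need at this step.
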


\begin{proof}
We have a canonical functor $\tau_{1,1}D_\bullet\to \intw_kD_\bullet$. As functors of $(n,1)$-double categories preserve companionable cells and bicartesian cells, this implies that a lift in the diagram of the statement of the lemma corresponds to a lift in the diagram:
\[\begin{tikzcd}
	{[1]^\flat} & {(\intw_kC_1,t^{c}\intw_kC_1)} \\
	{[1]^\sharp\vee[1]^\flat} & {(\intw_kC_0,t^{c}\intw_kC_0)\times (\intw_kC_0,t^{c}\intw_kC_0)}
	\arrow[from=1-1, to=1-2]
	\arrow[from=1-1, to=2-1]
	\arrow[from=1-2, to=2-2]
	\arrow[dashed, from=2-1, to=1-2]
	\arrow[from=2-1, to=2-2]
\end{tikzcd}\]
However, the theorem \ref{theo:jaco} implies that the left-hand morphism is a two-sided fibration, and by proposition \ref{prop: another characterization}, lifts exist and are unique.
\end{proof}

\begin{lemma}
\label{lemma:lifting of db2} Let $1\leq k\leq n<\omega$, and let $D_\bullet$ be a $(n,1)$-double category. Any diagram of shape
% https://q.uiver.app/#q=WzAsNCxbMCwxLCJbMV1^eXFxzaGFycFxcdmVlKFxcRGJfF8tYyJdLFswLDAsIihcXERiX2spX3QiXSxbMSwwLCIoQ18xLHRee2N9Q18xKSJdLFsxLDEsIihDXzAsdF57Y31DXzApXFx0aW1lcyAoQ18wLHRee2N9Q18wKSJdLFsxLDJdLFsxLDBdLFswLDNdLFsyLDNdLFswLDIsIiIsMSx7InN0eWxlIjp7ImJvZHkiOnsibmFtZSI6ImRhc2hlZCJ9fX1dXQ==
\[\begin{tikzcd}
	{(\Db_k)_t} & {(C_1,t^{c}C_1)} \\
	{[1]^\sharp\vee(\Db_k)_t} & {(C_0,t^{c}C_0)\times (C_0,t^{c}C_0)}
	\arrow[from=1-1, to=1-2]
	\arrow[from=1-1, to=2-1]
	\arrow[from=1-2, to=2-2]
	\arrow[dashed, from=2-1, to=1-2]
	\arrow[from=2-1, to=2-2]
\end{tikzcd}\]
where the first projection $[1]\vee\Db_n\to C_0$ factors through $\Db_n$, admits a unique lift.
\end{lemma}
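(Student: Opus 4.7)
The plan is to follow the argument of lemma \ref{lemma:lifting of db} almost verbatim, the only extra ingredient being to track the marking through the interior-functor reduction. Concretely, the canonical comparison $\tau_{1,1} D_\bullet \to \intw_k D_\bullet$ is a morphism of $(1,1)$-double categories; by remark \ref{rem:foncteur preserves accompanied marking} it preserves companionable cells and bicartesian cells, so it induces a morphism $(D, t^cD)_\bullet \to (\intw_k D, t^c \intw_k D)_\bullet$ of marked simplicial objects.

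Under this reduction, a morphism $(\Db_k)_t \to (C_1, t^c C_1)$ corresponds to a morphism $[1]^\sharp \to (\intw_k C_1, t^c \intw_k C_1)$, since both encode a bicartesian $k$-cell of $C_1$; indeed, unfolding the recursive definition \ref{defi:accompange} of bicartesian cells, this is eventually a statement about bicartesianness in $\tau_{1,1}$ of the appropriate iterated hom, which is precisely the $(1,1)$-bicartesianness detected by $\intw_k$. Similarly, $[1]^\sharp \vee (\Db_k)_t$ maps to $[1]^\sharp \vee [1]^\sharp$ and the factorization hypothesis on the first projection is preserved. A unique lift in the diagram of the statement is therefore equivalent to a unique lift in
\[
\begin{tikzcd}
	{[1]^\sharp} & {(\intw_k C_1, t^c\intw_k C_1)} \\
	{[1]^\sharp \vee [1]^\sharp} & {(\intw_k C_0, t^c\intw_k C_0) \times (\intw_k C_0, t^c\intw_k C_0).}
	\arrow[from=1-1, to=1-2]
	\arrow[from=1-1, to=2-1]
	\arrow[from=1-2, to=2-2]
	\arrow[dashed, from=2-1, to=1-2]
	\arrow[from=2-1, to=2-2]
\end{tikzcd}
\]

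Theorem \ref{theo:jaco} applied to $\intw_k D_\bullet$ exhibits the right-hand vertical morphism as a two-sided fibration, and the desired unique lift then follows from condition (2) of proposition \ref{prop: another characterization}, applied in the subcase involving $[1]^\sharp \vee (\Db_n)_t$ with $n=1$ (so that $(\Db_1)_t = [1]^\sharp$). The main technical point, as in the unmarked analogue \ref{lemma:lifting of db}, is verifying that $\intw_k$ transports the inductively defined marking correctly; this is a direct induction on $k$ built on the compatibility of $\intw_k$ with the hom construction $\hom^v_D(-,-)_{\bullet}$, and is otherwise routine bookkeeping.
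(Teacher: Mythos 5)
Your reduction hinges on the claim that a morphism $(\Db_k)_t\to(C_1,t^cC_1)$ is the same datum as a marked morphism $[1]^\sharp\to(\intw_kC_1,t^c\intw_kC_1)$, and that the bottom $(\Db_k)_t$-factor likewise becomes a marked morphism of $(\intw_kC_0,t^c\intw_kC_0)$. For theorem \ref{theo:jaco} to apply to your reduced span, $t^c\intw_kC_\bullet$ must be the \emph{canonical} marking of the $(1,1)$-double category $\intw_kC_\bullet$: companionable vertical $1$-cells and bicartesian squares in the sense of definition \ref{defi:cartesian cell}, computed in $\intw_kC_\bullet$. But the marking $t^cC_\bullet$ restricted to $k$-cells is the recursively defined one (definition \ref{defi:accompange} and its bicartesian analogue), whose witnesses live in the vertical hom double categories: the relevant companion of a $k$-cell is a cell of $C_1$ lying over horizontal identities, and the bicartesian decomposition uses $\circ_{k-1}$. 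By contrast, in $\intw_kC_\bullet$ a companion of a $k$-cell of $C_0$ (viewed as a morphism of $\intw_kC_0$) is an \emph{object} of $C_1$, and bicartesianness is a $\circ_0$-decomposition. These are different conditions; no comparison between them is established in the paper, and it is certainly not the ``routine bookkeeping'' you defer — it would be a substantive lemma, needed in both directions to get a correspondence of lifting problems. As written, theorem \ref{theo:jaco} does not apply to the span you exhibit (its marking is not the intrinsic one of $\intw_kC_\bullet$), so the central step of your proof has a genuine gap. (The case $k=1$ is fine, since there the two notions tautologically agree via $\tau_{1,1}$.)

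The paper's proof of this lemma avoids the issue rather than resolving it: for $k>1$ it does not try to match markings at all, but passes to the wide subcategory $\intw'_kD_\bullet\subset\intw_kD_\bullet$ whose morphisms are the marked $k$-cells, so that the constraint coming from the marked top cell of $(\Db_k)_t$ (and from the bottom $(\Db_k)_t$-factor) is encoded by membership in $\intw'_k$, while the reduced source stays flat, $[1]^\flat\to[1]^\sharp\vee[1]^\flat$. Theorem \ref{theo:jaco} is then applied to the $(1,1)$-double category $\intw'_kC_\bullet$ with its own canonical marking, and only the $[1]^\sharp$-factor — whose image comes from $\tau_{1,1}C_\bullet$, hence is marked because functors of $(1,1)$-double categories preserve companion and bicartesian structure — needs to be marked. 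To salvage your route you would have to prove that the recursive marking on $k$-cells agrees with (or at least interacts correctly with) the intrinsic marking of $\intw_kC_\bullet$; without that, you should follow the paper and replace $\intw_k$ by $\intw'_k$, keeping the interval encoding the $k$-globe flat.
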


\begin{proof}
Suppose first that $k=1$. In this case, up to replacing $D_\bullet$ by $\tau_{1,1}D_\bullet$, we can assume that $D_\bullet$ is a $(1,1)$-double category and then apply the theorem \ref{theo:jaco}. 

Suppose now that $k>1$. We denote by $\intw'_kD_n$ the subcategory of $\intw_kD_n$ whose objects are the same and whose morphisms are $n$-cells of $D_n$ that belong to $t^{c}D_n$. Note in particular that the canonical functor $\tau_{1,1}D_\bullet\to \intw_kD_\bullet$ factors through $\intw'_kD_n$. As functors of $(n,1)$-double categories preserve comparable cells and bicartesian cells, this implies that a lift in the diagram of the statement of the lemma corresponds to a lift in the diagram:
\[\begin{tikzcd}
 {[1]^\flat} & {(\intw'_kC_1,t^{c}\intw'_kC_1)} \\
 {[1]^\sharp\vee[1]^\flat} & {(\intw'_kC_0,t^{c}\intw'_kC_0)\times (\intw'_kC_0,t^{c}\intw'_kC_0)}
 \arrow[from=1-1, to=1-2]
 \arrow[from=1-1, to=2-1]
 \arrow[from=1-2, to=2-2]
 \arrow[dashed, from=2-1, to=1-2]
 \arrow[from=2-1, to=2-2]
\end{tikzcd}\]
However, the theorem \ref{theo:jaco} implies that the left-hand morphism is a two-sided fibration, and by proposition \ref{prop: another characterization}, lifts exist and are unique.
\end{proof}

\begin{lemma}
\label{lemma: accomoagnated implies marjed} Let $ n \in \mathbb{N} \cup \{\omega\}$ and let $ D_\bullet$ be an $(n,1)$-double category. The morphism $$(D, t^cD)_1 \to (D, t^cD)_0 \times (D, t^cD)_0$$ is a two-sided fibration.
\end{lemma}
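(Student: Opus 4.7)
The plan is to apply Proposition \ref{prop: another characterization}, which reduces the assertion to three lifting conditions for the span $(D, t^cD)_1 \to (D, t^cD)_0 \times (D, t^cD)_0$. Conditions (1) and (2) in one orientation---namely against the maps $\{0\} \to [1]^\sharp$, $\triangledown\colon \Db_k^\flat \to [1]^\sharp \vee \Db_k^\flat$, and $\triangledown\colon (\Db_k)_t \to [1]^\sharp \vee (\Db_k)_t$ in which the first projection of the right-hand side factors through $[0]$---are exactly the content of Lemmas \ref{lemma:lifting of db} and \ref{lemma:lifting of db2}, the degenerate case $k=0$ subsuming condition (1).

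For the symmetric orientation (with $[1]^\sharp$ appearing on the right and constant on the target factor), I would prove the direct analogues of those two lemmas by the same strategy: reduce via the interior functor $\intw_k$ to a lifting problem in the truncation $\tau_{1,1}D_\bullet$, and invoke Ruit's Theorem \ref{theo:jaco}, whose output is a \emph{two-sided} (not merely left cartesian) fibration and hence supplies the dual orientation of lifts automatically. One could alternatively derive this symmetric orientation from the original one by exploiting a duality on $(D,t^cD)_\bullet$ that swaps $\pi_0^{-,v}$ and $\pi_0^{+,v}$, under which the bicartesian marking is invariant by construction.

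Condition (3) proceeds by induction on $n$. The base case $n=1$ is Theorem \ref{theo:jaco}. For the step, fix $\alpha, \beta \in D_1$ corresponding to horizontal cells $f \colon a \proto b$ and $f' \colon a' \proto b'$. By unwinding the recursive definition of bicartesian and companionable $k$-cells, the hom-span
\[ \hom_{(D,t^cD)_1}(\alpha,\beta) \longrightarrow \hom_{(D,t^cD)_0}(a,a') \times \hom_{(D,t^cD)_0}(b,b') \]
is isomorphic, as a marked span, to the level-$1$ hom-span of the auxiliary $(n{-}1,1)$-double category $\hom^v_D(f, f')_{\bullet^{op}}$ equipped with its own canonical $t^c$-marking. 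The inductive hypothesis, applied to this smaller double category, supplies the desired two-sided fibration structure. The case $n=\omega$ follows by writing $D_\bullet$ as the limit of its truncations $\tau_{k,1}D_\bullet$ and using stability of two-sided fibrations under limits.

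The main obstacle is the bookkeeping in the inductive step: one must verify that a $(k{-}1)$-cell of $\hom^v_D(f, f')_{\bullet^{op}}$ is bicartesian for the $t^c$-marking on that $(n{-}1,1)$-double category precisely when the corresponding $k$-cell of $D_1$ is bicartesian for $t^cD_1$. This is tautological given the recursive definitions but requires carefully unwinding the nested suspension and hom conventions, and in particular justifying that the two recursive constructions of the marking commute. A secondary technicality, already mentioned, is to formulate and prove cleanly the symmetric versions of Lemmas \ref{lemma:lifting of db} and \ref{lemma:lifting of db2}.
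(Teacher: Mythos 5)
Your proposal is correct and follows essentially the same route as the paper: it verifies the three conditions of Proposition \ref{prop: another characterization}, using Lemmas \ref{lemma:lifting of db} and \ref{lemma:lifting of db2} together with their dual versions for condition (2), and an induction on $n$ for condition (3) that identifies the hom-span with (a pullback of components of) the level-$1$ span of $(\hom^v_D(f,f'),t^c\hom^v_D(f,f'))_{\bullet^{op}}$, everything ultimately resting on Theorem \ref{theo:jaco}. The only cosmetic deviations are that the paper obtains condition (1) by applying Theorem \ref{theo:jaco} to $\tau_{1,1}D_\bullet$ rather than from a ``$k=0$'' case of the lemmas (which are stated only for $k\geq 1$), and it treats $n=\omega$ by checking locality against each generating morphism of $\IF$ after a finite truncation instead of your limit-of-truncations argument, which also works thanks to the stability of two-sided fibrations under limits.
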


\begin{proof}
We will proceed by induction on $n$. The case $n=1$ is a consequence of theorem \ref{theo:jaco}. Suppose then the result is true at the stage $n-1<\omega$, and let $D_\bullet$ be a $(n,1)$-double category. The theorem \ref{theo:jaco} applied to $\tau_{1,1}D_\bullet$ implies that the span of the statement then verifies the condition $(1)$ of proposition \ref{prop: another characterization}. The condition $(2)$ is implied by lemmas \ref{lemma:lifting of db} and \ref{lemma:lifting of db2} and their dual versions. Eventually, let $f:a\proto b$ and $f':a'\proto b'$ be two objects of $D_1$. The induction hypothesis applied to the $(n-1,1)$-double categories $\hom^v_{D}(f,g)_{\bullet^{op}}$ and the cartesian square
% https://q.uiver.app/#q=WzAsNCxbMCwxLCJcXGhvbV97KEQsdF5jRClfMH0oYSxhJylcXHRpbWVzIFxcaG9tX3soRCx0XmNEKV8wfShiLGInKSJdLFswLDAsIlxcaG9tX3soRCx0XmNEKV8xfShmLGcpIl0sWzEsMCwiKFxcaG9tXnZfe0R9KGYsZyksdF5jXFxob21edl97RH0oZixnKSlfMSJdLFsxLDEsIihcXGhvbV52X3tEfShmLGcpLHReY1xcaG9tXnZfe0R9KGYsZykpXzBcXHRpbWVzIChcXGhvbV52X3tEfShmLGcpLHReY1xcaG9tXnZfe0R9KGYsZykpXzAiXSxbMSwwXSxbMCwzXSxbMSwyXSxbMiwzXV0=
\[\begin{tikzcd}
	{\hom_{(D,t^cD)_1}(f,g)} & {(\hom^v_{D}(f,g),t^c\hom^v_{D}(f,g))_1} \\
	{\hom_{(D,t^cD)_0}(a,a')\times \hom_{(D,t^cD)_0}(b,b')} & {(\hom^v_{D}(f,g),t^c\hom^v_{D}(f,g))_0\times (\hom^v_{D}(f,g),t^c\hom^v_{D}(f,g))_0}
	\arrow[from=1-1, to=1-2]
	\arrow[from=1-1, to=2-1]
	\arrow[from=1-2, to=2-2]
	\arrow[from=2-1, to=2-2]
\end{tikzcd}\]
implies that 
$$\hom_{(D,t^cD)_1}(f,g)\to\hom_{(D,t^cD)_0}(b,b')\times \hom_{(D,t^cD)_0}(a,a')$$
is a two-sided fibration.
The span of the statement then verifies the condition $(3)$ of proposition \ref{prop: another characterization}, which implies that it is a two-sided fibration.

Suppose now that $n=\omega$. Let $i$ be a morphism of $\IF_{ (D,t^cD)_0, (D,t^cD)_0}$. By construction, the domain and the codomain of $i$ are $k$-categories for some $k$. The span $$(D,t^cD)_1\to (D,t^cD)_0\times (D,t^cD)_0$$ is local with respect to $i$ if and only if $$\tau_{k,1}(D,t^cD)_1\to \tau_{k,1}(D,t^cD)_0\times \tau_{k,1}(D,t^cD)_0$$ is local with respect to $i$, which is true by the induction hypothesis.
\end{proof}

We then have the following corollary of Ruit theorem:

\begin{theorem}
\label{theo:generaljaco} Let $n\in \Nb\cup\{\omega\}$ and let $D_\bullet$ be a $(n,1)$-double category. $$(D_1,t^{c}D_1)\to(D_0,t^{c}D_0)\times (D_0,t^{c}D_0)$$ is a two-sided fibration. Furthermore, if $D_\bullet$ is a marked $(n,1)$-double category such that $D_0:=D_0^{\sharp}$ and $$D_1\to D_0\times D_0$$ is a two-sided fibration, then $D^\natural$ is accompanied.
\end{theorem}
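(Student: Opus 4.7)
The plan is to dispatch the first assertion by directly citing Lemma~\ref{lemma: accomoagnated implies marjed}, which establishes exactly that the canonical map $(D,t^cD)_1 \to (D,t^cD)_0 \times (D,t^cD)_0$ is a two-sided fibration. The converse direction requires genuine work, and I would proceed by induction on $n \in \Nb \cup \{\omega\}$, with $n=1$ being the essential base case.

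For the base case, fix a vertical $1$-cell $f: a \to b$ in $D_0$ (marked by hypothesis $D_0 = D_0^\sharp$). The strategy is to construct a companion $(\bar{f}, \psi_f, \phi_f)$ purely from the fibration property. Applying unique lifting against the biinitial morphism $\{0\} \to [1]^\sharp$ in $\I_{D_0^\sharp, D_0^\sharp}$ whose map $[1]^\sharp \to D_0^\sharp \times D_0^\sharp$ is $(\mathrm{cst}_a, f)$, starting from $\Ib^h_a \in D_1$, yields a $1$-cell $\psi_f: \Ib^h_a \to \bar{f}$ of $D_1$ with vertical sides $(\Ib^v_a, f)$. Dually, the $\F$-lift applied to $(f, \mathrm{cst}_b)$ ending at $\Ib^h_b$ produces $\phi_f: \bar{f}' \to \Ib^h_b$ with sides $(f, \Ib^v_b)$.

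The verification of the companion axioms reduces to uniqueness of lifts. The horizontal composition $\psi_f \circ^h \phi_f$ in $D_2 \to D_1$ is a $1$-cell of $D_1$ from $\bar{f}'$ to $\bar{f}$ whose image in $D_0 \times D_0$ is the identity $(\Ib^v_a, \Ib^v_b)$; uniqueness of the trivial $\I$-lift of an identity forces this composition to be $\Ib^v_{\bar{f}'}$, so $\bar{f} = \bar{f}'$ and the first axiom $\psi_f \circ^h \phi_f = \Ib^v_{\bar{f}}$ holds. The second axiom $\phi_f \circ^v \psi_f = \Ib^h_f$ requires more care, since both cells lie over $(f, f)$, which is in neither $\I$ nor $\F$-direction; the argument uses the factorization $(f, f) = (\mathrm{id}_a, f) \circ^v (f, \mathrm{id}_b)$ in $D_0 \times D_0$ and the fact that $\psi_f$ and $\phi_f$ are the unique $\I$- and $\F$-lifts of these factors (with common intermediate horizontal cell $\bar{f}$), while $\Ib^h_f$ coming from the simplicial degeneracy is visibly also such a composite lift.

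For the inductive step, assume the result for $(n-1, 1)$. By Proposition~\ref{prop:cartesian fibration between arrow} applied to $D_1 \to D_0 \times D_0$ at the pair of objects $\Ib^h_x, \Ib^h_y \in D_1$, the induced map $\hom_{D_1}(\Ib^h_x, \Ib^h_y) \to \hom_{D_0}(x,y) \times \hom_{D_0}(x,y)$ is a two-sided fibration, which under the identification of hom-categories is precisely $\hom^v_D(x,y)_1 \to \hom^v_D(x,y)_0 \times \hom^v_D(x,y)_0$. Since $D_0 = D_0^\sharp$ makes $\hom_{D_0}(x,y)$ sharply marked, the inductive hypothesis yields that $\hom^v_D(x,y)^\natural_{\bullet^{op}}$ is accompanied, which combined with the base case applied to $\tau_{1,1}D_\bullet$ gives accompaniment of $D^\natural$. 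The case $n = \omega$ follows by reducing to the finite truncations $\tau_{k,1}D_\bullet$, each of which inherits the fibration property. The hard part will be the second axiom in the base case, since it demands pinning down $\phi_f \circ^v \psi_f$ over a $1$-cell that does not belong to $\I \cup \F$.
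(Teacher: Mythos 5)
The first assertion and your overall skeleton are fine, and your verification of the first triangle identity does work once you make explicit that $\phi_f\circ^h\psi_f$ is a \emph{marked} $1$-cell of $D_1$ (both factors are images of $[1]^\sharp$, and the Segal equivalence $D_2\sim D_1\times_{D_0}D_1$ together with the face map preserves markings), so that it and $\Ib^v_{\overline{f}'}$ solve the same $\I$-lifting problem over the constant map $(\cst_a,\cst_b)$ and must agree; this is the same point the paper makes via the trivially marked fibres of Proposition \ref{prop:left fib over flat}. The genuine gap is your second axiom. You claim that $\Ib^h_f$ is ``visibly also such a composite lift'', i.e.\ that it factors vertically as a marked square over $(\Ib^v_a,f)$ with top $\Ib^h_a$ followed by a marked square over $(f,\Ib^v_b)$ with bottom $\Ib^h_b$. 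This is not visible: in a general $(1,1)$-double category the horizontal unit on $f$ admits no such factorization — its existence is essentially equivalent to $f$ having a companion, which is exactly what you are trying to prove. The missing input is that a two-sided fibration lifts uniquely not only against the generators of $\I$ and $\F$ but against all biinitial maps, in particular against the endpoint-preserving map $\triangledown\colon(\Db_1)_t\to[1]^\sharp\vee[1]^\sharp$ over $(\cst_a,f)\vee(f,\cst_b)$ (Lemma \ref{lemma: a partial biinitial}, condition (2) of Proposition \ref{prop: another characterization}). This is precisely how the paper argues: it applies this $\triangledown$-lift to $\Ib^h_f$, obtaining marked squares $\psi,\phi$ with $\phi\circ^v\psi\sim\Ib^h_f$ \emph{by construction}, and only the other identity then needs the fibre argument. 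With that lift in hand, your uniqueness argument identifies its two pieces with your $\psi_f$ and $\phi_f$, so the proof can be repaired, but as written the second axiom is unproven.

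A secondary point: in the inductive step you apply the base case to $\tau_{1,1}D_\bullet$, which presupposes that truncation sends $D_1\to D_0\times D_0$ to a two-sided fibration; this is not established in the paper and would need a separate argument. The paper sidesteps it by running the lifting argument for $1$-cells directly in $D_\bullet$ for arbitrary $n$ (the resulting squares and identities then witness companionability in $\tau_{1,1}D_\bullet$ by definition), and handles higher cells through Proposition \ref{prop:cartesian fibration between arrow} and the recursive definition of companionability, exactly as you do.
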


\begin{proof}
The first assertion is lemma \ref{lemma: accomoagnated implies marjed}. It remains to prove the second assertion, and to this end, let $D_\bullet$ be a marked $(n,1)$-double category fulfilling the desired condition. We will show that any $1$-cell of $D_0$ admits a companion. By the proposition \ref{prop:cartesian fibration between arrow} and by the definition of companionable $n$-cell, this will imply the result by an easy induction.

Let $f$ be a $1$-cell of $D_0$. By proposition \ref{prop: another characterization}, there exists an unique lift $l$ appearing in the following diagram:
\[\begin{tikzcd}
	{[1]^\sharp} && {D_1} \\
	{[1]^\sharp\vee[1]^\sharp} && {D_0^\sharp\times D_0^\sharp}
	\arrow["{\Ib_f}", from=1-1, to=1-3]
	\arrow[from=1-1, to=2-1]
	\arrow[from=1-3, to=2-3]
	\arrow["l"', dashed, from=2-1, to=1-3]
	\arrow["{(\pi_0f,f)\vee(f,\pi_0)}"', from=2-1, to=2-3]
\end{tikzcd}\]
This lift $l$ then corresponds to the data of
\[\begin{tikzcd}
	a & a \\
	a & b & a & a \\
	b & b & b & b
	\arrow["\shortmid"{marking}, equals, from=1-1, to=1-2]
	\arrow[equals, from=1-1, to=2-1]
	\arrow["\psi"{description}, draw=none, from=1-1, to=2-2]
	\arrow["f", from=1-2, to=2-2]
	\arrow["{\overline{f}}"', "\shortmid"{marking}, from=2-1, to=2-2]
	\arrow["f", from=2-1, to=3-1]
	\arrow["\phi"{description}, draw=none, from=2-1, to=3-2]
	\arrow[""{name=0, anchor=center, inner sep=0}, equals, from=2-2, to=3-2]
	\arrow["\shortmid"{marking}, equals, from=2-3, to=2-4]
	\arrow[""{name=1, anchor=center, inner sep=0}, "f", from=2-3, to=3-3]
	\arrow["{\Ib^h_f}"{description}, draw=none, from=2-3, to=3-4]
	\arrow["f", from=2-4, to=3-4]
	\arrow["\shortmid"{marking}, equals, from=3-1, to=3-2]
	\arrow["\shortmid"{marking}, equals, from=3-3, to=3-4]
	\arrow["\sim"{description}, draw=none, from=0, to=1]
\end{tikzcd}\]
Furthermore, $\phi \#_1 \psi$ is a marked $1$-cell of $D_1$ which lies in the fiber of $D_1$ over $\{a\}\times \{b\}$. By proposition \ref{prop:left fib over flat} $1$-cell is then an equivalence, and we then have
\[\begin{tikzcd}
	a & a & b & a & b \\
	a & b & b & a & b
	\arrow["\shortmid"{marking}, equals, from=1-1, to=1-2]
	\arrow[equals, from=1-1, to=2-1]
	\arrow["\psi"{description}, draw=none, from=1-1, to=2-2]
	\arrow["{\overline{f}}"', "\shortmid"{marking}, from=1-2, to=1-3]
	\arrow["f", from=1-2, to=2-2]
	\arrow["\phi"{description}, draw=none, from=1-2, to=2-3]
	\arrow[""{name=0, anchor=center, inner sep=0}, equals, from=1-3, to=2-3]
	\arrow["{\overline{f}}", "\shortmid"{marking}, from=1-4, to=1-5]
	\arrow[""{name=1, anchor=center, inner sep=0}, equals, from=1-4, to=2-4]
	\arrow["{\Ib^v_{\overline{f}}}"{description}, draw=none, from=1-4, to=2-5]
	\arrow[equals, from=1-5, to=2-5]
	\arrow["{\overline{f}}"', "\shortmid"{marking}, from=2-1, to=2-2]
	\arrow["\shortmid"{marking}, equals, from=2-2, to=2-3]
	\arrow["{\overline{f}}"', "\shortmid"{marking}, from=2-4, to=2-5]
	\arrow["\sim"{description}, draw=none, from=0, to=1]
\end{tikzcd}\]
\end{proof}

We directly obtain the following corollary:

\begin{cor}
\label{cor:marking and accompanied} The functor $$(\uvar)^\natural:(\omega,1)\Dcatm\to (\omega,1)\Dcat$$ induces an equivalence: $$(\uvar)^\natural:(\omega,1)\Dcatmc\sim (\omega,1)\Dcatc$$
\end{cor}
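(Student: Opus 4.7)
The plan is to construct an explicit inverse to $(\uvar)^\natural$ using the canonical companion marking $t^c$ from the preceding subsection. Define
$$F: (\omega,1)\Dcatc \to (\omega,1)\Dcatm, \qquad F(D_\bullet) := (D, t^cD)_\bullet.$$
The first task is to verify $F$ lands in $(\omega,1)\Dcatmc$. Since $D$ is accompanied, every cell of $D_0$ is companionable, so $(t^cD)_0 = (D_0)_{>0}$, which gives $(D,t^cD)_0 = D_0^\sharp$. Then Theorem \ref{theo:generaljaco} (first assertion) says exactly that $(D,t^cD)_1 \to (D,t^cD)_0 \times (D,t^cD)_0$ is a two-sided fibration, so $F(D_\bullet)$ is marked accompanied. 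That $F$ is functorial follows from Remark \ref{rem:foncteur preserves accompanied marking}: any map of $(\omega,1)$-double categories preserves companionable cells and bicartesian cells, so sends the canonical marking to the canonical marking.

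Next, I would check that $F$ and $(\uvar)^\natural$ are mutually inverse. The composite $(\uvar)^\natural\circ F$ is the identity by construction, since forgetting the marking recovers the underlying $D_\bullet$. For the other composite, let $D_\bullet \in (\omega,1)\Dcatmc$; by Theorem \ref{theo:generaljaco} (second assertion), $D^\natural_\bullet$ is accompanied, so $F(D^\natural_\bullet)$ is defined and has the same underlying $(\omega,1)$-double category as $D_\bullet$. I must therefore check that the marking on $D_0$ and $D_1$ agrees with the canonical one.

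For $D_0$: the hypothesis $D_0 = D_0^\sharp$ forces $tD_0 = (D_0^\natural)_{>0}$, and since every cell of $D_0^\natural$ is companionable (by accompaniedness of $D^\natural$), we get $t^cD_0^\natural = (D_0^\natural)_{>0} = tD_0$. For $D_1$: the inclusion $t^cD_1^\natural \subseteq tD_1$ is obtained by observing that companions, companionship units and counits are constructed in the proof of Theorem \ref{theo:generaljaco} as \emph{unique lifts} against the two-sided fibration $D_1 \to D_0^\sharp\times D_0^\sharp$, and such lifts automatically land in $tD_1$; the explicit form of bicartesian cells in Definition \ref{defi:cartesian cell} (together with its higher-dimensional inductive version) then shows they sit in $tD_1$ since $tD_1$ is closed under composition. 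The reverse inclusion $tD_1 \subseteq t^cD_1^\natural$ is the main obstacle, and I would prove it by induction on cell dimension: every marked cell of $D_1$ sits over two cells of $D_0^\sharp$, which are companionable; using the uniqueness of lifts in the two-sided fibration $D_1 \to D_0^\sharp \times D_0^\sharp$, one decomposes any marked cell as a composite of a cartesian lift against the target, a horizontal identity on the image in $D_0$, and a cocartesian lift against the source, which is precisely the bicartesian form. The higher-dimensional case then reduces, via the hom-description of Lemma \ref{lemma:stability of cartesian cell by comp2} and its analogue for companionable cells, to the already-handled lower-dimensional case applied to the two-sided fibration on mapping objects produced by Proposition \ref{prop:cartesian fibration between arrow}.

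The main obstacle is thus this last step, namely establishing $tD_1 \subseteq t^cD_1^\natural$ via the uniqueness of cartesian/cocartesian lifts; the remainder of the proof is an immediate assembly of Theorem \ref{theo:generaljaco} and Remark \ref{rem:foncteur preserves accompanied marking}.
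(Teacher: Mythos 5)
Your proposal is correct and follows essentially the route the paper intends: the corollary is deduced from Theorem \ref{theo:generaljaco} by using the companion marking $t^c$ as the inverse functor, exactly as you do. The only point you flag as an obstacle, namely that the given marking of a marked accompanied object must coincide with the companion marking ($tD_1=t^cD_1^\natural$), is precisely what the paper treats as immediate, and your sketch of it is sound — it amounts to combining the uniqueness of lifts in Proposition \ref{prop: another characterization} with the decomposition of marked cells already provided by Lemma \ref{lemma:marking on left cartesian fibration} (together with Proposition \ref{prop:left fib over flat} for cells in fibers), plus the hom-wise induction via Proposition \ref{prop:cartesian fibration between arrow}.
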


\subsection{Highly directed  Čech nerve}

\begin{definition}
Let $n,m$ be two elements of $\mathbb{N}\cup\{\omega\}$. A \textit{$(n,m)$-filtration} is a sequence 
$$A_0\to A_1\to... \to A_n$$
where $A_i$ is an $(n+i)$-category. A $(n,m)$-filtration is \textit{surjective} if for any $i<n$, $A_i\to A_{i+1}$ is $i$-surjective.

We denote by $\Filt_{n,m}$ the subcategory of $\Fun([k],\npmcat)$ whose objects correspond to $(n,m)$-filtrations and $\Filts_{n,m}$ the full subcategory of surjective $(n,m)$-filtrations.
\end{definition}

\begin{remark}
For any $k$, $\Filt_{\omega,k}$ is simply the functor category $\Fun([k],\ocat)$.
\end{remark}

\begin{construction}
\label{cons:cechnerve}
Given a $(n,m)$-filtration $A_\bullet$, its \textit{Čech nerve}, denoted by $\C_{\bullet}(A)$, is the $(n,m)$-double category whose value on $a$  is defined by induction on $|a|$ by setting  $\C_{[0]}(A):= A_0$ and $\C_a(A)$ for $|a|>0$ as the pullback: 
% https://q.uiver.app/#q=WzAsNCxbMSwwLCJBXnthfSJdLFsxLDEsIkFee1xcdGF1X3t8YXwtMX0oYSl9Il0sWzAsMSwiXFxDX3tcXHRhdV97fGF8LTF9KGEpfShBKSJdLFswLDAsIlxcQ19hKEEpIl0sWzAsMV0sWzIsMV0sWzMsMl0sWzMsMF0sWzMsMSwiIiwxLHsic3R5bGUiOnsibmFtZSI6ImNvcm5lciJ9fV1d
\[\begin{tikzcd}
	{\C_a(A)} & {A^{a}} \\
	{\C_{\tau_{|a|-1}(a)}(A)} & {A^{\tau_{|a|-1}(a)}}
	\arrow[from=1-1, to=1-2]
	\arrow[from=1-1, to=2-1]
	\arrow["\lrcorner"{anchor=center, pos=0.125}, draw=none, from=1-1, to=2-2]
	\arrow[from=1-2, to=2-2]
	\arrow[from=2-1, to=2-2]
\end{tikzcd}\]

Given a $(n,m)$-double category $D_{\bullet}$, its realization, denoted by $|D|_\bullet$, is the $(n,m)$-filtration defined by the formula
$$|D|_k := \coend_{\Theta_k} D_a \otimes a.$$
In particular, we have $|D|_0 := D_0$.

These two functors fit into an adjunction
\begin{equation}
\label{cons:adj betwn highly cech}
\begin{tikzcd}
	{|\uvar|:(n,m)\Dcat} & {\Filt_{n,m}:\C}
	\arrow[""{name=0, anchor=center, inner sep=0}, shift left=2, from=1-1, to=1-2]
	\arrow[""{name=1, anchor=center, inner sep=0}, shift left=2, from=1-2, to=1-1]
	\arrow["\dashv"{anchor=center, rotate=-90}, draw=none, from=0, to=1]
\end{tikzcd}
\end{equation}
\end{construction}

\begin{remark}
Let $A_\bullet$ be a $(n,m)$-filtration. The $n$-category $\C(A)_1$ is $A_0 \dirtimes{A_1} A_0$, the $n$-category $\C(A)_2$ is $A_0 \overset{\to}{\underset{\overset{\to}{A_1 \underset{A_2}{\times} A_1}}{\times}} A_0$, and so on.
\end{remark}

\begin{remark}
\label{remark: an easy description of relation case nm} It can be useful to note that the functor $|\uvar|$ is the restriction of the colimit preserving functor $$|\uvar|:\Fun(\Theta_m^{op},\ncat)\to \Filt_{n,m}$$ defined by the formula: $$|\langle C,a\rangle| := C\otimes \tau_0 a\to C\otimes \tau_1 a\to ...\to C\otimes \tau_k a\to... $$
\end{remark}

\begin{prop}
\label{prop:image of real}
Let $D_\bullet$ be a $(n,m)$-double category. Then $|D|_\bullet$ belongs to $\Filts_{n,m}$.
\end{prop}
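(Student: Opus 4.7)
The plan is to reduce the claim to a statement about representable double categories $\langle C, a\rangle$, and then to an elementary claim about truncations of globular sums.

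First, by Remark \ref{rem univ of langle nm}, every $(n,m)$-double category is a colimit of objects of the form $\langle C, a\rangle$ with $C\in \ncat$ and $a\in\Theta_m$. Since $|\uvar|$ preserves colimits and since $i$-surjective morphisms are stable under colimits for every $i$, the full subcategory $\Filts_{n,m}\subset \Filt_{n,m}$ of surjective filtrations is closed under colimits, and it suffices to prove the statement in the case $D_\bullet=\langle C, a\rangle$.

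Next, by Remark \ref{remark: an easy description of relation case nm}, the $i$-th transition map in $|\langle C, a\rangle|$ is $C\otimes\tau_i a\to C\otimes \tau_{i+1} a$, induced by the canonical comparison $\tau_i a\to \tau_{i+1} a$. Proposition \ref{prop:otimes and surjection} shows that tensoring with $C$ on the right preserves $i$-surjections; combined with Proposition \ref{prop:otimes and dualities} and the observation that $(\uvar)^{op}$ preserves $i$-surjections (since the generators $\partial\Db_{i+1}\to \Db_m$ are stable under this duality), tensoring with $C$ on the left does so as well. It therefore suffices to show that for every globular sum $a$, the map $\tau_i a\to \tau_{i+1} a$ is $i$-surjective.

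Finally, I would exhibit this map as a pushout
\[\begin{tikzcd}
\coprod_{x}\partial\Db_{i+1} \ar[r]\ar[d] & \coprod_{x}\Db_{i+1}\ar[d]\\
\tau_i a \ar[r] & \tau_{i+1} a
\end{tikzcd}\]
where $x$ runs over the $(i+1)$-cells of $a$. Since $\partial\Db_{i+1}\to\Db_{i+1}$ is a generator of the $i$-surjective class and this class is stable under coproducts and pushouts, the bottom map would then be an $i$-surjection, proving the theorem. The pushout description is verified by induction on $a\in\Theta_m$ using the decomposition $[\textbf{a},n]=[a_0,1]\vee\cdots\vee[a_{n-1},1]$ together with the identity $\tau_i[b,1]\sim [\tau_{i-1}b,1]$ for $i\geq 1$ (which moves the inductive step across a suspension), reducing to the base case of a globe $a=\Db_n$, where $\tau_i\Db_n$ equals $\Db_n$ if $n\leq i$ and $\partial\Db_{i+1}$ if $n>i$, and where the transition map is directly identified with either an identity, the generator $\partial\Db_{i+1}\to\Db_{i+1}$, or the inclusion $\partial\Db_{i+1}\to \partial\Db_{i+2}\sim \Db_{i+1}\cup_{\partial\Db_{i+1}}\Db_{i+1}$.

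The main obstacle is precisely this last inductive verification: matching the combinatorics of the truncations $\tau_i a$ of a globular sum $a$ with the pushout diagram above, which amounts to checking that $\tau_i$ commutes with the relevant joins and suspensions of globular sums. Once the pushout presentation is established, the conclusion is formal from the closure properties of the $i$-surjective class.
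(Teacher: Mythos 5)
Your proof is correct and follows essentially the same route as the paper, whose entire argument is the citation of Remark \ref{remark: an easy description of relation case nm} together with Proposition \ref{prop:otimes and surjection}; the points you elaborate explicitly (the duality trick to pass from $f\otimes C$ to $C\otimes f$, and the $i$-surjectivity of $\tau_i a\to \tau_{i+1}a$ via a cellular pushout) are precisely what the paper leaves implicit. One small imprecision: in your pushout the index $x$ should run over the generating $(i+1)$-cells of the globular decomposition of $a$ rather than over all $(i+1)$-cells (which include composites, and attaching a fresh cell for each of those would not give $\tau_{i+1}a$); the induction over $[\textbf{a},n]=[a_0,1]\vee\cdots\vee[a_{n-1},1]$ and suspensions that you sketch produces exactly this smaller indexing set, so the argument is unaffected.
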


\begin{proof}
This directly follows from remark \ref{remark: an easy description of relation case nm} and from proposition \ref{prop:otimes and surjection} that implies that $C\otimes\tau_ka\to C\otimes \tau_{k+1}a$ is $k$-surjective.
\end{proof}

\begin{definition}
\label{defi:nmaccomp}
We define by induction on $(n,m)$ the notion of \textit{accompanied $(n,m)$-double categories} as follows: 
\begin{enumerate} 
\item Every $(0,m)$-double category and $(n,0)$-double category is accompanied. 
\item If $n<\omega$ or $m<\omega$, a $(n,m)$-double category is accompanied if $\tau_{1,1}D_\bullet$ is accompanied, and for any objects $x,y$ in $D_0$, $\hom^v_{D}(x,y)_{\bullet^{\circ}}$ and $\hom^h_{D}(x,y)_{\bullet}$ are accompanied. 
\item A $(\omega,\omega)$-double category $D_\bullet$ is accompanied if for any $n<\omega$ and $m<\omega$, $\tau_{n,m}D_\bullet$ is accompanied. 
\end{enumerate}
\end{definition}

\begin{remark}
\label{rem:autre def ofthe accompagnability}
Let $n,m\in \mathbb{N}\cup\{\omega\}$, and let $D_\bullet$ be a $(n,m)$-double category. If $m<\omega$, then the $(n,m)$-double category $D_{\bullet}$ is accompanied if and only if 
\begin{enumerate}
\item $\tau_{n,1}D_\bullet$ is accompanied.
\item for any $x,y\in D_0$, the $(n,m-1)$-double category $\hom^h_D(x,y)_{\bullet}$ is accompanied.
\end{enumerate}
If $m=\omega$, the $(n,m)$-double category $D_\bullet$ is accompanied if and only if for any $k>0$, $\tau_{n,k}(D_\bullet)$ is accompanied.
\end{remark}

\begin{remark}
\label{rem:if D is accompagned, so is Dan}
Let $D_\bullet$ be an accompanied $(n,m)$-double category. By assumption, $D_{[[0],\bullet]}$ is accompanied. Furthermore, for any $a$ in $\Theta_{m-1}$, the morphism $D_{[[0],\bullet]} \to D_{[a,\bullet]}$ is an equivalence when evaluated on $[0]$. By remark \ref{rem:accom closed}, this implies that $D_{[a,\bullet]}$ is accompanied.
\end{remark}

\begin{lemma}
\label{lemma:characterization equivalence accompanied} Let $ n \in \Nb \cup \{\omega\}$, $m\in \Nb$ and let $\phi:C_\bullet\to D_\bullet$ be a morphism between accompanied $(n,m)$-double categories. Then $\phi$ is an equivalence if and only if the following hold: 
\begin{enumerate}
\item the induced morphism $C_0\to D_0$ is an equivalence.
\item for any $x,y\in C_0$, the induced morphism $\hom^h_C(x,y)_\bullet\to \hom^h_D(\phi(x),\phi(y))_\bullet$ is an equivalence.
\end{enumerate}
\end{lemma}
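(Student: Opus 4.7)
We proceed by induction on $m$. The base case $m=0$ is immediate: a $(n,0)$-double category is determined by its value at $[0]$, and condition (1) is precisely the statement that $C_{[0]}\to D_{[0]}$ is an equivalence (condition (2) being vacuous in this case).

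For $m=1$, we invoke theorem \ref{theo:generaljaco}: since $C_\bullet$ is accompanied, every $1$-cell of $C_0$ is companionable, so $(C_0,t^cC_0)=C_0^\sharp$, and the map $(C_1,t^cC_1)\to C_0^\sharp\times C_0^\sharp$ is a two-sided fibration of marked $\omega$-categories; analogously for $D_\bullet$. Condition (1) provides the base equivalence $C_0^\sharp\to D_0^\sharp$, while condition (2), which for $m=1$ evaluates to equivalence of the fibers of $C_1\to C_0\times C_0$ (these being the values of $\hom^h_C(x,y)_\bullet$ at the unique object $[0]\in\Theta_0$), provides the fiber equivalence. Corollary \ref{cor:morphism between is an equivalence when equivalence on fiberbifibrant case} then yields that $(C_1,t^cC_1)\to(D_1,t^cD_1)$ is an equivalence, and hence $C_1\to D_1$ is an equivalence; the Segal condition extends this to an equivalence $\phi\colon C_\bullet\to D_\bullet$.

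For the inductive step $m\geq 2$, by the Segal condition it suffices to prove that $C_{[b,1]}\to D_{[b,1]}$ is an equivalence for each $b\in\Theta_{m-1}$. To exploit the $m=1$ argument, we introduce for each such $b$ the $(n,1)$-double category $C^b_\bullet\colon\Delta^{op}\to\ncat$ defined by $C^b_k:=C_{[(b,\ldots,b),k]}$ ($k$ copies of $b$), which is a Segal object by the Segal condition on $C_\bullet$; analogously for $D^b_\bullet$. We then claim $C^b_\bullet$ (and $D^b_\bullet$) is accompanied, at which point applying the $m=1$ case to $\phi^b\colon C^b_\bullet\to D^b_\bullet$ concludes: $(C^b)_0=C_0\to D_0=(D^b)_0$ is an equivalence by (1), and the required fiber equivalence $\hom^h_{C^b}(x,y)_{[0]}=\hom^h_C(x,y)_b\to\hom^h_D(\phi(x),\phi(y))_b$ is the evaluation at $b$ of condition (2), which is an equivalence of accompanied $(n,m-1)$-double categories by remark \ref{rem:autre def ofthe accompagnability}.

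\textbf{Main obstacle:} The crux of the inductive step is establishing that the auxiliary $(n,1)$-double category $C^b_\bullet$ is accompanied, i.e.\ that every $1$-cell $f\colon x\to y$ of $C_0$ admits a companion in $C^b_1=C_{[b,1]}$ compatible with the bicartesian markings. Equivalently (via theorem \ref{theo:generaljaco}), one must show that $(C_{[b,1]},t)\to C_0^\sharp\times C_0^\sharp$ is a two-sided fibration for an appropriate marking $t$. The essential content is that the companion and bicartesian structure in $C_1$ guaranteed by the accompaniedness of $\tau_{n,1}C_\bullet$ propagates to $C_{[b,1]}$, using the accompaniedness of the horizontal hom $(n,m-1)$-double categories $\hom^h_C(x,y)_\bullet$ (which supply the horizontal coherences indexed by $b$) together with the Segal description $C_{[b,k]}\simeq C_{[b,1]}\times_{C_0}\cdots\times_{C_0}C_{[b,1]}$. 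This "horizontal extension" of companionship, combined with the stability results for bicartesian cells (propositions \ref{prop:comp stable by comp} and \ref{prop:cartesian cells are a marking}) applied level-wise, is expected to provide the accompanied structure on $C^b_\bullet$, reducing the $(n,m)$-case to the $(n,1)$-case uniformly in $b$.
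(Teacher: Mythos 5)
Your reduction is the right one, and it is essentially the paper's: by the Segal condition an equivalence of $(n,m)$-double categories can be checked on $[0]$ and on the objects $[b,1]$ for $b\in\Theta_{m-1}$, i.e.\ one must show that each $\phi_b:C_{[b,\bullet]}\to D_{[b,\bullet]}$ is an equivalence; viewing these $(n,1)$-double categories, with their canonical markings, as two-sided fibrations over $C_0^\sharp\times C_0^\sharp$ (theorem \ref{theo:generaljaco}), the equivalence is detected on fibers (corollary \ref{cor:morphism between is an equivalence when equivalence on fiberbifibrant case}, i.e.\ lemma \ref{lemma:characterization equivalence accompanied marked case.}), and these fibers are exactly the values $\hom^h_C(x,y)_b$ appearing in condition (2). (Your induction on $m$ is superfluous: you never invoke the $(m-1)$-case, only the $m=1$ case applied to each $b$, which is how the paper argues, with no induction.) But the step you yourself flag as the ``main obstacle'' --- that $C^b_\bullet=C_{[b,\bullet]}$ and $D^b_\bullet=D_{[b,\bullet]}$ are accompanied --- is left unproved, and it is exactly the point on which the whole argument for $m\ge 2$ rests: without it you cannot identify $(C^b_0,t^cC^b_0)$ with $C_0^\sharp$, and the fiber-detection corollary does not apply. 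Saying this is ``expected'' from level-wise stability of bicartesian cells and accompaniedness of the horizontal homs is not a proof, and the route you sketch is not the natural one: accompaniedness of $\hom^h_C(x,y)_\bullet$ concerns companions of cells of the fibers $(C_1)_{x,y}$, not companions of cells of $C_0$ inside $C_{[b,1]}$, which is what you need.

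The gap is genuine but short to fill, and the paper fills it in remark \ref{rem:if D is accompagned, so is Dan}: the unique map $b\to[0]$ of $\Theta_{m-1}$ induces a morphism of $(n,1)$-double categories $C_{[[0],\bullet]}\to C_{[b,\bullet]}$ which is an equivalence (the identity of $C_0$) when evaluated on $[0]$; since $C_{[[0],\bullet]}=\tau_{n,1}C_\bullet$ is accompanied (this is part of the hypothesis that $C_\bullet$ is accompanied, cf.\ remark \ref{rem:autre def ofthe accompagnability}), and since companion structures are preserved by any morphism of $(n,1)$-double categories while accompaniedness is a condition only on the cells of the value at $[0]$, remark \ref{rem:accom closed} gives that $C_{[b,\bullet]}$ is accompanied, and likewise for $D_{[b,\bullet]}$. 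With this argument inserted, your proof goes through and coincides with the paper's.
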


\begin{proof}
The remark \ref{rem:if D is accompagned, so is Dan} implies that for any $a$ in $\Theta_{m-1}$, the $(n,1)$-double category $C_{[a,\bullet]}$ and $D_{[a,\bullet]}$ are accompanied. A morphism $\phi$ satisfies the two conditions of the statement if and only if the functors $\phi_a:C_{[a,\bullet]}\to D_{[a,\bullet]}$ induce equivalences when evaluated on $[0]$ and on fibers for any $a$ in $\Theta_{m-1}$. Moreover, $\phi$ is an equivalence if and only if all the $\phi_a$ are equivalences. The result then follows from lemma \ref{lemma:characterization equivalence accompanied marked case.}.
\end{proof}

\begin{construction}
Let $A_\bullet$ be an object of $\Filt_{n,m}$, and $x,y$ two objects of $A_0$. We denote by $\hom_A(x,y)_{\bullet}$ the $(n,m-1)$-filtration: $$\hom_{A_1}(f_1(x),f_1(y)) \to \hom_{A_2}(f_2(x),f_2(y)) \to \ldots \to \hom_{A_k}(f_k(x),f_k(y)) \to \ldots$$
\end{construction}

\begin{lemma}
\label{lemma:hom and cech}
Let $A_\bullet$ be an object of $\Filt_{n,m}$, and $x,y$ two objects of $A_0$. The canonical morphism $$\hom^h_{\C(A)}(x,y)_\bullet\to \C_\bullet(\hom_A(x,y))$$ is an equivalence.
\end{lemma}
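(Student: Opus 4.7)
The plan is to prove the equivalence by induction on the dimension $|a|$ of the globular sum $a \in \Theta_{m-1}$, using the recursive pullback defining the Čech nerve on both sides of the comparison. Recall that $\hom^h_{\C(A)}(x,y)_a = \C_{[a,1]}(A) \times_{A_0 \times A_0} \{(x,y)\}$, while $\C_a(\hom_A(x,y))$ is defined by the inductive pullback formula of construction \ref{cons:cechnerve}.

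For the base case $a = [0]$, the left-hand side equals $\C_{[1]}(A) \times_{A_0 \times A_0} \{(x,y)\} = (A_0 \dirtimes{A_1} A_0) \times_{A_0 \times A_0} \{(x,y)\}$, which unwinds to $\hom_{A_1}(x,y)$. The right-hand side is $\C_{[0]}(\hom_A(x,y)) = \hom_A(x,y)_0 = \hom_{A_1}(x,y)$, and the canonical comparison is evidently an equivalence.

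For the inductive step with $|a| > 0$, two structural compatibilities underlie the argument. First, the $|a|$-truncation of the suspension equals the suspension of the boundary: $\tau_{|a|}([a,1]) = [\tau_{|a|-1}(a), 1]$. Second, for any $\omega$-category $B$, any $b \in \Theta$, and $x,y \in B_0$, there is a natural equivalence
$$B^{[b,1]} \times_{B \times B} \{(x,y)\} \;\simeq\; \hom_B(x,y)^b,$$
which follows from the pushout decomposition of $[b,1]$ in proposition \ref{prop:otimes and an} (yielding $B^{[b,1]} \simeq (B \times B) \times_{B^b \times B^b} (B^{[1]})^b$) together with the preservation of limits by $(-)^b$. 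Applying the defining pullback of the Čech nerve at $[a,1]$, substituting in the first compatibility, and then pulling back along $\{(x,y)\} \hookrightarrow A_0 \times A_0$ — using that fibering commutes with limits and the second compatibility applied to each relevant level of the filtration — one obtains
$$\hom^h_{\C(A)}(x,y)_a \;\simeq\; \hom^h_{\C(A)}(x,y)_{\tau_{|a|-1}(a)} \times_{\hom_A(x,y)^{\tau_{|a|-1}(a)}} \hom_A(x,y)^a.$$
By the inductive hypothesis applied to $\tau_{|a|-1}(a)$ (of dimension $|a|-1$), the first factor identifies with $\C_{\tau_{|a|-1}(a)}(\hom_A(x,y))$, and the whole expression matches the defining pullback for $\C_a(\hom_A(x,y))$.

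The main obstacle will be carefully tracking the filtration indices through the suspension-hom identification: the maps $\C_{[\tau_{|a|-1}(a),1]}(A) \to A^{[\tau_{|a|-1}(a),1]}$ and $A^{[a,1]} \to A^{[\tau_{|a|-1}(a),1]}$ involve different levels $A_{|a|}$ and $A_{|a|+1}$ of the filtration, and one must verify that restricting to the fibers over $(x,y)$ produces exactly the structure maps appearing in the defining pullback of the Čech nerve applied to the shifted filtration $\hom_A(x,y)$. This reduces to naturality of the suspension-hom cartesian square in each level, combined with compatibility with the transition maps $A_k \to A_{k+1}$.
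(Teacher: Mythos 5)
Your argument is correct and is essentially the paper's own proof in expanded form: the paper simply says the lemma "is a consequence of proposition \ref{prop:otimes and an}", and your key identification $B^{[b,1]}\times_{B\times B}\{(x,y)\}\simeq \hom_B(x,y)^b$ together with the comparison of the inductive pullbacks defining the two Čech nerves is exactly the unpacking of that citation. The remaining points you flag (compatibility $\tau_{|a|}([a,1])\simeq[\tau_{|a|-1}(a),1]$, naturality in $b$ and in the filtration maps, and extending the inductive hypothesis from globular sums to their truncations via the limit-extension convention) are routine, so no genuine gap remains.
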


\begin{proof}
This is a consequence of proposition \ref{prop:otimes and an}.
\end{proof}

\begin{construction}
Let $m' \geq m$. We denote by $\tau_{m,n}:\Filt_{n,m'}\to \Filt_{n,m}$ the functor sending a sequence $k<m' \mapsto A_k$ to $k<m \mapsto A_k$. Remark that we have canonical equivalences $$|\tau_{n,m}\uvar|\sim \tau_{n,m}|\uvar|~~~~ \C(\tau_{n,m}\uvar)\sim\tau_{n,m}\C(\uvar)$$
\end{construction}

\begin{prop}
\label{prop:filtration induces accomp} For every $(n,m)$-filtration $A_{\bullet}$, the $(n,m)$-double category $\C_\bullet(A)$ is accompanied.
\end{prop}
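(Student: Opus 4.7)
The plan is to induct on the horizontal dimension $m$. The base case $m=0$ is immediate: every $(n,0)$-double category is accompanied by definition. The crucial base case is $m=1$. For a $(n,1)$-filtration $A_0 \to A_1$, lemma \ref{lemma:cech is accompagnated} shows that the directed marked Čech nerve is an accompanied marked $(\omega,1)$-double category; corollary \ref{cor:marking and accompanied} then transfers this to the statement that the underlying (unmarked) $(\omega,1)$-double category is accompanied. I would then identify this underlying object with $\C_\bullet(A)$ as produced by construction \ref{cons:cechnerve}, and deduce the $(n,1)$-case by observing that the whole picture lives inside the full subcategory $\ncat \hookrightarrow \ocat$.

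For the inductive step with $2 \leq m < \omega$, I would invoke remark \ref{rem:autre def ofthe accompagnability}, which reduces accompaniment of a $(n,m)$-double category $D_\bullet$ to two conditions: first, that $\tau_{n,1}D_\bullet$ be accompanied; second, that $\hom^h_D(x,y)_\bullet$ be accompanied for every pair $x,y \in D_0$. Applied to $D_\bullet = \C_\bullet(A)$, the first reduces to $\tau_{n,1}\C_\bullet(A) \simeq \C_\bullet(\tau_{n,1}A)$ (by compatibility of the Čech nerve with truncation, as noted in construction \ref{cons:cechnerve}), which is accompanied by the base case $m=1$. The second follows from lemma \ref{lemma:hom and cech}, which yields $\hom^h_{\C(A)}(x,y)_\bullet \simeq \C_\bullet(\hom_A(x,y))$; applying the inductive hypothesis to the $(n,m-1)$-filtration $\hom_A(x,y)_\bullet$ completes the step.

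To handle $m = \omega$, I would invoke point (3) of definition \ref{defi:nmaccomp}: a $(\omega,\omega)$-double category is accompanied if and only if each of its $\tau_{n,k}$-truncations is. Compatibility of $\C$ with truncation gives $\tau_{n,k}\C_\bullet(A) \simeq \C_\bullet(\tau_{n,k}A)$, which is accompanied by the finite-$m$ case already established.

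The step I anticipate as the main obstacle is the base case $m=1$: specifically, the identification of the underlying unmarked $(\omega,1)$-double category of the directed marked Čech nerve (construction \ref{cons:cechnerve marked case}) with the unmarked Čech nerve (construction \ref{cons:cechnerve}). The two are defined by formally similar pullbacks but one uses $A_1^{[n]^\sharp}$ versus the unmarked counterpart, so some care is needed to check that the markings imposed on $A_0^\sharp \times \dots \times A_0^\sharp$ cut out exactly the same fibre. I expect this to follow from proposition \ref{prop:left fib over flat} applied to the two-sided fibration $A_1^{[1]^\sharp}\to A_1 \times A_1$, which forces all cells lying over an object of $A_0$ to be trivially marked in the marked Čech nerve, matching the unmarked object pointwise.
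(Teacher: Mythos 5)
Your proof is correct and takes essentially the same route as the paper's: induction on $m$, with the base case $m=1$ coming from the marked $(\omega,1)$-theory (theorem \ref{theo:eff of n1 dcat marked} together with theorem \ref{theo:generaljaco}/corollary \ref{cor:marking and accompanied}), the inductive step from remark \ref{rem:autre def ofthe accompagnability} combined with lemma \ref{lemma:hom and cech}, and the case $m=\omega$ from compatibility of $\C$ with truncation. The identification you flag as the main obstacle — that the underlying unmarked simplicial object of the directed marked Čech nerve is the Čech nerve of construction \ref{cons:cechnerve} — is indeed needed (the paper uses it tacitly), but it is immediate because limits in $\ocatm$ are computed on underlying $\omega$-categories and $\bigl((A_1^{\sharp})^{[n]^\sharp}\bigr)^\natural\sim A_1^{[n]}$, so no appeal to proposition \ref{prop:left fib over flat} is required.
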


\begin{proof}
We proceed by induction on $m$. The case $m=1$ follows from theorem \ref{theo:eff of n1 dcat}.

Suppose the result is true at the stage $(m-1)$ and let $A_\bullet$ be a $(n,m)$-filtration. Remark that we have a canonical equivalence
$$\tau_{n,1}\C_\bullet(A)\sim \C_\bullet(A_0\to A_1)$$
and it is accompanied by theorems \ref{theo:generaljaco} and \ref{theo:eff of n1 dcat marked}. The induction hypothesis and lemma \ref{lemma:hom and cech} imply that $\hom^h_{\C(A)}(x,y)_\bullet$ is accompanied. By remark \ref{rem:autre def ofthe accompagnability}, this implies that $\C_\bullet(A)$ is accompanied.

If $m=\omega$, the result follows from the equivalence
$$\C(\tau_{n,m}\uvar)\sim\tau_{n,m}\C(\uvar)$$
and the already proven case $m<\omega$.
\end{proof}

\subsection{$(n,1)$-Effectivity}
\label{section n1 eff}
\begin{theorem}
\label{theo:eff of n1 dcat}
The realization of a $(n,1)$-double category is always a surjective $(n,1)$-filtration, and the directed marked Čech nerve of a $(n,1)$-filtration is always an accompanied $(n,1)$-double marked category.

The realization and the directed marked Čech nerve induce inverse equivalences:
$$|\uvar|:(n,1)\Dcatc\sim \Filts_{n,1}:\C.$$
\end{theorem}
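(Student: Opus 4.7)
The plan is to deduce theorem \ref{theo:eff of n1 dcat} from the marked $(\omega,1)$-effectivity established in theorem \ref{theo:eff of n1 dcat marked}, combined with corollary \ref{cor:marking and accompanied}, by exhibiting both $(n,1)\Dcatc$ and $\Filts_{n,1}$ as full subcategories of their $(\omega,1)$-counterparts via the inclusion $\ncat \hookrightarrow \ocat$. First I would construct the realization--Čech nerve adjunction, with the realization obtained by colimit extension from the formula $|\langle C,[n]\rangle| := C \to C \otimes [n]$ on representables (analogous to remark \ref{remark: an easy description of relation case 1}), and with Čech nerve as its right adjoint.

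Next I would verify that the two functors restrict appropriately to the relevant subcategories. For a $(n,1)$-filtration $A_\bullet = (A_0 \to A_1)$, each $\C_k(A)$ is an iterated directed pullback over the $(n{+}1)$-category $A_1$ of copies of the $n$-category $A_0$; an analysis of fibers shows each such pullback is an $n$-category, so $\C_\bullet(A) \in (n,1)\Dcat$, and proposition \ref{prop:filtration induces accomp} then gives accompaniment. For the realization of a $(n,1)$-double category $D_\bullet$, each $D_k \otimes [k]$ is an $(n{+}1)$-category (reducing to the strict case via proposition \ref{prop:pio Gray} and the classical dimension bound for strict Gray products of an $n$-category with a $1$-category), and the coend stays in $(n{+}1)\tcat$ since this subcategory is closed under colimits in $\ocat$ (the inclusion being simultaneously a left and a right adjoint). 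The $0$-surjectivity of $D_0 \to |D|_1$ is given by proposition \ref{prop:image of real}. The equivalence then follows by restricting the $(\omega,1)$-equivalence along the fully faithful embeddings $(n,1)\Dcatc \hookrightarrow (\omega,1)\Dcatc$ and $\Filts_{n,1} \hookrightarrow \Filts_{\omega,1}$.

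The main obstacle is reconciling the realization formulas on the two sides of the reduction. The $(n,1)$-realization is the unmarked coend $\coend_\Delta D_n \otimes [n]$, whereas the $(\omega,1)$-realization obtained by passing through corollary \ref{cor:marking and accompanied} and theorem \ref{theo:eff of n1 dcat marked} takes the underlying $\omega$-category of the marked coend $\coend_\Delta (D, t^c D)_n \otimes [n]^\sharp$. These differ a priori because the marked version additionally inverts the bicartesian cells of $(D, t^c D)_n$ together with all $1$-cells of $[n]^\sharp$. For an accompanied $D$, however, the companionship units and counits exhibit exactly those cells as equivalences in the unmarked coend, so both constructions compute the same $(n{+}1)$-category. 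Making this precise --- by comparing the universal properties of the two coends against $(n{+}1)$-category valued functors on $\Delta$, and using proposition \ref{prop:decomposition} to analyse how the bicartesian markings propagate through $D_n \otimes [n]$ --- is the key technical step.
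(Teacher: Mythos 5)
Your proposal follows essentially the same route as the paper: the paper's proof reduces to the case $n=\omega$ and then invokes corollary \ref{cor:marking and accompanied} together with theorem \ref{theo:eff of n1 dcat marked}, exactly as you do, with the dimension checks and the marked/unmarked comparison left implicit. Concerning the step you flag as the main obstacle, note that it can be bypassed formally: the adjunction $|\uvar|\dashv \C$ of construction \ref{cons:adj betwn highly cech} restricts to the full subcategories $(n,1)\Dcatc$ and $\Filts_{n,1}$, and once the (unmarked) Čech nerve is known to be an equivalence there --- being the underlying object of the marked Čech nerve, via corollary \ref{cor:marking and accompanied} and theorem \ref{theo:eff of n1 dcat marked} --- its left adjoint, i.e.\ the unmarked realization, is automatically the inverse equivalence, so no direct comparison of the two coends is required.
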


\begin{proof}
It is sufficient to demonstrate the results when $n=\omega$. In this case, this follows from the equivalence between marked accompanied $(\omega,1)$-double categories and accompanied $(\omega,1)$-double categories of corollary \ref{cor:marking and accompanied} and from theorem \ref{theo:eff of n1 dcat marked}.
\end{proof}

\begin{cor}
\label{cor:unit is 1 fully faithful} Let $A_\bullet$ be an object of $\Filt_{n,1}$. The canonical morphism $$|\C(A)|_1\to A_1$$ is fully faithful.
\end{cor}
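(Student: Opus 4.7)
The plan is to factor $A_0 \to A_1$ into a $0$-surjection followed by a fully faithful morphism and apply Theorem \ref{theo:eff of n1 dcat} to the surjective part. Using the orthogonal factorization system between $0$-surjections and fully faithful morphisms, write $A_0 \to A_1$ as $A_0 \xrightarrow{p} A_1' \xrightarrow{j} A_1$ with $p$ a $0$-surjection and $j$ fully faithful. Then $A'_\bullet := (A_0 \xrightarrow{p} A_1')$ is a surjective $(n,1)$-filtration, and we obtain a morphism $\alpha: A'_\bullet \to A_\bullet$ in $\Filt_{n,1}$ which is the identity on $A_0$ and $j$ on the second component.

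The key step is to show that the induced morphism $\C(\alpha): \C(A'_\bullet) \to \C(A_\bullet)$ of accompanied $(n,1)$-double categories is an equivalence. Both sides are accompanied by proposition \ref{prop:filtration induces accomp}, so by lemma \ref{lemma:characterization equivalence accompanied} it suffices to check the equivalence at $\C_0$ and on the hom-$\omega$-categories. At $\C_0$ both sides equal $A_0$ and $\C(\alpha)_0$ is the identity. On hom-spaces, the fiber of $\C_1(A) = A_0 \dirtimes{A_1} A_0 \to A_0 \times A_0$ over a pair $(x,y)$ is $\hom_{A_1}(jp(x), jp(y))$ (since the directed pullback produces precisely the hom-$\omega$-category on fibers), and similarly the fiber of $\C_1(A')$ is $\hom_{A_1'}(p(x), p(y))$. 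The map between these fibers is precisely the one induced by $j$, which is an equivalence because $j$ is fully faithful.

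Applying Theorem \ref{theo:eff of n1 dcat} to the surjective filtration $A'_\bullet$ gives $|\C(A'_\bullet)| \sim A'_\bullet$, hence $|\C(A'_\bullet)|_1 \sim A'_1$. By naturality of the counit with respect to $\alpha$, the canonical morphism $|\C(A_\bullet)|_1 \to A_1$ fits into a commutative square whose other vertex is $|\C(A'_\bullet)|_1$, with $|\C(\alpha)|_1$ an equivalence (since $\C(\alpha)$ is). We therefore identify $|\C(A_\bullet)|_1 \to A_1$ with $j: A_1' \to A_1$, which is fully faithful by construction. The main obstacle is the fiber identification in step two; everything else is essentially formal consequences of the theorem and naturality.
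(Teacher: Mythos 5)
Your proof is correct, and it reaches the conclusion by a mechanism different from the one used in the paper, even though both start from the same factorization of $A_0\to A_1$ into a $0$-surjection $p:A_0\to A_1'$ followed by a fully faithful $j:A_1'\to A_1$. The paper never looks at $\C(\alpha)$ at all: it observes that the assignment $A_\bullet\mapsto (A_0\to \tilde A_1)$ given by this factorization is a right adjoint to the inclusion $\Filts_{n,1}\hookrightarrow\Filt_{n,1}$, that theorem \ref{theo:eff of n1 dcat} exhibits $|\C(\uvar)|$ as another right adjoint of the same inclusion, and concludes by uniqueness of right adjoints that the counit $|\C(A)|_1\to A_1$ \emph{is} $\tilde A_1\to A_1$, hence fully faithful. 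You instead verify directly that the Čech nerve does not see the fully faithful part: $\C(A')\to\C(A)$ is the identity on level $0$ and, fibrewise over $(x,y)$, is the map $\hom_{A_1'}(p(x),p(y))\to\hom_{A_1}(jp(x),jp(y))$ induced by $j$ (the identification of the fibre of $A_0\dirtimes{A_1}A_0\to A_0\times A_0$ with a hom-category is correct, via proposition \ref{prop:otimes and an} and the suspension adjunction, and is the same identification the paper exploits in lemma \ref{lemma:equivalence on fiber}); then the detection lemma for accompanied double categories (lemma \ref{lemma:characterization equivalence accompanied}, or equivalently corollary \ref{cor:morphism between is an equivalence when equivalence on fiberbifibrant case} after lemma \ref{lemma: accomoagnated implies marjed}) gives that $\C(\alpha)$ is an equivalence, and naturality of the counit together with $|\C(A')|\sim A'$ transports $j$ to $|\C(A)|_1\to A_1$. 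What the paper's route buys is economy and immediate generalization: the same two-line adjoint-uniqueness argument gives corollary \ref{cor:unit is k fully faithful} for all $(n,m)$-filtrations, whereas your fibrewise argument would need the higher analogues of the detection step; what your route buys is that it avoids having to check that $A_\bullet\mapsto\tilde A_\bullet$ really is a right adjoint (a point the paper dispatches with ``we can easily check''), replacing it with a concrete computation whose only nontrivial inputs are the fibre identification and full faithfulness of $j$.
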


\begin{proof}
Let $A_0\to A_1$ be an object of $\Filt_1(\npcat)$. Let $\tilde{A}_1$ be the unique $(n+1)$-category fitting in a commutative square
\[\begin{tikzcd}
	{A_0} & {\tilde{A}_1} \\
	{A_0} & {A_1}
	\arrow[two heads, from=1-1, to=1-2]
	\arrow[equals, from=1-1, to=2-1]
	\arrow[hook, from=1-2, to=2-2]
	\arrow[from=2-1, to=2-2]
\end{tikzcd}\]
where the top arrow is $0$-surjective and the left vertical arrow is fully faithful.

We can easily check that the assignment $(A_0\to A_1)\mapsto (A_0\to \tilde{A}_1)$ is the right adjoint of the canonical inclusion of $\Filt_{n,1}$ into $\Filts_{n,1}$. However, the theorem \ref{theo:eff of n1 dcat} implies that $|\C\uvar|$ is also a right adjoint of this inclusion. The two canonical morphisms $|\C(A)|_1\to A_1$ and $\tilde{A}_1\to A_1$ then coincide, which concludes the proof.
\end{proof}

\begin{cor}
\label{cor: naturality} Let $D_\bullet$ be an accompanied $(n,1)$-double category and $f:A\to D_0$ a morphism. The canonical morphism $$|f^*D|_1\to |D|_1$$ is fully faithful. In particular, if $f$ is essentially surjective, this morphism is an equivalence.
\end{cor}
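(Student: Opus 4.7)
The plan is to reduce the statement to Corollary \ref{cor:unit is 1 fully faithful} by identifying $f^*D$ with the Čech nerve of an appropriate $(n,1)$-filtration. Since $D_\bullet$ is accompanied, Theorem \ref{theo:eff of n1 dcat} yields an equivalence $D_\bullet \sim \C(|D|_\bullet)$, where $|D|_\bullet = (D_0 \to |D|_1)$. Write $g$ for the composite $A \xrightarrow{f} D_0 \to |D|_1$.

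The first step, and really the only technical one, is to identify $f^*D \sim \C(g)$ as $(n,1)$-double categories, compatibly with the evident projection to $D_\bullet$. Unfolding the pullback defining $(f^*D)_n$ together with the iterated directed-pullback description $D_n \sim D_0 \dirtimes{|D|_1}\cdots\dirtimes{|D|_1} D_0$, the cancellations in the iterated fibered product give $(f^*D)_n \sim A\dirtimes{|D|_1}\cdots\dirtimes{|D|_1} A \sim \C_n(g)$, functorially in $[n]\in \Delta^{op}$. This is just commutation of pullbacks with pullbacks and requires no further input.

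Given this identification, the canonical map $|f^*D|_1 \to |D|_1$ becomes the component at $1$ of the counit $|\C(g)|_\bullet \to |D|_\bullet$ of the adjunction from Construction \ref{cons:adj betwn highly cech}, applied to the filtration $g$. Corollary \ref{cor:unit is 1 fully faithful} applied to $g$ immediately yields the first assertion. For the second assertion: if $f$ is essentially surjective, it is $0$-surjective by Proposition \ref{prop: characterization of 1 surjective2}. Since $D_0 \to |D|_1$ is $0$-surjective by Proposition \ref{prop:image of real} and $0$-surjections are stable under composition, $g$ is $0$-surjective, i.e.\ $g \in \Filts_{n,1}$. The adjoint equivalence $|\uvar|:(n,1)\Dcatc \sim \Filts_{n,1}:\C$ of Theorem \ref{theo:eff of n1 dcat} then forces $|\C(g)|_1 \to |D|_1$ to be an equivalence.

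The only genuine obstacle is the identification $f^*D \sim \C(g)$ in the first step; once it is made precise and shown to be compatible with the projection, the rest is a direct appeal to the effectivity theorem and its corollary.
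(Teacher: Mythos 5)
Your proposal is correct and follows essentially the same route as the paper: using Theorem \ref{theo:eff of n1 dcat} to write $D_\bullet\sim\C_\bullet(D_0\to|D|_1)$, identifying $f^*D_\bullet$ with the Čech nerve of $A\to|D|_1$, and then invoking Corollary \ref{cor:unit is 1 fully faithful} (the paper leaves the pullback identification and the $0$-surjectivity argument for the ``in particular'' part implicit, which you have simply made explicit).
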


\begin{proof}
This directly follows from the fact that $D_\bullet$ is the Čech nerve of its realization by theorem \ref{theo:eff of n1 dcat} and from the corollary \ref{cor:unit is 1 fully faithful}.
\end{proof}

\begin{remark}
\label{rem:realization is companion}
Let $D_\bullet$ be a companionable $(n,1)$-double category. We then have $ D_0\overset{\to}{\underset{|D|_1}{\times}} D_0\sim D_1$, which induces for any $x,y$ in $D_0$, a morphism $$\hom(|x|_1,|y|_1)\sim (D_1)_{x,y}.$$ We can then easily check that the induced morphism $$\hom_{D_0}(x,y)\to \hom_{|D|_1}(|x|_1,|y|_1)\sim (D_1)_{x,y}$$ sends a $1$-arrow of $D_0$ to its companion.
\end{remark}

\begin{remark}
Let $D_\bullet$ be an accompanied $(1,1)$-double category. Let $\iota:\tau_0D_0\to D_0$ be the inclusion of the maximal sub homotopy type. The $2$-category $|\iota^*D|_1$ is the $2$-category freely generated by the flagged $2$-category $\Hor(D)$, and the corollary \ref{cor: naturality} then implies that $|D|$ corresponds to a morphism $$D_0\to \Fb\Hor(D_\bullet).$$
\end{remark}

\begin{cor}
The inclusion $\iota:(n,1)\Dcatc\to (n,1)\Dcat$ is part of a triplet of adjunctions:
% https://q.uiver.app/#q=WzAsMixbMSwwLCIobiwxMSlcXERjYXRjOlxc
\[\begin{tikzcd}
	{ (n,1)\Dcat} & {(n,1)\Dcatc:\iota}
	\arrow[""{name=0, anchor=center, inner sep=0}, "\Comp"', shift right=3, from=1-1, to=1-2]
	\arrow[""{name=1, anchor=center, inner sep=0}, "\Fb", shift left=3, from=1-1, to=1-2]
	\arrow[""{name=2, anchor=center, inner sep=0}, from=1-2, to=1-1]
	\arrow["\dashv"{anchor=center, rotate=-90}, draw=none, from=1, to=2]
	\arrow["\dashv"{anchor=center, rotate=-90}, draw=none, from=2, to=0]
\end{tikzcd}\]
Where given a $(n,1)$-double category $D_\bullet$, $\Fb(D_\bullet):= \C_\bullet(|D|).$
\end{cor}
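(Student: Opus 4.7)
The plan is as follows. First, by Proposition~\ref{prop:filtration induces accomp}, $\Fb(D):=\C_\bullet(|D|)$ always lies in $(n,1)\Dcatc$, so $\Fb$ defines a functor $(n,1)\Dcat\to(n,1)\Dcatc$. The adjunction $\Fb\dashv\iota$ then follows from the chain of natural isomorphisms
\[
\Hom_{(n,1)\Dcatc}(\Fb D, C) \simeq \Hom_{\Filt_{n,1}}(|D|,|C|) \simeq \Hom_{(n,1)\Dcat}(D,\iota C),
\]
using (a) the equivalence $\C:\Filts_{n,1}\sim(n,1)\Dcatc$ of Theorem~\ref{theo:eff of n1 dcat} (hence $|\C_\bullet(|D|)|\simeq|D|$, since $|D|$ is always surjective), (b) the adjunction $|\uvar|\dashv\C$ from the same theorem, and (c) $C\simeq\C_\bullet(|C|)$ for accompanied $C$.

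For $\iota\dashv\Comp$, I would construct $\Comp(D)$ as the \emph{accompanied part} of $D$: restrict $D_0$ to its wide sub-$n$-category $D_0^c$ spanned by companionable cells, which is closed under composition by Proposition~\ref{prop:comp stable by comp}, and correspondingly restrict $D_1$ to bicartesian squares with boundaries in $D_0^c$. Proposition~\ref{prop:cartesian cells are a marking} ensures this restriction is compatible with composition, and Theorem~\ref{theo:generaljaco} combined with Corollary~\ref{cor:marking and accompanied} guarantees the resulting simplicial object is a genuine accompanied $(n,1)$-double category. The counit $\iota\Comp D\to D$ is the evident inclusion, and its universal property follows because any morphism from an accompanied $C$ to $D$ must send companionable cells to companionable cells and bicartesian squares to bicartesian squares, hence factors uniquely through $\Comp D$.

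The main obstacle will be to verify coherence of the $\Comp$ construction at all dimensions of the $n$-categorical structure, in particular that the restrictions assemble into a Segal simplicial object in $\ncat$ and that every map from an accompanied source factors uniquely. The technical backbone is Theorem~\ref{theo:generaljaco}, which gives that the marking by companionable and bicartesian cells makes $D_1\to D_0\times D_0$ a two-sided fibration; extracting the ``fully marked'' subobject corresponds, via Corollary~\ref{cor:marking and accompanied}, to the accompanied subobject $\Comp D$.
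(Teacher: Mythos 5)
Your treatment of the first adjunction $\Fb\dashv\iota$ is correct and is essentially the paper's argument: compose the adjunction $|\uvar|\dashv\C$ with the equivalence $|\uvar|:(n,1)\Dcatc\sim\Filts_{n,1}:\C$ of Theorem~\ref{theo:eff of n1 dcat}, using that $|D|$ is always surjective and that $C\simeq \C_\bullet(|C|)$ for accompanied $C$.

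The second adjunction is where there is a genuine gap. Your explicit candidate for $\Comp D$ — restrict $D_0$ to companionable cells and restrict $D_1$ to \emph{bicartesian} squares — does not have the required universal property. Since $\iota$ is the inclusion of a full subcategory, $\Comp$ must be a coreflection: the counit $\iota\Comp D\to D$ has to be terminal among maps from accompanied objects, so $\Comp D$ must receive (the factorization of) \emph{every} morphism $C\to D$ with $C$ accompanied. Such a morphism does send companionable cells to companionable cells and bicartesian cells to bicartesian cells, but an accompanied $C$ contains plenty of non-bicartesian squares (only the companionship units/counits and their composites are bicartesian), and their images in $D_1$ need not be bicartesian; already the identity of an accompanied $D$ fails to factor through your $\Comp D$ unless every square of $D$ is bicartesian. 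So the factorization claim at the end of your argument is false as stated. A repaired explicit construction would have to keep \emph{all} cells of $D_1$ whose vertical boundaries are companionable (pulling $D_\bullet$ back along the inclusion of the sub-$n$-category of $D_0$ spanned by companionable cells), and one would then still have to verify the Segal condition, accompaniedness, and functoriality of this assignment — none of which is supplied by Theorem~\ref{theo:generaljaco} or Corollary~\ref{cor:marking and accompanied}, which concern markings and two-sided fibrations rather than this restriction.

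The paper sidesteps all of this: it proves (Proposition~\ref{prop:comp stable by colimit}) that $(n,1)\Dcatc$ is closed under colimits and that $\iota$ preserves them — the key inputs being that companionable cells are closed under composition (Proposition~\ref{prop:comp stable by comp}) and Example~\ref{exe:canonical exe of cloture by comp} — and then obtains $\Comp$ abstractly as the right adjoint guaranteed by the adjoint functor theorem. If you want to keep an explicit description of $\Comp D$, you must replace the bicartesian restriction by the one above and carry out the verifications; otherwise, the colimit-preservation route is the efficient fix.
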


\begin{prop}
\label{prop:comp stable by colimit} The category $\nDcatc$ is closed under colimits. Moreover, the canonical functor $\nDcatc \to \nDcat$ preserves them.
\end{prop}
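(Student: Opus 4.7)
The plan has two distinct parts: cocompleteness of $(n,1)\Dcatc$, and preservation of colimits by $\iota$.

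For cocompleteness, I appeal to Theorem~\ref{theo:eff of n1 dcat}, which identifies $(n,1)\Dcatc$ with $\Filts_{n,1}$ via the realization $|\uvar|$. So it suffices to show $\Filts_{n,1}$ is cocomplete. Since $\Filt_{n,1}=\Fun([1],\ocat)$ is cocomplete and $0$-surjections in $\ocat$ are closed under colimits (preliminaries), a colimit in $\Filt_{n,1}$ of objects of $\Filts_{n,1}$ is again in $\Filts_{n,1}$. Hence $\Filts_{n,1}$ is a closed-under-colimits full subcategory of $\Filt_{n,1}$, and transporting along the equivalence makes $(n,1)\Dcatc$ cocomplete with colimits that can be described via surjective filtrations.

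For the preservation statement, fix a diagram $D^{(i)}$ in $(n,1)\Dcatc$ and set $E:=\colim_i^{(n,1)\Dcat}\iota D^{(i)}$. Since $|\uvar|$ is a left adjoint, $|E|\simeq \colim_i|D^{(i)}|$ in $\Filt_{n,1}$, and by the previous paragraph this colimit lies in $\Filts_{n,1}$. By Theorem~\ref{theo:eff of n1 dcat}, $\C(|E|)$ is accompanied, and via the equivalence one can identify $\C(|E|)\simeq \iota(\colim_i^{(n,1)\Dcatc}D^{(i)})$. Thus it is enough to show that the unit $\eta_E\colon E\to \C(|E|)$ is an equivalence, which is exactly the statement that $E$ is accompanied: once this is known, both $E$ and $\iota(\colim_i^{(n,1)\Dcatc}D^{(i)})$ are accompanied with the same realization $|E|$, and the equivalence from Theorem~\ref{theo:eff of n1 dcat} forces them to coincide.

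To prove $E$ is accompanied, I pass to the marked setting. By Corollary~\ref{cor:marking and accompanied} and Theorem~\ref{theo:generaljaco}, being accompanied corresponds to the marked structure where $D_0=D_0^\sharp$ and $D_1\to D_0\times D_0$ is a two-sided fibration. The forgetful $(\uvar)^\natural\colon(\omega,1)\Dcatm\to(\omega,1)\Dcat$ preserves colimits (having both adjoints given by flat/sharp markings), so the question reduces to closure of the accompanied-marked condition under colimits in $(\omega,1)\Dcatm$. The condition $D_0=D_0^\sharp$ is preserved since $(\uvar)^\sharp$ is a left adjoint, while the two-sided fibration condition is preserved using the reflection \eqref{eq:adjonction two sided fibration and morphism over a b} together with Proposition~\ref{prop:bicart stable by pullback} to handle the variation of base. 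The main technical obstacle is precisely this last closure statement, which requires a careful straightening-style argument showing that $\BCart(A,B)$ is closed under colimits uniformly in the base $(A,B)$; this is the only nontrivial ingredient beyond the formal manipulations with adjunctions and the $0$-surjection closure already invoked.
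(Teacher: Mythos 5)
Your first two paragraphs are essentially fine: cocompleteness of $\Filts_{n,1}$ follows from stability of $0$-surjections under colimits, and the formal reduction via Theorem \ref{theo:eff of n1 dcat} correctly identifies what remains to be shown, namely that $E:=\colim_i\iota D^{(i)}$ (computed in $(n,1)\Dcat$) is accompanied. But that remaining claim is the entire content of the proposition, and your last paragraph does not prove it. You reduce it to the assertion that the accompanied-marked condition --- in particular the condition that $D_1\to D_0\times D_0$ is a two-sided fibration --- is closed under colimits in $(\omega,1)\Dcatm$, and you explicitly leave this as a ``careful straightening-style argument''. This is a genuine gap, and not a routine one: two-sided fibrations over a fixed base are the \emph{local} objects for $\IF_{A,B}$, hence closed under limits, not colimits; and over varying bases, straightening-type statements (such as the fact that $\LCart(\uvar)$ sends colimits to limits, used in Proposition \ref{prop:double bar is eq}) only glue a \emph{cartesian} diagram of fibrations into a fibration over the colimit base. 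A diagram of accompanied double categories does not induce cartesian squares on the spans $D^{(i)}_1\to D^{(i)}_0\times D^{(i)}_0$, so the colimit span has no reason to be a two-sided fibration on the nose; Proposition \ref{prop:bicart stable by pullback} (pullback stability along base change) does not help with this gluing problem. So the ``only nontrivial ingredient'' you defer is at least as hard as the statement you are proving, and as stated it is not correct without the cartesianness hypothesis.

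The paper's proof avoids the marked/fibration characterization altogether and is much more elementary: accompaniedness is the cellwise condition that every cell of $D_0$ admits a companion. Since $\ev_{[0]}$ sends the Segal maps $\langle X,\Sp_n\rangle\to\langle X,[n]\rangle$ to equivalences, it descends to a colimit-preserving functor on $(n,1)\Dcat$, so $(\colim_IF)_0=\colim_I F(i)_0$; by Example \ref{exe:canonical exe of cloture by comp} every cell of this colimit is a composite of cells coming from the $F(i)_0$, these have companions because functors of $(n,1)$-double categories preserve companions, and companionable cells are closed under composition by Proposition \ref{prop:comp stable by comp}. This gives directly that the colimit computed in $(n,1)\Dcat$ is accompanied, which yields both assertions at once. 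If you want to salvage your route, you would have to replace the asserted closure of $\BCart$ under colimits by an argument of exactly this cellwise kind (e.g.\ via Theorem \ref{theo:generaljaco} applied after checking companions cell by cell), at which point the detour through the marked setting is unnecessary.
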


\begin{proof}
Let $F:I\to \nDcat$ be a functor such that for any $i:I$, $F(i)$ is accompanied. We want to show that $\colim_IF$ is accompanied. Remark first that the colimit preserving functor $ev_{[0]}:\Fun(\Delta^{op},\ncat)\to \ncat$ sends $\langle X,\Sp_n\rangle\to \langle X,[n]\rangle$ to equivalence. This then implies that the induced functor $ev_{[0]}(n,1)\Dcat\to \ncat$ is colimit preserving.

Furthermore, as functors of $(n,1)$-double categories preserve companions, the hypothesis implies that every cell lying in the image of $F(i)\to \colim_IF$ for some $i$ has companions. However, proposition \ref{prop:comp stable by comp} implies that the space of cells having companions is closed under composition. By example \ref{exe:canonical exe of cloture by comp}, every cell of non-negative dimension of $(\colim_IF)_0$ admits a companion, and the $(n,1)$-double category $\colim_IF$ is then accompanied.
\end{proof}

\begin{proof}
The adjunction $\Fb \dashv \iota$ comes from the adjunction
% https://q.uiver.app/#q=WzAsMixbMCwwLCJ8XFx1dmFyfDogKG4sMSlcXERjYXQiXSxbMSwwLCJcXEZpbHRfMV57XFx0d29oZWFkcmlnaHRhcnJvd30oXFxucGNhdCk6XFxDIl0sWzAsMSwiIiwwLHsib2Zmc2V0IjotMn1dLFsxLDAsIiIsMCx7Im9mZnNldCI6LTJ9XSxbMiwzLCIiLDAseyJsZXZlbCI6MSwic3R5bGUiOnsibmFtZSI6ImFkanVuY3Rpb24ifX1dXQ==
\[\begin{tikzcd}
	{|\uvar|: (n,1)\Dcat} & {\Filt_1^{\twoheadrightarrow}(\npcat):\C}
	\arrow[""{name=0, anchor=center, inner sep=0}, shift left=2, from=1-1, to=1-2]
	\arrow[""{name=1, anchor=center, inner sep=0}, shift left=2, from=1-2, to=1-1]
	\arrow["\dashv"{anchor=center, rotate=-90}, draw=none, from=0, to=1]
\end{tikzcd}\]
and the theorem \ref{theo:eff of n1 dcat}. The existence of the adjunction $\iota \dashv \Comp$ follows from proposition \ref{prop:comp stable by colimit} that states that $\iota$ preserves colimits.
\end{proof}

\begin{cor}
\label{cor:cech preserves colimits} Let $\{A_i\to B_i\}_{i\in I}$ be a family of $0$-surjective functors in $\Filt_{n,1}$ indexed by a category $I$. Then we have an equivalence $$\colim_{i:I} \C_\bullet(A_i\to B_i) \sim \C_\bullet(\colim_{i:I}A_i\to \colim_{i:I}B_i).$$
\end{cor}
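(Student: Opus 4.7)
The plan is to deduce this as a direct formal consequence of Theorem \ref{theo:eff of n1 dcat} and Proposition \ref{prop:comp stable by colimit}, using only that the two inclusions $\Filts_{n,1}\hookrightarrow \Filt_{n,1}$ and $(n,1)\Dcatc\hookrightarrow (n,1)\Dcat$ are closed under colimits.

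First I would check that $\Filts_{n,1}$ is closed under colimits inside $\Filt_{n,1}=\Fun([1],\npcat)$. This follows immediately from the fact that $0$-surjective morphisms form a cocomplete class, being by definition the closure under colimits and composition of the generating set $\{\partial\Db_1\to \Db_m\mid m>0\}$. Hence the inclusion $\Filts_{n,1}\hookrightarrow \Filt_{n,1}$ preserves colimits. On the other side, Proposition \ref{prop:comp stable by colimit} states that the inclusion $(n,1)\Dcatc\hookrightarrow (n,1)\Dcat$ is also closed under colimits.

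Then I would invoke Theorem \ref{theo:eff of n1 dcat}, which yields an equivalence of categories
$$\C\colon \Filts_{n,1}\xrightarrow{\ \sim\ } (n,1)\Dcatc.$$
Being an equivalence, $\C$ preserves all colimits. Combining this with the two stability observations above, one obtains the chain of equivalences
$$\colim_{i\in I}\C_\bullet(A_i\to B_i)\;\sim\;\C_\bullet\!\left(\colim_{i\in I}(A_i\to B_i)\right),$$
where on the left the colimit may be computed either in $(n,1)\Dcatc$ or in $(n,1)\Dcat$, and on the right it may be computed either in $\Filts_{n,1}$ or in $\Filt_{n,1}$; in both cases the two computations agree by the closure results.

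There is no genuine obstacle here: the statement is a formal consequence of the adjoint equivalence already established, and the only verification to be made is that the ambient colimits coincide with the colimits computed inside the full subcategories, which is exactly what the two stability statements provide. It is worth noting, however, that the surjectivity hypothesis on the $A_i\to B_i$ is essential, since $\C$ is a right adjoint on the full category $\Filt_{n,1}$ and does not preserve colimits there; the phenomenon relies crucially on restricting to the subcategory of surjective filtrations, where $\C$ becomes an equivalence.
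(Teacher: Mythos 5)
Your proposal is correct and follows essentially the same route as the paper: observe that surjective filtrations are closed under colimits (since $0$-surjections form a cocomplete class), apply the equivalence $\C\colon\Filts_{n,1}\sim (n,1)\Dcatc$ of Theorem \ref{theo:eff of n1 dcat}, and use Proposition \ref{prop:comp stable by colimit} to identify colimits in $(n,1)\Dcatc$ with those in $(n,1)\Dcat$. No gaps to flag.
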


\begin{proof}
Remark that $(\colim_{i:I}A_i\to \colim_{i:I}B_i)$ is the colimit of the functor $I\to \Filts_{\omega,1}$ induced by the family of morphisms. The result then follows from theorem \ref{theo:eff of n1 dcat} and from proposition \ref{prop:comp stable by colimit}, which states that colimits in $(n,1)\Dcatc$ coincide with colimits in $(n,1)\Dcat$.
\end{proof}

\subsubsection*{Consequence on $n$-surjective functors}

\begin{cor}
\label{cor:characterization of n surjective}
Let $f:C\to D$ be a morphism. Then $f$ is $n$-surjective if and only if it is surjective of objects and it is locally $(n-1)$-surjective, i.e. for any pair of objects $x,y$ of $C$, $$\hom_C(x,y)\to \hom_D(f(x),f(y))$$ is $(n-1)$-surjective.
\end{cor}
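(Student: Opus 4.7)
I plan to prove the statement by induction on $n$, with the base case $n = 0$ being Proposition~\ref{prop: characterization of 1 surjective2}. Assume the characterization at stage $n - 1$, and fix a morphism $f : C \to D$.

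For the forward direction, suppose $f$ is $n$-surjective. A direct induction on $n$ shows that every $n$-fully faithful morphism is $(n+1)$-fully faithful, with the base case being that equivalences are fully faithful. Consequently the lifting condition defining $n$-surjectivity is stronger than the one defining $0$-surjectivity, so $n$-surjective morphisms are in particular $0$-surjective, and Proposition~\ref{prop: characterization of 1 surjective2} implies that $f$ is surjective on objects. To establish local $(n-1)$-surjectivity at a pair $x, y \in C$, I take an $n$-fully faithful morphism $g : X \to Y$ and a commutative square with $\hom_C(x, y) \to \hom_D(fx, fy)$ on the left and $g$ on the right. By the suspension/hom adjunction, this transports to a bipointed lifting problem for $f : (C, x, y) \to (D, fx, fy)$ against $[g, 1] : [X, 1] \to [Y, 1]$. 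Inspection of the hom-spaces of $[X, 1]$, whose only nontrivial hom is $\hom(0, 1) = X$, shows that $[g, 1]$ is $(n+1)$-fully faithful, so the $n$-surjectivity of $f$ produces a lift $h : D \to [X, 1]$ in the category of $\omega$-categories. Compatibility with the bipointed top map forces $h \circ (fx, fy) = (0, 1)$, so $h$ is automatically bipointed and the adjunction returns the desired lift $\hom_D(fx, fy) \to X$.

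For the converse, suppose $f$ is surjective on objects and locally $(n-1)$-surjective, and factor $f = i \circ p$ with $p : C \to C'$ being $n$-surjective and $i : C' \to D$ being $(n+1)$-fully faithful; I claim $i$ is an equivalence. The forward direction applied to $p$ shows that $p$ is itself surjective on objects and locally $(n-1)$-surjective. From $f = ip$, the surjectivity on objects of $f$ and $p$ yields surjectivity on objects of $i$. Applying the left cancellation property of the saturated class of $(n-1)$-surjective morphisms in each hom-space, with both the composite and the left factor $(n-1)$-surjective, gives that $i$ is locally $(n-1)$-surjective. But $i$ is also locally $n$-fully faithful, inherited from its $(n+1)$-fully faithfulness, so each of its hom maps is simultaneously $(n-1)$-surjective and $n$-fully faithful, hence an equivalence by the remark that a $k$-surjective and $(k+1)$-fully faithful morphism is an equivalence. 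Thus $i$ is fully faithful; combined with surjectivity on objects, $i$ is an equivalence, and $f \simeq p$ is $n$-surjective.

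The main obstacle is the forward direction: verifying that the suspension sends $n$-fully faithful morphisms to $(n+1)$-fully faithful ones and that the resulting lift is automatically bipointed. Both reduce to elementary bookkeeping on the hom-spaces of the suspension and impose no essential new difficulty.
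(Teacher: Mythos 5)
Your forward direction contains a genuine gap: the adjunction transposition you invoke does not exist. The suspension/hom adjunction identifies bipointed maps \emph{out of} a suspension, $[A,1]\to (E,a,b)$, with maps $A\to \hom_E(a,b)$. Consequently, a lifting problem for $\hom_C(x,y)\to\hom_D(f(x),f(y))$ against an $n$-fully faithful $g:X\to Y$ transposes to a bipointed lifting problem for $[\hom_C(x,y),1]\to[\hom_D(f(x),f(y)),1]$ against $[g,1]$ --- \emph{not} to one whose left leg is $f:(C,x,y)\to(D,f(x),f(y))$. To form a square with left leg $f$ and right leg $[g,1]$ you would have to manufacture functors $C\to[X,1]$ and $D\to[Y,1]$ from the single map $\hom_C(x,y)\to X$, and there is no canonical way to do this: already for $C=[2]$, $x=0$, $y=2$, the datum $\hom_C(0,2)\to X$ prescribes nothing about where the object $1$ or the arrows $0\to 1$, $1\to 2$ should be sent in $[X,1]$. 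So the $n$-surjectivity of $f$ is never actually brought to bear on the hom-level problem. Moreover, the implication you are trying to prove is not formal: $n$-surjections are the cocomplete closure of $\{\partial\Db_{n+1}\to\Db_m\}$, and although the generators have $(n-1)$-surjective hom-maps, $\hom$ does not commute with the colimits and compositions used to saturate the class (pushouts of $\omega$-categories create new cells between old objects), so one can neither transpose naively nor reduce to generators. This is precisely why the paper's proof is heavier: it runs an intertwined induction with Corollary \ref{cor:stability of n surjective by some pullback}, factors each $\C_m(\pi_0C\to C)\to E_m\to \C_m(\pi_0C\to D)$, shows $E_\bullet$ is an accompanied Segal object, and uses Theorem \ref{theo:eff of n1 dcat} to realize it and control the homs of the resulting quotient.

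Your converse half (factor $f=ip$ with $p$ $n$-surjective and $i$ $(n+1)$-fully faithful, then show $i$ is an equivalence via surjectivity on objects and left cancellation of $(n-1)$-surjections in each hom) is reasonable in outline, but as written it applies the forward direction to $p$ to know that $p$ is locally $(n-1)$-surjective, so it inherits the gap above. Note that the paper establishes this implication independently: the lifting problem against an $(n+1)$-fully faithful morphism is solved directly by passing to Čech nerves of $\pi_0C\to(\cdot)$ via Theorem \ref{theo:eff of n1 dcat} and checking the evaluations at $[0]$ and $[1]$, with no appeal to the forward implication. (A minor point: your base case $n=0$ also needs a convention making ``locally $(-1)$-surjective'' vacuous, on top of Proposition \ref{prop: characterization of 1 surjective2}.) To repair the proposal you would need an independent argument that $n$-surjections are locally $(n-1)$-surjective, and that is exactly where the real content, and the paper's machinery, lies.
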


\begin{cor}
\label{cor:stability of n surjective by some pullback}
Suppose we are given a diagram
% https://q.uiver.app/#q=WzAsNixbMCwwLCJBIl0sWzAsMSwiQSciXSxbMSwwLCJCIl0sWzEsMSwiQiJdLFsyLDAsIkMiXSxbMiwxLCJDJyJdLFswLDEsImYiLDJdLFsyLDMsIiIsMCx7ImxldmVsIjoyLCJzdHlsZSI6eyJoZWFkIjp7Im5hbWUiOiJub25lIn19fV0sWzEsM10sWzUsM10sWzAsMl0sWzQsNSwiZyJdLFs0LDJdXQ==
\begin{equation}
\label{eq:stability of n surjective by some pullback}
\begin{tikzcd}
	A & B & C \\
	{A'} & B & {C'}
	\arrow[from=1-1, to=1-2]
	\arrow["f"', from=1-1, to=2-1]
	\arrow["id_B",equals, from=1-2, to=2-2]
	\arrow[from=1-3, to=1-2]
	\arrow["g", from=1-3, to=2-3]
	\arrow[from=2-1, to=2-2]
	\arrow[from=2-3, to=2-2]
\end{tikzcd}
\end{equation}
If $f$ and $g$ are $n$-surjective, so is $f\times_{id_B}C:A\times_BC\to A'\times_BC'$.
\end{cor}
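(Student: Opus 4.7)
The plan is to proceed by induction on $n$ using the characterization of Corollary \ref{cor:characterization of n surjective}: a morphism is $n$-surjective if and only if it is $0$-surjective on objects and locally $(n-1)$-surjective.

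For the base case $n=0$, I would show that the induced morphism $A\times_B C\to A'\times_B C'$ is surjective on objects. An object of the codomain amounts to a triple $(a',c',\beta')$ with $\beta'$ an equivalence between the images of $a'$ and $c'$ in $B_0$. Using $0$-surjectivity of $f$ and $g$ on objects, I lift $a'$ and $c'$ to objects $a\in A_0$ and $c\in C_0$; the commutativity of the two squares gives equivalences $f(a)|_B\simeq a'|_B$ and $g(c)|_B\simeq c'|_B$ in $B_0$, along which $\beta'$ transports to an equivalence $\beta\colon a|_B\simeq c|_B$. The triple $(a,c,\beta)$ is then a preimage up to equivalence of $(a',c',\beta')$.

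For the inductive step with $n\geq 1$, surjectivity on objects follows from the base case (since $n$-surjective implies $0$-surjective on objects via Corollary \ref{cor:characterization of n surjective}). To verify local $(n-1)$-surjectivity, I would use that the internal hom functor is right adjoint to the suspension $[-,1]$ and therefore preserves limits, yielding for any objects $(a,c,\beta)$ and $(a'',c'',\beta'')$ of $A\times_B C$ the equivalence
\begin{equation*}
\hom_{A\times_B C}((a,c,\beta),(a'',c'',\beta''))\simeq \hom_A(a,a'')\times_{\hom_B(a|_B,a''|_B)}\hom_C(c,c'')
\end{equation*}
and analogously in the codomain. The morphism induced between these two pullbacks fits into a diagram of exactly the shape of \eqref{eq:stability of n surjective by some pullback}, with the left and right legs given by the hom-maps of $f$ and $g$ (which are $(n-1)$-surjective by the characterization corollary applied to $f$ and $g$) and the middle vertical map still the identity of $\hom_B(a|_B,a''|_B)$. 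Applying the inductive hypothesis yields $(n-1)$-surjectivity of the induced morphism on homs, hence the desired $n$-surjectivity.

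The only step that requires care is the hom-pullback identification displayed above; once it is granted, the induction proceeds without obstruction. No deeper difficulty is expected.
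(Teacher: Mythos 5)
Your argument is correct and is essentially the induction the paper runs in its Step~2 of the joint proof: base case on objects (the paper just cites Proposition \ref{prop: characterization of 1 surjective2}), then the identification $\hom_{A\times_BC}((a,c,\beta),(a'',c'',\beta''))\simeq \hom_A(a,a'')\times_{\hom_B(a|_B,a''|_B)}\hom_C(c,c'')$ (legitimate, since $\hom$ is right adjoint to the suspension and the basepoints can be rectified along $\beta$), the inductive hypothesis applied to the resulting diagram of the same shape, and finally the implication ``surjective on objects and locally $(n-1)$-surjective $\Rightarrow$ $n$-surjective''.

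The one point you should make explicit is a dependency issue: in the paper, Corollary \ref{cor:characterization of n surjective} is \emph{not} available as an independent black box — it is proven simultaneously with Corollary \ref{cor:stability of n surjective by some pullback}, and its ``only if'' direction at stage $n$ (the fact you use to know that $f$ and $g$ are locally $(n-1)$-surjective) is itself established using the case $n-1$ of the very statement you are proving. So quoting the characterization wholesale risks circularity unless the two statements are organized into one joint induction on $n$. Fortunately your scheme does close: at stage $n$ you may assume the pullback-stability statement at stage $n-1$, which is exactly what the paper's proof of the characterization at stage $n$ requires (its ``if'' direction, proved via Theorem \ref{theo:eff of n1 dcat}, does not use the pullback statement at all). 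With that bookkeeping spelled out, your proof coincides with the paper's.
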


\begin{proof}[Proof of corollaries \ref{cor:characterization of n surjective} and \ref{cor:stability of n surjective by some pullback}]
We will proceed in three steps. We will first show that a morphism $f$ that is surjective on objects and that induces for any $x,y$, a $(n-1)$-surjection $$\hom_C(x,y)\to \hom_D(f(x),f(y))$$ is $n$-surjective.
We will then show that this implies the case $n$ of the corollary \ref{cor:characterization of n surjective}, and we will then deduce that $n$-surjective morphisms are surjective on objects and are locally $(n-1)$-surjective.
\vspace{0.5cm}

Suppose then first that $f$ is surjective on objects and $$\hom_C(x,y)\to \hom_D(f(x),f(y))$$ is $(n-1)$-surjective.
We then have to show that any diagram of shape 
\begin{equation}
\label{eq:square ff and surj}
\begin{tikzcd}
	C & A \\
	D & B
	\arrow["k", from=1-1, to=1-2]
	\arrow["f"', from=1-1, to=2-1]
	\arrow["g", from=1-2, to=2-2]
	\arrow["l"', from=2-1, to=2-2]
\end{tikzcd}
\end{equation}
where $g$ is $(n+1)$-fully faithful admits a unique lift.

By the naturality of the factorization in $0$-surjective morphisms followed by fully faithful functors and by the stability by right cancellation of $(n+1)$-fully faithful morphisms, we can reduce to the case where the two morphisms $k,l$ in \eqref{eq:square ff and surj} are surjective on objects.

We now choose an inclusion $\pi_0C\to C$. The assumption implies that the morphisms $\pi_0C\to A$, $\pi_0C\to D$, and $\pi_0C\to B$ are $0$-surjective. The theorem \ref{theo:eff of n1 dcat} then implies that lifts in the square \eqref{eq:square ff and surj} correspond to lifts in the square:
\begin{equation}
\label{eq:square ff and surj2}
\begin{tikzcd}
	{\C_\bullet(\pi_0C\to C)} & {\C_\bullet(\pi_0C\to A)} \\
	{\C_\bullet(\pi_0C\to D)} & {\C_\bullet(\pi_0C\to B)}
	\arrow["k", from=1-1, to=1-2]
	\arrow["f"', from=1-1, to=2-1]
	\arrow["g", from=1-2, to=2-2]
	\arrow["l"', from=2-1, to=2-2]
\end{tikzcd}
\end{equation}
Lifts in this square exist and are unique if and only if the induced square by the evaluation on $0$ and $1$ admits unique lifts. Evaluated on $0$, this square is just composed of identities. Evaluated on $1$, it corresponds to the square
\[\begin{tikzcd}
	{\coprod_{x,y\in \pi_0(C)}\hom_C(x,y)} & {\coprod_{x,y\in \pi_0(A)}\hom_C(k(x),k(y))} \\
	{\coprod_{x,y\in \pi_0(D)}\hom_C(f(x),f(y))} & {\coprod_{x,y\in \pi_0(B)}\hom_C(gk(x),gk(y))}
	\arrow[from=1-1, to=1-2]
	\arrow[from=1-1, to=2-1]
	\arrow[from=1-2, to=2-2]
	\arrow[from=2-1, to=2-2]
\end{tikzcd}\]
The hypothesis implies that the left-hand morphism is $(n-1)$-surjective, and the right one is $n$-fully faithful. This square then admits a unique lift which concludes the first step of the proof.

Suppose now given a diagram of shape \eqref{eq:stability of n surjective by some pullback}. We proceed by induction on $k\leq n$. The case $0$ is obvious thanks to proposition \ref{prop: characterization of 1 surjective2}. Suppose the result proven at the stage $k<n$. The functor $f\times_{id_b}g$ is then $0$-surjective on objects and the induction hypothesis implies that it is locally a $(k-1)$-surjection. By the previous step, it is then a $k$-surjection. 

Eventually, let $f:C\to D$ be $n$-surjective. As $f$ is $0$-surjective, it is surjective on objects by proposition \ref{prop: characterization of 1 surjective2}. We choose an inclusion $\pi_0C\to C$. For any $m$, we consider the factorization 
$$\C_m(\pi_0C\to C)\to  E_m\to \C_m(\pi_0C\to D)$$
into a $(n-1)$-surjective functor followed by a $n$-fully faithful morphism. As fully faithful functors are stable by pullback, and as  $\C_0(\pi_0C\to C)\to \C_0(\pi_0C\to D)\sim C$, the case $(n-1)$ of the corollary \ref{cor:stability of n surjective by some pullback} then implies that $E_m$ is a Segal object. Eventually, as we have a morphism $\C_\bullet(\pi_0C\to C)\to  E_\bullet$ inducing an equivalence on objects and as $\C_0(\pi_0C\to C)$ is companionable, so is $E_\bullet$ by remark \ref{rem:accom closed}. By theorem \ref{theo:eff of n1 dcat}, this then implies that $C\to |E_\bullet|_1$ is locally $(n-1)$-surjective and $|E_\bullet|_1\to D$ is $(n+1)$-fully faithful. As the composite of these two morphisms is by assumption $n$-surjective, $|E_\bullet|_1\sim D$, and $f$ is then locally $(n-1)$-surjective, which concludes the proof.
\end{proof}

\begin{cor}
\label{cor:characterization of n surjective pullback} $n$-Surjective functors are closed under pullback.
\end{cor}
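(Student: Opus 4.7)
The plan is to deduce this immediately from Corollary~\ref{cor:stability of n surjective by some pullback}, which already handles a restricted form of pullback stability. Given an $n$-surjective morphism $g:C\to B$ and an arbitrary $f:A\to B$, I want to show that the projection $A\times_BC\to A$ is $n$-surjective.

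To this end, I would apply Corollary~\ref{cor:stability of n surjective by some pullback} with the following choices: the top row is $A\xrightarrow{f}B\xleftarrow{g}C$, the bottom row is $A\xrightarrow{f}B\xleftarrow{\mathrm{id}_B}B$, and the three vertical maps are $\mathrm{id}_A$, $\mathrm{id}_B$, and $g$. Identities satisfy every right lifting property vacuously, hence $\mathrm{id}_A$ is $n$-surjective; by hypothesis so is $g$. The corollary then produces an $n$-surjective morphism $\mathrm{id}_A\times_{\mathrm{id}_B}g:A\times_BC\to A\times_BB$. Since the canonical morphism $A\times_BB\to A$ is an equivalence, this identifies with the pullback projection $A\times_BC\to A$, which is what was to be shown.

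I expect no real obstacle here: the substantive work was already carried out in the proof of Corollary~\ref{cor:stability of n surjective by some pullback}, which proceeded by induction on $n$ and leveraged the effectivity theorem for accompanied $(n,1)$-double categories together with the characterisation of $n$-surjections as maps that are surjective on objects and locally $(n-1)$-surjective (Corollary~\ref{cor:characterization of n surjective}). The present corollary simply extracts unrestricted pullback stability by recognising that any pullback of $g$ along $f$ fits into the restricted configuration treated there once one takes $A'=A$, $C'=B$, and inserts identities in the left and middle columns.
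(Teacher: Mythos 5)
Your argument is correct, and it takes a genuinely different route from the paper's. You deduce the statement by specializing Corollary \ref{cor:stability of n surjective by some pullback}: with top cospan $A\xrightarrow{f}B\xleftarrow{g}C$, bottom cospan $A\xrightarrow{f}B\xleftarrow{\mathrm{id}_B}B$ and vertical maps $(\mathrm{id}_A,\mathrm{id}_B,g)$, that corollary yields that $A\times_BC\to A\times_BB\simeq A$ is $n$-surjective, which is exactly base-change stability; there is no circularity, since that corollary is established (jointly with Corollary \ref{cor:characterization of n surjective}) before the present statement and without using it. The paper instead uses Corollary \ref{cor:characterization of n surjective} and an induction on $n$ to show that a functor is $n$-surjective if and only if every square against $\partial\Db_k\to\Db_k$ admits a (possibly non-unique) lift, and then invokes the general fact that classes defined by such a weak lifting property are stable under pullback. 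The paper's route buys a reusable lifting-theoretic characterization of $n$-surjections (which gives stability under pullback and other limit-type constructions for free), whereas yours is a purely formal specialization of the restricted base-change result--shorter, but resting entirely on the strength already packed into Corollary \ref{cor:stability of n surjective by some pullback}. One cosmetic point: identities are $n$-surjective not ``vacuously'' but because the class of $n$-surjections is cocomplete and hence contains identities (equivalently, equivalences satisfy every unique lifting property); and one should note, as you implicitly do, that composing with the equivalence $A\times_BB\simeq A$ preserves $n$-surjectivity since the class is closed under composition and contains equivalences.
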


\begin{proof}
A direct induction on $n$ using the corollary \ref{cor:characterization of n surjective} implies that a functor $f:C\to D$ is $n$-surjective if and only if any diagram of shape 
\[\begin{tikzcd}
	{\partial \Db_k} & C \\
	{\Db_k} & D
	\arrow[from=1-1, to=1-2]
	\arrow[from=1-1, to=2-1]
	\arrow[from=1-2, to=2-2]
	\arrow[dashed, from=2-1, to=1-2]
	\arrow[from=2-1, to=2-2]
\end{tikzcd}\]
admits a (a priori non-unique) lift. This directly implies the stability of $n$-surjective functors by pullback.
\end{proof}

\begin{cor}
\label{cor:yet another characterization of fully faithful functor}
Let $f:A\to B$ be a functor. Then $f$ is $(n+1)$-surjective if and only if it is surjective on objects and if $$A^{[1]}\to A\overset{\to}{\underset{B}{\times}}A$$ is $n$-surjective. 

The morphism $f$ is $(n+1)$-fully faithful if and only if $$A^{[1]}\to A\overset{\to}{\underset{B}{\times}}A$$ is $n$-fully faithful.
\end{cor}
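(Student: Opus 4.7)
The proof rests on identifying the hom-space maps $\hom_A(a,a')\to\hom_B(f(a),f(a'))$ as the fibres of the comparison $\phi\colon A^{[1]}\to A\dirtimes{B}A$ over pairs $(a,a')\in A_0\times A_0$. Concretely, since both projections $A^{[1]}\to A\times A$ and $A\dirtimes{B}A\to A\times A$ have those hom-spaces as their fibres over $(a,a')$, we obtain a cartesian square
\[\begin{tikzcd}
\hom_A(a,a') \ar[r] \ar[d] & A^{[1]} \ar[d,"\phi"] \\
\hom_B(f(a),f(a')) \ar[r] & A\dirtimes{B}A.
\end{tikzcd}\]

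For the ``if'' directions, pullback stability of $n$-fully faithful morphisms (Proposition \ref{prop:ff 2}) and of $n$-surjections (Corollary \ref{cor:characterization of n surjective pullback}) applied to this cartesian square shows that $n$-fully-faithfulness (resp.\ $n$-surjectivity) of $\phi$ implies the same property for each $\hom_A(a,a')\to\hom_B(f(a),f(a'))$. In the fully-faithful case this is exactly the definition of $(n+1)$-fully-faithfulness of $f$; in the surjective case, combined with the surjectivity-on-objects hypothesis, Corollary \ref{cor:characterization of n surjective} upgrades this to $(n+1)$-surjectivity.

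For the ``only if'' directions, one translates a lifting problem against $\partial\Db_k\to\Db_m$ for $\phi$, via the adjunction $(-)\otimes[1]\dashv(-)^{[1]}$ and the pullback definition of $A\dirtimes{B}A$, into a lifting problem for $f$ against the pushout-product cofibration
\[(\partial\Db_k\otimes[1])\cup_{\partial\Db_k\otimes\partial[1]}(\Db_m\otimes\partial[1])\longrightarrow\Db_m\otimes[1].\]
The key step is then to identify this pushout-product with a cell complex in the cocomplete class generated by $\{\partial\Db_{k+1}\to\Db_{m'}:m'\geq k+1\}$, using the decomposition formulas of Propositions \ref{prop:decomposition} and \ref{prop:otimes and an} for Gray tensor products of suspensions. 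Given this identification, $f$ being $(n+1)$-fully-faithful (resp.\ $(n+1)$-surjective) directly supplies the required (unique) lift, giving the desired property of $\phi$.

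The main obstacle is the combinatorial identification of the pushout-product as a cell complex of globular boundary inclusions. A more synthetic alternative is to use that $\phi$ is a morphism of two-sided fibrations over $A\times A$ (via Lemma \ref{lemma:explicit factorization} and Proposition \ref{prop:directed pullback i bifib}), and then induct on $n$ by applying Proposition \ref{prop:cartesian fibration between arrow} to pass to hom-spaces: these hom-spaces remain two-sided fibrations, with the induced hom-map again fitting into a cartesian square of the same form, which reduces fibrewise conditions to global ones.
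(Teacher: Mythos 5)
Your ``if'' direction is correct and is essentially the paper's own argument: the fibre of $\phi:A^{[1]}\to A\dirtimes{B}A$ over $(a,b)\in A_0\times A_0$ is $\hom_A(a,b)\to\hom_B(f(a),f(b))$, so pullback-stability of $n$-fully faithful maps and of $n$-surjections, together with Corollary \ref{cor:characterization of n surjective}, gives $(n+1)$-fully-faithfulness, resp.\ $(n+1)$-surjectivity, of $f$.

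The ``only if'' direction is where your proposal has a genuine gap. The adjunction translation is fine: a lifting problem for $\phi$ against $\partial\Db_k\to\Db_m$ does correspond, via $\uvar\otimes[1]\dashv(\uvar)^{[1]}$ and the defining limit of $A\dirtimes{B}A$, to a lifting problem for $f$ against the Gray pushout-product $(\partial\Db_k\otimes[1])\cup_{\partial\Db_k\otimes\partial[1]}(\Db_m\otimes\partial[1])\to\Db_m\otimes[1]$. But the step you yourself flag as the main obstacle --- that this pushout-product lies in the cocomplete class generated by $\{\partial\Db_{k+1}\to\Db_{m'}\}$ --- is precisely the nontrivial content, and it is asserted, not proved: it does not follow by citing Propositions \ref{prop:decomposition} and \ref{prop:otimes and an}, and making it precise would require a Steiner-type cell analysis of the Gray cylinder (with the extra check that the strict cell filtration is a colimit in $\ocat$) comparable in weight to the proof of Proposition \ref{prop:otimes and surjection}; for the surjective half you would additionally need a non-unique lifting characterization of $n$-surjectivity (it is a left class, so this is only implicit, in the proof of Corollary \ref{cor:characterization of n surjective pullback}). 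Your ``synthetic alternative'' is also not a proof as stated: Lemma \ref{lemma:explicit factorization} and Propositions \ref{prop:directed pullback i bifib} and \ref{prop:cartesian fibration between arrow} live in the marked setting and concern $A^{[1]^\sharp}$, whereas the statement involves the unmarked Gray cotensor $A^{[1]}$, and knowing that homs of two-sided fibrations are again two-sided fibrations does not produce the identification you need. What the paper actually does here is different and more economical: from Proposition \ref{prop:decomposition} it extracts a cartesian square exhibiting $\hom_{A^{[1]}}(u,v)\to\hom_{A\dirtimes{B}A}(u,v)$ as a pullback, along whiskering by $u$ and $v$, of $\hom_A(a,b')^{[1]}\to\hom_A(a,b')\dirtimes{\hom_B(f(a),f(b'))}\hom_A(a,b')$, i.e.\ of the comparison map attached to the local functor $\hom_A(a,b')\to\hom_B(f(a),f(b'))$; one then inducts on $n$, using that $(n+1)$-fully faithful (resp.\ $(n+1)$-surjective) means locally $n$-fully faithful (resp.\ locally $n$-surjective, plus checking $\phi$ is surjective on objects). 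That identification of hom-objects is the key step missing from both of your routes.
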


\begin{proof}
We will denote by $\alpha$ the functor $A^{[1]}\to A\overset{\to}{\underset{B}{\times}}A$. Remark that we have a cartesian square:
\[\begin{tikzcd}
	{\hom_A(a,b)} & {A^{[1]}} \\
	{\hom_{B}(f(a),f(b))} & { A\overset{\to}{\underset{B}{\times}}A} \\
	{\{a\}\times \{b\}} & {A\times A}
	\arrow[from=1-1, to=1-2]
	\arrow[from=1-1, to=2-1]
	\arrow[from=1-2, to=2-2]
	\arrow[""{name=0, anchor=center, inner sep=0}, from=2-1, to=2-2]
	\arrow[from=2-1, to=3-1]
	\arrow[from=2-2, to=3-2]
	\arrow[""{name=1, anchor=center, inner sep=0}, from=3-1, to=3-2]
	\arrow["\lrcorner"{anchor=center, pos=0.125}, draw=none, from=1-1, to=0]
	\arrow["\lrcorner"{anchor=center, pos=0.125}, draw=none, from=2-1, to=1]
\end{tikzcd}\]
By the definition of a $n$-fully faithful functor, it implies that if $\alpha$ is $n$-fully faithful then $f$ is $(n+1)$-fully faithful. Similarly, by corollary \ref{cor:characterization of n surjective},  if $f$ is surjective on objects and $\alpha$ is $n$-surjective, then $f$ is $(n+1)$-surjective.

It remains to show that if $f$ is $(n+1)$-surjective (resp. $(n+1)$-fully faithful) then $\alpha$ is $n$-surjective (resp. $n$-fully faithful). Let $u:a\to a'$ and $v:b\to b'$ be two elements of $A^{[1]}$. The proposition \ref{prop:decomposition} implies that we have 
a cartesian square:

% https://q.uiver.app/#q=WzAsNixbMCwwLCJcXGhvbV97QV57WzFdfX0odSx2KSJdLFswLDEsIlxcaG9tX3tBXFxvdmVyc2V0e1xcdG99e1xcdW5kZXJzZXR7Qn17XFx0aW1lc319QX0odSx2KSJdLFsxLDAsIlxcaG9tX0EoYSxiJylee1sxXX0iXSxbMSwyLCJcXGhvbV9BKGEsYicpXFx0aW1lcyBcXGhvbV9BKGEsYicpIl0sWzEsMSwiXFxob21fQShhLGInKVxcb3ZlcnNldHtcXHRvfXtcXHVuZGVyc2V0e1xcaG9tX0IoZihhKSxmKGInKSl9e1xcdGltZXN9fVxcaG9tX0EoYSxiJykiXSxbMCwyLCJcXGhvbV9BKGEsYilcXHRpbWVzIFxcaG9tX0EoYScsYicpXFxcXCJdLFswLDFdLFsyLDRdLFs0LDNdLFs1LDMsInZfIVxcdGltZXMgdV8hIiwyXSxbMSw1XSxbMSw0XSxbMCwyXSxbMCwxMSwiIiwxLHsibGV2ZWwiOjEsInN0eWxlIjp7Im5hbWUiOiJjb3JuZXIifX1dLFsxLDksIiIsMSx7ImxldmVsIjoxLCJzdHlsZSI6eyJuYW1lIjoiY29ybmVyIn19XV0=
\[\begin{tikzcd}
	{\hom_{A^{[1]}}(u,v)} & {\hom_A(a,b')^{[1]}} \\
	{\hom_{A\overset{\to}{\underset{B}{\times}}A}(u,v)} & {\hom_A(a,b')\overset{\to}{\underset{\hom_B(f(a),f(b'))}{\times}}\hom_A(a,b')} \\
	\begin{array}{c} \hom_A(a,b)\times \hom_A(a',b')\\ \end{array} & {\hom_A(a,b')\times \hom_A(a,b')}
	\arrow[from=1-1, to=1-2]
	\arrow[from=1-1, to=2-1]
	\arrow[from=1-2, to=2-2]
	\arrow[""{name=0, anchor=center, inner sep=0}, from=2-1, to=2-2]
	\arrow[from=2-1, to=3-1]
	\arrow[from=2-2, to=3-2]
	\arrow[""{name=1, anchor=center, inner sep=0}, "{v_!\times u_!}"', from=3-1, to=3-2]
	\arrow["\lrcorner"{anchor=center, pos=0.125}, draw=none, from=1-1, to=0]
	\arrow["\lrcorner"{anchor=center, pos=0.125}, draw=none, from=2-1, to=1]
\end{tikzcd}\]

We leave it to the reader to check that an obvious induction on $n$ enables us to conclude the proof.
\end{proof}

\subsubsection*{Square functors and variations}
We now collect some results on the square and pair-square functors considered in the appendix of \cite{Gaitsgory_A_study_on_DAG}.

\begin{definition}
The \textit{square} functor is defined as 
$$\begin{array}{rrcl} \Sq^{n+1}:& \npcat &\to&(n,1)\Dcat\\ & D&\mapsto &\C_\bullet(\tau_{n} D \to D)\\ 
\end{array}$$
Unfolding the notation, for any pair of integers $n,m$ and for any $2$-category $D$, $$\mbox{$(\Sq^2_{n}(D))_m$}:=\Hom([m]\otimes[n],D).$$ The functor $\Sq^2$ is then the square functor considered in the appendix of \cite{Gaitsgory_A_study_on_DAG}.
 
We denote by $\npcat^{\Pair}$ the subcategory of $\Filts_{\omega,1}$ whose objects correspond to surjections $C\to D$ that are monomorphisms for $k$-cells for any $k\leq n$. The \textit{pair square} functor is defined as $$\begin{array}{rrcl} \Sq^{n,\Pair}:& \npcat^{\Pair}&\to&(n,1)\Dcat\\ & C\to D&\mapsto &\C_\bullet(C\to D)\\ \end{array}$$
\end{definition}

\begin{definition}
A $(n,1)$-double category $D_\bullet$ is \textit{complete} if the canonical map $$D_0\to D_{E_{eq}}$$ is an equivalence. We denote by $(n,1)\Dcat_{cplt}$ the subcategory of $(n,1)\Dcat$ whose objects are complete $(n,1)$-double categories.
\end{definition}

\begin{lemma}
\label{lemma:horizontal completeness} Let $D_\bullet$ be an accompanied $(n,1)$-double category. Then $D_\bullet$ is complete if and only if $$D_0 \to |D_{1}|$$ is a monomorphism on $k$-cells for any $k \leq n$.
\end{lemma}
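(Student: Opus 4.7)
By Theorem \ref{theo:eff of n1 dcat}, since $D_\bullet$ is accompanied, the counit provides an equivalence $D_\bullet \simeq \C_\bullet(p)$ where $p : D_0 \to |D|_1$ is the $0$-surjective unit. Extending the pullback formula of construction \ref{cons:cechnerve} from $\Delta$ to arbitrary simplicial sets via the Kan extension $D_K := \lim_{\Delta_{/K}} D_\bullet$, one obtains for every simplicial set $K$ a natural pullback
$$D_K \simeq D_0^{K_0} \times_{|D|_1^{K_0}} |D|_1^K.$$

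I apply this with $K = E_{eq}$. The simplicial set $E_{eq}$ has two vertices, so $K_0 = \{0,1\}$. Since $|D|_1$ is an $(n+1)$-category, it is $(\Wsat)_{n+1}$-local, hence in particular local to $E_{eq}\to[0]$, so the constant map provides an equivalence $|D|_1 \simeq |D|_1^{[0]} \to |D|_1^{E_{eq}}$. Substituting,
$$D_{E_{eq}} \simeq (D_0 \times D_0) \times_{|D|_1 \times |D|_1} |D|_1 \simeq D_0 \times_{|D|_1} D_0,$$
and under this identification the canonical morphism $D_0 \to D_{E_{eq}}$ corresponds to the diagonal $D_0 \to D_0 \times_{|D|_1} D_0$ of $p$.

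Consequently, $D_\bullet$ is complete if and only if this diagonal is an equivalence, i.e., iff $p$ is a monomorphism in $\npcat$. A morphism of $(n+1)$-categories is a monomorphism if and only if it induces a monomorphism on the homotopy type of $k$-cells for every $k \leq n+1$. Since $D_0$ is an $n$-category, $(D_0)_{n+1}$ identifies with $(D_0)_n$ via the unit, and the unit $\Ib : (|D|_1)_n \to (|D|_1)_{n+1}$ is itself a monomorphism (being a section of the source/target map). Hence the mono condition at level $n+1$ is automatic given it at levels $k \leq n$, yielding the stated characterization.

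The principal technical point is the first step: justifying the extension of the Čech nerve pullback formula from the globular sums $[m]\in\Delta$ to the simplicial set $E_{eq}$, and verifying that the $(\Wsat)_{n+1}$-locality of $|D|_1$ suffices to reduce $|D|_1^{E_{eq}}$ to $|D|_1$; the remaining steps are routine.
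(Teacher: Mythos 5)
Your route is viable and genuinely different in organization from the paper's. The paper transports the locality condition $\langle\Db_k,E_{eq}\rangle\to\langle\Db_k,[0]\rangle$ (for $k\leq n$) through the adjunction $|\uvar|\dashv\C$, using $D_\bullet\simeq\C_\bullet(|D|)$, turning completeness directly into a unique-lifting condition of $\Db_k\coprod\Db_k\to\Db_k$ against $D_0\to|D|_1$, which is verbatim ``monomorphism on $k$-cells for $k\leq n$''. You instead compute $D_{E_{eq}}$ by extending the Čech-nerve pullback formula along $\lim_{\Delta_{/E_{eq}}}$ and identify the comparison map with the diagonal of $p:D_0\to|D|_1$. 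That computation is sound in substance: limits commute with pullbacks; $\lim_{\Delta_{/K}}D_0^{([m])_0}\simeq D_0^{K_0}$ because evaluation at $[0]$ preserves colimits of presheaves; and $|D|_1^{E_{eq}}\simeq|D|_1$ because $E_{eq}\to[0]$ is already inverted in $\ocat$ itself, so the Gray cotensor collapses (the $(\Wsat)_{n+1}$-locality of $|D|_1$ alone concerns the cartesian mapping space and is not quite the relevant point, though the conclusion is the same). So you correctly reach: $D_\bullet$ is complete iff $p$ is a monomorphism of $(n+1)$-categories, i.e., a monomorphism on $k$-cells for all $k\leq n+1$.

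The genuine gap is the final reduction from $k\leq n+1$ to $k\leq n$. Your justification — that the unit $\Ib:(|D|_1)_n\to(|D|_1)_{n+1}$ is a monomorphism ``being a section of the source/target map'' — is not valid: in $\Hot$ a map admitting a retraction need not be $(-1)$-truncated (for instance $*\to S^1$ is split by the unique map to the point, yet $*\times_{S^1}*\simeq\Omega S^1$ is not contractible). The fact you actually need is that for an $(n+1)$-category being a unit (equivalently, invertible) $(n+1)$-cell is a property, i.e.\ that the composite $C_{\Db_n}\xrightarrow{\sim}C_{\Sigma^nE_{eq}}\to C_{\Db_{n+1}}$ is a monomorphism; this is true, but it relies on the Segal and saturation locality of $|D|_1$ (a Rezk-type argument identifying the space of equivalence data with a union of components of the cell space), not on the existence of a retraction. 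Either supply that argument, or follow the paper's route, which never produces an $(n+1)$-level condition in the first place since the test objects $\langle\Db_k,E_{eq}\rangle$ only involve globes with $k\leq n$.
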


\begin{proof}
Remark that $D_\bullet$ is complete if and only if it is local with respect to $\langle \Db_k,E_{eq}\rangle\to \langle\Db_k,[0]\rangle$ for any $k\leq n$. By theorem \ref{theo:eff of n1 dcat}, $D_\bullet$ is the Čech nerve of its realization, and lifts in the diagram
\[
\begin{tikzcd}
	{\langle \Db_k,E_{eq}\rangle} & {D_{\bullet}} \\
	{\langle\Db_k,[0]\rangle} & {[0]}
	\arrow[from=1-1, to=1-2]
	\arrow[from=1-1, to=2-1]
	\arrow[from=1-2, to=2-2]
	\arrow[from=2-1, to=2-2]
\end{tikzcd}
\]
are then equivalent to lifts in the diagram
\[
\begin{tikzcd}
	{(\Db_k\coprod\Db_k\to \Db_k)} & {(D_0\to |D|_1)} \\
	{(\Db_k\to \Db_k)} & {([0]\to [0])}
	\arrow[from=1-1, to=1-2]
	\arrow[from=1-1, to=2-1]
	\arrow[from=1-2, to=2-2]
	\arrow[from=2-1, to=2-2]
\end{tikzcd}
\]
which are equivalent to lifts in the diagram
\[
\begin{tikzcd}
	{\Db_k\coprod\Db_k} & {D_0} \\
	{\Db_k} & {|D|_1}
	\arrow[from=1-1, to=1-2]
	\arrow[from=1-1, to=2-1]
	\arrow[from=1-2, to=2-2]
	\arrow[from=2-1, to=2-2]
\end{tikzcd}
\]
However, lifts in the previous diagram exist and are unique if and only if $D_0\to |D_{1}|$ induces a monomorphism on $k$-cells, which concludes the proof.
\end{proof}

\begin{remark}
Combined with the remark \ref{rem:realization is companion}, the last lemma then implies that in an accompanied $(1,1)$-double category, the assignment $f\mapsto \overline{f}$ induces a monomorphism between vertical $1$-cells and horizontal $1$-cells.
\end{remark}

\begin{lemma}
\label{lemma:all hori are companion} Let $D_\bullet$ be an accompanied $(1,1)$-double category. Then every horizontal $1$-cell of $D_\bullet$ is a companion of some $f$ if and only if $$D_0\to |D|_1$$ is surjective on $1$-cells.
\end{lemma}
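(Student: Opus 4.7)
The plan is to mirror the argument of Lemma \ref{lemma:horizontal completeness}, but phrased in terms of images rather than lifts. By Theorem \ref{theo:eff of n1 dcat}, the accompanied $(1,1)$-double category $D_\bullet$ is the Čech nerve of its realization $p:D_0\to |D|_1$, so in particular $D_1\sim D_0\dirtimes{|D|_1}D_0$. Under this identification, for any $x,y\in D_0$, the horizontal $1$-cells from $x$ to $y$, i.e.\ the objects of $(D_1)_{x,y}$, correspond to the $1$-cells $u:|x|_1\to |y|_1$ of $|D|_1$.

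Next, I would apply Remark \ref{rem:realization is companion}, which states that under the equivalence $(D_1)_{x,y}\sim \hom_{|D|_1}(|x|_1,|y|_1)$, the canonical map $\hom_{D_0}(x,y)\to (D_1)_{x,y}$ is precisely $f\mapsto \overline{f}$. Consequently, a horizontal $1$-cell $u:x\proto y$ is a companion of some vertical $f:x\to y$ if and only if $u$ lies in the image of the induced map
$$p_{x,y}:\hom_{D_0}(x,y)\to \hom_{|D|_1}(|x|_1,|y|_1).$$
Thus the assertion ``every horizontal $1$-cell is a companion'' is equivalent to asking that $p_{x,y}$ be surjective on objects for all $x,y\in D_0$.

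Finally, to translate this local surjectivity into the global statement, I would invoke Lemma \ref{lemma:realization is 0 surj} together with Proposition \ref{prop: characterization of 1 surjective2}: the realization $p:D_0\to |D|_1$ is $0$-surjective, hence surjective on objects, so every object of $|D|_1$ is equivalent to some $|x|_1$ with $x\in D_0$. Therefore surjectivity of all the maps $p_{x,y}$ on objects is exactly the condition that $p$ be surjective on $1$-cells, which yields the desired equivalence. No significant obstacle is expected, since the argument is a direct combination of the effectivity theorem, the identification of companions via the unit, and the $0$-surjectivity of the realization; the structure is parallel to Lemma \ref{lemma:horizontal completeness}, with ``surjective on $1$-cells'' replacing ``monomorphism on $k$-cells''.
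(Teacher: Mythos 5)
Your argument is correct and is essentially the paper's own proof: the paper simply says the lemma follows directly from Remark \ref{rem:realization is companion}, and your proposal spells out exactly that identification (unit map $\hom_{D_0}(x,y)\to (D_1)_{x,y}\sim \hom_{|D|_1}(|x|_1,|y|_1)$ sending $f$ to $\overline{f}$), together with the $0$-surjectivity of the realization to pass from fiberwise to global surjectivity on $1$-cells.
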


\begin{proof}
This directly follows from remark \ref{rem:realization is companion}.
\end{proof}

\begin{definition}
The functor:
$$
\begin{array}{rrcl}
\Delta^n&\to&\ncat\\
([k_i])_{i\leq n}&\mapsto&[k_1]\otimes...\otimes[k_n]\\
\end{array}$$
induces an adjunction
\[\begin{tikzcd}
	{\widehat{\Delta^n}} & {\ncat:\Cube^n}
	\arrow[""{name=0, anchor=center, inner sep=0}, shift left=2, from=1-1, to=1-2]
	\arrow[""{name=1, anchor=center, inner sep=0}, shift left=2, from=1-2, to=1-1]
	\arrow["\dashv"{anchor=center, rotate=-90}, draw=none, from=0, to=1]
\end{tikzcd}\]
The right adjoint $\Cube^n$ is called the \textit{cube functor}.
\end{definition}

\begin{prop}
\label{prop:ff of squares} For any $n$, the functors $\Sq^{n+1}$, $\Sq^{n+1,\Pair}$, and $\Cube^{n+1}$ are fully faithful.
\end{prop}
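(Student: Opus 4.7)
The plan is to present each functor as a composition of fully faithful pieces, whose main input is theorem~\ref{theo:eff of n1 dcat}.

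First I would treat $\Sq^{n+1}$ by factoring it as
$$\npcat \xrightarrow{\tau} \Filts_{n,1} \xrightarrow{\C} (n,1)\Dcatc \hookrightarrow (n,1)\Dcat,$$
where $\tau(D) := (\tau_n D \to D)$. The counit $\tau_n D \to D$ of the adjunction $\iota \dashv \tau_n$ is an equivalence on $0$-cells, hence a $0$-surjection by proposition~\ref{prop: characterization of 1 surjective2}, so $\tau$ lands in $\Filts_{n,1}$ and $\Sq^{n+1} = \iota \circ \C \circ \tau$ lands in the accompanied subcategory by proposition~\ref{prop:filtration induces accomp}. Fully faithfulness of $\tau$ comes from the adjunction: $\Hom_{\Filt_{n,1}}(\tau D, \tau E)$ is computed by the pullback
$$\Hom(D, E) \times_{\Hom(\tau_n D, E)} \Hom(\tau_n D, \tau_n E),$$
in which the right-hand map $\Hom(\tau_n D, \tau_n E) \to \Hom(\tau_n D, E)$ is the adjunction equivalence, so the pullback reduces to $\Hom(D, E)$. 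Since $\C$ is an equivalence by theorem~\ref{theo:eff of n1 dcat} and the inclusion of accompanied double categories is fully faithful, $\Sq^{n+1}$ is fully faithful.

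For $\Sq^{n+1,\Pair}$, the functor is by construction the restriction of the Čech nerve to the full subcategory $\npcat^{\Pair}$ of the relevant $\Filts$, so theorem~\ref{theo:eff of n1 dcat} immediately supplies fully faithfulness.

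For $\Cube^{n+1}$, I would argue by induction on $n$. The base case $n = 0$ is the classical fully faithfulness of the nerve $\cat \to \widehat{\Delta}$. For the inductive step, the crucial identification is $\Sq^{n+1}(D) \cong D^{[\bullet]}$ as simplicial $n$-categories: the cotensor $D^{[k]}$ is automatically an $n$-category when $D$ is $(n+1)$-category (a $j$-cell corresponds to a map $\Db_j \otimes [k] \to D$, which is trivial for $j > n$), and the Čech-nerve formula combined with the fact that $\tau_n D$ and $D$ share the same $0$-cells gives $\C_{[k]}(\tau_n D \to D) \cong D^{[k]}$. With this identification, Gray--cotensor adjunction yields
$$\Cube^{n+1}(D)_{[k_1], \ldots, [k_{n+1}]} = \Hom([k_1] \otimes \cdots \otimes [k_{n+1}], D) \cong \Cube^n(D^{[k_1]})_{[k_2], \ldots, [k_{n+1}]},$$
so $\Cube^{n+1}$ factors as $\Sq^{n+1}$ followed by applying $\Cube^n$ levelwise. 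Both factors are fully faithful (the first by the step above, the second by the induction hypothesis, since applying a fully faithful functor levelwise yields a fully faithful functor on functor categories), whence $\Cube^{n+1}$ is fully faithful. The main obstacle I anticipate is the explicit identification $\Sq^{n+1}(D) \cong D^{[\bullet]}$, which requires a careful reconciliation of the iterated directed-pullback description of the Čech nerve with the Gray cotensor.
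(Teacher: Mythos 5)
Your treatment of $\Sq^{n+1}$ and $\Sq^{n+1,\Pair}$ coincides with the paper's: both are factored through $\Filts_{n,1}\sim (n,1)\Dcatc$ via theorem \ref{theo:eff of n1 dcat} and the full inclusion $(n,1)\Dcatc\to (n,1)\Dcat$, and your adjunction computation showing that $D\mapsto (\tau_nD\to D)$ is fully faithful correctly fills in a step the paper leaves implicit. Your strategy for $\Cube^{n+1}$ — write it as $\Sq^{n+1}$ followed by levelwise application of $\Cube^{n}$ and induct, with base case the classical nerve — is also the paper's factorization.

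However, the justification you give for the key identification has a genuine error. The Gray cotensor $D^{[k]}$ of an $(n+1)$-category $D$ is \emph{not} automatically an $n$-category: already for $n=0$, if $D$ is a $1$-category then $D^{[k]}\simeq \Fun([k],D)$ has non-invertible morphisms; your parenthetical argument fails because a map $\Db_{n+1}\otimes[k]\to D$ need not be degenerate even though $D$ has no nontrivial cells above dimension $n+1$. Hence $\Sq^{n+1}(D)\cong D^{[\bullet]}$ is false as stated. What is true, and what your argument actually needs, is that $\Sq^{n+1}(D)_{[k]}=\C_{[k]}(\tau_nD\to D)$ corepresents on $\ncat$ the functor $X\mapsto \Hom_{\npcat}(X\otimes[k],D)$ (maps from an $n$-category into the defining pullback collapse to maps $X\otimes[k]\to D$, since $\Hom(X,\tau_nD)\simeq\Hom(X,D)$); equivalently $\Sq^{n+1}(D)_{[k]}\simeq\tau_n(D^{[k]})$, and this suffices because your test objects $[k_1]\otimes\dots\otimes[k_n]$ are $n$-categories, so maps out of them into $D^{[k]}$ factor uniquely through the truncation. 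A second, smaller slip is the variance: the paper's cotensor $D^{[k]}$ is right adjoint to $\uvar\otimes[k]$, so the adjunction peels off the \emph{last} tensor factor, $\Hom([k_1]\otimes\dots\otimes[k_{n+1}],D)\simeq\Hom([k_1]\otimes\dots\otimes[k_n],D^{[k_{n+1}]})$; peeling $[k_1]$ off the left, as you wrote, would require the other cotensoring since $\otimes$ is not symmetric, so the Čech direction of $\Sq^{n+1}$ must be matched with the correct cube coordinate (or one must invoke a duality). With these two corrections your induction goes through and agrees with the paper's argument.
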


\begin{proof}
The functor $\Sq^n$ is the composite:
$$\npcat\to \Filts_{n,1}\sim (n,1)\Dcatc\to (n,1)\Dcat$$
where the first functor sends $C$ onto $\tau_{n-1}C\to C$, 
and the functor $\Sq^{n+1,\Pair}$ is the composite 
$$\npcat^{\Pair}\to \Filts_{n,1}\sim (n,1)\Dcatc\to (n,1)\Dcat$$
As all these functors are fully faithful, so are $\Sq^{n+1}$ and $\Sq^{n+1,\Pair}$. Note now that $\Cube^2$ is the composite: 
$$2\mbox{-$\cat$}\xrightarrow{\Sq^2}(n,1)\Dcat\to \widehat{\Delta^2}$$
and is then fully faithful. 
Furthermore, $\Cube^{n+1}$ is the composite: $$\npcat\xrightarrow{\Sq^{n+1}}(n,1)\Dcat\to \Fun(\Delta^{op},\ncat)\xrightarrow{\Fun(\Delta^{op},\Cube^n)}{ \widehat{\Delta^{n+1}}}$$ and an obvious induction on $n$ implies that $\Cube^{n+1}$ is fully faithful.
\end{proof}

\begin{prop}
\label{prop:characterization of the image.} The functor $\Sq^2:2\mbox{-$\cat$}\to \widehat{\Delta\times \Delta}$ is fully faithful and its image consists of accompanied and complete $(1,1)$-double categories where each horizontal cell is the companion of a vertical cell. The functor $\Sq^{n,\Pair}: 2\mbox{-$\cat$}^{\Pair}\to \widehat{\Delta\times \Delta}$ is fully faithful and its image consists of accompanied and complete $(n,1)$-double categories.
\end{prop}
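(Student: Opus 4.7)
The plan is to dispatch fully faithfulness by citing Proposition \ref{prop:ff of squares}, so the real work is the identification of essential images. In both cases I would factor the functor through the equivalence $|\uvar|:(1,1)\Dcatc\simeq\Filts_{1,1}$ of Theorem \ref{theo:eff of n1 dcat} and translate the defining conditions on the filtration side to the double-category side using Lemmas \ref{lemma:horizontal completeness} and \ref{lemma:all hori are companion}.

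For $\Sq^{1,\Pair}$, by construction this functor factors as
$$2\mbox{-$\cat$}^{\Pair}\hookrightarrow \Filts_{1,1}\xrightarrow{\C}(1,1)\Dcatc\hookrightarrow (1,1)\Dcat,$$
and the first inclusion identifies its source with the full subcategory of those $(1,1)$-filtrations $C\to D$ that are moreover monomorphisms on $k$-cells for $k\leq 1$. By Lemma \ref{lemma:horizontal completeness} applied with $n=1$, this monomorphism condition transfers under $\C$ to exactly the completeness of the resulting accompanied $(1,1)$-double category. This settles the second part of the proposition.

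For $\Sq^2$, the analogous factorization is
$$2\mbox{-$\cat$}\xrightarrow{D\mapsto (\tau_1 D\to D)} \Filts_{1,1}\xrightarrow{\C}(1,1)\Dcatc\hookrightarrow (1,1)\Dcat.$$
The essential image of the first functor consists of those surjective filtrations $A\to B$ for which the canonical comparison $A\to \tau_1 B$ (which exists because $A$ is a $1$-category) is an equivalence. As $A\to B$ is $0$-surjective, so is $A\to \tau_1 B$, and the equivalence condition therefore reduces to asking that for all $x,y\in A$ the map $\hom_A(x,y)\to \pi_0\hom_B(x,y)$ be a bijection. Translating under $\C$, with $A=\mathcal{D}_0$ and $B=|\mathcal{D}|_1$: the surjectivity of this map is $\mathcal{D}_0\to|\mathcal{D}|_1$ being surjective on $1$-cells, which by Lemma \ref{lemma:all hori are companion} is precisely the "every horizontal cell is a companion" condition; the injectivity is $\mathcal{D}_0\to|\mathcal{D}|_1$ being a monomorphism on $1$-cells, which combined with the automatic monomorphism on $0$-cells gives completeness of $\mathcal{D}_\bullet$ by Lemma \ref{lemma:horizontal completeness}. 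This yields the required characterization.

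The only subtle point to verify carefully is the $(\infty,2)$-categorical interpretation of "surjective" and "mono" on $1$-cells of $|\mathcal{D}|_1$: both involve the $\pi_0$ of $1$-morphism spaces (i.e., $1$-cells modulo invertible $2$-cells), matching exactly the passage through $\tau_1$. This is precisely what the two cited lemmas encode, and once one accepts this, the characterizations of both essential images follow directly from Theorem \ref{theo:eff of n1 dcat} without further computation.
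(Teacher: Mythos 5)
Your overall architecture coincides with the paper's: fully faithfulness is quoted from Proposition \ref{prop:ff of squares}, and the essential images are obtained by passing through the equivalence of Theorem \ref{theo:eff of n1 dcat} and translating via Lemmas \ref{lemma:horizontal completeness} and \ref{lemma:all hori are companion} (this is exactly the paper's one-line proof). The $\Sq^{n,\Pair}$ part is correct once you run it for general $n$ rather than $n=1$: the source is by definition the subcategory of $\Filts_{n,1}$ of filtrations that are monomorphisms on $k$-cells for all $k\leq n$, and Lemma \ref{lemma:horizontal completeness} converts this into completeness.

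There is, however, a genuine error at the step you yourself flag as the crux. The truncation $\tau_1$ in $\Sq^2(D)=\C_\bullet(\tau_1D\to D)$ is the right adjoint to the inclusion, so $\hom_{\tau_1D}(x,y)$ is the maximal sub-$\infty$-groupoid of $\hom_D(x,y)$; equivalently the space of $1$-cells of $\tau_1D$ is $D_{\Db_1}\simeq\Hom([1],D)$, consistent with the unfolded formula $\Hom([m]\otimes[n],D)$ in the definition of $\Sq^2$. It is \emph{not} the set $\pi_0\hom_D(x,y)$. Hence the essential image of $D\mapsto(\tau_1D\to D)$ is not characterized by ``$\hom_A(x,y)\to\pi_0\hom_B(x,y)$ is a bijection'' (this fails, indeed does not typecheck, whenever the cores of the hom-categories are not discrete), and your closing claim that ``mono'' and ``surjective'' on $1$-cells are $\pi_0$-level conditions is not what Lemma \ref{lemma:horizontal completeness} encodes: there, ``monomorphism on $k$-cells'' means that $(D_0)_{\Db_k}\to(|D|_1)_{\Db_k}$ is a monomorphism of homotopy types (it arises from unique lifting against the codiagonal $\Db_k\coprod\Db_k\to\Db_k$), strictly stronger than injectivity on $1$-cells modulo invertible $2$-cells. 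With your $\pi_0$-reading the appeal to that lemma is unjustified. The correct bookkeeping is space-level: since $A\to\tau_1B$ is automatically essentially surjective, it is an equivalence iff $A_{\Db_0}\to B_{\Db_0}$ and $A_{\Db_1}\to B_{\Db_1}$ are monomorphisms and surjective on $\pi_0$ (a monomorphism of homotopy types that is $\pi_0$-surjective is an equivalence, and the Segal condition then gives a levelwise equivalence); these are exactly the hypotheses of Lemmas \ref{lemma:horizontal completeness} and \ref{lemma:all hori are companion}, so your proof closes after this replacement. Note also that the monomorphism on $0$-cells is not ``automatic'' for a general accompanied $(1,1)$-double category (the Čech nerve of $\{0,1\}\to[0]$ is accompanied, yet its realization identifies the two objects); it holds trivially in the forward direction, but in the converse direction it must be extracted from completeness via Lemma \ref{lemma:horizontal completeness}.
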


\begin{proof}
This follows from lemmas \ref{lemma:horizontal completeness} and \ref{lemma:all hori are companion}.
\end{proof}

\begin{remark}
\label{rem:gait}
The proposition \ref{prop:ff of squares} implies theorems 4.1.3, 4.3.5, and 4.6.3 of \cite{Gaitsgory_A_study_on_DAG}, and proposition \ref{prop:characterization of the image.} implies theorem 5.2.3 of \cite{Gaitsgory_A_study_on_DAG}. 
A proof of the fully faithfulness of the square functor was already give by Abellán in \cite{Abellan2023}.
\end{remark}

\begin{theorem}
\label{theo:square commutes with cil}
The functors $$\mbox{$\Sq^{2}$}:2\mbox{-$\cat$}\to (1,1)\Dcat_{cplt}$$ preserve colimits.
\end{theorem}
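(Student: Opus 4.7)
The strategy is to factor $\Sq^2$ via the equivalence of theorem \ref{theo:eff of n1 dcat} and apply an adjoint argument. Since $\tau_1 D \to D$ is bijective on objects and on $1$-cells (as $\tau_1 D$ is the maximal $(1,1)$-subcategory of the $2$-category $D$), lemma \ref{lemma:horizontal completeness} implies that $\Sq^2(D) = \C_\bullet(\tau_1 D \to D)$ is complete; combined with proposition \ref{prop:filtration induces accomp}, the functor $\Sq^2$ factors as
$$2\mbox{-$\cat$} \xrightarrow{F} \Filts_{1,1}^{\mathrm{mono}} \xrightarrow{\C} (1,1)\Dcatcc \xrightarrow{j} (1,1)\Dcat_{cplt},$$
where $F(D) := (\tau_1 D \to D)$ and $\Filts_{1,1}^{\mathrm{mono}} \subset \Filts_{1,1}$ denotes the full subcategory of filtrations that are moreover monomorphisms on $1$-cells. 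Via lemma \ref{lemma:horizontal completeness}, the middle arrow $\C$ restricts to an equivalence onto $(1,1)\Dcatcc$, hence preserves colimits.

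The heart of the argument is showing that $F$ is a left adjoint. Its right adjoint $R: \Filts_{1,1}^{\mathrm{mono}} \to 2\mbox{-$\cat$}$ sends $(A_0 \to A_1)$ to the sub-$2$-category of $A_1$ spanned by the $1$-cells in the image of $A_0 \hookrightarrow A_1$, together with the $2$-cells of $A_1$ with such $1$-cell boundary. The adjunction $F \dashv R$ follows from the universal property of $\iota \dashv \tau_1$: a $2$-functor $\phi: D \to A_1$ has its induced map $\tau_1\phi: \tau_1 D \to \tau_1 A_1$ factoring through the subcategory $A_0 \hookrightarrow \tau_1 A_1$ if and only if $\phi$ itself factors through $R(A_0 \to A_1)$, and this is precisely the compatibility condition defining a morphism of filtrations from $(\tau_1 D \to D)$ to $(A_0 \to A_1)$. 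Thus $F$ preserves all colimits.

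For the inclusion $j$ to preserve colimits, it suffices to verify that the completion functor $(1,1)\Dcat \to (1,1)\Dcat_{cplt}$ preserves accompaniment: then, for a diagram in $(1,1)\Dcatcc$, the colimit in $(1,1)\Dcat_{cplt}$ is obtained by applying completion to the colimit in $(1,1)\Dcat$, which is already accompanied by proposition \ref{prop:comp stable by colimit}, so the result still lies in $(1,1)\Dcatcc$ and the universal property in $(1,1)\Dcat_{cplt}$ coincides with that in $(1,1)\Dcatcc$. The main obstacle is this last verification: the completion identifies horizontal equivalences with identities without removing cells, so the companion structure of proposition \ref{prop:comp stable by comp} should propagate to the completed object, but making this rigorous demands a careful analysis of how completion acts on the companion data, possibly via an explicit description through the Čech nerve--realization adjunction, in the spirit of the proof of proposition \ref{prop:comp stable by colimit}.
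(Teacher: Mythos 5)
Your reduction is structured differently from the paper's proof, and parts of it are genuinely nice: the observation that $D\mapsto(\tau_1 D\to D)$ becomes a \emph{left adjoint} once you restrict the target to surjective filtrations that are monomorphisms on cells of dimension $\leq 1$ (with right adjoint the ``wide-on-$2$-cells'' subcategory construction $R$) is a clean way around the fact that $\tau_1$ itself does not preserve colimits, and the identification of $(1,1)\Dcatcc$ with this subcategory of $\Filts_{1,1}$ via lemma \ref{lemma:horizontal completeness} is correct (note, though, that you must require the filtration to be a monomorphism on $0$-cells as well as on $1$-cells, both in the definition of $\Filts_{1,1}^{\mathrm{mono}}$ and in the factorization argument for $F\dashv R$; with only the $1$-cell condition the map $A_0\to\tau_1 A_1$ need not be a monomorphism and the restricted equivalence with $(1,1)\Dcatcc$ fails).

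The problem is the step you defer at the end: that the completion functor $\Fb:(1,1)\Dcat\to(1,1)\Dcat_{cplt}$ preserves accompaniment, equivalently that $(1,1)\Dcatcc\hookrightarrow(1,1)\Dcat_{cplt}$ preserves colimits. This is not a routine verification that can be waved through with ``the companion structure should propagate'': it is precisely the mathematical heart of the theorem. Remark \ref{rem:accom closed} does not apply, since completion changes the value at $[0]$; and any attempt to identify $\Fb D$ for accompanied $D$ (say as the Čech nerve of the image factorization of $D_0\to|D|_1$) requires showing that the comparison map is inverted by localization at the completeness generators $\langle\Db_k,E_{eq}\rangle\to\langle\Db_k,[0]\rangle$, which is exactly the kind of computation the theorem is about. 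The paper handles this by working at the level of $\widehat{\Theta_2}$: it exhibits $\Sq^2$ as the functor induced on the localization by the colimit-preserving composite $\Fb\circ\C\circ\eta$, so the whole proof reduces to checking that the generators of $\W_2$ are inverted; the only nontrivial one, $[E_{eq},1]\to[[0],1]$, is sent (via corollary \ref{cor:cech preserves colimits}) to $\Sq([1])\coprod_{\langle[0],[1]\rangle}\Sq([1])\to\Sq([1])$, and the fact that $\Fb$ inverts this map is imported from corollary 2.5.16 of \cite{Ruit2025}. Your missing step is essentially equivalent to this input (restricted to the colimits you need, it is literally the statement that the completion of a colimit of accompanied complete objects is still accompanied), so as written the proposal is incomplete: either supply an argument of this type (e.g.\ reduce, as the paper does, to a generating pushout and invoke Ruit's computation), or the proof does not go through.
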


\begin{proof}
As a complete category can be defined as a local object with respect to a set of maps, we have an adjunction
\[\begin{tikzcd}
	{\Fb:(1,1)\Dcat} & {(1,1)\Dcat_{cplt}:\iota}
	\arrow[""{name=0, anchor=center, inner sep=0}, shift left=2, from=1-1, to=1-2]
	\arrow[""{name=1, anchor=center, inner sep=0}, shift left=2, from=1-2, to=1-1]
	\arrow["\dashv"{anchor=center, rotate=-90}, draw=none, from=0, to=1]
\end{tikzcd}\]
Let $\eta:\widehat{\Theta_{2}}\to \Filts_{1,1}$ be the functor that sends $X$ to $\tau_1 X\to X$.
We then define $\alpha$ as the composite
$$\widehat{\Theta_{2}}\xrightarrow{\eta} \Filts_{1,1}\xrightarrow{\C} (1,1)\Dcat \xrightarrow{\Fb} (1,1)\Dcat_{cplt}$$
Note that all these functors preserve colimits, and thus so does $F$. Furthermore, $\eta$ sends $(\Wseg)_{2}$ and $E_{eq}\to [0]$ to equivalences. We are now willing to show that $\eta$ sends $[E_{eq},1]\to [[0],1]$ to an equivalence.

Note that $\C\circ \alpha$ sends this morphism to $\C_{\bullet}(\partial \Db_2\to \Db_1)$ which is, by corollary \ref{cor:cech preserves colimits}, the morphism 
$$\Sq([1])\coprod_{\langle [0],[1]\rangle}\Sq([1])\to \Sq([1])$$
By corollary 2.5.16 of \cite{Ruit2025}, this morphism is sent by $\Fb$ to an equivalence. 

The colimit-preserving functor $\alpha$ then sends every morphism of $\W_2$ to an equivalence. It then induces a colimit-preserving functor 
$$2\mbox{-$\cat$}\to (1,1)\Dcat_{cplt}$$
which by construction coincides with the functor $\Sq^2$.
\end{proof}

\subsection{$(n,m)$-Effectivity}

\begin{lemma}
\label{lemma:technical |||3bis}
Let $D_\bullet$ be a $(\omega,m+1)$-double category. We have a natural equivalence:
$$|[n]\mapsto |D_{[\bullet,n]}|_m|_1\sim |D_\bullet|_{m+1}\sim (|a\mapsto (|D_{[a,\bullet]}|_1)^{\circ}|_{m})^{\circ}$$
\end{lemma}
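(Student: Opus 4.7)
The plan is to reduce, via colimit-preservation and the universal property of the functor $\langle\uvar,\uvar\rangle$, to the case of a representable generator, and then verify both equivalences by direct computation.

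Both the source and targets of the claimed equivalences are colimit-preserving functors of the argument $D_\bullet$ into $\ocat$. Indeed, the realizations $|\uvar|_k$ preserve colimits by construction (being coends), restriction along the functors $([n],a)\mapsto [a^{*},n]$ and $a\mapsto [a,\bullet]$ is colimit-preserving, and the dualities as well as Gray tensor products are colimit-preserving in each variable. By remark \ref{rem univ of langle nm}, any colimit-preserving functor on $(\omega,m+1)\Dcat$ is determined by its restriction to objects of the form $\langle C,K\rangle$ with $C\in\ocat$ and $K$ an $(m+1)$-category. Using that every element of $\Theta_{m+1}$ is, as an $(m+1)$-category, a colimit of globular sums of the form $[a,1]$ with $a\in\Theta_m$ together with $[0]$, and then that $[a,n]$ is built from such pieces, it suffices to verify both equivalences on $D_\bullet=\langle C,[a,n]\rangle$ for $C\in\ocat$, $a\in\Theta_m$, and $n\in\Delta$.

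For such $D_\bullet$, remark \ref{remark: an easy description of relation case nm} gives $|D_\bullet|_{m+1}=C\otimes [a,n]$. For the first equivalence, one computes $|D_{[\bullet,k]}|_m$ explicitly for each $k\in\Delta$: the Segal condition and the formula $|\langle C',K'\rangle|_m=C'\otimes\tau_m K'$ identify this with a simplicial $\omega$-category built from $C$, $a$, and Gray tensor products with simplices. Taking the $|\uvar|_1$ yields a coend over $\Delta$, and applying Fubini for coends together with proposition \ref{prop:otimes and an}:
$$[a,n]\;\simeq\;\operatorname{colim}\Bigl(\textstyle\coprod_{k\le n}\{k\}\leftarrow a\otimes\textstyle\coprod_{k\le n}\{k\}\to a\otimes [n]\Bigr),$$
reassembles this coend as $C\otimes[a,n]$, matching the right-hand side.

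For the second equivalence, the same strategy applies but with the roles of the two variables swapped, so that one uses proposition \ref{prop:an other description of the suspension}:
$$[a,n]\;\simeq\;\operatorname{colim}\Bigl(\textstyle\coprod_{k\le n}\{k\}\leftarrow\textstyle\coprod_{k\le n}\{k\}\otimes a^{\circ}\to [n]\otimes a^{\circ}\Bigr),$$
which places the $[n]$-factor on the left of $a^{\circ}$. The full duality $(\uvar)^{\circ}$ appearing in the formula is precisely the bookkeeping needed to pass between the left-of/right-of conventions, and compatibility with Gray tensor products is ensured by proposition \ref{prop:otimes and dualities}. Naturality of the equivalences follows from that of all constructions involved.

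The main obstacle is the careful tracking of the pushout correction between $[a,n]$ and $a\otimes[n]$ (respectively $[n]\otimes a^{\circ}$): since these are only equal up to the pushouts from propositions \ref{prop:otimes and an} and \ref{prop:an other description of the suspension}, one must verify that the extra colimit pieces $\coprod_{k\le n}\{k\}$ and $a\otimes\coprod_{k\le n}\{k\}$ contribute compatibly on both sides, i.e., that the Fubini-exchanged double coend sees the same pushout corrections as the one-step coend $\coend_{\Theta_{m+1}}D_c\otimes c$. Once this matching is verified on generators, the general statement follows by colimit-preservation.
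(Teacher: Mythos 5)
Your proposal follows essentially the same route as the paper: reduce by colimit-preservation (via remark \ref{rem univ of langle nm}, which the paper combines with proposition \ref{prop:delta theta and delta}) to generators $\langle C,[a,n]\rangle$ with $a\in\Theta_m$, compute both iterated realizations as explicit colimits of spans built from Gray tensor products, and identify them with $C\otimes[a,n]\sim |D_\bullet|_{m+1}$ using propositions \ref{prop:otimes and an}, \ref{prop:an other description of the suspension} and \ref{prop:otimes and dualities}, concluding by remark \ref{remark: an easy description of relation case nm}. The coend-Fubini/pushout-correction bookkeeping you flag is exactly the span comparison the paper writes out, so the argument is correct and matches the paper's proof.
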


\begin{proof}
As all these functors commute with colimits, proposition \ref{prop:delta theta and delta} and remark \ref{rem univ of langle nm} imply that it is sufficient to construct these equivalences for the $(\omega,m)$-double category of the shape $\langle C,[a,k]\rangle$ for $C$ an $\omega$-category, $a$ an element of $\Theta_m$, and $k$ an integer.

In this case,  $|[n]\mapsto |\langle C,[a,k]\rangle_{[\bullet,n]}|_m|_1$ is the colimit of the span:
% https://q.uiver.app/#q=WzAsMyxbMiwwLCJDXFxvdGltZXMgYVxcb3RpbWVzIFtrXSJdLFsxLDAsIlxcY29wcm9kX3tsXFxsZXEga31DXFxvdGltZXMgYVxcb3RpbWVzIFxce2xcXH0iXSxbMCwwLCJcXGNvcHJvZF97bFxcbGVxIGt9Q1xcb3RpbWVzICBcXHtsXFx9Il0sWzEsMl0sWzEsMF1d
\[\begin{tikzcd}
	{\coprod_{l\leq k}C\otimes  \{l\}} & {\coprod_{l\leq k}C\otimes a\otimes \{l\}} & {C\otimes a\otimes [k]}
	\arrow[from=1-2, to=1-1]
	\arrow[from=1-2, to=1-3]
\end{tikzcd}\]
and $(|a\mapsto (|\langle C,[a,k]\rangle_{[a,\bullet]}|_1)^{\circ}|_{m})^{\circ}$ is the colimit of the span:
% https://q.uiver.app/#q=WzAsMyxbMiwwLCIoKENcXG90aW1lcyBba10pXlxcY2lyYyBcXG90aW1lcyBhKV5cXGNpcmMiXSxbMSwwLCJcXGNvcHJvZF97bFxcbGVxIGt9KChDXFxvdGltZXMgXFx7bFxcfSleXFxjaXJjIFxcb3RpbWVzIGEpXlxcY2lyYyJdLFswLDAsIlxcY29wcm9kX3tsXFxsZXEga30oKENcXG90aW1lcyBcXHtsXFx9KV5cXGNpcmMpXlxcY2lyYyJdLFsxLDJdLFsxLDBdXQ==
\[\begin{tikzcd}
	{\coprod_{l\leq k}((C\otimes \{l\})^\circ)^\circ} & {\coprod_{l\leq k}((C\otimes \{l\})^\circ \otimes a)^\circ} & {((C\otimes [k])^\circ \otimes a)^\circ}
	\arrow[from=1-2, to=1-1]
	\arrow[from=1-2, to=1-3]
\end{tikzcd}\]
By propositions \ref{prop:otimes and dualities}, \ref{prop:otimes and an} and \ref{prop:an other description of the suspension}, these two functors are canonically equivalent to $C\otimes [a,k]$, which concludes the proof by remark \ref{remark: an easy description of relation case nm}.
\end{proof}

\begin{construction}
Let $D_\bullet$ be a $(\omega,m)$-double category and $f:A\to D_0$ a morphism. We denote by $f^*D_\bullet$ the $(\omega,m)$-double category whose value on $a$ fits in the pullback
\[\begin{tikzcd}
	{f^*D_a} & {D_a} \\
	{\prod_{[0]\to a}A} & {\prod_{[0]\to a}D_{[0]}}
	\arrow[from=1-1, to=1-2]
	\arrow[from=1-1, to=2-1]
	\arrow[from=1-2, to=2-2]
	\arrow[from=2-1, to=2-2]
\end{tikzcd}\]
\end{construction}

\begin{lemma}
\label{lemma:lumpen} Let $D_\bullet$ be an accompanied $(\omega,m)$-double category, and $p:X\to D_0$ a $0$-surjection. Then for any $k>0$, the canonical morphism $|p^*D|_k\to |D|_k$ is an equivalence.
\end{lemma}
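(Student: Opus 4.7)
The plan is to reduce the claim at filtration level $k$ to the already-established $(\omega,1)$-case (corollary \ref{cor: naturality}) via the decomposition provided by Lemma \ref{lemma:technical |||3bis}. Since $|D|_k$ depends only on the restriction of $D$ to $\Theta_k\subset\Theta_m$ and the construction $p^*$ commutes with this restriction, we may assume $m=k$; the restriction of an accompanied double category to $\Theta_k$ is itself accompanied. When $k=1$, this is exactly corollary \ref{cor: naturality}, since a $0$-surjection is essentially surjective on objects. For $k\geq 2$, applying the second equivalence of Lemma \ref{lemma:technical |||3bis} (with $m+1=k$) to both $D$ and $p^*D$ gives natural equivalences
$$|D|_k\sim\bigl(|F_D|_{k-1}\bigr)^\circ,\qquad |p^*D|_k\sim\bigl(|F_{p^*D}|_{k-1}\bigr)^\circ,$$
where $F_D,F_{p^*D}:\Theta_{k-1}^{op}\to\ocat$ are defined by $F_D(a):=(|D_{[a,\bullet]}|_1)^\circ$ and $F_{p^*D}(a):=(|(p^*D)_{[a,\bullet]}|_1)^\circ$.

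The key observation is that $(p^*D)_{[a,\bullet]}$ coincides, as an $(\omega,1)$-double category, with the pullback $p^*(D_{[a,\bullet]})$ of $D_{[a,\bullet]}$ along $p:X\to D_0=(D_{[a,\bullet]})_0$. Indeed, both have $n$-th level equal to $D_{[a,n]}\times_{D_0^{n+1}}X^{n+1}$, with pullback taken along the same vertex map induced by the inclusion $\coprod_{n+1}[0]\hookrightarrow[a,n]$ in $\Theta_k$. Combined with remark \ref{rem:if D is accompagned, so is Dan}, which states that $D_{[a,\bullet]}$ is accompanied for every $a\in\Theta_{k-1}$, corollary \ref{cor: naturality} applies pointwise and yields a natural equivalence $F_{p^*D}\to F_D$ in $\Fun(\Theta_{k-1}^{op},\ocat)$.

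Applying the colimit-preserving functor $|\uvar|_{k-1}$ (via remark \ref{remark: an easy description of relation case nm}) followed by the full duality gives the desired equivalence $|p^*D|_k\sim|D|_k$. The main subtlety in this argument is the identification $(p^*D)_{[a,\bullet]}\sim p^*(D_{[a,\bullet]})$ together with its naturality in $a$, which amounts to careful bookkeeping of the vertex inclusions in globular sums but introduces no new conceptual difficulty beyond the $(\omega,1)$-effectivity of corollary \ref{cor: naturality}.
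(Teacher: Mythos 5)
Your proposal is correct and follows essentially the same route as the paper: reduce to $m=k$ by truncation, invoke Remark \ref{rem:if D is accompagned, so is Dan} to get that each $D_{[a,\bullet]}$ is accompanied, and feed Corollary \ref{cor: naturality} through the decomposition of Lemma \ref{lemma:technical |||3bis} to transport the levelwise equivalences $|p^*D_{[a,\bullet]}|_1\sim |D_{[a,\bullet]}|_1$ to an equivalence $|p^*D|_k\sim |D|_k$. Your explicit identification $(p^*D)_{[a,\bullet]}\sim p^*(D_{[a,\bullet]})$ is a point the paper leaves implicit, and making it precise is a welcome addition rather than a deviation.
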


\begin{proof}
Up to replacing $D_\bullet$ by $\tau_{\omega,k}D_\bullet$, we can suppose $m=k$. The remark \ref{rem:if D is accompagned, so is Dan} states that  $D_{[a,\bullet]}$ is accompanied for any $a$. By lemma \ref{lemma:technical |||3bis} and corollary \ref{cor: naturality}, this induces 
equivalences
$$|f^*D_\bullet|_m\sim (|a\mapsto (|f^*D_{[a,\bullet]}|_{1})^{\circ}|_{m})^{\circ}\sim (|a\mapsto (|D_{[a,\bullet]}|_{1})^{\circ}|_{m})^{\circ}\sim |f^*D_\bullet|_m.$$
\end{proof}

\begin{lemma}
\label{lemma: Dan accompanied} Let $D_\bullet$ be an accompanied $(\omega,m)$-double category such that $D_0$ is a homotopy type. Then for any $n$, $D_{[\bullet,n]}$ is an accompanied $(\omega,m-1)$-double category.
\end{lemma}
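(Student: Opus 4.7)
The plan is to proceed by induction on $m$, using the Segal decomposition in $\Theta_m$ to reduce the general $n$ case to $n=1$, and then to exploit the hypothesis that $D_0$ is a homotopy type in conjunction with the accompaniedness of $D_\bullet$ itself.

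The base case $m=1$ is immediate since $(\omega,0)$-double categories are automatically accompanied by Definition \ref{defi:nmaccomp}(1). For $m=\omega$, the statement reduces to the finite cases via the truncations $\tau_{\omega,k}D_\bullet$ using Definition \ref{defi:nmaccomp}(3). Thus one is reduced to $2\leq m<\omega$, and by Remark \ref{rem:autre def ofthe accompagnability} it suffices to verify (i) that $\tau_{\omega,1}(D_{[\bullet,n]})$ is an accompanied $(\omega,1)$-double category, and (ii) that for every $x,y\in D_n=(D_{[\bullet,n]})_0$, the $(\omega,m-2)$-double category $\hom^h_{D_{[\bullet,n]}}(x,y)_\bullet$ is accompanied.

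For the reduction to $n=1$, the Segal condition applied to the decomposition $[a,n]\simeq [a,1]\vee_{[0]}\cdots\vee_{[0]}[a,1]$ in $\Theta_m$ yields
$$D_{[a,n]}\simeq D_{[a,1]}\times_{D_0}\cdots\times_{D_0}D_{[a,1]},$$
so $D_{[\bullet,n]}$ is an $n$-fold fiber product of $D_{[\bullet,1]}$ over the constant $D_0$. Since $D_0$ is a homotopy type, the endpoint-matching in these fiber products contributes only equivalence data, so companionship and bicartesianness on each factor $D_{[\bullet,1]}$ assemble component-wise into companionship and bicartesianness on the product via Propositions \ref{prop:comp stable by comp} and \ref{prop:cartesian cells are a marking}, reducing to the case $n=1$. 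For $n=1$, I would combine three ingredients: the fact that $D_\bullet$ being accompanied furnishes companions for cells of $D_0$ in the horizontal direction of $D_\bullet$; the hypothesis that $D_0$ is a homotopy type, which ensures that all vertical data at the base are equivalences and therefore the companion structure can be propagated from $D_0$-level cells up to $D_1$-level cells; and the inductive hypothesis applied to the hom double categories $\hom^h_D(x',y')_\bullet$ arising from the accompaniedness of $D_\bullet$, to handle condition (ii) by relating $\hom^h_{D_{[\bullet,1]}}(x,y)_\bullet$ to appropriate fibers of $D_{[[a,1],1]}$ over $D_0$.

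The main obstacle will be the $n=1$ case of condition (i): showing that a $1$-cell $f$ in $D_1$ admits a companion in $D_{[\bullet,1]}$ requires constructing an appropriate square in $D_{[[1],1]}$ satisfying the invertibility axioms of a companion, and this must be bootstrapped from the companion structure that $D_\bullet$ provides at the $D_0$-level. The homotopy-type hypothesis on $D_0$ is essential here: it ensures that the endpoints of $f$ are joined by invertible vertical data in $D_0$, so the horizontal companions of those endpoint equivalences can be composed with $f$ to yield the desired companion of $f$. The higher-dimensional companion conditions then follow by a parallel induction on cell dimension, using at each stage Proposition \ref{prop:comp stable by comp} together with the stability of companion structure under the hom double categorical operations.
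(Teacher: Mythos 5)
Your reduction of the general $n$ to $n=1$ via the Segal decomposition of $D_{[\bullet,n]}$ as an iterated fiber product of $D_{[\bullet,1]}$ over the constant double category on $D_0$ is in the spirit of the paper (which phrases this as stability of accompaniedness under pullback, plus the observation that $a\mapsto D_0$ is accompanied because $D_0$ is a homotopy type). The gap is in your $n=1$ argument for condition (i), and it is a confusion of directions. A $1$-cell $f$ of $D_1$, viewed as a vertical cell of $E_\bullet:=D_{[\bullet,1]}$, needs companion data (a horizontal cell and companionship unit/counit) living in $E_{[1]}=D_{[[1],1]}$, i.e.\ in the $\Theta_{m-1}$-direction. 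The companion structure that accompaniedness of $D_\bullet$ provides ``at the $D_0$-level'' (via $\tau_{1,1}D_\bullet$ and $\hom^v$) consists of objects and cells of $D_1$, i.e.\ data in the $\Delta$-direction; composing such data, or companions of the endpoint equivalences, with $f$ only produces further cells of $D_1$ and cannot yield the required square in $D_{[[1],1]}$. (Moreover, when $D_0$ is a homotopy type this part of the hypothesis is essentially vacuous.) The actual source of the companion of $f$ is the other half of the hypothesis: since $D_0$ is a homotopy type, every positive-dimensional cell of $D_{[a,1]}$ lies in a fiber $\hom^h_D(x,y)_a=(D_{[a,1]})_{x,y}$ over a pair of objects, and these fibers are accompanied by assumption; since morphisms of double categories preserve companion structures (cf.\ remarks \ref{rem:accom closed} and \ref{rem:foncteur preserves accompanied marking}), the companion found in the fiber is a companion in $D_{[\bullet,1]}$. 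This single observation is the paper's argument, and it settles your conditions (i) and (ii) at once, with no induction on $m$.

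Your handling of condition (ii) also does not go through as stated: you want to apply the induction hypothesis (the lemma for $m-1$) to $\hom^h_D(x',y')_\bullet$, but the level-zero part of that double category is the fiber of $D_1$ over $(x',y')$, a general $\omega$-category rather than a homotopy type, so the lemma's hypothesis is not satisfied there. Once you use the fiber decomposition above, neither the induction on $m$ nor this application is needed: $D_{[\bullet,1]}$ is accompanied directly, the constant double category $a\mapsto D_0$ is accompanied, and the Segal pullback then gives that $D_{[\bullet,n]}$ is accompanied for every $n$.
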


\begin{proof}
By assumption, for any $x,y$ in $D_0$, the $(\omega,m-1)$-double category $(D_{[\bullet,1]})_{x,y}$ is accompanied. As $D_0$ is a homotopy type, every cell of $D_{[a,1]}$ is in one of the $(D_{[a,1]})_{x,y}$ for some $x,y$. Unfolding the definition, this implies that the $(n,m-1)$-double category $a\mapsto D_{[a,1]}$ is accompanied. Furthermore, as $D_0$ is a homotopy type, the $(n,m-1)$-double category $a\mapsto D_0$ is accompanied. As accompanied $(\omega,m-1)$-double categories are stable under pullback, this implies that $D_{[\bullet,n]}$ is accompanied for any $n$.
\end{proof}

\begin{theorem}
\label{theo:eff of nm dcat} Let $ n,m \in \Nb \cup \{\omega\}$. The realization of a $(n,m)$-double category is always a surjective $(n,m)$-filtration, and the directed marked Čech nerve of a $(n,m)$-filtration is always an accompanied $(n,m)$-double marked category.

The realization and the directed marked Čech nerve induce inverse equivalences:
$$|\uvar|:(n,m)\Dcatc\sim \Filts_{n,m}:\C.$$
\end{theorem}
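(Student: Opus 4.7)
The plan is to proceed by induction on $m$, with the base case $m=1$ being exactly Theorem \ref{theo:eff of n1 dcat}. Since the functors $|\uvar|$ and $\C$ commute with truncation $\tau_{k,l}$, and since $(n,m)$-objects embed into $(\omega,m)$-objects, we may reduce to $n = \omega$. The two assertions that realization lands in $\Filts_{n,m}$ and that the Čech nerve is accompanied are already Propositions \ref{prop:image of real} and \ref{prop:filtration induces accomp}; what remains is to show that the unit $D_\bullet \to \C(|D|)$ and the counit $|\C(A)| \to A_\bullet$ of adjunction (\ref{cons:adj betwn highly cech}) are equivalences when restricted to accompanied $(\omega,m)$-double categories and surjective $(\omega,m)$-filtrations respectively.

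For the unit, assume inductively the result at stage $m$ and let $D_\bullet$ be an accompanied $(\omega,m+1)$-double category. By Lemma \ref{lemma:characterization equivalence accompanied}, it suffices to check that $D_0 \to \C(|D|)_0$ is an equivalence (immediate from construction) and that for each $x,y \in D_0$, the induced map $\hom^h_D(x,y)_\bullet \to \hom^h_{\C(|D|)}(x,y)_\bullet$ is an equivalence of $(\omega,m)$-double categories. Lemma \ref{lemma:hom and cech} identifies the target with $\C_\bullet(\hom_{|D|}(x,y))$, and since $\hom^h_D(x,y)_\bullet$ is accompanied by definition, the inductive hypothesis reduces the problem to exhibiting an equivalence of $(\omega,m)$-filtrations $\hom_{|D|}(x,y)_\bullet \sim |\hom^h_D(x,y)_\bullet|$, i.e.\ a natural equivalence $\hom_{|D|_{k+1}}(x,y) \sim |\hom^h_D(x,y)|_k$ for each $k \leq m$. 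This will be obtained by combining the iterative formula of Lemma \ref{lemma:technical |||3bis}, which writes $|D|_{k+1}$ as an iterated realization in terms of $|D_{[a,\bullet]}|_1$, with the $m=1$ case of Theorem \ref{theo:eff of n1ocat} applied fiberwise over $D_0$, together with the adjunction between suspension and $\hom$ to extract the hom-object on both sides.

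For the counit, let $A_\bullet$ be a surjective $(\omega,m+1)$-filtration; we must show $|\C(A)|_k \sim A_k$ for every $k \leq m+1$. The cases $k=0, 1$ follow from the construction and from Theorem \ref{theo:eff of n1 dcat}. For $k > 1$, apply Lemma \ref{lemma:technical |||3bis} to write
\[
|\C(A)|_k \;\sim\; \bigl|[n] \mapsto |\C(A)_{[\bullet,n]}|_{k-1}\bigr|_1.
\]
Under Lemma \ref{lemma:hom and cech}, the inner term $|\C(A)_{[\bullet,n]}|_{k-1}$ assembles (over an appropriate $0$-surjective cover of $A_0$, using Lemma \ref{lemma:lumpen} to pass to $\pi_0 A_0 \to A_0$ and making the cover essentially surjective on each hom) into the Čech nerve of the restricted filtration obtained by truncating $A_\bullet$; applying the inductive hypothesis levelwise and then the case $m=1$ of Theorem \ref{theo:eff of n1 dcat} to the outer realization identifies the result with $A_k$.

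The case $m = \omega$ follows formally: an $(n,\omega)$-double category is accompanied iff all its truncations $\tau_{n,k}D$ are, and a filtration is surjective iff all its truncations are; since $|\uvar|$ and $\C$ commute with $\tau_{n,k}$, the equivalence in the infinite case follows from the compatible system of finite-$m$ equivalences. The principal technical obstacle is the hom-vs-realization compatibility $\hom_{|D|_{k+1}}(x,y) \sim |\hom^h_D(x,y)|_k$: all the other steps are bookkeeping around Lemma \ref{lemma:characterization equivalence accompanied} and the adjunction, but this identification rests on a careful unwinding of the iterative formula of Lemma \ref{lemma:technical |||3bis} together with proposition-level compatibilities between the Gray product, suspension, and dualities from \S1.4.
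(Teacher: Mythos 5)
Your overall skeleton (induction on $m$, reduction to $n=\omega$, base case Theorem \ref{theo:eff of n1 dcat}, first assertion from Propositions \ref{prop:image of real} and \ref{prop:filtration induces accomp}, reduction of the unit via Lemma \ref{lemma:characterization equivalence accompanied} and Lemma \ref{lemma:hom and cech}) matches the intended strategy, but the proof has a genuine gap exactly where you locate the ``principal technical obstacle''. The identity $\hom_{|D|_{m+1}}(x,y)\sim |\hom^h_D(x,y)_\bullet|_m$ is not a bookkeeping consequence of Lemma \ref{lemma:technical |||3bis}, the suspension/hom adjunction and Gray-product compatibilities: it is essentially the whole content of the theorem at the inductive step, and ``applying the $m=1$ case fiberwise over $D_0$'' does not work as stated, because the realization is a colimit while the fiber over $(x,y)$ is a limit, and these do not commute without further input. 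To make this step go through one needs (i) to know that the simplicial object $[n]\mapsto |D_{[\bullet,n]}|_m$ satisfies the Segal condition — in the paper this requires Corollary \ref{cor:stability of n surjective by some pullback} together with the inductive hypothesis applied to the filtration $|D_{[\bullet,1]}|\times_{|D_{[\bullet,0]}|}\cdots\times_{|D_{[\bullet,0]}|}|D_{[\bullet,1]}|$, whose Čech nerve is $D_{[\bullet,n]}$; (ii) to know that this simplicial object is an \emph{accompanied} $(\omega,1)$-double category, which is obtained only after first reducing to the case where $D_0$ is a homotopy type, via Lemmas \ref{lemma:characterization equivalence accompanied}, \ref{lemma:lumpen} and \ref{lemma: Dan accompanied} (you invoke Lemma \ref{lemma:lumpen} only in the counit part, where it plays no role in the paper); and (iii) to extract the hom-objects not by commuting fibers with realizations but by applying Theorem \ref{theo:eff of n1 dcat} to this accompanied $(\omega,1)$-double category, which yields the cartesian squares identifying $|D_{[\bullet,n]}|_m$ with $\C_n(D_0\to |D|_{m+1})$; combined with the inductive cartesian square for $D_{[a,n]}$ and Proposition \ref{prop:otimes and an}, this is what gives $D_{[a,n]}\sim\C_{[a,n]}(|D|)$. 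None of these three points appears in your sketch, and each can fail or requires the listed results, so the unit step is not yet proved.

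Your treatment of the counit is also substantially vaguer than what is needed, and here the paper takes a different and simpler route that you could adopt: rather than re-expanding $|\C(A)|_{m+1}$ through Lemma \ref{lemma:technical |||3bis} and an unspecified ``assembly over a $0$-surjective cover'', one uses the already established unit equivalence $\C_\bullet(A)\sim\C_\bullet(|\C(A)|)$ to compute $\hom_{|\C(A)|_{m+1}}(\overline f,\overline g)$ as a pullback of cotensors of $A_{m+1}$, deduces that $|\C(A)|_{m+1}\to A_{m+1}$ is $(m+1)$-fully faithful (using Corollary \ref{cor:characterization of n surjective} to get surjectivity on the relevant boundary data), and concludes it is an equivalence since it is already $m$-surjective by left cancellation. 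As written, your counit argument neither establishes the required Segal/Čech identifications nor explains how the inductive hypothesis applies ``levelwise'', so it should be replaced or completed along these lines.
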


\begin{proof}
The first assertion is propositions \ref{prop:image of real} and \ref{prop:filtration induces accomp}. By construction, it is sufficient to demonstrate the second claim for $n=\omega$ and $m<\omega$. We will then proceed by induction on $m$. The case $m=1$ is theorem \ref{theo:eff of n1 dcat}, and we then suppose the result is true at the stage $m$.
\vspace{0.5cm}
Let $D_\bullet$ be an accompanied $(\omega,m+1)$-double category. By the induction hypothesis, the canonical morphism 
$D_{[a,n]}\to \C_{[a,n]}(|D|)$ is an equivalence
for any $|a|<m$. We want to show that this morphism is an equivalence for a globular sum $a$ of dimension $m$.
 By lemmas \ref{lemma:characterization equivalence accompanied} and \ref{lemma:lumpen}, we can reduce to the case where $D_0$ is a homotopy type.  The lemma \ref{lemma: Dan accompanied} then implies that $D_{[\bullet,n]}$ is accompanied for any $n$, and by the induction hypothesis, we have a cartesian square:
\begin{equation}
\label{eq:presque fini}
\begin{tikzcd}
	{D_{[a,n]}} & {(|D_{[\bullet,n]}|_m)^a} \\
	{D_{[\tau_{m-1}a,n]}} & {(|D_{[\bullet,n]}|_m)^{\tau_{m-1}a}}
	\arrow[from=1-1, to=1-2]
	\arrow[from=1-1, to=2-1]
	\arrow["\lrcorner"{anchor=center, pos=0.125}, draw=none, from=1-1, to=2-2]
	\arrow[from=1-2, to=2-2]
	\arrow[from=2-1, to=2-2]
\end{tikzcd}
\end{equation}

By corollary \ref{cor:stability of n surjective by some pullback}, and as $|D_{[\bullet,0]}|$ is constant, the $(n,m)$-filtration 
$$|D_{[\bullet,1]}|\times_{|D_{[\bullet,0]}|}\times....\times_{|D_{[\bullet,0]}|}|D_{[\bullet,1]}|$$
is surjective. As its Čech nerve is $D_{[\bullet,n]}$, the induction hypothesis implies that the canonical morphism
$$|D_{[\bullet,n]}|\to |D_{[\bullet,1]}|\times_{|D_{[\bullet,0]}|}\times....\times_{|D_{[\bullet,0]}|}|D_{[\bullet,1]}|$$
is an equivalence.
The assignment $([n]\mapsto |D_{[\bullet,n]}|_{m})$ is then a Segal object. As evaluated on $[0]$, it is the homotopy type $D_0$, $([n]\mapsto |D_{[\bullet,n]}|_{m})$ is an accompanied $(\omega,1)$-double category. 
By the theorem \ref{theo:eff of n1 dcat} and the equivalence 
$$|D_\bullet|_{m+1}\sim |[n]\mapsto |D_{[\bullet,n]}|_m|_1$$ 
given by lemma \ref{lemma:technical |||3bis},  we get a cartesian square
% https://q.uiver.app/#q=WzAsNCxbMCwxLCJcXHByb2Rfe2tcXGxlcSBufURfMF57XFx7a1xcfX0iXSxbMSwwLCIofER8X3ttKzF9KV57W25dfSJdLFsxLDEsIlxccHJvZF97a1xcbGVxIG59KHxEfF97bSsxfSlee1xce2tcXH19Il0sWzAsMCwifERfe1tcXGJ1bGxldCxuXX18X20iXSxbMCwyXSxbMywwXSxbMSwyXSxbMywxXSxbMywyLCIiLDEseyJzdHlsZSI6eyJuYW1lIjoiY29ybmVyIn19XV0=
\[\begin{tikzcd}
	{|D_{[\bullet,n]}|_m} & {(|D|_{m+1})^{[n]}} \\
	{\prod_{k\leq n}D_0^{\{k\}}} & {\prod_{k\leq n}(|D|_{m+1})^{\{k\}}}
	\arrow[from=1-1, to=1-2]
	\arrow[from=1-1, to=2-1]
	\arrow["\lrcorner"{anchor=center, pos=0.125}, draw=none, from=1-1, to=2-2]
	\arrow[from=1-2, to=2-2]
	\arrow[from=2-1, to=2-2]
\end{tikzcd}\]
By proposition \ref{prop:otimes and an}, and using the fact that $D_0$ is a homotopy type, this implies that the outer and the lower square in the diagram
% https://q.uiver.app/#q=WzAsNixbMSwyLCJcXHByb2Rfe2tcXGxlcSBufSh8RHxfe20rMX0pXntcXHtrXFx9fSJdLFsxLDAsIih8RHxfe20rMX0pXntbYSxuXX0iXSxbMCwwLCIofERfe1tcXGJ1bGxldCxuXX18X20pXnthfSJdLFswLDIsIlxccHJvZF97a1xcbGVxIG59RF8wXntcXHtrXFx9fSJdLFswLDEsIih8RF97W1xcYnVsbGV0LG5dfXxfbSlee1xcdGF1X3ttLTF9YX0iXSxbMSwxLCIofER8X3ttKzF9KV57W1xcdGF1X3ttLTF9IGEsbl19Il0sWzMsMF0sWzIsMV0sWzIsNF0sWzQsM10sWzEsNV0sWzUsMF0sWzQsNV1d
\[\begin{tikzcd}
	{(|D_{[\bullet,n]}|_m)^{a}} & {(|D|_{m+1})^{[a,n]}} \\
	{(|D_{[\bullet,n]}|_m)^{\tau_{m-1}a}} & {(|D|_{m+1})^{[\tau_{m-1} a,n]}} \\
	{\prod_{k\leq n}D_0^{\{k\}}} & {\prod_{k\leq n}(|D|_{m+1})^{\{k\}}}
	\arrow[from=1-1, to=1-2]
	\arrow[from=1-1, to=2-1]
	\arrow[from=1-2, to=2-2]
	\arrow[from=2-1, to=2-2]
	\arrow[from=2-1, to=3-1]
	\arrow[from=2-2, to=3-2]
	\arrow[from=3-1, to=3-2]
\end{tikzcd}\]
are pullbacks. By left cancellation, so is the top one. Combined with the square \eqref{eq:presque fini}, we get a cartesian square:
% https://q.uiver.app/#q=WzAsNCxbMCwwLCJEX3tbYSxuXX0iXSxbMCwxLCJEX3tbXFx0YXVfe20tMX1hLG5dfSJdLFsxLDAsIih8RHxfe20rMX0pXntbYSxuXX0iXSxbMSwxLCIofER8X3ttKzF9KV57W1xcdGF1X3ttLTF9IGEsbl19Il0sWzAsMV0sWzIsM10sWzEsM10sWzAsMl0sWzAsMywiIiwxLHsic3R5bGUiOnsibmFtZSI6ImNvcm5lciJ9fV1d
\[\begin{tikzcd}
	{D_{[a,n]}} & {(|D|_{m+1})^{[a,n]}} \\
	{D_{[\tau_{m-1}a,n]}} & {(|D|_{m+1})^{[\tau_{m-1} a,n]}}
	\arrow[from=1-1, to=1-2]
	\arrow[from=1-1, to=2-1]
	\arrow["\lrcorner"{anchor=center, pos=0.125}, draw=none, from=1-1, to=2-2]
	\arrow[from=1-2, to=2-2]
	\arrow[from=2-1, to=2-2]
\end{tikzcd}\]
This then implies the equivalence
$$D_{[a,n]}\sim\C_{[a,n]}(|D|)$$
In particular, $D_\bullet\to \C_\bullet(|D|)$ is an equivalence when evaluated on globes, and so it is an equivalence.

\vspace{0.5cm}

Now let $A_\bullet$ be a surjective $(\omega,m+1)$-filtration. The induction hypothesis implies that $|\C(A)|_k\to A_k$ is an equivalence for any $k\leq m$, and it then remains to demonstrate the case $m+1$. Note that we have a square:
\begin{equation}
\label{eq:lastsquare}
\begin{tikzcd}
	{|\C(A)|_m} & {|\C(A)|_{m+1}} \\
	{A_m} & {A_{m+1}}
	\arrow[from=1-1, to=1-2]
	\arrow["\sim"', from=1-1, to=2-1]
	\arrow[from=1-2, to=2-2]
	\arrow[from=2-1, to=2-2]
\end{tikzcd}
\end{equation}
where the two horizontal morphisms are $m$-surjective, and so is the right vertical one by left cancellation. Note that the already proven equivalence  $\C_\bullet(A)\to \C_\bullet(|\C(A)|)$ implies that for any pair of morphisms $(f,g):\partial\Db_{m+1}\to |\C(A)|_m\sim A_m$, the $\omega$-category  $\hom_{|\C(A)|_{m+1}}(\overline{f},\overline{g})$ is the pullback of the cospan
\[\begin{tikzcd}
	{\{(f,g)\}} & {A_{m+1}^{\partial\Db_{m+1}}} & {A_{m+1}^{\Db_{m+1}}}
	\arrow[from=1-1, to=1-2]
	\arrow[from=1-3, to=1-2]
\end{tikzcd}\] 
and is then equivalent to $ \hom_{A_{m+1}}(\tilde{f},\tilde{g})$, where $(\overline{f},\overline{g})$ and $(\tilde{f},\tilde{g})$ are the images of $(f,g)$. As the horizontal arrows of the square \eqref{eq:lastsquare} are $m$-surjective, the corollary \ref{cor:characterization of n surjective} implies that the two morphisms
$$\Fun(\partial \Db_{m+1}, |\C(A)|_m)\to  \Fun(\partial \Db_{m+1}, |\C(A)|_{m+1})~~~\Fun(\partial \Db_{m+1},A_m)\to  \Fun(\partial \Db_{m+1},A_{m+1})$$
are surjective on objects.

This then implies that $|\C(A)|_{m+1}\to A_{m+1}$ is $(m+1)$-fully faithful. As we already know that this morphism is $m$-surjective, it is an equivalence. We then have $|\C(A)|_\bullet\sim A_\bullet$, which concludes the proof.
\end{proof}

\begin{cor}
\label{cor:unit is k fully faithful} Let $A_\bullet$ be an object of $\Filt_{n,m}$. For any $k \leq m$, the canonical morphism $$|\C(A)|_k \to A_k$$ is $k$-fully faithful.
\end{cor}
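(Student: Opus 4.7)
The plan is to generalize the strategy of Corollary \ref{cor:unit is 1 fully faithful}: construct the right adjoint of the inclusion $\iota:\Filts_{n,m}\hookrightarrow \Filt_{n,m}$ by an iterated factorization, and then identify it with $|\C(-)|$ via uniqueness of right adjoints.

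Given $A_\bullet = (A_0\to A_1\to\dots) \in \Filt_{n,m}$, define $R(A)_\bullet$ inductively. Set $R(A)_0 := A_0$ with identity counit. Assuming $R(A)_{k-1}$ has been built together with a $(k-1)$-fully faithful counit $\epsilon_{k-1}:R(A)_{k-1}\to A_{k-1}$, factor the composite $R(A)_{k-1}\xrightarrow{\epsilon_{k-1}} A_{k-1}\to A_k$ uniquely as
$$R(A)_{k-1}\to R(A)_k\xrightarrow{\epsilon_k} A_k,$$
with the first map $(k-1)$-surjective and the second $k$-fully faithful. Then $R(A)_\bullet$ lies in $\Filts_{n,m}$ and each $\epsilon_k$ is $k$-fully faithful by construction. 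To check that $R$ is right adjoint to $\iota$, one shows that any morphism $B_\bullet\to A_\bullet$ with $B_\bullet$ surjective lifts uniquely to $B_\bullet\to R(A)_\bullet$ over $A_\bullet$: at level $k$, the lift is produced by the unique left lifting property of the $(k-1)$-surjection $B_{k-1}\to B_k$ against the $k$-fully faithful $\epsilon_k$.

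By Theorem \ref{theo:eff of nm dcat}, the functor $|\C(-)|$ is also right adjoint to $\iota$. Indeed, using that $B_\bullet\sim |\C(B_\bullet)|$ for surjective $B_\bullet$, the adjunction $|\cdot|\dashv\C$, and the equivalence $\C(|\C(A_\bullet)|)\sim\C(A_\bullet)$ that holds because $\C(A_\bullet)$ is accompanied by Proposition \ref{prop:filtration induces accomp}, one obtains natural bijections
$$\Hom_{\Filt_{n,m}}(B_\bullet,A_\bullet)\sim \Hom_{(n,m)\Dcat}(\C(B_\bullet),\C(A_\bullet))\sim \Hom_{\Filts_{n,m}}(B_\bullet,|\C(A_\bullet)|).$$
Uniqueness of right adjoints then gives a natural equivalence $R(A)_\bullet\sim |\C(A)|_\bullet$ compatible with the counits to $A_\bullet$, and the $k$-fully faithfulness of $|\C(A)|_k\to A_k$ is inherited from that of $\epsilon_k$.

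The only delicate point is to verify cleanly that $|\C(-)|$ is indeed a right adjoint of $\iota$, i.e.\ to carry through the chain of adjunction/equivalence identifications above and its naturality; the inductive construction of $R$ and the uniqueness of the lifts are otherwise straightforward consequences of the factorization system $(k\text{-surjective}, (k+1)\text{-fully faithful})$.
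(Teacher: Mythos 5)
Your proof is correct and follows essentially the same route as the paper: the paper also builds the coreflection $A_\bullet\mapsto \tilde A_\bullet$ by the iterated factorization into a $(k-1)$-surjection followed by a $k$-fully faithful map, observes via Theorem \ref{theo:eff of nm dcat} that $|\C(\uvar)|$ is likewise right adjoint to the inclusion $\Filts_{n,m}\hookrightarrow\Filt_{n,m}$, and concludes by uniqueness of right adjoints. Your extra unwinding of why $|\C(\uvar)|$ is a right adjoint (using $B_\bullet\sim|\C(B_\bullet)|$ for surjective $B_\bullet$ and the adjunction $|\uvar|\dashv\C$) is exactly what the paper leaves implicit.
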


\begin{proof}
Let $A_\bullet$ be an object of $\Filt_{n,m}$. We define the unique sequence $\tilde{A}_\bullet$ fitting in a diagram \[\begin{tikzcd} {A_0} & {\tilde{A}_1} & {\tilde{A}_2} & {...} \\ {A_0} & {A_1} & {A_2} & {...} \arrow["0", two heads, from=1-1, to=1-2] \arrow[equals, from=1-1, to=2-1] \arrow["1", two heads, from=1-2, to=1-3] \arrow["1", hook, from=1-2, to=2-2] \arrow["2", two heads, from=1-3, to=1-4] \arrow["2", hook, from=1-3, to=2-3] \arrow[from=2-1, to=2-2] \arrow[from=2-2, to=2-3] \arrow[from=2-3, to=2-4] \end{tikzcd}\] where arrows labeled as $\overset{k}{\twoheadrightarrow}$ are $k$-surjective, and arrows labeled as $\overset{k}{\hookrightarrow}$ are $k$-fully faithful. We can easily check that the assignment $A_\bullet\mapsto \tilde{A}_\bullet$ is the right adjoint of the canonical inclusion of $\Filt_{n,m}$ into $\Filts_{n,m}$. However, the theorem \ref{theo:eff of nm dcat} implies that $|\C\uvar|$ is also a right adjoint of this inclusion. The two canonical morphisms $|\C(A)|_\bullet\to A_\bullet|$ and $\tilde{A}_\bullet\to A$ then coincide, which concludes the proof.
\end{proof}

\bibliography{biblio}{}
\bibliographystyle{alpha}
\end{document}